\newcommand{\Q}{{\mathbb{Q}}}
\newcommand{\Z}{{\mathbb{Z}}}
\newcommand{\ds}{\displaystyle}
\newcommand{\ov}{\overline}
\newcommand{\wt}{\widetilde}
\newcommand{\ft}{\footnotesize}
\newcommand{\ns}{\normalsize}
\newcommand{\BN}{{\bf N}}
\newcommand{\BJ}{{\bf J}}
\newcommand{\BH}{{\bf H}}
\newcommand{\Bnu}{\hbox{\Large $\nu$}}
\newcommand{\CA}{{\mathcal A}}
\newcommand{\CH}{{\mathcal H}}
\newcommand{\CR}{{\mathcal R}}
\newcommand{\CW}{{\mathcal W}}
\newcommand{\CU}{{\mathcal U}}
\newcommand{\CT}{{\mathcal T}}
\newcommand{\CC}{{\mathcal C}}
\newcommand{\CP}{{\mathcal P}}
\newcommand{\CN}{{\mathcal N}}
\newcommand{\CX}{{\mathcal X}}
\newcommand{\CI}{{\mathcal I}}
\newcommand{\order}{\raise0.8pt \hbox{${\scriptstyle \#}$}}
\newcommand{\lien}{\mathrel{\mkern-4mu}}
\newcommand{\too}{\relbar\lien\rightarrow}
\newcommand{\tooo}{\relbar\lien\relbar\lien\too}
\newcommand{\toooo}{\relbar\lien\relbar\lien\tooo}
\newcommand{\plus}{\ds\mathop{\raise 0.5pt \hbox{$\bigoplus$}}\limits}
\newcommand{\prd}{\ds\mathop{\raise 1.0pt \hbox{$\prod$}}\limits}
\newcommand{\sm}{\ds\mathop{\raise 1.0pt \hbox{$\sum$}}\limits}
\newcommand{\ffrac}[2]{\hbox{\ft $\displaystyle\frac{#1}{#2}$}}
\newcommand{\limproj}{\mathop{\lim_{\longleftarrow}}}
\newcommand{\Gal}{{\rm Gal}}
\newcommand{\Lbda}{{\bf \Lambda}}
\newcommand{\rk}{{\rm rank}}
\newcommand{\tor}{{\rm tor}}
\newcommand{\Ker}{{\rm Ker}}
\newcommand{\pr}{{\rm pr}}
\newcommand{\ta}{{\rm ta}}
\newcommand{\ram}{{\rm ram}}
\newcommand{\ab}{{\rm ab}}
\newcommand{\spl}{{\rm spl}}
\newcommand{\nr}{{\rm nr}}
\newcommand{\lc}{{\rm lc}}
\newcommand{\lac}{{\rm lac}}
\newcommand{\bp}{{\rm bp}}
\newcommand{\cyc}{{\rm c}}
\newcommand{\acyc}{{\rm ac}}
\newcommand{\iw}{{\rm Iw}}
\newcommand{\hk}{{\rm h}_k}
\newcommand{\hp}{{\rm h}_p}
\newcommand{\knr}{\wt {k}^{\raise -1.0pt\hbox{${}^\nr$}}}
\newcommand{\Ccl}{c\hskip-0.1pt{\ell}}
\newcommand{\Bcl}{\hbox{$\rm c\hskip-0.1pt{{l}}$}}
\newcommand{\Log}{{\rm Log}}
\newcommand{\Val}{{\rm V\!al}}
\newcommand{\Rad}{{\rm Rad}}
\newcommand{\Ram}{{\rm Ram}}
\newcommand{\Nram}{{\rm Nram}}
\newcommand{\expo}{{\rm exp}}
\newcommand{\fffrac}[2]{\hbox{$\frac{#1}{#2}$}}
\newtheorem{theorem}{Theorem}[section]
\newtheorem{lemma}[theorem]{Lemma}
\newtheorem{corollary}[theorem]{Corollary}
\newtheorem{conjecture}[theorem]{Conjecture}
\newtheorem{proposition}[theorem]{Proposition}
\newtheorem{definition}[theorem]{Definition}
\newtheorem{remark}[theorem]{Remark}
\newtheorem{remarks}[theorem]{Remarks}
\numberwithin{equation}{section}
\title[On the $\Z_p$-extensions of a totally $p$-adic imaginary quadratic field]
{On the $\Z_p$-extensions of a totally $p$-adic \\ imaginary 
quadratic field \\ \vspace{0.2cm}
\small{With an Appendix by Jean-Fran\c cois Jaulent}}
\author{Georges Gras}
\address{Villa la Gardette, 4 chemin de Ch\^ateau Gagni\`ere, 
F-38520 Le Bourg d'Oisans}
\email{g.mn.gras@wanadoo.fr}
\urladdr{\url{http://orcid.org/0000-0002-1318-4414}}
\keywords{$\Z_p$-extensions, imaginary quadratic fields,
Chevalley--Herbrand formula, norm residue symbols, 
class field theory, capitulation of $p$-classes}
\subjclass{11R29, 11R37, 11R23, 12Y05}
\thanks{{\bf Acknowledgments}. 
I sincerely thank the anonymous Referee for his patience, thorough 
reading of the manuscript(s), and valuable comments and suggestions 
that led to a better presentation of the article. I also thank the Editor, 
Christine Huyghe, for her understanding during the evolution of the 
versions. I would like to thank Jean-Fran\c cois Jaulent for crucial discussions 
about the logarithmic class group and for agreeing to write, in Appendix \ref{A}, 
the general result giving the Theorem \ref{fq} for imaginary quadratic fields. 
My friendly thoughts to Christian Maire and Jean-Fran\c cois Jaulent for so many 
reasons.}
\begin{document}

\date{April 24, 2026}

\begin{abstract}
Let $k = \Q(\sqrt {-m})$ and $p \geq 3$ split in $k$. We prove new properties 
of the $\Z_p$-extensions $K/k$, distinct from the cyclotomic one; we do not 
assume $K/k$ totally ramified, nor the triviality of the $p$-class group of $k$. 
These properties are governed by the $p$-valuation $\delta_p(k)$ of a 
Fermat quotient of the fundamental $p$-unit $x$ of $k$, which 
also yields the order of the logarithmic class group $\wt \CH_k$ 
(Thm.\,\ref{fq} extended in App.\,\ref{A}
to the case of imaginary abelian fields of prime-to-$p$ degree), and 
allows to generalize the Gold--Sands criterion (Sec.\,\ref{gold}). 
These results are related to the first two elements, 
$\CH_{K_n}^1$ and $\CH_{K_n}^2$, of the filtrations of the $p$-class 
groups in $K = \cup_n K_n$, without any 
argument of Iwasawa's theory, and provide new perspectives since 
$\order (\CH_{K_n}^2/\CH_{K_n}^1) = \order \wt \CH_k$ for $n$ 
large enough (Thm.\,\ref{stability}). We give  a short proof generalizing 
a result of Kundu--Washington (Thm.\,\ref{noncyclic}) on the 
$p$-class groups in the anti-cyclotomic $\Z_p$-extension $k^\acyc$. 
We compute, Sec.\,\ref{initial}, for $p = 3$,
the first layer $k_1^\acyc$ of $k^\acyc$, using the $\Log_p$-function, 
and show (Thms.\,\ref{capitule},\,\ref{capituleK}) that capitulation of 
suitable ``classes'' is possible in $k^\acyc$, suggesting 
Conjecture\,\ref{mainconj}. Finally, we generalize (Thms.\,\ref{H},\,\ref{nongal}) 
a result of Ozaki giving large $\lambda$'s invariants. 
Calculations and programs are gathered App.\,\ref{C}.

\bigskip\noindent
{\sc R\'esum\'e.} 
Soit $k = \Q(\sqrt {-m})$ et $p \geq 3$ d\'ecompos\'e dans $k$. Nous 
d\'emontrons de nouvelles propri\'et\'es des $\Z_p$-extensions $K/k$, 
distinctes de la cyclotomique~; nous ne supposons pas $K/k$ totalement 
ramifi\'ee ni la trivialit\'e du $p$-groupe des classes de $k$. Ces propri\'et\'es 
sont gouvern\'ees par la $p$-valuation $\delta_p(k)$ d'un quotient de 
Fermat de la $p$-unit\'e fondamentale $x$ de $k$, 
qui donne aussi l'ordre du groupe des classes logarithmiques 
$\wt \CH_k$ (Thm.\,\ref{fq} g\'en\'eralis\'e dans 
l'App.\,\ref{A} au cas des corps ab\'eliens 
imaginaires de degr\'e \'etranger \`a $p$), et permet la g\'en\'eralisation 
du crit\`ere de Gold--Sands (Sec.\,\ref{gold}). Ces r\'esultats sont 
en relation avec les deux premiers \'el\'ements, $\CH_{K_n}^1$ et $\CH_{K_n}^2$, 
des filtrations des $p$-groupes  de classes dans $K = \cup_n K_n$, 
sans aucun argument de la th\'eorie d'Iwasawa, et fournissent 
de nouvelles perspectives car $\order (\CH_{K_n}^2/\CH_{K_n}^1) = 
\order \wt \CH_k$ pour $n$ assez grand (Thm.\,\ref{stability}). Nous donnons 
une courte preuve g\'en\'eralisant un r\'esultat de Kundu--Washington 
(Thm.\,\ref{noncyclic}) sur les $p$-groupes des classes dans la 
$\Z_p$-extension anti-cyclotomique $k^\acyc$. 
Nous calculons, Sec.\,\ref{initial}, pour $p = 3$, 
le premier \'etage $k_1^\acyc$ de $k^\acyc$, via 
la fonction $\Log_p$, et montrons (Thms.\,\ref{capitule},\,\ref{capituleK}) que la 
capitulation de certaines ``classes'' est possible dans $k^\acyc$, sugg\'erant la 
Conjecture\,\ref{mainconj}. Enfin, nous g\'en\'eralisons (Thms.\,\ref{H},\,\ref{nongal}) 
un r\'esultat de Ozaki donnant de grands $\lambda$-invariants. 
Calculs et programmes sont rassembl\'es App.\,\ref{C}.
\end{abstract}

\maketitle

\tableofcontents

\section{Introduction} 

This work, in connection with articles of Gold (1974), Carroll--Kisilevsky 
(1976), Horie (1987), Dummit--Ford--Kisilevsky--Sands (1991), Sands 
(1991--1093), Greenberg (1976--2016), 
Ozaki (2001--2004), then Fujii (2013), Kataoka (2017),
Hubbard--Washington (2010--2018), Hajir--Maire--Ramakrishna (2019), 
Jaulent (2019--2025), Li--Qiu (2023), Kundu--Washington (2023--2024), 
among others, falls more precisely within the framework 
of Jaulent's study of ``totally $p$-adic'' number fields \cite[Section 3]
{Jau2024a}, restricted, in particular for us, to the case of imaginary 
quadratic fields.

\smallskip
In other words, {\it $k$ is a totally $p$-adic number field} 
means that $p$ totally splits in $k$, contrary to some studies
concerning the easier case of a single $p$-place of $k$, totally 
ramified in a $\Z_p$-extension $K/k$ (e.g., \cite[Question p.\,2]{KW2023}, 
\cite[Conjecture 1.3]{EJV2011}, for statistical aspects); of course, the split 
case is also considered in the literature, but is much more complex, essentially 
due to the fact that prime ideals of $k$ dividing $p$ can be non-$p$-principal,
and also give non-trivial $p$-units having a great influence in abelian
$p$-ramification theory.

\smallskip
Our approach is not of the Iwasawa theory type and uses computable arguments of 
classical class field theory by means of the Chevalley--Herbrand formula \cite{Che1933}, 
with the help of some properties of the filtrations\,\footnote{\label{filtration0} 
Let $K = \cup_n K_n$, $\Gal(K_n/k) =: \langle \sigma_n \rangle$, and
$\CH_{K_n}$, the $p$-class group of $K_n$; 
the filtration is the sequence of subgroups $\big\{\CH_{K_n}^i \!:=\!
\{c \in \CH_{K_n},\, c^{(1-\sigma_n)^i} = 1\} \big\}_{i = 0}^{i = b_n}$, with
$\CH_{K_n}^{b_n} = \CH_{K_n}$. Formula \eqref{formule} for the ``jump''
$\CH_{K_n}^{i+1}/\CH_{K_n}^i$ generalizes 
that of Chevalley--Herbrand.} 
attached to the $p$-class groups in a cyclic (or pro-cyclic) 
$p$-extension $K/k$, properties that we gave in the $1994$'s; this point 
of view allows to find some new and precise results, and to understand 
algorithmic $p$-adic aspects that purely algebraic ones do not describe. 
Given the breadth of the subject, this article also provides a survey with 
a large, albeit incomplete, bibliography.

\smallskip
The reader more familiar with the cohomological  Artin--Tate 
presentation of class field theory will have no difficulties since our approach 
is elementary, the more elaborate objects or notions being the Artin symbols, 
and the Hasse norm theorem claiming that in a cyclic extension $M/F$, 
any element of $F$, everywhere local norm (except at a single place 
because of the ``product formula''), is a global norm in $M/F$. 
For a cohomological description of abelian $p$-ramification theory, 
we refer to \cite{NQD1986}, then to \cite{Gra2019c} for a survey and 
practical aspects. For a $p$-adic logarithmic class field theory approach, 
we refer to \cite{Jau1998, Jau1994, BJ2016, DJPS2005, Jau2024b, Jau2026}.

\smallskip
For the so important Chevalley--Herbrand formula, it is clear that the 
$1933$'s proof (in french), may offer some difficulties; so we refer to 
the cohomological Lang's proof \cite{Lan1986}. All variations of this 
formula are classical (\cite{Jau1986, Gra2017a, LiYu2020} and a lot 
of papers). 

\smallskip
It is now accepted that deep $p$-adic obstructions to the knowledge 
of the arithmetic in a $\Z_p$-extension, due in particular to the 
units and/or Fermat's quotients of algebraic numbers, cannot be 
removed by means of a strict algebraic way, and a majority 
of articles supplement the usual techniques of $\Z_p[[T]]$-modules 
with arguments of class field theory type, most often using, in a more 
or less hidden way, the genus theory form of Chevalley--Herbrand 
formula, improperly referenced through articles of Iwasawa or others
(see \cite[Footnote 1, p.46]{Gra2026} for more historical details). 

\smallskip
But, if basic arguments of class field theory are not always sufficient 
either, the simple use, for $k = \Q(\sqrt{-m})$, of the crucial second jumps 
$\CH_{K_n}^2/\CH_{K_n}^1$, deduced from the filtrations, yields many 
results with elementary proofs, which literature sometimes obtains 
with difficulty. Surprisingly, this second jump has, for $n$ large enough,
same order as that of the logarithmic class group $\wt \CH_k$ 
(Theorems \ref{fq}, \ref{stability}):
\begin{equation}\label{firstclog}
\order \wt \CH_k = p^{\delta_p(k)} \times \order \CH_k^{S_k},
\end{equation} 

\noindent
where $S_k$ is the set of the two primes dividing $p$,
$\CH_k^{S_k} := \CH_k/\Ccl(\langle S_k \rangle)$ is the $S_k$-class group,
$\CH_k$ being the $p$-class group of $k$ and $\Ccl({\mathfrak a})$ 
the image of an ideal ${\mathfrak a}$ in $\CH_k$; then $\delta_p(k)$ is
a deep random number, specific of the imaginary split case, elucidating 
the order of the norm residue symbol at $p$ of the fundamental $S_k$-unit 
of $k$. This is generalized in the semi-simple case, to imaginary abelian fields, 
in Appendix \ref{A} and \cite{Jau2024b}, in the context of a Gold criterion, 
by analogy with ordinary class groups. 

\smallskip
We think that, beyond the second 
element $\CH_{K_n}^2$, arithmetic becomes random, which explains 
that only statistics, densities results, may be accessible with the help 
of computations. When adding some hypothesis 
on some invariants, particular results may be stated.

\smallskip
One gets some general results of Gold's criterion type, characterizing 
$\lambda_p(K/k) = 1$, $\mu_p(K/k) = 0$, for Iwasawa's invariants
\cite{Gold1974, San1993, JS1995}. More precisely we obtain, 
in a simpler way, the classical Gold's criterion, and new generalizations 
which apply to any $\Z_p$-extension $K/k$ ramified at the $p$-places from 
some layers, with non-trivial $p$-class group for $k$ (recall that Gold's 
techniques were applied to the cyclotomic $\Z_p$-extension $k^\cyc/k$, 
and are essentially the first ones within the scope of class field theory). 

\smallskip
We notice that, in an imaginary base field $k$, the non-cyclotomic 
$\Z_p$-extensions $K/k$ behave differently from $k^\cyc/k$ because unit 
groups of $K$ do not reduce to those of $\Q^\cyc$ whose norm properties in 
the tower are well-known and, in some sense, do not intervene in $k^\cyc/k$.
But we show some similarities with the totally real case related to the tough 
Greenberg's conjectures \cite{Gree1976, Gree1998}, in particular since
there is a priori no obstruction for the phenomena of capitulations in $K$, as 
for \cite[Theorem 1]{Gree1976}. For some examples of capitulations, see 
Theorems \ref{capitule} and \ref{capitule2}, for $p$ split and non-split, 
respectively, then Program \ref{P7} with computing the first layer 
$k_1^\acyc/k$, of the the anti-cyclotomic $\Z_3$-extension $k^\acyc/k$
(more complete programs are given in \cite{Gra2026}).

\smallskip
A large part of the paper is devoted to the relations between the minimality 
of Iwasawa's invariants and phenomena of capitulation, in $K/k$, of  ``class 
groups'', the best notion of classes, adapted to the context (real or imaginary, 
$p$-split or not), needing to be analyzed. We only consider Iwasawa's invariants 
attached to usual $p$-class groups (eventually $\CH^S$-class groups); for a 
broad generalization to $\CH_T^S$- class groups, for $S$-ramification and 
$T$-decomposition, see \cite{JMP2013} giving the corresponding Iwasawa theory.

\section{Why the logarithmic class group \texorpdfstring{$\wt \CH_k$}{Lg}
for capitulation criteria ?}\label{why}

Nevertheless, capitulation in $K$ of the $p$-class group $\CH_k$ does not 
always characterize the minimality of Iwasawa's invariant
in both real and imaginary context (that is $\mu = 0$, then $\lambda = 0$ in the 
real case, $\lambda \in \{0, 1\}$ in the imaginary case, depending on the 
number of $p$-places ramified in $K/k$).
Indeed, if $\CH_k = 1$ in the split case of $p$ in 
$k = \Q(\sqrt{-m})$, Ozaki--Minardi results \cite[Theorem~1]{Oza2001} 
suggest the existence of exceptional non-Galois $\Z_p$-extensions with 
non-minimal Iwasawa's invariants; either these $\Z_p$-extensions do not 
exist or $\CH_k$ is not the good object for a general capitulation criterion. 
If, at the opposite, $\CH_k \ne 1$ and the $p$-Hilbert class field $H_k^\nr$ is 
contained in $K$, then $\CH_k$ trivially capitulates in $K$ but in this case, we
obtain $p^e \leq \lambda_p(K/k) < 2 p^e$, where $p^e := \order \CH_k$
(Theorems \ref{H}, \ref{nongal}).

\smallskip
The viewpoint of logarithmic class groups essentially intervenes in the 
{\it totally real case} of Greenberg's conjectures, which are, for us, the 
archetypal model illustrating the difficulties of proving such conjectures; 
indeed the theoretical approach gives equivalent non-effective conditions
for the minimality of Iwasawa's invariants: in general they must be satisfied 
for ``all $n$ large enough'', or ``for some $n$'' in the tower $\cup_n K_n$, 
and they depend on random phenomena.

\smallskip
The links between capitulations and the totally real Greenberg's criteria may 
appear diverse, except if one knows that these criteria are {\it equivalent} to the
capitulation of the logarithmic class group $\wt \CH_k$ of $k$ in the (cyclotomic) 
$\Z_p$-extension \cite{Jau2019a}.

\smallskip
Other studies about 
generalized Greenberg's conjecture and capitulation are that of 
Nguyen Quang Do \cite[Theorem C]{NQD2018} and Fujii \cite{Fu2025}
whose bibliography cites erroneous papers, but not that of Jaulent 
\cite[Theorems 5\,(iv),\,7]{Jau2019a}. 

\smallskip
Recall Greenberg's criteria restricted to the simplest case of real quadratic fields, 
for comparison with the imaginary one, and to clarify these questions of capitulation:

\begin{theorem}(\cite[Theorems 1, 2]{Gree1976}). Let $k$ be a real quadratic
field and let $p$ be a prime number. 
Then $\order \CH_{K_n}$ is bounded as $n \to \infty$ if and only if the 
following condition is satisfied, depending on the decomposition of $p$ in $k$:

\smallskip
(i) $p$ non-split.
Every ideal class of $\CH_k$ becomes principal in $K_n$ for some $n$.

\smallskip
(ii) $p$ split.
For $n$ sufficiently large, every class of $B_n$ {\rm $[$sub-module of $\CH_{K_n}$
fixed by $\Gal(K_n/k)$$]$} contains an ideal whose prime factors lie above $p$
{\rm $[$$B_n \!= \CH_{K_n}^\ram$ with our notations$]$}.
\end{theorem}

Recall that $\wt \CH_k \simeq \Gal(H_k^\lc/k^\cyc)$ where $H_k^\lc$ is 
the maximal abelian pro-$p$-extension of $k$, locally cyclotomic (thus 
such that $p$ totally splits in $H_k^\lc/k^\cyc$). In complete generality, 
the finiteness of this Galois group constitutes the Gross-Kuz'min conjecture. 
It is known that any tame place totally splits in $H_k^\pr/k^\cyc$, whatever 
$p$ and the base field~$k$ \cite[Remark\,III.4.8.2]{Gra2005}; so, the 
definition of $H_k^\lc$ only depends on the $p$-places. Of course, 
$H_k^\lc \subseteq H_k^\pr$, the maximal abelian $p$-ramified 
pro-$p$-extension of $k$.

\smallskip
(i) In the non-split case of $p$ in $k$ real, $H_k^\lc = k^\cyc H_k^\nr$
since, by $p$-principality, ${\mathfrak p} \mid p$ totally splits in $H_k^\nr/k$
and ramifies in $H_k^\pr/H_k^\lc$; then, capitulations of $\wt \CH_k$ and 
$\CH_k$ are equivalent \cite[Scolie 9 to Theorem 7]{Jau2019a}, giving 
Greenberg's first criterion.

\smallskip
(ii) In the split case of $p$ in $k$ real, $H_k^\lc$ may be a strict subfield of 
$k^\cyc H_k^\nr$; indeed, the structure of $\CT_k := \Gal(H_k^\pr/k^\cyc)$
is given by the following diagram (from \cite[\S\,2.3]{Jau2019a}), where the 
finite $p$-group $\CR_k$ is 
the normalized $p$-adic regulator (of order ``equivalent'' to $\frac{1}{p} 
\log_p(\varepsilon_k)$ for the fundamental unit of $k$ \cite[Section 5]
{Gra2018}), where $H_k^\pr/k^\cyc H_k^\nr$ is totally ramified, and 
where the image of $\Ccl(S_k)$ is generated by the decomposition 
groups of the two $p$-places of $k$:
\unitlength=1.25cm
\begin{equation*}
\vbox{\hbox{\hspace{-1.8cm}
\begin{picture}(10.0,1.9)
\put(0.6,0.9){\line(1,0){1.3}}
\put(2.9,0.9){\line(1,0){1.8}}
\put(2.1,0.8){$k^\cyc H^\spl_k$}
\put(5.4,0.9){\line(1,0){2.2}}
\put(8.7,0.9){\line(1,0){2.2}}
\put(-0.1,0.82){$\wt k \!=\! k^\cyc$}
\put(4.8,0.8){$H_k^\lc$}
\put(7.8,0.8){$k^\cyc H_k^\nr$}
\put(11.1,0.8){$H_k^\pr$}
\put(1.1,1.0){\tiny$\CH_k^{S_k}$}
\put(3.4,1.0){\tiny$\wt \Ccl(S_k)$}
\put(9.8,1.02){\ft$\CR_k$}
\put(5.2,0.0){\tiny$\CT_k$}
\bezier{400}(0.4,0.66)(3.3,-0.3)(11.2,0.66)
\bezier{300}(0.4,1.1)(4.0,1.7)(4.8,1.1)
\bezier{400}(0.4,1.2)(6.0,2.5)(7.8,1.2)
\bezier{400}(2.4,0.75)(3.0,0.4)(7.9,0.75)
\put(3.4,0.4){\tiny$\Ccl(S_k)$}
\put(5.0,1.92){\tiny$\CH_k$}
\put(3.0,1.45){\tiny$\wt \CH_k$}
\end{picture}}} 
\end{equation*}
\unitlength=1.0cm

By comparison with formula \eqref{firstclog} and the forthcoming 
Definition \ref{wtdeltapk} in the imaginary case, one sees that, 
in the real split case, the ``tricky invariant $\delta_p(k)$'' is not defined
because of the fundamental unit of $k$. If, for $k$ real in the split case, $\CT_k = 1$, 
then Iwasawa's invariants of $k^\cyc$ are trivial.\footnote{\label{prat} 
Under Leopoldt's conjecture for $p$, the 
property ``$\CT_k = 1$'' has been called ``$p$-rationality of $k$'' in Movahhedi's 
thesis \cite{Mov1988}; the long history of this notion can be found in a lot of papers, 
as \cite{Jau1986, Gra1986, NQD1986, MN1990, JN1993, Gra2019, MR2019a, 
MR2019b, Gra2019c, LiQi2023}. Most of number fields are $p$-rational, probably 
for almost all $p$, this being out of reach (see an analysis in \cite{Gra2019a}).
}

\begin{remark}
In the split case of $p$ in $k$ real, the capitulation of $\CH_k$ in the tower 
does no seem to be equivalent to that of $\wt \CH_k$ as shown by the 
following numerical examples with {\sc pari/gp}, where $\wt \CH_k = 1$, giving 
minimal Iwasawa's invariants (but we have seen that Greenberg's criterion is 
different in the split case, and is $\CH_{K_n}^{G_n} = \CH_{K_n}^\ram$ for $n \gg 0$):

\begin{itemize}
\item $k = \Q(\sqrt{79})$, giving $\CH_k \simeq \Z/3\Z$, $\CT_k \simeq \Z/9\Z$, 
$\wt \CH_k = 1$, 

\item $k = \Q(\sqrt{8761})$, giving $\CH_k \simeq \Z/27\Z$, $\CT_k \simeq \Z/81\Z$, 
$\wt \CH_k = 1$, 

\item $k = \Q(\sqrt{1109371})$, giving $\CH_k \simeq \Z/81\Z$, $\CT_k \simeq \Z/81\Z$, 
$\wt \CH_k = 1$,

\item $k = \Q(\sqrt{1141099})$, $k = \Q(\sqrt{1299286})$, giving $\CH_k \simeq \Z/81\Z$, 
$\CT_k \simeq \Z/243\Z$, $\wt \CH_k = 1$;
\end{itemize}

\noindent
capitulation of $\CH_k$ (if any) may be satisfied only in 
large layers of $K$. Then $\CT_k \ne 1$ is not an obstruction to minimal 
Iwasawa's invariants. Note that, under Leopoldt's conjecture,
$\CT_k$ {\it never capitulates} (\cite[Corollaire\,II.2.26, p.\,145]{Jau1986}, 
\cite[III,\,Proposition\,6]{Gra1986}, \cite[Proposition\,1.4]{NQD1986}, 
\cite[Theorem\,IV.2.1]{Gra2005} from \cite{Gra1982, Gra1983, Gra1985}). 
\end{remark}

In conclusion, we can say that the logarithmic class group, in relation with 
the first jump of the filtrations, is an appropriate object for these studies, as well 
as for numerical criteria, even if any phenomenon of capitulation in the tower is 
unverifiable in practice, except when the test is sufficient in the first layer,
which case is characterized in \cite[Remark\,2.2, Theorem\,3.4]{Gra2017b}, 
then in \cite[Th\'eor\`eme\,6, Proposition\,12]{Jau2024a}.

The philosophy of using $\wt \CH_k$ in the split case, is that it encodes 
information within the studied number field, that usual $S$-class groups 
in a tower do not provide. This is also clear in the imaginary case from the study, 
for instance, of the anti-cyclotomic $\Z_p$-extensions, where $\wt \CH_k$ plays 
a decisive role (e.g., Theorem \ref{stability}\,(iii)). Nevertheless, a capitulation of 
the logarithmic class group, in the imaginary split case, is not sufficient and needs 
further investigation.

\section{Main results and perspectives}

The present ``semi-simple'' study seems to depend on the notion of ``smooth 
complexity'' of $K$ that we introduce in Section \ref{complexity}; so we conjecture 
that for $k$ imaginary quadratic, $p \geq 3$ split in $k$, $K/k$ totaly ramified
at the two $p$-places,
$K \ne k^\cyc$, $\lambda_p(K/k) = 1$ and $\mu_p(K/k) = 0$, with similar results 
for imaginary abelian fields and odd isotypic components (Appendix~\ref{A}).
If $K/k$ is not totally ramified, Theorems \ref{H}, \ref{nongal}, show that
the notion of ``minimal invariants'' must be adapted.

\smallskip
By comparison, we consider, \S\,\ref{nonzero}, the famous Iwasawa counterexample, 
with {\it non semi-simple} action of $\Gal(k/\Q)$ and ingenious use of Chevalley--Herbrand 
formula, giving $\Z_p$-extensions $K/k$ with $\mu_p(K/k) > 0$, to examine the 
capitulation phenomenon in such an ad hoc construction (Section \ref{P6}).

\subsection{Logarithmic writings}\label{log}
Set $g = \Gal(k/\Q) =: \langle \tau \rangle$, where $\tau$ is the complex conjugation.
Let $k$ be an imaginary quadratic field and let $p$ be an odd prime split in $k$;
set $(p) = {\mathfrak p}{\ov {\mathfrak p}}$, where $z \to \ov {z}$ is the corresponding 
notation when replacing an object by its complex conjugate, if no confusion is 
possible (e.g., if $p^e$ is an inertia group index of ${\mathfrak p}$, $p^{\ov e}$ will 
be that of $\ov {\mathfrak p} = {\mathfrak p}^\tau$).

\smallskip                                                     
Let $\log_{\mathfrak p}$ and $\log_{\ov {\mathfrak p}}$ 
be the corresponding $\log$-functions in the two $p$-completions 
$k_{\mathfrak p}$ and $k_{\ov {\mathfrak p}}$ of $k$
(isomorphic to $\Q_p$), {\it with the convention 
$\log_{\mathfrak p}(p) = \log_{\ov {\mathfrak p}}(p) = 0$}.
Let $\hk$ be the class number of $k$, and let 
$x_{\mathfrak p} =: x \in k^\times$ be the fundamental 
${\mathfrak p}$-unit of $k$ (obtained, up to sign, from 
${\mathfrak p}^{\hp} =: (x)$, $\hp \mid \hk$ 
being the common order of the classes of ${\mathfrak p}$ and
$\ov {\mathfrak p}$ since $\Ccl(\ov {\mathfrak p}) = \Ccl({\mathfrak p})^{-1}$); 
so, $x_{\ov {\mathfrak p}}^{} = \ov x_{\mathfrak p}^{} = \ov x$. 
Since $x \ov x = p^{\hp}$, this defines, in the 
$\Z_p[g]$-algebra $k_{\mathfrak p} \oplus k_{\ov {\mathfrak p}}$:
\begin{equation*}
\left \{\begin{aligned}
& \log_{\mathfrak p}(\ov x) \ \hbox{(usual definition since 
$\ov x$ is a unit in $k_{\mathfrak p}^\times$)}, \\
& \log_{\ov {\mathfrak p}}(x) \ \hbox{(conjugate 
of $\log_{\mathfrak p}(\ov x)$)}, \\
& \log_{\mathfrak p}(x) = - \log_{\mathfrak p}(\ov x), \  \
\log_{\ov {\mathfrak p}}(\ov x) = - \log_{\ov {\mathfrak p}}(x).
\end{aligned}\right.
\end{equation*}

Thus, this defines, without any ambiguity, in terms of $p$-adic 
valuations on $\Q_p^\times$:
$$\delta_p(k) \!:= v_p \big (\fffrac{1}{p} \log_{\mathfrak p}(\ov x) \big)
\!= v_p \big (\fffrac{1}{p} \log_{\ov {\mathfrak p}}(x) \big) 
\!= v_p \big (\fffrac{1}{p}\log_{\mathfrak p}(x) \big)
\!= v_p \big (\fffrac{1}{p} \log_{\ov {\mathfrak p}}(\ov x) \big)
\!= v_{\mathfrak p}(\ov x^{\,p-1} \! -1) - 1 .$$

\begin{definition}\label{wtdeltapk}
Let $\hk$ be the class number of $k$ (i.e. the order of the class 
group $\BH_k$) and let $\hp \mid \hk$ be the 
order of the classes of ${\mathfrak p}$ and ${\ov {\mathfrak p}}$. Put: 
$$\hbox{${\mathfrak p}^{\hp} =: (x)$ and 
${\mathfrak p}^{\hk} =: (X)$, hence $X = x^{\hk 
\cdot \hp^{-1}}$, }$$ 

\noindent
and define the integers $\delta_p(k) = v_{\mathfrak p}(\ov x^{\,p-1}-1) - 1$
and $\wt \delta_p(k) := v_{\mathfrak p}
(\ov X^{\,p-1}-1) - 1$ satisfying the obvious 
relation $\wt \delta_p(k) = \delta_p(k) + \big [v_p (\hk) 
- v_p(\hp) \big] = v_{\mathfrak p}(\ov x^{\,p-1}-1) - 1
+ \big [v_p (\hk) - v_p(\hp) \big]$,
where $v_p (\hk) - v_p(\hp)$ is the valuation 
of the $S_k$-class group $\CH_k^{S_k} := \CH_k/ \Ccl(S_k)$, 
$S_k := \{{\mathfrak p}, \ov {\mathfrak p}\}$. 
For $k$ fixed, $\wt \delta_p(k) = \delta_p(k)$ for almost all primes $p$.
\end{definition}

\subsection{Summary of the results}

Let $k$ be an imaginary quadratic field, let $p$ be an 
odd prime, split in $k$, and set $(p) = {\mathfrak p}\,{\ov {\mathfrak p}}$. 
Denote by $\BH_k$ (resp. $\CH_k \simeq \BH_k \otimes \Z_p$) the class 
group (resp. the $p$-class group) of $k$. For any $\Z_p$-extension $K/k$, 
let $K_n \subset K$ be the layer of degree $p^n$ over $k$, and let 
$\CH_{K_n}$ be its $p$-class group. In general $K/k$ is 
non-Galois and its conjugate $\ov K := K^\tau$ is such that 
$\Gal(K \ov K/k) \simeq \Z_p^2$. 
We assume that ${\mathfrak p}$ (resp. ${\ov {\mathfrak p}}$) is ramified 
from some layer $K_e$ (resp. $K_{\ov e}$) of $K$ ($e \geq \ov e \geq 0$),
which excludes two canonical $\Z_p$-extensions defined in 
\S\,\ref{L} as inertia fields of ${\mathfrak p}$ and ${\ov {\mathfrak p}}$ in $\wt k/k$.

\smallskip
See Theorem \ref{6cases} for the parametrization of these 
$K/k$'s and for information on the resulting integers $e$, $\ov e$, 
their equality being false in infinitely many non-Galois cases.

\smallskip
Note that in the split case for $p$ in $k$ and assuming that 
conjecturally $\mu_p(K/k) = 0$, the existence of the layers
$K_e$, $K_{\ov e}$, implies $\lambda_p(K/k) \geq 1$
from Chevalley--Herbrand formula.

\smallskip
Before giving an excerpt of the results, we just 
report the \S\S\,\ref{nonkummercase}, 
\ref{verifcap}, giving an easy way to compute the first layer of any $\Z_p$-extension 
of $k$ (e.g., Program \ref{P7}, for $p = 3$ and $K = k^\acyc$), and the observation of 
capitulations. In Appendix \ref{C}, explicit tables and computations, with {\sc pari/gp} 
programs, are given; the programs may be copied and pasted easily by anyone;
they run with any {\sc pari/gp} package, or, if necessary, using:

\smallskip
\centerline{\cite[\url{https://pari.math.u-bordeaux.fr/gpwasm.html}]{Pari2019}.}

\smallskip
Refer to the above Definition \ref{wtdeltapk} for definitions of
$\delta_p(k)$, $S_k$, $x$:

\medskip
({\bf a}) (Theorem \ref{fq}, Appendix \ref{A}).
Let $\wt \CH_k$ be the logarithmic class group of $k$ 
(isomorphic to $\Gal(H_k^\lc/k^\cyc)$, where $k^\cyc$ is the cyclotomic 
$\Z_p$-extension and $H_k^\lc$ the maximal abelian 
pro-$p$-extension of $k$, locally cyclotomic. 
Then $\order \wt \CH_k = p^{\delta_p(k)} \times \order \CH_k^{S_k}$.

\smallskip
({\bf b}) (Theorem \ref{mainorder}).
The Hasse norm residue symbols $\ds \big(\fffrac{\ov x\,,\,K_n/k}{\mathfrak p}\big)$,
$\big(\fffrac{x\,,\,K_n/k}{{\ov {\mathfrak p}}}\big)$ are of orders 
$p^{\max(0, n - e - \delta_p(k))}$, $p^{\max(0, n - \ov e - \delta_p(k))}$, 
respectively.

\smallskip
({\bf c}) (Theorem \ref{stability}). 
The order of $\wt \CH_k$ is also given by 
$\order (\CH_{K_n}^2 / \CH_{K_n}^1)$, for $n$ large enough, where 
$\big\{\CH_{K_n}^i \big\}_{i \geq 0}$ is the filtration of $\CH_{K_n}$ 
(see Footnote \ref{filtration0}). 
Thus, $\wt \CH_k = 1$ leads to $\order \CH_{K_n} = 
\order \CH_{K_n}^1 = p^{n + v_p(\order \CH_k) - e - \ov e}$, whence 
$\lambda_p(K/k) = 1$, $\mu_p(K/k) = 0$, $\nu_p(K/k) = 
v_p(\order \CH_k) - e - \ov e$.

\smallskip
({\bf d}) (Theorem \ref{goldplus}).
Assume that $\CH_k$ is generated by $\Ccl({\mathfrak p})$, ${\mathfrak p} \mid p$, 
and that ${\mathfrak p}$ and ${\ov {\mathfrak p}}$ totally ramify in $K/k$ 
($e = \ov e = 0$). Then, the relation: ``\,$\order \CH_{K_n} = p^{n + v_p(\order \CH_k)}$ 
for all $n \geq 0$\,'', is equivalent to the condition ``\,$\delta_p(k) = 0$\,''
(whence to the condition $\wt \CH_k = 1$).

\smallskip
We obtain again the following Gold--Sands criterion for $k^\cyc$, 
which has some importance:

\smallskip
({\bf e}) (Theorem \ref{sands}).
If $\Ccl({\mathfrak p}) = 1$, the cyclotomic $\Z_p$-extension $k^\cyc/k$ 
is such that $\lambda_p(k^\cyc/k) \geq 2$ if and only if $\wt \CH_k \ne 1$, 
whence, if and only if ``\,$v_p(\order \CH_k) \geq 1$ or $\delta_p(k) \geq 1$\,''.

\smallskip
({\bf f}) (Theorems \ref{H}, \ref{nongal}).
If the $p$-Hilbert class field $H_k^\pr$ is contained in $K$, if 
$\Ccl({\mathfrak p}) = 1$ and if $\delta_p(k) = 0$, then 
``\,$\mu_p(k^\acyc/k) = 0\ \ \&\ \ p^e \leq \lambda_p(k^\acyc/k) < 2p^e$\,'', 
where $p^e = \order \CH_k$.

\medskip
An essential subject is to describe $H_k^\pr$, the maximal $p$-ramified 
abelian pro-$p$-extension of $k$ and the torsion group $\CT_k := 
\Gal(H_k^\pr/\wt k)$ (see \cite{NQD1986} for its cohomological definition 
and most of our papers for its arithmetic description, as
\cite[III.2\,(c)]{Gra2005}). We use, numerically,
suitable ray class fields,  we characterize $\CT_k$ as the kernel of the 
$\Log_p$-function and obtain an effective method 
of computation of the first layer $K_1$ of any $\Z_p$-extension $K/k$.

\medskip
Many pioneering works dealing with analogues of Gold's criterion consider $k^\cyc$
assuming $\CH_k = 1$. More recent literature succeeds in obtaining similar results
in more general cases, but, to our knowledge, the consideration of non-trivial inertia 
fields $K_e$ and $K_{\ov e}$, does not exist. 
Some results are based on assuming that $k$ has a smooth arithmetic complexity, 
e.g., $k$ is $p$-principal ($\CH_k = 1$), or logarithmically principal ($\wt \CH_k = 1$), 
or $p$-rational ($\CT_k = 1$, see Footnote \ref{prat}), or the filtrations 
$\{\CH_{K_n}^i\}_{i \geq 0}$ are limited to the step $i = 1$ giving $\CH_{K_n}
= \CH_{K_n}^1$ and Iwasawa's formula via the Chevalley--Herbrand formula. 
In Section \ref{complexity} we propose a general definition of the ``smooth 
complexity'' of a cyclic (or pro-cyclic) $p$-extension and show the strong link with 
capitulation of $\CH_k$ in the tower.

\smallskip
The general case ($K \ne k^\cyc$, $\CH_k$ arbitrary, and $\CH_{K_n} = 
\CH_{K_n}^{b_n}$ for the $b_n$'s defining the lengths of the filtrations) 
depends on random $p$-adic numerical circumstances in the 
computation of $\CH_{K_n}^i$ for $i > 2$, but subject to accessible 
probability laws, as done in \cite{KP2022} in neighboring areas. We 
discuss this aspect in \S\,\ref{obstructions} to try to understand the 
nature of the obstructions for large $\lambda_p(K/k)$'s, which clearly 
depends on the groups of units in the $\Z_p$-extensions, and on 
capitulation aspects. 

\smallskip
A precise theoretical result may be found in Grandet--Jaulent \cite[Theorem\,(i-iv), 
p.\,214]{GrJa1985}, only assuming the triviality of $\mu_p(K/k)$, but valid for any 
$\Z_p$-extension of any base-field, and showing that for $n$ large enough:
$$\hbox{$\CH_{K_n} \simeq \big(\bigoplus_{i = 1}^\lambda 
\Z/p^{n+\alpha_i}\Z \big) \oplus \big(\bigoplus_{i = \lambda + 1}^\kappa 
\Z/p^{\alpha_i} \Z\big)$, $\alpha_1, \ldots, \alpha_\lambda \in \Z$, 
$\alpha_{\lambda+1}, \ldots, \alpha_\kappa \in \Z_{> 0}$,} $$ 

\noindent
where $\bigoplus_{i = \lambda + 1}^\kappa 
\Z/p^{\alpha_i} \Z \simeq {\rm Cap}_{K_n}$, the subgroup of $p$-classes 
of $K_n$ which capitulate in $K$, these $\alpha_i$'s being 
independent of $n$. Moreover, let ${\mathfrak T}(K/k) := \ds \tor_{\Z_p}^{} 
\big(\limproj \CH_{K_n} \big)$; then for all $n$ large enough, the norm map 
induces an isomorphism between ${\mathfrak T}(K/k)$ and ${\rm Cap_{K_n}}$.

\subsection{Prerequisites on \texorpdfstring{$p$}{Lg}-ramification theory}

Let $k$ be an imaginary quadratic field and set $g = \Gal(k/\Q) =: 
\langle \tau \rangle$.

\subsubsection{The cyclotomic and anti-cyclotomic 
\texorpdfstring{$\Z_p$}{Lg}-extensions}

Let $p$ be an odd prime number. Let $\wt k$ be the 
compositum of the $\Z_p$-extensions of $k$. 
We denote by $\CH_k$ (resp. $H_k^\nr$) the $p$-class group (resp. the $p$-Hilbert
class field) of $k$, and put $\knr := \wt k \cap H_k^\nr$. Let $\Bcl({\mathfrak a})$ 
(resp. $\Ccl({\mathfrak a})$), be the image of an ideal ${\mathfrak a}$ in the  
class group $\BH_k$ (resp. in the $p$-class group $\CH_k$).

\smallskip
(i) The group $g$ operates on $\Gamma := \Gal(\wt k/k) \simeq \Z_p^2$ 
and we define $\Gamma^+ = \Gamma^{\frac{1+\tau}{2}}$ and
$\Gamma^- = \Gamma^{\frac{1-\tau}{2}}$. 
Thus, $\Gamma = \Gamma^+ \oplus \Gamma^-$ and 
$\Gamma^+ \cap \Gamma^- = 1$, leading to define the two  
{\it fundamental} $\Z_p$-extensions yielding $\wt k$ as direct compositum  
over $k$; they are the only ones to be Galois over $\Q$:

\smallskip
$\bullet$ The cyclotomic $\Z_p$-extension $k^\cyc$ (fixed by $\Gamma^-$), equal to 
the compositum $k \,\Q^\cyc$, where the cyclotomic $\Z_p$-extension $\Q^\cyc$ 
is linearly disjoint from $k$; so $k^\cyc/k$ is totally ramified at $p$.

\smallskip
$\bullet$ The anti-cyclotomic $\Z_p$-extension $k^{\acyc}$ (fixed by $\Gamma^+$) 
is a pro-diedral $\Z_p$-exten\-sion of $\Q$. 

\smallskip
(ii) Since $p \ne 2$, $\tau$ operates on a 
$p$-class, $\Ccl({\mathfrak a})$, of $k$, by $\Ccl({\mathfrak a})^{\,\tau} = 
\Ccl({\mathfrak a})^{\,-1}$, and operates similarly on $\Gal(H_k^\nr/k)$; thus 
$\knr = k^{\acyc} \cap H_k^\nr$, the extension $\knr/k$ is $p$-cyclic, and 
$k^{\acyc}/\knr$ is totally ramified at $p$. 
In the same way, $\tau$ operates by inversion on $\wt \CH_k$, $\CT_k$ 
and $\Gal(H_k^\pr/k^\cyc)$.

\subsubsection{Abelian \texorpdfstring{$p$}{Lg}-ramification over
\texorpdfstring{$k$}{Lg} (split case of \texorpdfstring{$p$}{Lg} in $k$)}

The inertia groups, $I_{\mathfrak p}(H_k^\pr/k)$ and 
$I_{\ov {\mathfrak p}}(H_k^\pr/k)$, of ${\mathfrak p}$ and 
${\ov {\mathfrak p}}$ in $H_k^\pr/k$, are images of the groups of 
principal local units $\CU_{\mathfrak p}$ and $\CU_{\ov {\mathfrak p}}$ 
(isomorphic to $1+p\Z_p$) of the completions $k_{\mathfrak p}$ and 
$k_{\ov {\mathfrak p}}$, respectively, by the local reciprocity maps, 
according to the following first diagram of the maximal $p$-sub-extension 
$k^\ab$ of an abelian closure of $k$, where $H_k^\ta$ is the maximal 
tamely ramified abelian pro-$p$-extension of $k$.

\smallskip
In \cite[III.4.4.1]{Gra2005} is proved that, in this diagram, 
$\Gal(k^\ab/H_k^\pr H_k^\ta)$ is isomorphic to $E_k \otimes \Z_p$, where 
$E_k$ is the group of units of~$k$. In the present case, it is reduced to the group
$\mu_k^{} \otimes \Z_p$ of $p$-roots of unity of $k$; for $p \ne 2$, 
$\mu_k^{} \otimes \Z_p = 1$,
except for $p = 3$ and $k = \Q(\sqrt{-3})$, but the assumption $3$ split in $k$ 
does not hold, and $H_k^\pr H_k^\ta = k^\ab$ with $\Gal(H_k^\pr/H_k^\nr) 
\simeq \Z_p \times \Z_p$, since $\tor_{\Z_p}^{}(\CU_k) = 1$:
\unitlength=0.85cm
\begin{equation}\label{Tpsplit1}
\begin{aligned}
\vbox{\hbox{\hspace{2.6cm}
\begin{picture}(10.0,2.1)
\put(1.6,2.0){\line(1,0){3.5}}
\put(1.6,0.18){\line(1,0){3.9}}
\put(-1.2,0.18){\line(1,0){1.9}}
\put(1.0,0.5){\line(0,1){1.20}}
\put(6.00,0.5){\line(0,1){1.20}}
\put(6.2,1.1){\ft$\CU_k = \CU_{\mathfrak p}
\oplus \CU_{\ov {\mathfrak p}} \simeq \Z_p^2$}
\put(-0.3,1.1){\ft$I_{\mathfrak p}
\oplus I_{\ov {\mathfrak p}}$}
\put(5.2,1.9){$H_k^\pr H_k^\ta=k^\ab$}
\put(0.8,1.9){$H_k^\pr$} 
\put(5.8,0.1){$H_k^\ta$}
\put(0.8,0.1){$H_k^\nr$}
\put(-1.6,0.1){$k$}
\end{picture}}} 
\end{aligned}
\end{equation}
\unitlength=1.0cm

Since $I_p(k^\cyc/k)$ and $I_p(k^\acyc/k)$ 
are isomorphic to $\Z_p$, $H_k^\pr/\wt k$ is unramified, $\wt k/k^\cyc$ and 
$\wt k/k^\acyc$ are unramified (or apply \cite[Lemme 4]{Jau2024a}).                

\smallskip
The second diagram (\cite[III.2.6.1\,(Fig. 2)]{Gra2005}), corresponding 
to $H_k^\pr/k$, admits also some 
simplifications from $\tor_{\Z_p}^{}(\CU_k) = 1$, which implies 
$\CW_k^\bp := \tor_{\Z_p}^{}(\CU_k)/\mu_p(k) = 1$, and $H_k^\pr$ 
coincides with the Bertrandias--Payan field $H_k^\bp$ (maximal abelian
pro-$p$-extension of $k$ in which any cyclic extension is embeddable
in cyclic $p$-extensions of $k$ of arbitrary degree \cite{BP1972}). 
Take care that in more general setting (as in \cite{Jau2019a}), the good object 
may be $\CT_k^\bp$ instead of $\CT_k$. In our context, $\CW_k^\bp \ne 1$
is equivalent to $p = 3$ and $-m \equiv -3 \pmod 9$, which is excluded
(but not in \cite{Gra2026} where all cases are considered):
\unitlength=0.85cm 
\begin{equation}\label{Tpsplit2}
\begin{aligned}
\vbox{\hbox{\hspace{-4.0cm} 
\begin{picture}(10.0,3.2)
\put(6.6,2.46){\line(1,0){1.45}}
\put(6.6,2.52){\line(1,0){1.45}}
\put(9.1,2.46){\line(1,0){1.55}}
\put(9.1,2.52){\line(1,0){1.55}}
\put(3.8,2.50){\line(1,0){1.6}}
\put(3.8,2.2){\tiny${\rm totally\ split}$}
\put(9.25,2.1){\ft$\CW_k^\bp\! = \!1$}
\put(3.85,0.50){\line(1,0){1.45}}
\put(1.7,0.50){\line(1,0){1.45}}
\put(1.3,0.40){$k$}
\bezier{300}(1.38,0.35)(3.5,0.0)(5.6,0.35)
\put(3.2,-0.1){\ft$\CH_k$}
\put(4.35,0.64){\ft$\CH_k{'}$}
\bezier{400}(3.7,2.7)(7.25,3.3)(10.8,2.7)
\put(7.0,3.14){\ft$\CT_k$}
\put(3.50,0.9){\line(0,1){1.25}}
\put(5.7,0.9){\line(0,1){1.25}}
\bezier{350}(6.2,0.5)(8.5,0.6)(10.9,2.3)
\put(8.6,0.75){\ft$\CU_{k}$}
\put(10.85,2.4){$H_k^\pr$}
\put(5.6,2.4){$\wt k H_k^\nr$}
\put(8.15,2.4){${H_k^\bp}$}
\put(6.8,2.15){\ft$\CR_k\! = \!1$}
\put(3.35,2.4){$\wt k$}
\put(5.5,0.4){$H_k^\nr$}
\put(3.35,0.4){$\knr$}
\put(2.25,1.5){\ft $\Z_p \!\times\! \Z_p$}
\end{picture}}} 
\end{aligned}
\end{equation}
\unitlength=1.0cm

The normalized regulator $\CR_k$ is trivial 
since $E_k \otimes \Z_p = 1$, hence $H_k^\pr = \wt k H_k^\nr$.
Thus, $\CT_k$ is isomorphic to a subgroup $\CH_k{'}$ of $\CH_k$
and we have the fundamental formula:
$$\big[\knr : k \big] := \big[\wt k \cap H_k^\nr : k \big] = 
[k^\acyc \cap H_k^\nr : k] = 
\ffrac{\order \CH_k}{\order \CT_k}, $$

\noindent
easily computable with {\sc pari/gp} instructions.
We find again that $H_k^\pr/\wt k$ is unramified; then it is totally split 
at $p$; indeed, since the Gross--Kuz'min conjecture holds for $k$
(finiteness of splitting of the $p$-places in $H_k^\pr/k$), the 
(pro-cyclic) decomposition groups of ${\mathfrak p}$ and ${\ov {\mathfrak p}}$ 
in $H_k^\pr/k^\cyc$ are isomorphic to $\Z_p$ and can not intersect 
$\CT_k$ non-trivially. This case differs from the case $p$ non-split in $k$
for which $p$ is totally ramified in $\wt k/k^\acyc \cap H_k^\nr$ and where
$\Gal(H_k^\lc/k^\cyc) \simeq \CH_k$. 

\smallskip
The extension $\wt k/k^\cyc$ being unramified, the decomposition groups 
of ${\mathfrak p}$ and ${\ov {\mathfrak p}}$ in this extension are of finite index
(this is the purpose of \S\,\ref{split}); similarly, the decomposition groups are 
of finite index in $\Gal(\wt k/k^\acyc)$. These indices depend canonically on 
$p^{\delta_p(k)}$ and of the splitting of $p$ in $k^\acyc$. In \S\,\ref{L}
we will precise where inertia and decomposition groups take place.

\begin{remark}
The general study of $\CA_k := \Gal(H_k^\pr/k)$ and its torsion group
$\CT_k$ is done in our book, and we have the other formula, whatever 
the base-field $k$ and the prime $p$:
$$[\knr : k] = \big(\Log_p(I_k \otimes \Z_p) : \Log_p(P_k \otimes \Z_p) \big), $$

\noindent
where $I_k$ is the group of prime-to-$p$ ideals of $k$ and $P_k$ the sub-group 
of principal ideals; then $\Log_p({\mathfrak a}) := \frac{1}{h} \log_p(\alpha)
\pmod {\Q_p\log_p(E_k)}$ if ${\mathfrak a}^{h} = (\alpha)$, 
$(\alpha) \in P_k \otimes \Z_p$. 
Via the Artin Symbol, $\Log_p$ defines an isomorphism $\Log_p(I_k \otimes \Z_p) 
\simeq \Gal(\wt k/k)$. This will be involved in Section \ref{nonkummercase}.

Set $\CH_k = \langle \Ccl({\mathfrak a}_1) \rangle \oplus \cdots \oplus
\langle \Ccl({\mathfrak a}_r)\rangle$; the degree $[\knr : k]$ only depends 
on the Fermat quotients of the $\alpha_i$'s deduced from the ${\mathfrak a}_i$'s,
and allows statistics on the values of this degree when $k$
is imaginary quadratic, as done in Kundu--Washington \cite{KW2023, KW2024}.
\end{remark}

\subsubsection{General diagram of ramification in 
\texorpdfstring{$H_k^\pr/k$}{Lg}}\label{L}

We introduce the inertia fields $L$ of ${\mathfrak p}$ and $\ov L$ of 
${\ov {\mathfrak p}}$, in $\wt k/k$; so, $L/k$ (resp. $\ov L/k$) is unramified 
at ${\mathfrak p}$ (resp. ${\ov {\mathfrak p}}$). Then, $\ov L \ne L$, 
$L \,\ov L = \wt k$, and $L \cap \ov L = \knr$ (indeed, we have $L \cap \ov L \subset 
\knr$; so, $L\, \knr/L$, being unramified at ${\mathfrak p}$, is trivial, hence 
$\knr \subset L$, $\knr \subset \ov L$).\,\footnote{\,If $e$, $\ov e$, $L$, $\ov L$, 
are non-ambiguous notations associated to ${\mathfrak p}$, ${\ov {\mathfrak p}}$,
it is not the case in a layer $K_n$ of a non-Galois $\Z_p$-extension $K$
since the complex conjugate of an object $X_{K_n}({\mathfrak p})$ of $K_n$ is not 
$X_{K_n}({\ov {\mathfrak p}})$ of $K_n$, but $X_{\ov K_n}({\ov {\mathfrak p}})$.
The simplest example is that of ${\mathfrak P}_n \mid
{\mathfrak p}$ in $K_n$; its image by $\tau$ is an ideal $\ov {\mathfrak P}_n 
\mid {\ov {\mathfrak p}}$ of $\ov K_n$ (cf. Footnote \ref{Kprime}).} 
Since ${\mathfrak p}$ does not ramify in $L/k$,
$L \cap k^\cyc = k$, thus $\wt k$ is the direct compositum $L\,k^\cyc$ over $k$,
$\Gal(L/k) \simeq \Gal(\wt k/k^\cyc) \simeq \Z_p$; thus $L/k$, $\ov L/k$ are 
$\Z_p$-extensions.

\smallskip
The field $H'^\pr_k = k^\acyc H_k^\nr$ is canonical, and 
$\Gal(H'^\pr_k/k^\acyc) \simeq \CT_k$, but $p$ does not 
necessarily totally split in $H'^\pr_k/k^\acyc$. See Remarks
\ref{lognonsplit} and \ref{logsplit} for the definition and properties
of $H_k^\lac$, the anti-cyclotomic analogue of $H_k^\lc$. Of course,
if $p$ does not split in $k$, $H_k^\lac = H'^\pr_k$. 

\smallskip
In the following diagrams, ramification (resp. non-ramification) of 
${\mathfrak p}$, ${\ov {\mathfrak p}}$ is made explicit with indications
${\mathfrak p} \Ram$ (resp. ${\mathfrak p} \Nram$):
\unitlength=1.5cm 
\begin{equation}\label{ramification}
\begin{aligned}
\vbox{\hbox{\hspace{-2.0cm}
\begin{picture}(8.5,5.0)
\put(3.44,4.35){$\wt k$}
\put(3.7,4.45){\line(1,0){3.15}}
\bezier{300}(3.5,4.65)(5.25,5.0)(7.0,4.65)
\put(5.15,4.9){$\CT_k$}
\put(4.0,1.1){\line(1,0){2.8}}
\put(4.7,1.15){$\simeq\!\CT_k$}
\put(6.9,1.05){$H_k^\nr$}
\put(6.95,4.35){$\wt k H_k^\nr \!=\! H_k^\pr$}
\put(1.2,2.26){$k^\cyc$}
\put(5.3,2.42){$k^\acyc$}
\put(1.55,2.5){\line(1,1){1.78}}
\put(1.6,2.3){\line(1,-1){2.3}}
\put(4.0,1.2){\line(1,1){1.2}}
\put(7.35,1.3){\line(1,1){1.1}}
\put(8.45,2.45){$H'^\pr_k$}
\put(5.68,2.5){\line(1,0){2.72}}
\put(6.4,2.3){\tiny${\mathfrak p}, {\ov {\mathfrak p}} \,\Nram$}
\put(6.4,2.3){\tiny${\mathfrak p}, {\ov {\mathfrak p}} \,\Nram$}
\put(7.8,3.65){\tiny${\ov {\mathfrak p}}\,\Nram$}
\put(8.0,3.45){\tiny${\mathfrak p}\,\Nram$}
\put(8.0,1.8){\tiny${\mathfrak p}\,\Ram$}
\put(7.85,1.6){\tiny${\ov {\mathfrak p}}\,\Ram$}
\put(3.6,4.2){\line(1,-3){0.5}}
\put(3.4,4.2){\line(-1,-5){0.32}}
\put(3.0,2.35){$L$}
\put(4.05,2.46){$\ov L$}
\put(3.1,2.3){\line(1,-2){0.47}}
\put(4.1,2.4){\line(-1,-3){0.34}}
\put(3.62,1.05){$\knr$}
\put(3.7,1.0){\line(1,-2){0.44}}
\put(4.05,-0.1){$k$}
\put(3.7,4.3){\line(1,-1){1.62}}
\put(7.15,4.2){\line(1,-1){1.4}}
\put(4.8,4.24){\tiny${\mathfrak p}, {\ov {\mathfrak p}}\ {\rm totally\ split}$}
\put(4.7,0.9){\tiny${\mathfrak p}, {\ov {\mathfrak p}} \,\Nram$}
\put(3.75,0.55){\tiny${\mathfrak p}, {\ov {\mathfrak p}} \,\Nram$}
\put(2.0,3.7){\tiny${\mathfrak p}\,\Nram$}
\put(1.85,3.5){\tiny${\ov {\mathfrak p}}\,\Nram$}
\put(3.96,3.2){\tiny${\mathfrak p}\,\Nram$}
\put(4.06,3.0){\tiny${\ov {\mathfrak p}}\,\Ram$}
\put(2.58,3.2){\tiny${\mathfrak p}\,\Ram$}
\put(2.58,3.0){\tiny${\ov {\mathfrak p}}\,\Nram$}
\put(1.8,1.4){\tiny${\mathfrak p}\,\Ram$}
\put(2.05,1.2){\tiny${\ov {\mathfrak p}}\,\Ram$}
\put(4.35,3.7){\tiny${\ov {\mathfrak p}}\,\Nram$}
\put(4.59,3.5){\tiny${\mathfrak p}\,\Nram$}
\put(4.65,1.6){\tiny${\ov {\mathfrak p}}\,\Ram$}
\put(4.85,1.8){\tiny${\mathfrak p}\,\Ram$}
\put(3.75,1.8){\tiny${\ov {\mathfrak p}}\,\Nram$}
\put(3.85,2.0){\tiny${\mathfrak p}\,\Ram$}
\put(2.9,1.8){\tiny${\ov {\mathfrak p}}\,\Ram$}
\put(2.85,2.0){\tiny${\mathfrak p}\,\Nram$}
\bezier{300}(4.35,-0.1)(6.0,0.0)(6.9,0.95)
\put(6.1,0.2){$\CH_k$}
\bezier{300}(3.9,-0.1)(1.8,0.2)(1.35,2.2)
\put(1.85,0.45){$\Z_p$}
\bezier{300}(4.05,0.0)(2.9,1.1)(3.0,2.3)
\put(2.82,1.3){\ft$\Z_p$}
\bezier{300}(1.35,2.5)(1.5,4.2)(3.35,4.45)
\put(1.5,3.8){$\Z_p$}
\bezier{300}(3.75,4.4)(5.7,4.1)(5.45,2.7)
\put(5.25,3.8){$\Z_p$}
\bezier{300}(4.3,0.0)(6.2,0.7)(5.45,2.3)
\put(5.4,0.45){$\Z_p$}
\end{picture}}} 
\end{aligned}
\end{equation}

\subsubsection{General diagram including a \texorpdfstring{$\Z_p$}{Lg}-extension 
\texorpdfstring{$K/k$, $K := \bigcup_{n \geq 0} K_n$}{Lg}}

In this paper we always assume $K \ne L, \ov L$ since in these $\Z_p$-extensions, 
only a single $p$-place ramifies, thus giving rather simpler properties (see \cite{FK2002} 
for numerical experiments); this is equivalent to the finiteness of $K \cap L/k$ and  
$K \cap \ov L/k$, characterizing the ramification of the two $p$-places
(see Theorem \ref{6cases} for details; the ramification indices strongly depend
on the intersections of $K$ with $k^\cyc$ and $k^\acyc$, and of course with $L$
and $\ov L$):
\unitlength=1.4cm 
\begin{equation}\label{maindiagram}
\begin{aligned}
\vbox{\hbox{\hspace{-1.8cm}
\begin{picture}(8.0,7.2)
\put(4.4,1.8){\line(1,0){2.5}}
\put(5.45,1.85){$\simeq\!\CT_k$}
\put(1.6,0.46){\line(1,0){1.46}}
\put(3.45,6.40){$\wt k$}
\put(3.8,6.45){\line(1,0){3.1}}
\bezier{300}(3.6,6.7)(5.3,7.0)(7.0,6.7)
\put(5.15,6.95){$\CT_k$}
\put(3.50,4.15){\line(0,1){2.18}}
\put(3.50,2.0){\line(0,1){1.75}}
\put(3.50,0.65){\line(0,1){1.0}}
\put(3.35,3.86){$K$}
\put(3.15,1.73){$K\!\!\cap\!\!L$}
\put(3.15,0.40){$K\!\!\cap \!\!\ov L\!=\! K^\nr$}
\put(1.4,0.42){$k$}
\bezier{300}(1.55,0.6)(1.9,1.8)(3.15,1.8)
\bezier{300}(1.6,0.52)(2.3,1.3)(4.15,1.75)
\put(2.48,1.3){\ft $p^{\wt e}$}
\put(1.74,1.4){\ft $p^e$}
\put(7.0,1.7){$H_k^\nr$}
\put(6.95,6.35){$\wt k H_k^\nr \!=\! H_k^\pr$}
\put(4.8,6.24){\tiny${\mathfrak p}, {\ov {\mathfrak p}}\ {\rm totally\ split}$}
\put(1.3,4.35){$L$}
\put(5.5,4.4){$\ov L$}
\put(1.55,4.5){\line(1,1){1.78}}
\put(1.6,4.3){\line(1,-1){1.72}}
\put(3.65,2.3){\line(1,-1){0.4}}
\put(3.7,0.78){\line(1,2){0.4}}
\put(4.33,2.1){\line(1,2){0.6}}
\put(5.1,3.55){\line(1,2){0.4}}
\put(4.15,1.7){$\knr$}
\put(3.7,6.3){\line(1,-1){1.65}}
\put(1.54,4.25){\line(3,-4){1.7}}
\put(2.1,5.7){\tiny${\mathfrak p}\,\Ram$}
\put(1.85,5.5){\tiny${\ov {\mathfrak p}}\,\Nram$}
\put(4.0,4.5){\tiny${\mathfrak p}, {\ov {\mathfrak p}}$}
\put(4.0,4.3){\tiny$\Nram$}
\put(3.0,5.2){\tiny${\mathfrak p}, {\ov {\mathfrak p}}$}
\put(3.0,5.0){\tiny$\Nram$}
\put(2.25,3.8){\tiny${\mathfrak p}\,\Nram$}
\put(2.5,3.55){\tiny${\ov {\mathfrak p}}\,\Ram$}
\put(4.4,5.7){\tiny${\ov {\mathfrak p}}\,\Ram$}
\put(4.65,5.5){\tiny${\mathfrak p}\,\Nram$}
\put(5.3,3.6){\tiny${\ov {\mathfrak p}}\,\Nram$}
\put(5.4,3.8){\tiny${\mathfrak p}\,\Ram$}
\put(2.3,2.2){\tiny${\mathfrak p}\,\Nram$}
\put(2.1,2.4){\tiny${\ov {\mathfrak p}}\,\Ram$}
\put(3.54,3.0){\tiny${\mathfrak p}, {\ov {\mathfrak p}}$}
\put(3.54,2.8){\tiny$\Ram$}
\put(2.0,0.54){\tiny${\mathfrak p}, {\ov {\mathfrak p}}\,\Nram$}
\put(3.96,1.15){\tiny${\mathfrak p}, {\ov {\mathfrak p}}$}
\put(3.96,0.95){\tiny$\Nram$}
\put(2.8,1.1){\tiny${\mathfrak p}\,\Nram$}
\put(2.8,0.9){\tiny${\ov {\mathfrak p}}\,\Ram$}
\put(5.3,1.6){\tiny${\mathfrak p}, {\ov {\mathfrak p}}\,\Nram$}
\put(2.3,0.24){\tiny$p^{\ov e}$}
\bezier{400}(1.5,0.35)(4.0,-0.5)(7.1,1.56)
\put(5.2,0.35){$\CH_k$}
\put(3.6,6.3){\line(1,-2){1.8}}
\put(7.2,6.3){\line(1,-2){1.7}}
\put(7.5,1.76){\line(3,2){1.05}}
\put(8.6,2.54){$H'^\pr_k$}
\put(5.8,2.6){\line(1,0){2.7}}
\put(6.65,2.7){\tiny${\mathfrak p}, {\ov {\mathfrak p}}\,\Nram$}
\put(7.3,4.4){\tiny${\ov {\mathfrak p}}\,\Nram$}
\put(7.5,4.2){\tiny${\mathfrak p}\,\Nram$}
\put(8.05,2.0){\tiny${\mathfrak p}\,{\ov {\mathfrak p}} \,\Ram$}
\put(5.4,2.52){$k^\acyc$}
\put(4.6,2.0){\line(3,2){0.75}}
\put(5.,2.15){\tiny${\mathfrak p},{\ov {\mathfrak p}}\,\Ram$}
\end{picture}}} 
\end{aligned}
\end{equation}
\unitlength=1.0cm

Suppose that $K \cap \ov L \subseteq K \cap L$.  
Since $L \cap \ov L = \knr$, $K \cap L \cap \ov L = K \cap \ov L = 
K \cap \knr =: K^\nr$ is a subfield of $k^\acyc$. 
Put $[K \cap \ov L : k] = p^{\ov e}$, $[K \cap L : k] = p^e$, $e \geq \ov e$
(whence $K \cap \ov L = K_{\ov e}$ and $K \cap L = K_e$, $\ov e \leq e$). 
Since $H_k^\pr$ is the direct compositum of $H_k^\nr$ and $\wt k$ 
over $\knr$, we have:
$$\order \CH_k = [\knr  : k] \times \order \CT_k
= p^{\ov e} \times [\knr : K^\nr] \times \order \CT_k = p^{\wt e}
\times \order \CT_k. $$
We refer to Appendix \ref{B} to justify this diagram, about
the description of the ramification of ${\mathfrak p}$ and 
${\ov {\mathfrak p}}$ (i.e. the computation of the integers $e$, 
$\ov e$ from the parametrization of $\Gal(\wt k/K)$).

\section{The logarithmic class group \texorpdfstring{$\wt \CH_k$}{Lg}}
\label{clg}

Recall, from Section \ref{why}, that a suitable interpretation is
$\wt \CH_k \simeq \Gal(H_k^\lc/k^\cyc)$, where $H_k^\lc$ is the maximal 
abelian pro-$p$-extension of $k$, locally cyclotomic. For definitions and 
properties of $\wt \CH_k$, as group of classes of divisors,
see Appendix \ref{A} and Jaulent's papers of the 
bibliography.\,\footnote{\,\label{wtH=H}For numerical computations; the {\sc pari/gp} 
instruction ${\sf bnflog(k,p)}$ gives the vector 
${\sf \big[\wt \CH_k,\, \langle \wt \Ccl(S_k) \rangle,\, \CH_k^{S_k}}\big ]$, 
where $\langle \wt \Ccl(S_k) \rangle$ is the subgroup of $\wt \CH_k$ generated 
by the logarithmic classes of $S_k$, and $\CH_k^{S_k} = 
\CH_k/\langle \Ccl(S_k)\rangle$. These invariants are linked by the canonical 
exact sequence $0 \to \wt \Ccl(S_k) \too \wt \CH_k \too \CH_k^{S_k} \to 1$
allowing capitulations comparison between $\wt \CH_k$ and $\CH_k$; if
$\wt \Ccl(S_k) = \Ccl(S_k) = 1$, then $\wt \CH_k = \CH_k$, which is 
satisfied for instance when $p$ does not split in $k$ (real or imaginary).
See also \cite{DJPS2005} and \cite{BJ2016}.
In the imaginary split case, $\wt \Ccl(S_k)$, $\Ccl(S_k)$ are independent; thus 
$\order \langle \wt \Ccl(S_k) \rangle = p^{\delta_p(k)}$ from Relation \eqref{firstclog}.}

\subsection{A characteristic property of \texorpdfstring{$\wt \CH_k$}{Lg} in abelian 
\texorpdfstring{$p$}{Lg}-ramification}

A criterion using $\wt \CH_k$ in the case of totally $p$-adic imaginary quadratic 
fields is the following. Denote by $\CC_{\,\wt k}$ the $p$-class group of 
$\wt k$, defined as the projective limit, for the norms, of the $p$-class groups 
$\CH_F$, for families of finite extensions $F/k$ such that $\wt k = \cup F$. In 
\cite[Proposition 12]{Jau2024a}, a general result claims that 
$\CC_{\,\wt k} = 1$ if and only if $\wt \CH_k = 1$. Assume that 
this property is satisfied; this implies in particular that for $k^\cyc$, 
the compositum of $\wt k$ with the Hilbert class field of the $n^{\rm th}$ 
layer $k_n^\cyc$ of $k^\cyc$ is included in $\wt k$, for all $n \geq 0$.
Since the extension $\wt k/k^\cyc$ is unramified and pro-cyclic
of Galois group isomorphic to $\Z_p$, this proves that 
$\lambda_p(k^\cyc/k) = 1$ and $\mu_p(k^\cyc/k) = 0$. 

\subsection{Decomposition of the \texorpdfstring{$p$}{Lg}-places
in \texorpdfstring{$H_k^\pr/k$}{Lg}} \label{delta239}

The following diagram introduces $H_k^\lc$ for totally $p$-adic imaginary 
quadratic fields $k = \Q(\sqrt{-m})$.\label{split}
In the unramified abelian extension $H_k^\pr/k^\cyc$ the decomposition groups 
$D_{\mathfrak p}$, $D_{\ov {\mathfrak p}}$ are pro-cyclic groups fixing $M$, 
$\ov M$, respectively, such that $M \cap \ov M = H_k^\lc$, $M \ov M = F$. 
Necessarily, we have $\Gal(M/\wt k \cap H_k^\lc) \simeq \Gal(\ov M/\wt k 
\cap H_k^\lc) \simeq \CT_k$, to get total splitting in $H_k^\pr/ \wt k$:
\unitlength=0.85cm 
\begin{equation}\label{hlc}
\begin{aligned}
\vbox{\hbox{\hspace{-1.8cm} 
\begin{picture}(10.0,6.55)
\put(3.8,5.9){\line(1,0){4.3}}
\put(3.95,3.6){\line(1,0){4.3}}
\put(4.2,1.0){\line(1,0){1.25}}
\put(3.15,3.5){$\wt k \cap\! F$}
\put(8.35,3.5){$F$}
\put(5.65,6.46){\ft$\CT_k$}
\bezier{350}(3.8,6.2)(6.0,6.45)(8.2,6.2)
\bezier{350}(5.9,2.6)(6.2,4.8)(8.3,5.8)
\bezier{350}(8.35,2.2)(10.7,3.2)(8.6,5.75) 
\put(5.6,4.6){\ft$D_{\ov {\mathfrak p}}\!\simeq \! \Z_p$}
\put(9.65,3.7){\ft$D_{\mathfrak p}\!\simeq \! \Z_p$}
\put(3.50,1.2){\line(0,1){2.25}}
\put(3.50,3.8){\line(0,1){1.95}}
\put(8.45,3.8){\line(0,1){1.95}}
\put(7.6,4.78){\tiny$p$-{\rm inert}}
\put(6.1,1.05){\line(2,1){2.0}}
\put(8.0,2.1){\ft$M$}
\put(5.75,2.25){\ft$\ov M$}
\put(6.1,2.4){\line(2,1){2.1}}
\put(5.65,1.2){\line(1,5){0.2}}
\put(8.2,2.4){\line(1,5){0.2}}
\put(8.3,5.9){$H_k^\pr\!=\!\wt k F$}
\put(3.35,5.9){$\wt k$}
\put(1.4,0.9){$k^\cyc$}
\put(1.8,1.0){\line(1,0){1.25}}
\put(2.1,0.8){\tiny$p$-{\rm split}}
\bezier{200}(1.5,0.8)(3.57,0.2)(5.65,0.8)
\put(3.2,0.10){\ft$\simeq \! \wt \CH_k$} 
\put(5.5,0.9){$H_k^\lc$}
\put(3.15,0.9){$\wt k \!\cap\! H_k^\lc$}
\put(2.8,4.78){\ft$\Z_p$}
\bezier{200}(3.35,5.8)(3.0,4.9)(3.35,4.0)
\bezier{400}(3.2,5.8)(1.75,3.5)(3.3,1.3)
\bezier{400}(1.34,1.1)(0.75,4.5)(3.3,6.0)
\put(0.8,2.8){\ft$\Z_p$}
\put(3.6,4.78){\tiny$p$-{\rm inert}}
\put(5.55,5.65){\tiny$p$-{\rm split}}
\put(4.45,0.8){\tiny$p$-{\rm split}}
\put(4.6,3.75){\tiny$p$-{\rm split}}
\put(6.2,2.85){\tiny${\ov {\mathfrak p}}$-{\rm inert}}
\put(6.2,3.07){\tiny${{\mathfrak p}}$-{\rm split}}
\put(8.3,2.65){\tiny${\mathfrak p}$-{\rm inert}}
\put(8.3,2.87){\tiny${\ov {\mathfrak p}}$-{\rm split}}
\put(5.0,1.77){\tiny${\ov {\mathfrak p}}$-{\rm split}}
\put(5.0,1.55){\tiny${{\mathfrak p}}$-{\rm inert}}
\put(7.0,1.32){\tiny${\mathfrak p}$-{\rm split}}
\put(7.0,1.1){\tiny${\ov {\mathfrak p}}$-{\rm inert}}
\put(2.9,2.8){\tiny{\rm finite}}
\put(2.9,2.6){\tiny{\rm extension}}
\put(3.6,2.2){\tiny$p$-{\rm inert}}
\put(2.1,3.55){\ft$\Z_p$}
\end{picture}}} 
\end{aligned}
\end{equation}
\unitlength=1.0cm

Numerical examples, given by the {\sc pari/gp}  
Program \ref{P1}, may illustrate the above diagram; they
show that $\CT_k$ (in ${\sf T_k}$) may be trivial, contrary to $\CH_k$
(in ${\sf H_k}$) and to the logarithmic class group  $\wt \CH_k$ 
(in ${\sf Clogk[1]}$).
When $\CT_k = 1$ (i.e. $k$ is $p$-rational), $H_k^\lc \subset \wt k$,
$\wt \CH_k$ is cyclic of order $p^{\wt \delta_p(k)} = 
p_{}^{\delta_p(k)+v_p(\hk) - v_p(\hp)}$ 
(Definition \ref{wtdeltapk}); 
moreover, the $p$-Hilbert class field $H_k^\nr$ is contained in 
$k^\acyc$, and $\CH_k$ is cyclic. Illustrations of $p$-rationality, 
with non-trivial and non-necessarily isomorphic $\wt \CH_k$ and $\CH_k$, 
are (with $\delta_p(k)$, $\wt \delta_p(k)$ in ${\sf d(x)}$, ${\sf d(X)}$, 
respectively):

\ft\begin{verbatim}
m=239   p=3  Clogk=[[3],[],[3]]    Tk=[]  Hk=[3]   d(x)=0 d(X)=1 
m=7727  p=3  Clogk=[[3],[],[3]]    Tk=[]  Hk=[81]  d(x)=0 d(X)=1
m=14207 p=3  Clogk=[[9],[],[9]]    Tk=[]  Hk=[81]  d(x)=0 d(X)=2
m=1238  p=3  Clogk=[[9],[3],[3]]   Tk=[]  Hk=[3]   d(x)=1 d(X)=2 
m=2239  p=5  Clogk=[[25],[5],[5]]  Tk=[]  Hk=[5]   d(x)=1 d(X)=2 
\end{verbatim}\ns
 
\begin{remark}
The structures of the invariants $\wt \CH_k$, $\CT_k$ and $\CH_k$ are most often
obvious from the data; but some examples are interesting as the following one:

\ft\begin{verbatim}
m=78731 p=3  Clogk=[[6561,3],[6561],[3]] Tk=[27]  Hk=[27]  d(x)=8 d(X)=9
\end{verbatim}\ns

\noindent
for which $\Bcl({\mathfrak p})$ is of order $9$; surprisingly, ${\mathfrak p}^9 = 
\frac{1}{2}(1 + \sqrt{-78731})$, which is exceptional since, in general, coefficients 
may have thousands of digits. Since $\wt \CH_k \simeq \Z/3^8\Z \times \Z/3\Z$, 
$H_k^\lc$ can not be contained in $\wt k$; so $[H_k^\lc : \wt k \cap H_k^\lc] =: 
3^r$, $r \geq 1$, and $\wt \CH_k/\Gal(H_k^\lc / \wt k \cap H_k^\lc)$ must 
be cyclic of order $3^{9-r}$. Two cases may occur:

\smallskip
$\bullet$ $H_k^\pr = \wt k H_k^\lc$. In this case, $r = 3$, $M = \ov M = H_k^\lc = F$,
and $\Gal(\wt k \cap H_k^\lc / k^\cyc)$ must be a cyclic quotient of order 
$3^6$ of $\Z/3^8\Z \times \Z/3\Z =: \langle a, b \rangle$
($a$ of order $3^8$, $b$ of order~$3$); the unique solution is the 
quotient by $\langle a^{3^5}\! \cdot b \rangle$ (of order~$3^3$), 
giving a cyclic group of order $3^6$.

\smallskip
$\bullet$ $[H_k^\pr : \wt k H_k^\lc] \geq 3$. In this case, $r \leq 2$, and $3$ 
can not be totally inert in $H_k^\pr/H_k^\lc$; so, this suggests that the 
two  decomposition groups of ${\mathfrak p}$ and ${\ov {\mathfrak p}}$ do not 
coincide in $\Gal(H_k^\pr/H_k^\lc)$, but are of index a suitable power of $3$ with
cyclicity of $M$ and $\ov M$ over $\wt k \cap H_k^\lc$ since $\CT_k \simeq \Z/27\Z$
(see the diagram where all possibilities are represented). From the forthcoming 
Theorem \ref{fq} and numerical results (e.g., $\Ccl(S_k) \simeq \Z/9\Z$ in 
$\Gal(H_k^\nr/k)$ from projections of $\Gal(\ov M/H_k^\lc)$ and $\Gal(M/H_k^\lc)$), 
one deduces $r = 1$, $[\wt k \cap H_k^\lc : k^\cyc] = 3^8$.                                                     

\smallskip
This situation does exist, a priori, for all $p$; for instance with $p = 5$ one gets the case:

\ft\begin{verbatim}
m=30911   p=5   Clogk=[[125,5],[125],[5]]   Tk=[25]   Hk=[25]
\end{verbatim}\ns

Even if most examples give either cyclic $\wt \CH_k$ or else bi-cyclic ones, we have 
found some examples of rank $3$, restricting the search with $p = 3$; of course, 
this implies that $\CT_k$ is at least of rank $2$. Numerical results are given by 
Program \ref{P2}.
\end{remark}

\subsection{Order of \texorpdfstring{$\wt \CH_k$}{Lg} -- Fermat quotients of the
\texorpdfstring{$S_k$}{Lg}-units \texorpdfstring{$\ov x$, $\ov X$}{Lg}}

Let's assume $p \geq 3$, split in an {\it arbitrary imaginary quadratic field} 
$k = \Q(\sqrt{-m})$.
Our purpose was to give a class field theory computation (from 
a method we have used in \cite{Gra2017b, Gra2019a, Gra2019b}) of the order 
of the Hasse norm residue symbols $\big(\fffrac{x\,,\,K_n/k}{\mathfrak p} \big)$,
$\big(\fffrac{x\,,\,K_n/k}{\ov {\mathfrak p}} \big)$ for any $\Z_p$-extension 
$K = \bigcup_n K_n$ of $k$, where the $S_k$-unit $x$ is the generator of 
${\mathfrak p}^{\hp}$, where $\hp$ is the 
common order of $\Bcl({\mathfrak p})$ and $\Bcl({\ov {\mathfrak p}})$ in 
$\BH_k$.\footnote{\label{note} The global Hasse norm residue symbols are
defined from the local ones, given by the local reciprocity map of local class field
theory; the link between Artin reciprocity law was made by
Hasse as explained by Peter Roquette (2006) in the summary of a lecture:

{\it In 1927 Artin proved his general reciprocity law which admitted a completely new perspective on class field theory. Five years later, in 1932, Hasse succeeded to give a proof of Artin's law based on a local-global principle; this paved the way to various generalizations which are investigated today. We shall report on the development in the years between 1927 and 1932, from Artin's law to Hasse's result. Our main source are the letters which had been exchanged between Hasse and Artin in that time period, mirroring the excitement which prevailed among our protagonists. Emmy Noether and Richard Brauer too were involved in this development.}

The interest of Artin symbols is that they are products of familiar Frobenius automorphisms.}

\smallskip
We show, Section \ref{hasse}, that the order of these 
symbols especially depends on the integer $\delta_p(k)$ 
which intervenes in the higher-rank Chevalley--Herbrand formulas, in direction 
of a Gold criterion. We have observed a remarkable numerical coincidence:
more precisely, consider, instead of $x$, the $S_k$-unit $X$ defined by 
${\mathfrak p}^{\hk} =: (X)$; then, from $X = x^{\hk/\hp}$:
$$\wt \delta_p(k) := v_{\mathfrak p}
\big (\fffrac{\ov X^{\,p-1} \!-1}{p}\big) = v_{\mathfrak p}
\big (\fffrac{\ov x^{\,p-1} \!-1}{p}\big)
\! +\! \big[v_p(\hk) - v_p(\hp) \big] = 
\delta_p(k)\! +\! \big[v_p(\hk) - v_p(\hp) \big]$$

\noindent
is the valuation of the Fermat quotient of $\ov X$ (Definition \ref{wtdeltapk}). 
Then $p^{\wt \delta_p(k)}$ is quite simply the order of the logarithmic class 
group $\wt \CH_k$ for totally $p$-adic imaginary quadratic fields.
Using the techniques of logarithmic class field theory, Jaulent proves a 
more general framework where this property holds for semi-simple totally 
$p$-adic imaginary abelian number fields (see Appendix \ref{A}, 
Theorem \ref{JFJ}, Corollary \ref{qc}), developed in the direction of a 
Gold version about $\wt \CH_k$ in \cite[Th\'eor\`emes 2, 4]{Jau2024b}:

\begin{theorem}\label{fq}
Let $k$ be an imaginary quadratic field of class number $\hk$, 
and let $p \geq 3$ be a prime number split in $k$ into $(p) = {\mathfrak p}
{\ov {\mathfrak p}}$. 
Let's define $\wt \delta_p(k) := \delta_p(k) + \big[v_p(\hk) 
- v_p(\hp) \big]$, where $\hp \mid \hk$ is the 
common order of $\Bcl({\mathfrak p})$ and $\Bcl({\ov {\mathfrak p}})$ 
in $\BH_k$. Then:
$$\order \wt \CH_k = p^{\wt \delta_p(k)} = p^{\delta_p(k)} \times 
\ffrac{\order \CH_k}{\order \langle\,\Ccl({\mathfrak p})\,\rangle} =
p^{\delta_p(k)} \times \order \CH_k^{S_k}. $$

So, $\wt \CH_k = 1$ if and only if  $\delta_p(k) = 0$ and
$\CH_k = \langle\,\Ccl({\mathfrak p})\,\rangle$. 
\end{theorem}

\begin{corollary}
(i) For non-cyclic $\CH_k$'s, necessarily $\CH_k^{S_k} \ne 1$; then the 
logarithmic class group of $k$ is non-trivial.

\smallskip
(ii) If $K/k$ is a $\Z_p$-extension (necessarily distinct from $k^\cyc/k$) 
and if there exists a layer $K_s$, $s \geq 1$, in which $p$ totally split, 
then $\wt \CH_k \ne 1$ since $[H_k^\lc : k^\cyc ] \geq p^s$.
\end{corollary}

\begin{remark}\label{borel-cantelli}
For $p$ split in $k$, $\order \wt \CH_k$ is given by means of the valuation 
of the Fermat quotient of an algebraic number,
but depending on $p$; indeed, $\fffrac{1}{p} (\ov x^{\,p-1} - 1)$ is not 
a ``standard'' Fermat's quotient, compared with $\fffrac{1}{p} (a^{p-1} - 1)$, 
for the fixed prime-to-$p$ algebraic number $a$. 

\smallskip
Thus, at first glance, it seems difficult to apply the $p$-adic 
heuristics worked out in \cite[Section~8]{Gra2016} and to extend, to totally real 
number fields $k$, the conjecture \cite[Conjecture 11]{Jau2017} stated for the 
real quadratic case. In a numerical viewpoint, we see that for $k = \Q(\sqrt{-3})$, 
one finds the following large list of primes $p$, such that $\delta_p(k) \ne 0$:
$p \in$ $\{$$13$, $181$, $2521$, $76543$, $489061$, $6811741$, $1321442641$, 
$18405321661$$\}$, but nothing else after three days of calculations. 
For $k = \Q(\sqrt{-5})$, one obtains the unique solution $p = 5881$, up 
to $10^9$. The case of $ \Q(\sqrt{-47})$, also giving many small solutions 
$p \in$ $\{$$3$, $17$, $157$, $1193$, $1493$, $1511$, $19564087$$\}$, 
doesn't give more up to $10^9$ (see Program \ref {P5}).

\smallskip
So this raises the question of the finiteness or not of the set of solutions $p$.
If $x = u+v \sqrt{-m}$ in $k = \Q(\sqrt{-m})$, then $\ov x = u-v\sqrt{-m} 
\equiv 2u \pmod {{\mathfrak p}^{\hp}}$, and $\delta_p(k)$ 
depends on the $p$-valuation of the Fermat quotient $q = \ffrac{a^{p-1} - 1}{p}$ 
of $a = 2u$, when $\hp \geq 2$ ($\hp = 1$ reduces, after some 
elementary computations, to the $p$-valuation of $(2u)^2 q +1 = a^2 q+1$).

\smallskip
Such a non-zero valuation is in general related to probability $\frac{1}{p}$, the 
Borel--Cantelli heuristic giving, a priori, infiniteness of solutions $p$; but the trace 
$2u$ is subject to the strong relationship $(u+v \sqrt{-m}) = 
{\mathfrak p}^{\hp}$, 
which does not define uniformly random coefficients contrary to the probability 
draw of a solution $(U_0, V_0)$ of the congruence $U+V \sqrt{-m} \equiv 0 \pmod 
{{\mathfrak p}^{\delta_p(k)+1}}$ giving the same Fermat quotient (among 
$p^{\delta_p(k)+1}$ ones, characterized by $U_0$ since $\sqrt{-m} \in \Z_p^\times$). 
So we may think that the corresponding probability of $\delta_p(k) \ne 0$ is 
much less than $\frac{1}{p}$, giving infiniteness; in other
words, $(u+v \sqrt{-m}) = {\mathfrak p}^{\hp}$ is a global fact,
while $U+V \sqrt{-m} \equiv 0 \pmod {{\mathfrak p}^{\delta_p(k)+1}}$ is 
a local one without any mystery.

\smallskip
Nevertheless, we suggest the following conjecture, for any number field $k$;
this is coherent with our similar conjectures about the $\CT_k(p)$'s 
(\cite[Conjecture 8.11]{Gra2016},\cite{Gra2019a}):
\end{remark}

\begin{conjecture}\label{pfinitude}
For any fixed number field $k$, satisfying for all $p$ the Leopoldt and Gross--Kuz'min 
conjectures, the set of primes $p$ such that $\wt \CH_k(p) \ne 1$ is finite.
In other words, the invariants $\CH_k(p)$, $\wt \CH_k(p)$, $\CT_k(p)$, would define 
finite global invariants of $k$.
\end{conjecture}

If Leopoldt and Gross--Kuz'min conjectures are of transcendental nature, the 
existence of the previous global invariants (only known for class groups, from a
non-algebraic proof) are much less accessible and perhaps are ``folk conjectures''; 
but computational experiments suggest this, even if it is non-obvious to characterize 
many $(k, p)$ with trivial invariants (e.g., for the $p$-rationality, regarding the
$abc$-conjecture, cf. \cite{MR2019a, MR2019b}).

\section{Hasse's norm residue symbols in 
\texorpdfstring{$\wt k/k$}{Lg}} \label{hasse}

Let $K_n$ be the $n$th layer of a $\Z_p$-extension $K/k$. For $a \in k^\times$ and 
for any unramified prime ideal ${\mathfrak q}$, the Hasse norm residue symbol 
$\big(\fffrac{a\,,\,K_n/k}{{\mathfrak q}} \big)$ is given by the Artin symbol 
$\big(\fffrac{K_n/k}{{\mathfrak q}} \big)^{\!v_{\mathfrak q}(a)}$, power of a
Frobenius automorphism; but we intend to compute the order of the less 
obvious symbol $\big(\fffrac{x\,,\,K_n/k}{\mathfrak p} \big)$, for 
the $S_k$-unit $x$ given in Definition \ref{wtdeltapk}, when $k = \Q(\sqrt{-m})$
with $p$ split in $k$ (see Footnote \ref{note} for some history).

\subsection{Expression of \texorpdfstring{$\big(\fffrac{x\,,\,{\wt k}/k}
{\mathfrak p} \big)$ via an Artin symbol}{Lg}}

Recall a method to compute $\big(\fffrac{x\,,\,{\wt k}/k}
{\mathfrak p} \big)$, for $p$ split into $(p) = {\mathfrak p} {\ov {\mathfrak p}}$,
when $x$ is the fundamental ${\mathfrak p}$-unit of $k$.
Put ${\mathfrak p}^{\hp} =: (x)$ for the order 
$\hp$ of $\Bcl({\mathfrak p})$. 
Of course, $\big(\fffrac{\bullet\,,\,{\wt k}/k}{\bullet} \big)$ will be considered
as the pro-limit for $N \to \infty$ of $\big(\fffrac{\bullet\,,\,{F_N}/k}{\bullet} \big)$, 
where $F_N$ is for instance the intersection of $\wt k$ with the ray class field
$k(p^N)$ of modulus $(p^N)$.

\smallskip
Let $y^{}_{N,\mathfrak p} \in k^\times$ (called a ${\mathfrak p}$-associate of $x$) 
satisfying the following conditions:

\smallskip
\quad (i) $y^{}_{N,\mathfrak p} x^{-1} \equiv 1 \pmod {{\mathfrak p}^N}$,

\smallskip
\quad (ii) $y^{}_{N,\mathfrak p} \equiv 1 \pmod {{\ov {\mathfrak p}}^N}$.

\smallskip
For $N$ large enough, 
$v_{\mathfrak p}(y^{}_{N,\mathfrak p}) = v_{\mathfrak p}^{}(x) = 
\hp$. Product Formula for $y^{}_{N,\mathfrak p}$ yields:
$$\big(\fffrac{y^{}_{N,\mathfrak p}\,,\,F_N/k}{\mathfrak p} \big) = 
\hbox{$\prd_{{\mathfrak q} \,,\, {\mathfrak q} \,\ne\, {\mathfrak p}}$}
\big (\fffrac{y^{}_{N,\mathfrak p}\,,\, F_N/k}{{\mathfrak q}} \big)^{-1}, $$

\noindent  
and since $\big( \fffrac {x\,,\, F_N/k}{\mathfrak p} \big) = 
\big(\fffrac {y^{}_{N,\mathfrak p}\,,\, F_N/k}{\mathfrak p} \big)$
from (i), by definition of the local norm ${\mathfrak p}$-conductor 
of $F_N/k$, which divides $p^N$, then
$\big(\fffrac{x\,,\, F_N/k}{\mathfrak p} \big) = \prd_{{\mathfrak q} \,,\, 
{\mathfrak q} \,\ne \, {\mathfrak p}}
\big (\fffrac{y^{}_{N,\mathfrak p}\,,\, F_N/k}{{\mathfrak q}} \big)^{-1}$.
Let's compute these symbols:

$\quad\bullet\ $ 
If ${\mathfrak q} = {\ov {\mathfrak p}}$, since $y^{}_{N,\mathfrak p} 
\equiv 1 \pmod {{\ov {\mathfrak p}}^N}$, one has 
$\big(\fffrac {y^{}_{N,\mathfrak p}\,,\, F_N/k}{{\ov {\mathfrak p}}} \big) = 1$,

$\quad\bullet\ $ 
If ${\mathfrak q} \nmid p$ being unramified, 
$\big (\fffrac{y^{}_{N,\mathfrak p}\,,\, F_N/k}{{\mathfrak q}} \big) =
\big (\fffrac{F_N /k}{{\mathfrak q}} \big)^{v_{\mathfrak q}(y^{}_{N,\mathfrak p})}$. 

Finaly, since $v_{\ov{\mathfrak p}}(y^{}_{N,\mathfrak p}) = 0$,
$\big (\fffrac{x\,,\, F_N/k}{\mathfrak p} \big) = \prd_{{\mathfrak q}\, \nmid\, p}
\big (\fffrac{F_N /k}{{\mathfrak q}} \big)^{-v_{\mathfrak q}(y^{}_{N,\mathfrak p})}
=: \big (\fffrac{F_N /k}{{\mathfrak a}_{N,\mathfrak p}} \big)^{-1}$,  
inverse of the Artin symbol of the principal ideal:
\begin{equation}\label{ideal}
{\mathfrak a}_{N,\mathfrak p} :=
\prd_{{\mathfrak q} \,\nmid\, p} {\mathfrak q}^{v_{\mathfrak q}(y^{}_{N,\mathfrak p})}
= (y^{}_{N,\mathfrak p})\,{\mathfrak p}^{-\hp} = 
\big (y^{}_{N,\mathfrak p} \cdot x^{-1} \big). 
\end{equation}

\subsection{Computation of the orders of \texorpdfstring{$\big(\frac{x\,,\,K_n/k}
{\mathfrak p} \big)$}{Lg} and 
\texorpdfstring{$\big(\frac{x\,,\,{K_n}/k}{\ov {\mathfrak p}} \big)$}{Lg}}

Let $K = \cup_n K_n$ be a $\Z_p$-extension of $k = \Q(\sqrt{-m})$ 
and let $L$, $\ov L$, be the inertia fields defined in Diagrams \ref{ramification}, 
\ref{maindiagram}. We only assume that $K$ is distinct from $L$ and $\ov L$
(otherwise, a single $p$-place does ramify in $K/k$ and many simplifications
arize in this case). The principle is to show that these orders can be deduced 
from that of $\big(\fffrac{x\,,\,{k^\cyc_n}/k}{\mathfrak p} \big)$. The following 
lemma simplifies the relations between all possible choices for $x$, $\ov x$,
${\mathfrak p}$, ${\ov {\mathfrak p}}$ and the corresponding symbols; it uses
complex conjugation and the general formula 
$\big(\fffrac{z\,,\,M/F}{\mathfrak q} \big)^\tau := 
\tau \circ \big(\fffrac{z\,,\,M/F}{\mathfrak q} \big) \circ \tau^{-1} =
\big(\fffrac{z^\tau\,,\,M^\tau/F^\tau}{{\mathfrak q}^\tau} \big)$, which reduces
to $\big(\fffrac{z^\tau\,,\,M/F}{{\mathfrak q}^\tau} \big)$ in the absolute Galois 
cases:

\begin{lemma}\label{ordercyc} 
$\big(\fffrac{p\,,\,k^\cyc/k}{\mathfrak p} \big)  \!=\!
\big(\fffrac{p\,,\,k^\cyc/k}{\ov {\mathfrak p}} \big)  \!=\! 1$, 
$\big(\fffrac{\ov x\,,\,k^\cyc/k}{\mathfrak p} \big)  \!=\!
\big(\fffrac{x\,,\,k^\cyc/k}{\mathfrak p} \big)^{-1}  \!=\! 
\big(\fffrac{\ov x\,,\,k^\cyc/k}{\ov {\mathfrak p}} \big)^{-1}  \!=\!
\big(\fffrac{x\,,\,k^\cyc/k}{\ov {\mathfrak p}} \big)$. 
\end{lemma}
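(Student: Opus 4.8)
The plan is to establish the vanishing of the two cyclotomic norm residue symbols attached to $p$, and then derive the chain of equalities via conjugation and the product/convention relations.

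\smallskip
First I would prove $\Big(\ffrac{p\,,\,k^\cyc/k}{{\mathfrak p}} \Big) = 1$. The key observation is that $k^\cyc = k\,\Q^\cyc$, so the local behaviour at ${\mathfrak p}$ is governed by the cyclotomic $\Z_p$-extension $\Q^\cyc/\Q$. Since $p$ splits in $k$, the completion $k_{\mathfrak p}$ is $\Q_p$, and $p$ is a uniformizer there. The norm residue symbol $\Big(\ffrac{p\,,\,k^\cyc/k}{{\mathfrak p}} \Big)$ computes the image of $p$ under the local reciprocity map for the local extension $(k^\cyc)_{\mathfrak P}/k_{\mathfrak p} = \Q_p^\cyc/\Q_p$. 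But in the cyclotomic $\Z_p$-extension of $\Q_p$, the uniformizer $p$ is a universal norm: it is a local norm from every finite layer of $\Q_p^\cyc/\Q_p$, because that tower is totally ramified and $p$ sits in the image of the norm at each level (the norm of the uniformizer at each layer differs from $p$ by a unit that itself is a norm). Hence $p$ lies in the kernel of the local Artin map, giving symbol $1$. By applying complex conjugation $\tau$, which permutes ${\mathfrak p} \leftrightarrow \ov{\mathfrak p}$ and fixes $k^\cyc$ (as $k^\cyc/\Q$ is Galois and $\tau$ acts trivially on $\Gamma^+ = \Gal(k^\cyc/k)$), the symbol at $\ov{\mathfrak p}$ equals the $\tau$-image of the one at ${\mathfrak p}$, hence also $1$.

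\smallskip
Next I would derive the stated equalities. Since $x\ov x = p^h$, bilinearity of the norm residue symbol gives $\Big(\ffrac{x\,,\,k^\cyc/k}{{\mathfrak p}} \Big)\Big(\ffrac{\ov x\,,\,k^\cyc/k}{{\mathfrak p}} \Big) = \Big(\ffrac{p^h\,,\,k^\cyc/k}{{\mathfrak p}} \Big) = 1$, which yields $\Big(\ffrac{\ov x\,,\,k^\cyc/k}{{\mathfrak p}} \Big) = \Big(\ffrac{x\,,\,k^\cyc/k}{{\mathfrak p}} \Big)^{-1}$. Applying $\tau$ (which fixes $k^\cyc$, sends ${\mathfrak p} \to \ov{\mathfrak p}$ and $x \to \ov x$) to the symbol $\Big(\ffrac{\ov x\,,\,k^\cyc/k}{{\mathfrak p}} \Big)$ produces $\Big(\ffrac{x\,,\,k^\cyc/k}{\ov{\mathfrak p}} \Big)$, forcing these two to agree. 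Combining with the same relation at $\ov{\mathfrak p}$ completes the chain.

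\smallskip
The main obstacle will be the first step: making precise that $p$ is a universal norm in $\Q_p^\cyc/\Q_p$. This is the genuinely arithmetic input, and the cleanest justification appeals directly to the convention $\log_{\mathfrak p}(p) = \log_{\ov{\mathfrak p}}(p) = 0$ introduced in \eqref{logdefinition}, which is exactly the normalization encoding that the uniformizer contributes trivially. Under the reciprocity isomorphism the inertia at ${\mathfrak p}$ in $\wt k/k$ is the image of $U_{\mathfrak p}$, and $p$ maps into the complementary (unramified, then split) part; since $k^\cyc/k$ is the cyclotomic direction in which $p$ is totally ramified but norm-compatible, the symbol of $p$ vanishes. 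Everything after this reduces to formal manipulation with the bilinearity and Galois-equivariance of the symbol, which I would treat as routine.
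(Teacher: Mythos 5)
Your route is the paper's route: first trivialize the two symbols of $p$, then get the whole chain from bilinearity applied to $x\,\ov x = p^h$ together with Galois equivariance (that second half is correct and genuinely routine — since $\Gal(k^\cyc/\Q) \simeq \Gal(k/\Q)\times\Gal(\Q^\cyc/\Q)$ is abelian, conjugation by $\tau$ acts trivially on $\Gal(k^\cyc/k)$, so $\big(\ffrac{\ov x\,,\,k^\cyc/k}{{\mathfrak p}}\big) = \big(\ffrac{x\,,\,k^\cyc/k}{\ov{\mathfrak p}}\big)$ exactly as you say). The problem is that your first step is asserted, not proved. The fact you need — $p$ is a norm from every layer of $\Q_p^\cyc/\Q_p$ — is true, but neither of your two justifications establishes it. The parenthetical ``the norm of the uniformizer at each layer differs from $p$ by a unit that itself is a norm'' is circular: in a totally ramified extension the norm of any uniformizer is $p$ times some unit $u$, and $p$ is a norm \emph{precisely when} $u$ is a norm of a unit; so that parenthesis restates the claim rather than proving it. Worse, your proposed ``cleanest justification'' via the convention $\log_{\mathfrak p}(p)=\log_{\ov{\mathfrak p}}(p)=0$ inverts the logic: that convention is a \emph{definition} (Iwasawa's normalization of the logarithm) and can carry no arithmetic content; indeed the paper's remark immediately after the lemma points the correspondence in the opposite direction — the norm property of $p$ ``elucidates'' the convention, it is not derived from it.

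What is missing is the one-line arithmetic input that constitutes the paper's entire proof: for all $n \geq 0$, $p = \BN_{\Q(\mu_{p^{n+1}})/\Q}\big(1-\zeta_{p^{n+1}}\big)$ (the value $\Phi_{p^{n+1}}(1)$ of the cyclotomic polynomial). Hence $p$ is a norm from $\Q_p(\mu_{p^{n+1}})/\Q_p$, hence from the subextension $\Q^\cyc_{p,n} = (k_n^\cyc)_{\mathfrak P}$, i.e.\ $p$ is a local norm at ${\mathfrak p}$ and $\ov{\mathfrak p}$ at every layer of $k^\cyc/k$, and both symbols of $p$ vanish. (Alternatively one may quote Lubin--Tate theory: the local Artin image of $p$ in $\Gal(\Q_p^{\ab}/\Q_p)$ acts trivially on $\Q_p(\mu_{p^\infty})$; but this must be invoked as the input, not gestured at via ``norm-compatible''.) With that single identity inserted, your argument closes and coincides with the paper's.
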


\begin{proof}
In the cyclotomic extension $\Q(\mu_{p^{n+1}}^{})/\Q$, $p = 
\BN_{\Q(\mu_{p^{n+1}}^{})/\Q}(1 - \zeta_{p^{n+1}}^{})$, for all 
$n \geq 0$ (hence norm in $\Q_n^\cyc/\Q$), whence $p$ local 
norm at ${\mathfrak p}$ and 
${\ov {\mathfrak p}}$ in $k^\cyc/k$, and, since $x\, \ov x = 
p^{\hp}$, the relations follow, showing equality 
of the four symbols or their inverses. 
\end{proof}

\begin{lemma}\label{ordercyc2}
Set $K \cap L =: K_e$, $K \cap \ov L =: K_{\ov e}$, with $e \geq \ov e$.
For $n \geq e$, the order of $\big(\fffrac{x\,,\,{K_n}/k}{\mathfrak p} \big)$
is that of $\big(\fffrac{x\,,\,{k_{n - e}^\cyc}/k}{\mathfrak p} \big)$ and
for $n \geq \ov e$, the order of $\big(\fffrac{x\,,\,{K_n}/k}{\ov {\mathfrak p}} \big)$
is that of $\big(\fffrac{x\,,\,{k_{n - \ov e}^\cyc}/k}{\ov {\mathfrak p}} \big)$;
otherwise, if $n < e$ (resp. $n < \ov e$), the orders are $1$.
\end{lemma}

\begin{proof}
We consider the following diagrams, where $\wt k$ is the direct compositum
over $k$ (resp. over $K_e$) of $L$ and $k^\cyc$ (resp. of $L$ and $K$),
and similarly, $\wt k$ is the direct compositum over $k$ (resp. over 
$K_{\ov e}$) of $\ov L$ and $k^\cyc$ (resp. of $\ov L$ and $K$); 
we denote by $L^m$, $m \geq 0$, the extension of degree $p^m$ 
of $L$ in $\wt k$, and similarly for $\ov L$:
\unitlength=0.6cm
\[\vbox{\hbox{\hspace{3.0cm}
\begin{picture}(10.0,3.65)
\put(4.5,3.2){\line(1,0){2.8}}
\put(1.8,3.2){\line(1,0){1.9}}
\put(4.6,0.2){\line(1,0){2.65}}
\put(1.85,0.2){\line(1,0){1.85}}
\put(1.5,0.5){\line(0,1){2.4}}
\put(4.00,0.5){\line(0,1){2.4}}
\put(7.5,0.5){\line(0,1){2.4}}
\put(-5.0,3.0){${\rm Symbol}\ 
\big(\fffrac{\ov x\,,\,{\wt k}/k}{\mathfrak p} \big)$:}
\put(7.6,1.6){\ft$\Z_p$}
\put(2.6,3.4){\ft$p^m$}
\put(7.4,3.1){\ft$\wt k$}
\put(3.85,3.1){\ft$L^m$}
\put(1.4,3.1){\ft$L$}
\put(3.85,0.1){\ft$k_m^\cyc$}
\put(7.3,0.1){\ft$k^\cyc$}
\put(1.34,0.1){\ft$k$}
\put(2.2,2.8){\tiny${\mathfrak p}\,\Ram$}
\put(5.2,2.8){\tiny${\mathfrak p}\,\Ram$}
\put(1.6,1.7){\tiny${\mathfrak p}\,\Nram$}
\end{picture}}} \]

\unitlength=0.6cm
\[\vbox{\hbox{\hspace{3.0cm}
\begin{picture}(10.0,3.55)
\put(4.85,3.2){\line(1,0){2.45}}
\put(1.75,3.2){\line(1,0){1.65}}
\put(4.5,0.2){\line(1,0){2.75}}
\put(1.85,0.2){\line(1,0){1.85}}
\put(-0.94,0.2){\line(1,0){2.0}}
\put(1.5,0.5){\line(0,1){2.4}}
\put(4.00,0.5){\line(0,1){2.4}}
\put(7.5,0.5){\line(0,1){2.4}}
\put(-5.0,3.0){${\rm Symbol}\ 
\big(\fffrac{\ov x\,,\,{\wt k}/k}{\mathfrak p} \big)$:}
\put(7.6,1.6){\ft$\Z_p$}
\put(2.2,3.4){\ft$p^{n\!-e}$}
\put(7.4,3.1){\ft$\wt k$}
\put(3.5,3.1){\ft$L^{n\!-e}$}
\put(1.4,3.1){\ft$L$}
\put(3.85,0.1){\ft$K_n$}
\put(7.3,0.1){\ft$K$}
\put(1.2,0.1){\ft$K_e$}
\put(-1.3,0.1){\ft$k$}
\put(-0.7,0.4){\tiny${\mathfrak p}\,\Nram$}
\put(-0.1,1.0){\ft$p^e$}
\put(1.6,1.7){\tiny${\mathfrak p}\,\Nram$}
\put(2.2,2.8){\tiny${\mathfrak p}\,\Ram$}
\put(5.5,2.8){\tiny${\mathfrak p}\,\Ram$}
\end{picture}}} \]

\unitlength=0.6cm
\[\vbox{\hbox{\hspace{3.0cm}
\begin{picture}(10.0,3.55)
\put(4.4,3.2){\line(1,0){2.85}}
\put(1.8,3.2){\line(1,0){1.9}}
\put(4.6,0.2){\line(1,0){2.65}}
\put(1.85,0.2){\line(1,0){1.85}}
\put(1.5,0.5){\line(0,1){2.4}}
\put(4.00,0.5){\line(0,1){2.4}}
\put(7.5,0.5){\line(0,1){2.4}}
\put(-5.0,3.0){${\rm Symbol}\ 
\big(\fffrac{x\,,\,{\wt k}/k}{\ov {\mathfrak p}} \big)$:}
\put(7.6,1.6){\ft$\Z_p$}
\put(2.6,3.4){\ft$p^m$}
\put(7.4,3.1){\ft$\wt k$}
\put(3.85,3.1){\ft$\ov L^m$}
\put(1.4,3.1){\ft$\ov L$}
\put(3.85,0.1){\ft$k_m^\cyc$}
\put(7.3,0.1){\ft$k^\cyc$}
\put(1.34,0.1){\ft$k$}
\put(2.2,2.75){\tiny${\ov {\mathfrak p}}\,\Ram$}
\put(5.2,2.75){\tiny${\ov {\mathfrak p}}\,\Ram$}
\put(1.6,1.7){\tiny${\ov {\mathfrak p}}\,\Nram$}
\end{picture}}} \]

\unitlength=0.6cm
\[\vbox{\hbox{\hspace{3.0cm}
\begin{picture}(10.0,3.55)
\put(4.5,3.2){\line(1,0){2.8}}
\put(1.75,3.2){\line(1,0){1.65}}
\put(4.5,0.2){\line(1,0){2.75}}
\put(1.85,0.2){\line(1,0){1.85}}
\put(-0.94,0.2){\line(1,0){2.0}}
\put(1.5,0.5){\line(0,1){2.4}}
\put(4.00,0.5){\line(0,1){2.4}}
\put(7.5,0.5){\line(0,1){2.4}}
\put(-5.0,3.0){${\rm Symbol}\ 
\big(\fffrac{x\,,\,{\wt k}/k}{\ov {\mathfrak p}} \big)$:}
\put(7.6,1.6){\ft$\Z_p$}
\put(2.2,3.4){\ft$p^{n\!-\!\ov e}$}
\put(7.4,3.1){\ft$\wt k$}
\put(3.5,3.1){\ft$\ov L^{n\!-\!\ov e}$}
\put(1.4,3.1){\ft$\ov L$}
\put(3.85,0.1){\ft$K_n$}
\put(7.3,0.1){\ft$K$}
\put(1.1,0.1){\ft$K_{\ov e}$}
\put(-1.3,0.1){\ft$k$}
\put(-0.7,0.4){\tiny${\ov {\mathfrak p}}\,\Nram$}
\put(-0.1,1.0){\ft$p^{\ov e}$}
\put(1.6,1.7){\tiny${\ov {\mathfrak p}}\,\Nram$}
\put(2.2,2.75){\tiny${\ov {\mathfrak p}}\,\Ram$}
\put(5.5,2.75){\tiny${\ov {\mathfrak p}}\,\Ram$}
\end{picture}}} \]
\unitlength=1.0cm

\smallskip
In the first and third diagram, $k_m^\cyc = k \,\Q_m^\cyc$, 
where $\Q_m^\cyc$ is of conductor $p^{m+1}$. 

\smallskip
In the second and fourth diagram, since $[K_n : k] = p^n$ and $K\cap L = K_e$
(resp. $K\cap \ov L = K_{\ov e}$), it follows that $[L K_n : L] = p^{n - e}$ 
(resp. $[\ov L K_n : \ov L] = p^{n - \ov e}$). 

\smallskip
Finally, $L^m = L^{n - e}$ for $n = m + e$  
(resp. $\ov L^m = \ov L^{n - \ov e}$ for $n = m + \ov e$).

\smallskip
Recall that ${\mathfrak p}$ (resp. ${\ov {\mathfrak p}}$) is totally ramified in 
$K/K_e$ (resp. $K/K_{\ov e}$) and unramified in $L/K_e$
(resp. $\ov L/K_{\ov e}$). So:

\smallskip
$\big(\fffrac{\ov x\,,\,{\wt k}/k}{\mathfrak p} \big)$ fixes $L$ since $\ov x$ is a local 
unit in an extension $L/k$ unramified at ${\mathfrak p}$; \par \vspace{0.1cm}
$\big(\fffrac{\ov x\,,\,{k^\cyc}/k}{\mathfrak p} \big) \in \Gal(k^\cyc/ k)$, 
by restriction of $\big(\fffrac{\ov x\,,\,{\wt k}/k}{\mathfrak p} \big)$
to $k^\cyc$; \par\vspace{0.1cm}
$\big(\fffrac{\ov x\,,\,{K}/k}{\mathfrak p} \big) \in \Gal(K/K_e)$, by restriction
of $\big(\fffrac{\ov x\,,\,{\wt k}/k}{\mathfrak p} \big) \in \Gal(\wt k/L)$; \par\vspace{0.1cm}
$\big(\fffrac{x\,,\,{\wt k}/k}{{\ov {\mathfrak p}}} \big)$ fixes $\ov L$ since $x$ is a local 
unit in an extension $\ov L/k$ unramified at ${\ov {\mathfrak p}}$; \par\vspace{0.1cm}
$\big(\fffrac{x\,,\,{k^\cyc}/k}{{\ov {\mathfrak p}}} \big) \in \Gal(k^\cyc/ k)$, 
by restriction of $\big(\fffrac{x\,,\,{\wt k}/k}{{\ov {\mathfrak p}}} \big)$
to $k^\cyc$; \par\vspace{0.1cm}
$\big(\fffrac{x\,,\,{K}/k}{{\ov {\mathfrak p}}} \big) \in \Gal(K/K_{\ov e})$, by restriction of
$\big(\fffrac{x\,,\,{\wt k}/k}{{\ov {\mathfrak p}}} \big) \in \Gal(\wt k/\ov L)$.

\smallskip
Since all these restriction maps are isomorphisms, one obtains successively 
that the order of $\big(\fffrac{\ov x\,,\,{k_m^\cyc}/k}{\mathfrak p} \big)$ is that 
of $\big(\fffrac{\ov x\,,\,{L^m}/k}{\mathfrak p} \big)$, and that the order of 
$\big(\fffrac{\ov x\,,\,{K_n}/k}{\mathfrak p} \big)$ is that of 
$\big(\fffrac{\ov x\,,\,{L^{n - e}}/k}{\mathfrak p} \big)$.
For $n = m + e$, we get that the order of 
$\big(\fffrac{\ov x\,,\,{K_n}/k}{\mathfrak p} \big)$ is that of
$\big(\fffrac{\ov x\,,\,{k_{n - e}^\cyc}/k}{\mathfrak p} \big)$, hence 
that of $\big(\fffrac{x\,,\,{k_{n - e}^\cyc}/k}{\mathfrak p} \big)$
from Lemma \ref{ordercyc}. Similarly, the order of 
$\big(\fffrac{x\,,\,{K_n}/k}{{\ov {\mathfrak p}}} \big)$ is that of
$\big(\fffrac{x\,,\,{k_{n - \ov e}^\cyc}/k}{{\ov {\mathfrak p}}} \big)$
and that of $\big(\fffrac{\ov x\,,\,{k_{n - \ov e}^\cyc}/k}{\ov {\mathfrak p}} \big)$.
\end{proof}

But the orders of the symbols in $k_m^\cyc/k$ are easily computable:

\begin{theorem}\label{mainorder}
Let $k$ be an imaginary quadratic field and let $p \geq 3$ be a prime 
number split in $k$. Let $x \in k^\times$ be the generator of 
${\mathfrak p}^{\hp}$, where $\hp \mid \hk$ is the order 
of $\Bcl({\mathfrak p})$, and let $\delta_p(k) := v_{\mathfrak p}
(\ov x^{\,p-1}-1) - 1$. Let $K$ be a $\Z_p$-extension of $k$ such that 
${\mathfrak p}$, ${\ov {\mathfrak p}}$ are ramified from some layers
$K_e$, $K_{\ov e}$, $e \geq \ov e$. For any $n \geq 0$, 
the symbols $\big (\ffrac{\ov x\,,\,{K_n/k}}{\mathfrak p} \big)$ and
$\big (\ffrac{x\,,\,{K_n/k}}{{\ov {\mathfrak p}}} \big)$ are of orders
$p^{\max(0, n - e - \delta_p(k))}$ and $p^{\max(0, n - \ov e - \delta_p(k))}$,
respectively.
\end{theorem}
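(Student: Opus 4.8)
The plan is to use Lemma~\ref{sameorder} to transport the whole computation into the cyclotomic $\Z_p$-extension, and then to reduce to a purely local reciprocity computation at the place $\mathfrak p$ (resp.\ $\ov{\mathfrak p}$). Indeed, Lemma~\ref{sameorder} asserts that for $n \geq e$ the order of $\big(\ffrac{x\,,\,K_n/k}{\mathfrak p}\big)$ equals that of $\big(\ffrac{x\,,\,k_{n-e}^\cyc/k}{\mathfrak p}\big)$ (and is $1$ for $n<e$), and symmetrically for $\ov{\mathfrak p}$ with $\ov e$ in place of $e$. So, writing $m := n-e$ (resp.\ $m := n-\ov e$), it suffices to prove that the order of $\big(\ffrac{x\,,\,k_m^\cyc/k}{\mathfrak p}\big)$ is $p^{\max(0,\,m-\delta_p(k))}$; substituting back $m=n-e$ (resp.\ $m=n-\ov e$) then yields the two asserted exponents, the case $n<e$ (resp.\ $n<\ov e$) being covered since then $m<0\le\delta_p(k)$ forces $\max(0,m-\delta_p(k))=0$.

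For the cyclotomic symbol I would first replace $x$ by its conjugate. By Lemma~\ref{ordercyc} the symbols $\big(\ffrac{x\,,\,k_m^\cyc/k}{\mathfrak p}\big)$ and $\big(\ffrac{\ov x\,,\,k_m^\cyc/k}{\mathfrak p}\big)$ have the same order, and this is the efficient choice because $v_{\mathfrak p}(\ov x)=0$, i.e.\ $\ov x$ is a local unit at $\mathfrak p$ (the norm property of $p$ in Lemma~\ref{ordercyc} being exactly what lets one discard the valuation-$h$ part of $x$). Since $p$ splits, $k_{\mathfrak p}=\Q_p$ and, $k^\cyc$ being $k\,\Q^\cyc$ totally ramified at $\mathfrak p$, the local extension is $(k_m^\cyc)_{\mathfrak p}/\Q_p$, the $m$-th layer of the cyclotomic $\Z_p$-extension of $\Q_p$, cyclic totally ramified of degree $p^m$, with Galois group isomorphic to $(1+p\Z_p)/(1+p^{m+1}\Z_p)$.

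The heart of the argument is then the local reciprocity computation: the norm residue symbol sends the unit $\ov x\in U_{\mathfrak p}$ (up to its Teichm\"uller part, which is killed by the logarithm) to the class determined, via the isomorphism $\log_{\mathfrak p}\colon 1+p\Z_p\simeq p\Z_p$, by $\log_{\mathfrak p}(\ov x)\in p\Z_p/p^{m+1}\Z_p$. Now Definition~\ref{log} gives precisely $v_p(\log_{\mathfrak p}(\ov x))=\delta_p(k)+1$; hence the order of this class in $p\Z_p/p^{m+1}\Z_p$ is $p^{(m+1)-(\delta_p(k)+1)}=p^{\,m-\delta_p(k)}$ when $\delta_p(k)\le m$, and $1$ otherwise, i.e.\ $p^{\max(0,\,m-\delta_p(k))}$, as required. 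The statement for $\ov{\mathfrak p}$ follows identically, Lemma~\ref{ordercyc} showing that the order of $\big(\ffrac{x\,,\,k_m^\cyc/k}{\ov{\mathfrak p}}\big)$ is again that of $\big(\ffrac{\ov x\,,\,k_m^\cyc/k}{\mathfrak p}\big)$, so that the same integer $\delta_p(k)$ governs both places.

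The main obstacle is this third step, fixing the normalization of local reciprocity so that the symbol of a unit is read off directly from $\log_{\mathfrak p}$: one must check that the (inessential, for orders) inverse in the reciprocity map is harmless, that the Teichm\"uller factor $\omega(\ov x)$ contributes trivially, and above all that the conductor bookkeeping is correct, namely that layer $m$ corresponds to the quotient modulo $p^{m+1}$ (not $p^{m}$) under $\log_{\mathfrak p}$, since $v_p(\log_{\mathfrak p}(1+p))=1$. Matching this ``$+1$'' against the ``$-1$'' in the definition $\delta_p(k)=v_{\mathfrak p}(\ov x^{\,p-1}-1)-1$ is what produces the clean exponent $m-\delta_p(k)$, and is the one place where a careless count would shift the answer.
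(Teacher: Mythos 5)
Your proof is correct, and its central step follows a genuinely different route from the paper's. Both arguments begin identically, using Lemma~\ref{sameorder} to reduce everything to the order of the symbol in $k_m^\cyc/k$ with $m=n-e$ (resp.\ $m=n-\ov e$), the cases $n<e$, $n<\ov e$ being trivial. From there, however, the paper stays global: it invokes the $\mathfrak p$-associate machinery of Section~3.1 to write $\big(\ffrac{x\,,\,F_N/k}{\mathfrak p}\big)$ as the inverse of the Artin symbol of the prime-to-$p$ ideal $\mathfrak a_{N,\mathfrak p}=(y_{N,\mathfrak p}\,x^{-1})$, restricts to $\Gal(\Q_m^\cyc/\Q)\simeq\{a\in(\Z/p^{m+1}\Z)^\times,\ a\equiv 1 \!\!\pmod p\}$, and obtains the order from $v_p\big(\BN_{k/\Q}(\mathfrak a_{N,\mathfrak p})^{p-1}-1\big)=v_{\mathfrak p}\big((x\,p^{-h})^{p-1}-1\big)=1+\delta_p(k)$. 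You instead use that the Hasse symbol is local by definition: after trading $x$ for the $\mathfrak p$-unit $\ov x$ via Lemma~\ref{ordercyc}, you identify the completion at $\mathfrak p$ of $k_m^\cyc/k$ with the $m$-th layer of the cyclotomic $\Z_p$-extension of $k_{\mathfrak p}=\Q_p$, whose norm group is $\langle p\rangle\,\mu_{p-1}\,(1+p^{m+1}\Z_p)$, so that the order is read directly from $v_p(\log_{\mathfrak p}\ov x)=\delta_p(k)+1$. The two computations rest on the same valuation identity --- locally at $\mathfrak p$ one has $x\,p^{-h}=\ov x^{-1}$, so the paper's Fermat-quotient count is exactly your $\log$-valuation count --- but yours bypasses the associate construction entirely and is more self-contained, at the cost of citing the precise local norm groups (your ``conductor bookkeeping'' modulo $p^{m+1}$, the Teichm\"uller part, and the edge cases $n<e$ and $n-e<\delta_p(k)$ are all handled correctly). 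What the paper's global detour buys in exchange is the explicit ideal $\mathfrak a_{N,\mathfrak p}$ whose Artin symbol realizes the Hasse symbol, i.e.\ the ideal-theoretic form of the statement that matches the Chevalley--Herbrand/filtration framework used in the rest of the paper.
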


\begin{proof}
For this, it suffices from Lemma \ref{ordercyc2} to compute for $m$ large 
enough the order of the Artin symbol 
$\big (\fffrac{k^\cyc_m /k}{{\mathfrak a}_{N,\mathfrak p}}\big)$, obtained in 
expression \eqref{ideal}, for $N \gg 0$. It is equivalent to compute the order 
of the image of $\big (\fffrac{k^\cyc_m /k}
{{\mathfrak a}_{N,\mathfrak p}}\big)$ in $\Gal(\Q^\cyc_m/\Q) \simeq 
\big \{a \in (\Z/ p^{m+1}\Z)^\times, \ a \equiv 1 \pmod p \big\} \simeq 
\Z/ p^m \Z$, which is the Artin symbol
$\big (\fffrac{\Q^\cyc_m/\Q}{\BN_{k/\Q}( {\mathfrak a}_{N,\mathfrak p})}\big)$, 
whose order is that of the Artin symbol of the integer 
$\BN_{k/\Q}({\mathfrak a}_{N,\mathfrak p})^{p-1}$ 
of the form $1 + p^{1+\delta} \, u$, with $v_p(u) = 0$ and $\delta \geq 0$ 
to be computed. Before, let's prove that ${\mathfrak a}_{N,\mathfrak p} 
= (y^{}_{N,\mathfrak p}) \cdot {\mathfrak p}^{- \hp}$ is 
such that, for $N \gg 0$, $\BN_{k/\Q}({\mathfrak a}_{N,\mathfrak p}) 
\equiv x \cdot p^{- \hp} \pmod {{\mathfrak p}^N}$.

Indeed, one has $\BN_{k/\Q}({\mathfrak a}_{N,\mathfrak p}) = 
\BN_{k/\Q} (y^{}_{N,\mathfrak p}) \cdot p^{- \hp}$; 
by definition of an associate, $y^{}_{N,\mathfrak p} x^{-1} \equiv 1 
\pmod {{\mathfrak p}^N}$, $y^{}_{N,\mathfrak p} \equiv 1 
\pmod {{{\ov {\mathfrak p}}}^N}$, thus $\ov y^{}_{N,\mathfrak p} 
\equiv 1 \pmod {{\mathfrak p}^N}$, which leads to $\BN_{k/\Q} 
(y^{}_{N,\mathfrak p}) \cdot x^{-1} \equiv 1 \pmod {{\mathfrak p}^N}$.

Finally, $\BN_{k/\Q}({\mathfrak a}_{N,\mathfrak p}) \equiv 
x \cdot p^{- \hp} \pmod {{\mathfrak p}^N}$, whence 
(see Section \ref{log}):
$$v_p (\BN_{k/\Q} \big({\mathfrak a}_{N,\mathfrak p})^{p-1} - 1\big) =
v_{\mathfrak p} \big ((x \cdot p^{- \hp})^{p-1} - 1\big ) =
v_{\mathfrak p} \big ((\ov x ^{p-1} - 1\big ) = 1 + \delta_p(k); $$

\noindent
the number $\delta_p(k)$ gives the order of $\big (\ffrac{k^\cyc_m /k}
{{\mathfrak a}_{N,\mathfrak p}}\big)$ in $\Gal(k^\cyc_m/k)$, as soon 
as $m \geq \delta_p(k)$, and this order is $p^{m - \delta_p(k)}$ (for 
$m < \delta_p(k)$, this order would be $1$ since in that case,
$\BN_{k/\Q}( {\mathfrak a}_{N,\mathfrak p})^{p-1} \equiv 1 \pmod {p^{m+1}}$). 
Whence the formulas for $m = n - e$, then $m = n - \ov e$.
\end{proof}

\begin{remark}\label{order}
We will use the relation giving the minimal order, 
of the images of the two fundamental $S_k$-units $x$, $\ov x$,
in $\Omega_{K_n/k} := \big\{(s, \ov s) \in G_{n,\mathfrak p} \times 
G_{n,{\ov {\mathfrak p}}},\  s\cdot \ov s = 1 \big\}$:
\begin{equation*}
\min \big (\order \omega_{K_n/k}(x), \order\omega_{K_n/k}(\ov x)\big) =
\order \big (\fffrac{\ov x\,,\,{K_n/k}}{\mathfrak p} \big) = 
p^{\max(0, n - e - \delta_p(k))},
\end{equation*}

\noindent
where $G_{n,\mathfrak p}$ and $G_{n,{\ov {\mathfrak p}}}$ are the inertia groups 
of ${\mathfrak p}$ and $\ov{\mathfrak p}$ in $\Gal(K_n/k)$, knowing that $\ov e \leq e$;
of course, $\order \omega_{K_n/k}(E_k^{S_k}) = \order \omega_{K_n/k}
(\langle x, \ov x \rangle) = \order \omega_{K_n/k}(x)$
(definitions of the forthcoming \S\,\ref{genus}).
\end{remark}

\section{Higher rank Chevalley--Herbrand formulas in 
\texorpdfstring{$K_n/k$}{Lg}} 

The basic principle beyond Chevalley--Herbrand formula for 
$p$-class groups in any $p$-cyclic (or pro-$p$-cyclic) extension 
$K = \cup_n K_n$ of $k$, is to define filtrations of the form:
$$\big \{\CH_{K_n}^i =: \Ccl_n(\CI_{\!K_n}^i) \big\}_{i \geq 0}, $$

\noindent
for suitable ideal groups $\CI_{\!K_n}^i$, where  $G_n := \Gal(K_n/k)$, and:
\begin{equation*}
\CH_{K_n}^0 :=1,\ \  
\CH_{K_n}^1 := \CH_{K_n}^{G_n}, \ \  \CH_{K_n}^{i+1}/\CH_{K_n}^i 
:= (\CH_{K_n}/\CH_{K_n}^i)^{G_n}, 
\end{equation*}

\noindent 
up to a minimal bound $i = b_n$ for which $\CH_{K_n}^{b_n} = \CH_{K_n}$.
The higher rank Chevalley--Herbrand formulas give a computable expression of 
$\order\big( \CH_{K_n}^{i+1}/\CH_{K_n}^i\big)$ in an algorithmic point of view 
for $n \geq 2$ (see the improved english translation \cite{Gra2017a} of our 1994's 
paper and its transcript into the id\'elic form in \cite{LiYu2020}).

\subsection{Expression of \texorpdfstring{$\order \CH_{K_n}^{G_n}$}{Lg}}

Let $k = \Q(\sqrt{-m})$ and $p \geq 3$ split in $k$. Let $K$ be a 
$\Z_p$-extension of $k$ distinct from $L$ and $\ov L$, hence $K/k$ 
ramified at the two $p$-places ${\mathfrak p}$ and ${\ov {\mathfrak p}}$ 
from some layers $K_e$ and $K_{\ov e}$ of $K$, $e \geq \ov e$. 
Moreover, we have, from Diagram \ref{maindiagram}: 
$$\order \CH_k = p^{\ov e} \times [\knr : K^\nr] \times \order \CT_k =
[K_n^\nr : k] \times \order \BN_{K_n/k}(\CH_{K_n}). $$ 

The Chevalley--Herbrand formula \cite[pp.\,402-406]{Che1933} for 
any cyclic $p$-extension $M/F$ of number fields $F, M$, of 
Galois group $G$, is $\ds \order \CH_M^G = \ffrac{\order \CH_F 
\times \prod_v \,p^{n_v}}{[M : F] \times (E_F : E_F \cap \BN_{M/F}
(M^\times))}$, where $p^{n_v}$ is the ramification index of the place $v$.
In our particular case $K_n/k$, ${\mathfrak p}$ and ${\ov {\mathfrak p}}$ 
are of ramification indices $p^{\max(0,n - e)}$, $p^{\max(0,\,n - \ov e)}$, 
respectively, and $E_k \otimes \Z_p = 1$. Using the above expression 
of $\order \CH_k$ gives the final expression:
\begin{equation}\label{CH}
\order \CH_{K_n}^{G_n} = \frac{\order \CH_k \times 
p^{\max(0,\,n - e)} p^{\max(0, \,n - \ov e)} }{p^n}
 = \order \BN_{K_n/k}(\CH_{K_n}) \times p^{\max(0, n-e)},
\end{equation}

\noindent
for all $n \geq 0$; then immediately the well-known result that, for $K$ 
distinct from $L$ and $\ov L$, $\order \CH_{K_n}$ is unbounded for 
$n \to \infty$, whence $\lambda_p(K/k) \geq 1$ or $\mu_p(K/k) \geq 1$.

\smallskip
We recall, now, a more suitable writting of these formulas, by means of two
integral factors taking into account the ``Product Formula'' of class field theory.

\subsection{Genus theory form for the filtration of 
\texorpdfstring{$\CH_{K_n}$}{Lg}}\label{genus}

We still assume $K$ distinct from $L$ and $\ov L$. Denote by 
$G_{n, \mathfrak p}$ and $G_{n,{\ov {\mathfrak p}}}$ the inertia 
groups in $K_n/k$, of ${\mathfrak p}$ and ${\ov {\mathfrak p}}$, 
respectively, for all $n \geq 0$. We define the $G_n$-module: 
$$\Omega_{K_n/k} := \big\{(s, \ov s) \in G_{n,\mathfrak p} \times 
G_{n,{\ov {\mathfrak p}}},\  s\cdot \ov s = 1\ {\rm in}\ G_n \big\}, $$ 

\noindent
and we consider the 
map $\omega_{K_n/k} : \CN_{K_n/k} \to \Omega_{K_n/k}$, 
where $\CN_{K_n/k}$ is the subgroup of $k^\times$ of elements 
$a$, {\it local norms in $K_n/k$ for all places $v \nmid p$}
(recall that any $a \in k^\times$, norm of an ideal in $K_n/k$, is local 
norm at the non-ramified places):
$$\omega_{K_n/k}(a) = \big( \big(\fffrac{a\,,\,{K_n}/k}{\mathfrak p} \big),
\big(\fffrac{a\,,\,{K_n}/k}{{\ov {\mathfrak p}}} \big) \big) \in \Omega_{K_n/k}, $$

\noindent
since these norm residue symbols satisfy the product formula:
$\prod_{v} \big(\fffrac{a\,,\,{K_n}/k}{v} \big) = 1$. The order of
$\omega_{K_n/k}(a)$ is that of $\big(\fffrac{a\,,\,{K_n}/k}{\mathfrak p} \big)$
and divides $\order \Omega_{K_n/k}$.

The general expression (giving the most powerful fixed points formula
in a cyclic extension $M/F$), that  may be called 
genus theory form of Chevalley--Herbrand formulas, is the following product 
of two integers, for any submodule $\CH =: \Ccl_M(\CI)$ of $\CH_M$, where 
$\CI$ is a submodule of $I_M$ of finite type \cite[Corollary 3.7]{Gra2017a}:
\begin{equation}\label{formule}
\left\{\begin{aligned}
\order\big(\CH_M/\CH \big)^{G} & = \frac{\order \BN_{M/F}(\CH_M)}
{\order \BN_{M/F}(\CH)} \times \frac{\order \Omega_{M/F}}
{\order \omega_{M/F}(\Lbda_{M/F})}, \\
\Lbda_{M/F} & := \{a \in F^\times, \, (a) \in \BN_{M/F}(\CI)\}.
\end{aligned}\right.
\end{equation}

The first integer is called the class factor and the second one the norm factor.
Since $\CI$ is defined modulo principal ideals and since $\Lbda_{M/F}$ 
contains the group of units of $F$, $\omega_{M/F}(\Lbda_{M/F})$
is well defined and only depend on $\CH$.  In other words, $\Lbda_{M/F}/
\Lbda_{M/F} \cap \BN_{M/F}(M^\times) \simeq \omega_{M/F}(\Lbda_{M/F})$.
We apply this formula to $M/F = K_n/k$.

\begin{remark}\label{orderomega}
We have, for any $n\geq 0$, $\Omega_{K_n/k} \simeq \Z/p^{\max(0,n - e)}\Z$. 
Indeed, if $n \leq e$, there is at most a single $p$-place ramified 
in $K_n/k$, thus $\Omega_{K_n/k} = 1$ and $\ffrac{\order \Omega_{K_n/k}}
{\order \omega_{K_n/k}(\Lbda_{K_n/k})} = 1$ in \eqref{formule}.
Otherwise, product formula takes place in $\Gal(K_n/K_e)$ of order 
$p^{n-e}$ since $\ov e \leq e$, and where $p$ totally ramifies.
From \eqref{CH}, \eqref{formule} and $\order \BN_{K_n/k}(\CH_{K_n}) 
= [H_k^\nr : K_n^\nr]$, we get:
\begin{equation}\label{filtration}
\left\{\begin{aligned}
\order \CH_{K_n}^1 & := \order \CH_{K_n}^{G_n} = \order \BN_{K_n/k}(\CH_{K_n}) 
\!\times\! p^{\max(0,n - e)}, \\
\Big [& = [\knr : K^\nr] \!\times\! \order \CT_k \times p^{n - e}, \ 
{\rm if}\ n \geq e \Big], \\
\order (\CH_{K_n}^{i+1}/\CH_{K_n}^i) & 
:= \frac{\order \BN_{K_n/k}(\CH_{K_n})}
{\order \BN_{K_n/k}(\CH_{K_n}^i)} \times \frac{p^{\max(0,n - e)}}
{\order \omega_{K_n/k}(\Lbda_{K_n/k}^i)},\ \, i \geq 1,\\
\Big [& = \frac{[\knr : K^\nr] \!\times\! \order \CT_k}
{\order \BN_{K_n/k}(\CH_{K_n}^i)} \times \frac{p^{n - e}}
{\order \omega_{K_n/k}(\Lbda_{K_n/k}^i)}, \ {\rm if}\ n \geq e \Big],
\end{aligned}\right.
\end{equation}

\noindent
where the $G_n$-module $\Lbda_{K_n/k}^i := \{a \in k^\times,\ 
(a) \in \BN_{K_n/k}(\CI_{\!K_n}^i) \}$ is a subgroup of finite type of 
$k^\times$, so that $\omega_{K_n/k}(\Lbda_{K_n/k}^i) \subseteq 
\Omega_{K_n/k}$ from the product formula.
\end{remark}

\subsection{Properties of the filtration}

The behavior of the filtrations, regarding $n \to \infty$ and the number of steps 
$b_n$, determine, logacally, the parameters $\lambda_p(K/k)$, 
$\mu_p(K/k)$; we will see that this is often possible and allows to generalize
class field theory methods that we describe in Section \ref{gold}.
For this, recall some properties and algorithms. 

\smallskip
To ease notations, put $\CH_n := \CH_{K_n}$, $\Lbda_n^i := \Lbda_{K_n/k}^i$, 
$\BN_n := \BN_{K_n/k}$, and so on; let $\sigma_n$ be the generator of $G_n
= \Gal(K_n/k)$ obtained from the restriction to $K_n$ of a fixed topological 
generator $\sigma$ of $\Gal(K/k)$. Note that $k = K_0$. For $n \geq 0$ fixed: 
$$\CH_n^i = \{c \in \CH_n,\, c^{(1 - \sigma_n)^i} =1\},\ \,\hbox{for all $i \geq 0$}. $$

\subsubsection{Algoritmic aspects}\label{inclusions}

(i) Set $\CH_n^i = \Ccl_n(\CI_{\!n}^i)$ for a $G_n$-module $\CI_{\!n}^i$ of 
ideals of $K_n$, and consider the expression $\ds \ffrac{\order \BN_n (\CH_n)}
{\order \BN_n (\CH_n^i)} \cdot  \ffrac{\order\Omega_n}{\order\omega_n 
(\Lbda_n^i)}$ giving $\order (\CH_n^{i+1}/\CH_n^i)$, for any $n \geq 0$. 
Then the principal 
ideals $(\alpha)$ of $\BN_n(\CI_{\!n}^i)$, such that $\omega_n(\alpha) = 1$,
allow to find solutions $\beta$ to the norm relation
$\alpha = \BN_n(\beta) \in \Lbda_n^i \cap \BN_n(K_n^\times)$, 
by definition such that $(\alpha) = \BN_n( {\mathfrak A})$, for
${\mathfrak A} \in \CI_{\!n}^i$, yielding $(\beta) = {\mathfrak A} \cdot 
{\mathfrak B}^{1-\sigma_n}$, creating ${\mathfrak B} \in \CI_{\!n}^{i+1}$, 
and $\Ccl (\BN_n({\mathfrak B}))$ in $\CH_k$, hence building 
$\CH_n^{i+1} = \Ccl_n(\CI_{\!n}^{i+1})$ and $\BN_n(\CH_n^{i+1})$; then 
$\Lbda_n^{i+1}$ comes from principal ideals of $\BN_n(\CI_{\!n}^{i+1})$,
yielding the next step of the algorithm, whence  
$\order \BN_n (\CH_n^{i+1}) \geq \order \BN_n (\CH_n^i)$ and  
$\order\omega_n (\Lbda_n^{i+1}) \geq \order\omega_n (\Lbda_n^i)$.

\smallskip
(ii) We have the following diagram where $\BN_{K_{n+1}/K_n}$, on 
$\CH_{n+1}$ and $(\CH_{n+1})^{(1-\sigma_{n+1})^i}$, is onto for $n$ 
large enough such that $K/K_n$ is totally ramified. 
On $\CH_{n+1}^i$, the norm may be non-surjective and/or non-injective 
(a crucial point in the attempt of proving Greenberg's conjectures, somewhat
due to random arithmetic aspects):
$$\begin{array}{ccccccccc}  
1 & \too & \CH_{n+1}^i & \toooo & \CH_{n+1} & \stackrel{(1-\sigma_{n+1})^i}
{\toooo} & (\CH_{n+1})^{(1-\sigma_{n+1})^i} & \!\! \too  1 
\\ \vspace{-0.3cm} \\
&& \Big \downarrow & & \hspace{-1.4cm} \hbox{\ft$\BN_{K_{n+1}/K_n}$\ns} 
\big \downarrow \!\!\!\! \Big \downarrow && \hspace{-2.3cm} 
\hbox{\ft$\BN_{K_{n+1}/K_n}$\ns} \big \downarrow \!\!\!\! \Big \downarrow & 
\\ \vspace{-0.4cm} \\ 
1 & \too & \CH_n^i & \toooo & \CH_n & \stackrel{(1-\sigma_n)^i}
{\toooo} & (\CH_n)^{(1-\sigma_n)^i} & \!\! \too  1\,.
\end{array} $$

We have $\BN_{K_{n+1}/K_n} (\CH_{n+1}^i)  \subseteq \CH_n^i$~; so, 
for all ideal ${\mathfrak A}_{n+1} \in \CI_{\!n+1}^i$, one may write:
$$\BN_{K_{n+1}/K_n}({\mathfrak A}_{n+1}) = (\alpha_n) \, {\mathfrak A}_n, 
\ \  \alpha_n \in K_n^\times,\ \, {\mathfrak A}_n \in \CI_{\!n}^i. $$ 

Then, {\it modifying} $\CI_{\!n}^i$ modulo principal ideals of $K_n$ 
(${\mathfrak A}_n \mapsto {\mathfrak A}'_n := (\alpha_n)\,{\mathfrak A}_n$) 
does not modify $\CH_n^i$, and one may suppose $\BN_{K_{n+1}/K_n}
(\CI_{\!n+1}^i) \subseteq \CI_{\!n}^i$, whence $\BN_{K_{n+1}/k}(\CI_{\!n+1}^i) 
\subseteq \BN_{K_{n}/k}(\CI_{\!n}^i)$; this modifies $\Lbda_n^i = 
\{ a \in k^\times,\, (a) \in \BN_{K_n/k}(\CI_{\!n}^i) \}$ modulo 
$\BN_n(K_n^\times)$ not changing the norm index $\omega_n
(\Lbda_n^i) = (\Lbda_n^i : \Lbda_n^i\cap \BN_{K_{n}/k}(K_n^\times))$.
One may obtain:
\begin{equation}\label{incl}
\Lbda_{n+h}^i \subseteq \cdots \subseteq \Lbda_{n+1}^i 
\subseteq \Lbda_{n}^i, \ \, \hbox{for all $h\geq 1$}. 
\end{equation}

(iii) We assume that $p$ splits in $k$ and that $K/k$ is any $\Z_p$-extension 
such that ${\mathfrak p}$ and ${\ov {\mathfrak p}}$ ramify from some layers 
$K_e$ and $K_{\ov e}$, $e \geq \ov e$. 

\begin{lemma} \label{lem1} 
(i) For $n \geq 0$, the $i$-sequence $\order (\CH_n^{i+1} / \CH_n^i)$, 
$0 \leq i \leq b_n$, is decreasing from $\order \CH_n^1
 = \order \BN_n(\CH_n) \cdot p^{\max(0,n - e)}$ to $1$.

\smallskip
(ii) For $i \geq 0$, the $n$-sequence $\order \big( \CH_n^{i+1} / 
\CH_n^i \big)$ is increasing and stationary from some layer.

\smallskip
(iii) The $i$-sequence $\ds \lim_{n \to \infty} \order (\CH_n^{i+1} / \CH_n^i)$,
$i \geq 1$, is decreasing and stationary.
\end{lemma}

\begin{proof} 

{\bf a}) Indeed, $1-\sigma_n$ defines the injections 
$\CH_n^{i+1} / \CH_n^i \!\hookrightarrow\! \CH_n^i / \CH_n^{i-1}$, 
for all $i \geq 1$.

\smallskip
{\bf b}) Consider, for $i \geq 0$ fixed, the $n$-sequence 
$\order ( \CH_n^{i+1} / \CH_n^i) = \ds \ffrac{\order \BN_n(\CH_n)}
{\order \BN_n (\CH_n^i)} \cdot \ds \ffrac{\order\Omega_n}
{\order\omega_n (\Lbda_n^i)}$. 

\smallskip
\quad $\bullet\, $ We get $\BN_{n+1} (\CH_{n+1}^i) \subseteq \BN_n (\CH_n^i)$; 
thus, the $n$-sequence $\ds \ffrac{\order \BN_n(\CH_n)}{\order \BN_n( \CH_n^i)} 
=: p^{c_n^i}$ is {\it increasing} and stationary at a maximal value $p^{c_{\infty}^i}$ 
dividing $\order \BN_n(\CH_n)$, for all $i$ fixed. 

\smallskip
\quad $\bullet\, $ Put $\ds \ffrac{\order \Omega_n}
{\order \omega_n (\Lbda_n^i)} =: p^{\rho_n^i}$. Then $p^{\rho_{n+1}^i-\rho_n^i} 
= p \cdot \ds \ffrac{\order \omega_n(\Lbda_n^i)}{\order \omega_{n+1}
(\Lbda_{n+1}^i)}$; using \eqref{incl}, one may assume $\Lbda_{n+1}^i 
\subseteq \Lbda_n^i$, hence $\order \omega_{n+1}(\Lbda_{n+1}^i) 
\leq \order \omega_{n+1}(\Lbda_n^i)$, whence
$p^{\rho_{n+1}^i-\rho_n^i} \geq p \cdot \ds 
\ffrac{\order \omega_n(\Lbda_n^i)}{\order \omega_{n+1}(\Lbda_n^i)}$. 

Under the restriction of Hasse's norm residue symbols, one gets the map
$\Omega_{n+1} \to \hspace{-0.45cm} \to \Omega_n$, whose kernel is of 
order $p$; the image of $\omega_{n+1}(\Lbda_n^i)$ is $\omega_n(\Lbda_n^i)$, 
whence the result for the increasing $n$-sequence of norm factors $p^{\rho_n^i}$, 
stationary at some $p^{\rho_{\infty}^i}$, for all $i$ fixed.  

\smallskip
{\bf c}) Using (i), we have $\ds \lim_{n \to \infty} \order (\CH_n^{i+1} / \CH_n^i) = 
p^{c_{\infty}^i} \!\cdot p^{\rho_{\infty}^i}$, where the $i$-sequences 
$p^{c_{\infty}^i}$ and $ p^{\rho_{\infty}^i}$ are decreasing and stationary.
\end{proof}

\subsubsection{Bounds for Iwasawa's invariants}

Assume that $p \geq 3$ splits in $k$ and that the $\Z_p$-extension
$K/k$ is totally ramified at ${\mathfrak p}$ and ${\ov {\mathfrak p}}$
from the layers $K_e$ and $K_{\ov e}$, respectively. 
We will get some $S_k$-unit $y \in \Lbda_n^i$, hence 
a power of $x$ of the form $x^{p^{h}}$, $h \geq 0$,
giving $\delta_{\mathfrak p}(y) = \delta_{\mathfrak p}(x)+ h
= \delta_p(k)+ h$ (Definition \ref{wtdeltapk}), for all $i \geq 1$.
Being independent of the choice of ${\mathfrak p} \in S_k$, we put
$\delta_{\mathfrak p}(y) =: \delta_p(y)$ (see its computation from 
Formulas \ref{normPn}, \ref{ycomput}, \ref{ycomputbis}).

\smallskip 
The forthcoming computations suppose $n$ large enough as soon as we 
consider Iwasawa's invariants; in particular, we assume 
$n \geq \ov e$ to get $K_n^\nr = K^\nr$ and $\order \BN_n(\CH_n) 
= [H_k^\nr : K_n^\nr] = [H_k^\nr : K^\nr] = \order \CH_k \cdot 
p^{-\ov e}$ since $K^\nr = K_{\ov e}$, and we assume
$n \geq \max \big(\expo_k, e + \delta_p(k) \big)$
to obtain the order of $\omega_n(y)$, equal to 
$p^{n - e - \delta_p(y)}$.

\begin{lemma}\label{majorant}
We have $p^{c_n^i} = \ffrac{[H_k^\nr : K^\nr]}{\order \BN_n( \CH_n^i)}
\leq [H_k^\nr : K^\nr]$ for all $i \geq 0$ and $p^{\rho_n^i} = \ffrac{p^{n - e}}
{\order \omega_n (\Lbda_n^i)} \leq p^{\delta_p(y)}$
for all $i \geq 1$ (resp. $\ffrac{p^{n - e}}{\order \omega_n (\Lbda_n^0)}
= p^{n - e}$ for $i = 0$). 
So, $\order (\CH_n^{i+1} / \CH_n^i)$ is bounded by 
$[H_k^\nr : K^\nr]\, p^{n - e}$ for $i = 0$, and by $[H_k^\nr : K^\nr] 
\times p^{\delta_p(y)}$ for all $i \geq 1$.
\end{lemma}

\begin{proof} 
Since $y \in \Lbda_n^i$, for $i \geq 1$, with $\omega_n(y)$ of 
order $p^{n - e - \delta_p(y)}$ (Theorems \ref{mainorder}, 
Theorem \ref{stability}, Remark \ref{order}), this yields $\ffrac{p^{n - e}}
{\order \omega_n (\Lbda_n^i)} \leq \ffrac{p^{n - e}}
{\order \omega_n (\langle\, y\, \rangle)} = p^{\delta_p(y)}$. 
Since $p^{c_{n}^i} \mid [H_k^\nr : K^\nr]$, the conclusion holds. 
The case $i = 0$ gives the upper bounds $[H_k^\nr : K^\nr]$ and $p^{n - e}$, 
respectively.
\end{proof}

Since $\order \CH_n^1 = [H_k^\nr : K^\nr]\, p^{n - e}$, 
$\order (\CH_n^2 / \CH_n^1) = p^{\wt \delta_p(k) }$
(Theorem \ref{stability}), and $\order (\CH_n^{i+1} / \CH_n^i) 
\geq p$ for $i \in [2, b_n-1]$, we obtain, from the identity $\order \CH_n 
= \prod_{i = 0}^{b_n-1} \! \order (\CH_n^{i+1} / \CH_n^i)$ and the 
Iwasawa formula, a lower bound for $\order \CH_n$; whence,
for $b_n \geq 2$:
\begin{equation}\label{inequalities}
\left\{\begin{aligned}
v_p ([H_k^\nr & :  K^\nr]) + n - e + \wt \delta_p(k) + (b_n-2) \times 1 \\
 \leq \, &\lambda_p(K/k) \cdot n + \mu_p(K/k)\cdot p^n + \nu_p(K/k) \\
 & \ \ \ \leq (b_n - 1) \cdot v_p ([H_k^\nr : K^\nr]) + n - e + \wt \delta_p(k) 
+ (b_n - 2) \cdot \delta_p(y).
\end{aligned}\right.
\end{equation}

Since $\delta_p(y) := 
\delta_p(k) + \max(0, \ov e- v_p(\hp))$ 
(apply \eqref{ycomput}, \eqref{ycomputbis}
when $n \geq \ov e$), and since in the simplest
case $b_n = 1$, $\wt \delta_p(k) = 0$ and $\order \CH_n = 
[H_k^\nr : K^\nr]\, p^{n-e}$ ($\lambda_p(K/k) = 1$, $\mu_p(K/k) = 0$, 
$\nu_p(K/k) = v_p( \hk) - e - \ov e$), we deduce, after 
some obvious transformations:

\begin{theorem}\label{O}
Let $k$ be an imaginary quadratic field and let $p \geq 3$ be a prime number 
split in $k$. Let $K$ be a $\Z_p$-extension of $k$ such that ${\mathfrak p}$, 
${\ov {\mathfrak p}}$ are ramified from some layers $K_e$, $K_{\ov e}$, 
respectively, and put $\nu_K^{} := v_p ([H_k^\nr : K^\nr]) = 
v_p(\hk) - \ov e$. Then, for all $n \gg 0$ and if $b_n \geq 2$:

\smallskip
(i) The Iwasawa invariants $\lambda_p(K/k)$, $\mu_p(K/k)$ and $\nu_p(K/k)$, 
satisfy the property:
\begin{equation}\label{inequalitiesbis}
\left\{\begin{aligned}
\wt \delta_p(k) & + b_n - 2 \leq (\lambda_p(K/k) - 1)\, n
+ \mu_p(K/k)\, p^n + \nu_p(K/k) + e - \nu_K^{} \\
& \leq  (b_n - 1) \times \,\wt \delta_p(k) 
+ (b_n - 2) \times \big[\max(\ov e, v_p(\hp)) - \ov e \big].
\end{aligned}\right.
\end{equation}

(ii) If $\CH_k = 1$ and $e = \ov e = 0$ ($\wt \delta_p(k) = \delta_p(k)$,
total ramification), the inequalities become
$\delta_p(k) + b_n - 2 \leq (\lambda_p(K/k) - 1)\, n
+ \mu_p(K/k)\, p^n + \nu_p(K/k) \leq  (b_n - 1) \times \delta_p(k)$.
\end{theorem}

One may compare with some forthcoming class field theory
proofs obtained in Section \ref{gold}. 

\begin{corollary}\label{plateau}
Assume $\lambda_p(K/k) \geq 2$ or $\mu_p(K/k) \geq 1$. Then
for $n \gg 0$ fixed, there exist intervals $I_j := [i_j, i_j+ \Delta_j]$, 
with $\Delta_j = O(n)$, such that $\order (\CH_n^{i+1} / \CH_n^i)$
is constant on $I_j$.
\end{corollary}

\begin{proof}
By assumption, we have $b_n \geq O(n)$. Consider $[1, b_n]$ as 
disjoint union of intervals $I_j := [i_j, i_j+ \Delta_j]$, $\Delta_j \geq 0$, 
on which $\order (\CH_n^{i+1} / \CH_n^i) = C_j$, with $C_{j+1} > C_j$; 
since the $C_j$'s are bounded by an explicit constant (Lemmas \ref{lem1}, 
\ref{majorant}), the result is obvious.
\end{proof}

Regarding the nature of the algorithm described in \S\,\ref{inclusions}\,(i),
the Borel--Cantelli probability of such a phenomenon tends to $0$ as
$n$ tends to $\infty$, which enforces Conjecture \ref{pfinitude}.
In the semi-simple totally real case, and $p$ totally split in $k$, we 
have proven, in \cite[Th\'eor\`eme 6.3]{Gra2017b}, the inequalities:
$b_n \leq \lambda_p(k^\cyc/k)\, n+ \mu_p(k^\cyc/k)\, p^n + \nu_p(k^\cyc/k)
\leq  b_n \times v_p(\CT_k(p))$, for $n \gg 0$, and similarly we have
conjectured, for $k$ fixed, the finiteness of primes $p$ such that $\CT_k(p) \ne 1$.
For $\CT_k(p) = 1$, we find again well-known result.

\begin{remarks}
(i) $\lambda_p(K/k) = 1$ and $\mu_p(K/k) = 0$ if and only if 
$b_n$ is bounded. 

\smallskip
(ii) If $K = k^\cyc$ and $p$ splits in $k$, the norm factors, for $i = 0$, 
in the relative extensions $K_m/K_n$, have orders of the form 
$p^{m-n}$, while these orders are $\ffrac{p^{m-n}}
{(E_n : E_n \cap \BN_{m/n}(K_m^\times)}$ for $K \ne k^\cyc$;
moreover, $\CH_n$ never capitulates in $k^\cyc$ (see  
\S\,\ref{obstructions} and Remark \ref{noncap}). So, large 
$\lambda_p(k^\cyc/k)$'s may occur since $\mu_p(k^\cyc/k) = 0$
in the abelian case.

\smallskip
(iii) From our point of view, for any $n \geq 1$ and $i \geq 2$, the 
filtration does not only depend on canonical elements, as the 
$S_k$-units of $k$, but of random groups $\Lbda_n^i$.

\smallskip
By nature, Iwasawa's theory does not give an algorithm likely to 
``construct'' $\lambda, \mu, \nu$ and remains purely algebraic. 
Many heuristics are suggested for $\lambda_p(k^\cyc/k)$ as that 
of Ellenberg--Jain--Venkatesh \cite{EJV2011} using random matrices.
\end{remarks}

\section{Generalizations of Gold's criterion}\label{gold}

\subsection{The fundamental relation \texorpdfstring{$\order 
\wt \CH_k = \order (\CH_{K_n}^2/\CH_{K_n}^1)$}{Lg}} 

We focus on the elements $\CH_{K_n}^1$ and $\CH_{K_n}^2$ of the 
filtrations; they are somewhat canonical and bring a maximal information 
on the $\CH_{K_n}$'s in a general setting about ramification in $K/k$ 
and about the $p$-class group of~$k$; we will see that the order of the 
quotient gives the order of the logarithmic class group.

\begin{theorem}\label{stability}
Let $k$ be a totally $p$-adic imaginary quadratic field of class number 
$\hk$. Let $K$ be a $\Z_p$-extension distinct from $L$ and $\ov L$,
with ${\mathfrak p}$ and ${\ov {\mathfrak p}}$ totally ramified from $K_e$ and 
$K_{\ov e}$, respectively, $e \geq \ov e$ (cf. Diagram \ref{maindiagram}); 
let $p^{\expo_k}$ be the exponent of $\CH_k$ and let $\hp 
\mid \hk$ be the order of $\Bcl({\mathfrak p})$. Assume
$n \geq \max \big(\expo_k,  e + \delta_p(k) \big)$
in the two following claims (i) and (ii):

\smallskip
(i) The order of the logarithmic class group $\wt \CH_k$ is given by the order of 
the jump $\CH_{K_n}^2/\CH_{K_n}^1$, whatever $K/k$ as defined above.

\smallskip
(ii) The condition $\CH_{K_n}^2 / \CH_{K_n}^1 = 1$ 
(i.e. $\CH_{K_n} = \CH_{K_n}^1$, $b_n = 1$) holds if and only if 
$\wt \CH_k = 1$; hence, $\CH_{K_n}^2 = \CH_{K_n}^1$ is equivalent to 
satisfy the two conditions $\delta_p(k) = 0$ and $\CH_k^{S_k} = 1$. 

\smallskip
(iii) If $\CH_{K_n}^2 = \CH_{K_n}^1$, the Iwasawa formula is
$\order \CH_{K_n} = p^{n + v_p(\hk) - e - \ov e}$,
for all $n \geq e$.
\end{theorem}

\begin{proof}

{\bf a)} {\bf  Computation of the first two elements of the filtration}.
Consider Formulas \eqref{filtration} for $i \leq 1$, still using the
previous simplified notations; for the moment we are not making 
any hypothesis on $n$:
\begin{equation*}
\left\{\begin{aligned}
\order \CH_n^1 & = \order \BN_n(\CH_n) \times p^{\max(0,n - e)}, \\
\order (\CH_n^2/\CH_n^1) & = \frac{\order \BN_n(\CH_n)}
{\order \BN_n(\CH_n^1)} \times  \frac{p^{\max(0,n - e)}}
{\order \omega_n(\Lbda_n^1)}, \\
\Lbda_n^1& = \{a \in k^\times,\ \, (a) = \BN_n({\mathfrak A}_n),\, 
\Ccl_n({\mathfrak A}_n) \in \CH_n^1 \}.
\end{aligned}\right.
\end{equation*}

We have the following classical exact sequence, where $\BJ_n$ is the 
transfer map and $\CH_n^\ram$ the subgroup of $\CH_n$ generated 
by the classes of the products of ramified primes $\CP_{\!\!n} 
:= {\mathfrak P}_n^1\cdots {\mathfrak P}_n^{p^{g_n}}$, with 
${\mathfrak P}_n^j \!\mid\! {\mathfrak p}\! \mid\! p$ in $K_n/k/\Q$, 
and $\CP'_{\!\!n} := {\mathfrak P}'^1_n \cdots 
{\mathfrak P}'^{p^{\ov g_n}}_n$, with ${\mathfrak P}'^j_n 
\!\mid\! {\ov {\mathfrak p}}\! \mid\! p$ in $K_n/k/\Q$ \,\footnote{\,\label{Kprime}
To avoid any ambiguity with complex conjugation $\tau : K_n \mapsto \ov K_n$ 
(since for instance $\ov {\mathfrak P}_n$ is in $\ov K_n$, above 
${\ov {\mathfrak p}}$, but not above ${\ov {\mathfrak p}}$ in $K_n$), we use
the notations ${\mathfrak P}_n \mid {\mathfrak p}$ in $K_n$,
${\mathfrak P}'_n \mid {\ov {\mathfrak p}}$ in $K_n$.
On the other hand, $p^e$, $p^{\ov e}$, $p^f$, $p^{\ov f}$, $p^g$, $p^{\ov g}$ 
are associated to the decomposition of ${\mathfrak p}$, ${\ov {\mathfrak p}}$ in 
$K_n$; for $n \geq e$, they do not depend on $n$ and since ramification indices 
are $p^{n - e}$, $p^{n - \ov e}$, we have the classic formula $p^{f+g} = p^e$ and 
$p^{\ov f+ \ov g} = p^{\ov e}$.}:
\begin{equation}\label{ambigeES}
1 \to \BJ_n(\CH_k) \cdot \CH_n^\ram \too \CH_n^1 \too 
E_k \cap \BN_n(K_n^\times)/\BN_n(E_n) \to 1. 
\end{equation}

Since $p$ totally split in the imaginary quadratic field $k$,
one obtains the identity:
$$\CH_n^1 = \BJ_n(\CH_k) \cdot \CH_n^\ram =: \Ccl_n(\CI_n^1)$$

\noindent 
for a suitable ideal group $\CI_n^1$ built with extensions $\BJ_n({\mathfrak a}_j)$
of ideals ${\mathfrak a}_j$ of $k$ generating $\CH_k$ and with the products 
$\CP_{\!\!n}$, $\CP'_{\!\!n}$. Taking the norm, we obtain:
$$\BN_n(\CH_n^1) = \CH_k^{p^n} \cdot \BN_n(\CH_n^\ram) = \CH_k^{p^n} \cdot 
\BN_n(\Ccl_n \langle\, \CP_{\!\!n}, \CP'_{\!\!n}  \,\rangle). $$ 

Let $\CI_k =: \langle {\mathfrak a}_1, \ldots , {\mathfrak a}_r \rangle$, 
$r = \rk_p(\CH_k) \geq 0$, be a subgroup of prime-to-$p$ ideals of $k$, 
whose classes generate $\CH_k$; in other words, the ideal group 
$\CI_k^1$ of $k$, leading to $\BN_n(\CI_n^1)$ and to $\BN_n(\CH_n^1)$ 
for the computation of $\Lbda_n^1$ (\S\,\ref{inclusions}(ii)), is of the form:
\begin{equation}\label{directsum}
\BN_n (\CI_k^1) := \CI_k^{p^n} \oplus 
\BN_n(\langle\, \CP_{\!\!n}, \CP'_{\!\!n}  \,\rangle)
= \langle {\mathfrak a}_1^{p^n}, \ldots , {\mathfrak a}_r^{p^n} \rangle \oplus 
\BN_n(\langle\, \CP_{\!\!n}, \CP'_{\!\!n}  \,\rangle). 
\end{equation} 

Note that if ${\mathfrak p}$ is one of the ${\mathfrak a}_i$'s or
divides it, one may find a prime-to-$p$ representative of the class; 
in other words we have a direct sum for ideals, but not necessarily 
for the corresponding class groups. 
Then ${\mathfrak a}_j^{p^n} = (a_j^{p^{n - {\epsilon}_j}})$, $a_j \in k^\times$,
where $p^{{\epsilon}_j} \leq p^{\expo_k}$ is the order of $\Ccl({\mathfrak a}_j)$, 
under the condition $n \geq \expo_k$.
So, $\langle\, (a_1^{p^{n - {\epsilon}_1}}), \ldots , (a_r^{p^{n - {\epsilon}_r}})\,\rangle 
\subset \BN_n(\CI_{\!n}^1)$, and these $a_j^{p^{n - {\epsilon}_j}}$ are 
by definition a part of the generators of $\Lbda_n^1$ that will be completed 
by some $S_k$-unit $y$ of minimal valuation, such that
$(y) \in \BN_n(\langle\, \CP_{\!\!n}, \CP'_{\!\!n}\,\rangle)$
for which we will compute the order of $\omega_n(y)$.

But, for $n$ large enough, $\omega_n(a_j^{p^{n - {\epsilon}_j}}) = 
\omega_n(a_j)^{p^{n - {\epsilon}_j}}$ would be of order less than 
$p^{\,{\epsilon}_j}$ in the cyclic group $\Omega_n$ of order $p^{\max (0, n - e)}$; 
whence, the computation of the norm factor for $n \geq e$, where only 
the $S_k$-unit $y$ will intervene since $\omega_n(y)$ will have the large 
order $p^{n - e - \delta_p(y)}$ (Theorem \ref{mainorder} and relation \ref{order}) 
for a constant $\delta_p(y)$ computed below (see \eqref{ycomput},
\eqref{ycomputbis}):
\begin{equation}\label{deltay}
\ffrac{p^{n - e}}{\order \omega_n (\Lbda_n^1)} = 
\ds \ffrac{p^{n - e}}{\order \omega_n \big(\langle \ldots, 
a_j^{p^{n - {\epsilon}_j}}, \ldots ; y \rangle \big)} = \ds \ffrac{p^{n - e}}
{\order \omega_n (\langle y \rangle )} = p^{\delta_p(y)}
\end{equation}

All this is due to the fact that these $\delta_p$'s come from random
``Fermat quotients'', very small and conjecturally zero for almost
primes $p$ (see Remark \ref{borel-cantelli} and Conjecture \ref{pfinitude}).

\medskip
{\bf b)} {\bf Computation of $y$ and order of $\omega_n(y)$}.
Since ${\mathfrak p}$ is totally ramified in $K/K_e$ and unramified in 
$K_e/k$, we have $\BJ_n ({\mathfrak p}) = \CP_n^{\,p^{\max(0, n-e)}}$;
similarly, $\BJ_n ({\ov {\mathfrak p}}) = \CP'^{\,p^{\max(0, n - \ov e)}}_n$.
Taking the norm, we get
$\BN_n(\CH_n^\ram) = \langle\, \Ccl({\mathfrak p}^{p^{\min(n, e)}}), 
\Ccl({\ov {\mathfrak p}}^{p^{\min(n, \ov e)}})\,\rangle
= \langle\,\Ccl({\ov {\mathfrak p}}^{p^{\min(n, \ov e)}})\,\rangle = 
\langle\,\Ccl({\mathfrak p}^{p^{\min(n, \ov e)}})\,\rangle$
since $e \geq \ov e$ and ${\mathfrak p} {\ov {\mathfrak p}} = (p)$. 
Whence:
\begin{equation}\label{normPn}
\BN_n(\CH_n^\ram) = \langle\,\Ccl( {\mathfrak p}^{p^{\min(n, \ov e)}})\,\rangle, 
\ {\rm and}\  \BN_n(\CH_n^1) = 
\langle\,\Ccl( {\mathfrak p}^{p^{\min(n, \ov e)}})\, \rangle
\  {\rm for}\  n \geq \expo_k.
\end{equation}

So, $\omega_n(\Lbda_n^1)$ will be generated by the image of a suitable 
$S_k$-unit $y$, of minimal valuation, {\it  following by definition the necessary 
condition $(y) \in \BN_n(\CI_{\!n}^1)$ to be in $\Lbda_n^1$}.

\smallskip
One has $\BN_n(\CP_n) = {\mathfrak p}^{p^e}$ and
$\BN_n(\CP'_n) = {\ov{\mathfrak p}}^{\,p^{\ov e}}$, with $(x) = 
{\mathfrak p}^{\hp}$ and $(\ov x) = {\ov{\mathfrak p}}^{\,\hp}$.
Thus, if $v_p(\hp) \leq \ov e$, then $\ov x_{}^{p^{\ov e-v_p(\hp)}}$
satisfies the conditions; otherwise, when $v_p(\hp) \geq \ov e$,
the minimal power giving a solution is $p^{v_p(\hp)}$ and gives  $y = \ov x$. 

\smallskip
Whence, $y = \ov x^{\,p^{\max(0, \ov e - v_p(\hp))}}$ is the minimal solution
yielding to (for $n$ large enough):
\begin{equation}\label{ycomput}
\left\{\begin{aligned}
\delta_p(y) & = \delta_p(k) + 0,\ & 
\hbox{if $v_p(\hp) \geq \ov e$}, \\ 
\delta_p(y) & = \delta_p(k) + 
\big [\ov e - v_p(\hp) \big],
\ & \hbox{if $v_p(\hp) < \ov e$}. 
\end{aligned}\right.
\end{equation}

This formula may be written, when $n \geq e \geq \ov e$:
\begin{equation}\label{ycomputbis}
\delta_p(y) = \wt \delta_p(k) - v_p(\BN_n(\CH_n)) +
\max(0, \ov e - v_p(\hp)),
\end{equation}

\noindent
since $\Gal(H_k^\nr /K^\nr) = \Gal(H_k^\nr / K_{\ov e})$
is the norm of $\CH_n$ in $K_n/k$.

\smallskip
Whence the formulas, where ``\,$m \geq \delta_p(x) = \delta_p(k)$\,'' 
in the proof of Theorem \ref{mainorder} translates here to 
``\,$n - e \geq \delta_p(y)$\,''. So, we must have
$n \geq  e + \delta_p(y)$.
Taking into account \eqref{normPn}, the two cases in \eqref{ycomput}, 
and $[K_n^\nr : k] = [K^\nr : k] = p^{\ov e}$, leads to (with \eqref{deltay}):
\begin{equation}\label{formulas}
\left\{\begin{aligned}
\order \CH_n^1 & = [H_k^\nr : K^\nr] \times p^{n - e} =
[\knr : K^\nr] \times \order \CT_k \times p^{n - e};  \\
\order (\CH_n^2/\CH_n^1) & = 
\frac{[H_k^\nr : K^\nr]}{\order \langle \,\Ccl({\mathfrak p}^{p^{\ov e}})\, \rangle}
\times p^{\delta_p(y)}  \\
 & = \left\{\begin{aligned}
& \frac{[H_k^\nr : K^\nr]}{p^{v_p(\hp)} \cdot [K^\nr : k]^{-1}}
\times p^{\delta_p(k)}\ \, \big[{\rm if}\ v_p(\hp) \geq \ov e \big]  \\
& \frac{[H_k^\nr : K^\nr] }{1} \times
\frac{[K^\nr : k]}{p^{v_p(\hp)}} \times p^{\delta_p(k)} \ \, 
\big[{\rm if}\ v_p(\hp) < \ov e\big]
\end{aligned}\right. \\
& = \frac{[H_k^\nr : k]} {p^{v_p(\hp)}} \times p^{\delta_p(k)} 
 = \frac{\order \CH_k}{p^{v_p(\hp)}} \times p^{\delta_p(k)} 
 = \order \wt \CH_k.
\end{aligned}\right.
\end{equation}

This final formula implies a complete proof of (i) and (ii); point (iii)
comes from the fact that $\CH_k = \langle \,\Ccl({\mathfrak p})\,\rangle$,
so the the condition $n \geq \expo_k$ does not intervene
(see \eqref{directsum}).
\end{proof}

Some conditions, equivalent to the triviality of $\wt \CH_k$, often required 
in the literature, are nothing else than the stability from $i = 1$ giving
$\CH_n = \CH_n^1$, for all $n$ large enough, hence, $\mu_p(K/k) = 0$
and $\lambda_p(K/k) = 1$. 

\smallskip
A consequence of expressions \eqref{formulas} is that $\order \CH_n^1$ and 
$\order \CH_n^2$ do not depend on the $\Z_p$-extension $K/k$, assumed to 
be ramified at the $p$-places from some explicit layers (which applies in particular 
to $k^\cyc$ and $k^\acyc$). Of course, it will be necessary to study $\order \CH_n^3$ 
to see if it is random or if some general property holds for all $K/k$ or not.
We shall give more details and consequences of the above theorem, to 
improve some classical results.

\begin{remarks} \label{lognonsplit}
Consider the case where $p$ does not split in $k$; then $\wt k/k$ is totally 
ramified from $\knr = \wt k \cap H_k^\nr = k_{\wt e}^\acyc$ for some 
$\wt e \geq 0$. The prime ideal ${\mathfrak p} \mid p$ in $k$ is $p$-principal 
($v_p(\hp) = 0$), and then totally splits in $H_k^\nr$.
We then have the following properties:

(i) It is immediate that $H_k^\lc = k^\cyc H_k^\nr$, giving $\wt \CH_k \simeq 
\Gal(H_k^\lc / k^\cyc) \simeq \CH_k$. The previous Theorem \ref{stability} applies, 
defining $\delta_p(k) = 0$, which corresponds, in some sense, to the convention 
$\log_{\mathfrak p}(p) = \log_{\ov {\mathfrak p}}(p) = 0$ in \S\,\ref{log}, since the 
Fermat quotient $\delta_p(k) = \frac{1}{p} (\ov x^{\,p-1} -1)$ is, when it 
makes sense, related to $\frac{1}{p} \log(\ov x)$, and in the non-split case,
$x = \ov x = p$.

(iii) We can define $H_k^\lac$, the maximal abelian locally anti-cyclotomic 
pro-$p$-extension of $k$. Similarly, $H_k^\lac = k^\acyc H_k^\nr$, then 
$\Gal(H_k^\lac/k^\acyc) \simeq \CT_k$.

\smallskip
There is some interest in studying capitulation in the real non-split case 
of ordinary $p$-classes since one obtains exactly Greenberg's criterion 
\cite[Theorem 1]{Gree1976}, and to extend this study in the imaginary case
even if the logarithmic class group seems more suitable. 

\smallskip
It will be interesting to test capitulations in the first layers of the 
$\Z_p$-extensions in the imaginary case; this needs to be able to conjugate the 
logarithmic classes to calculate the algebraic norm of $\wt \CH_n$ giving
$\Bnu_n(\wt \CH_n) = \BJ_n \circ \BN_n(\wt \CH_n) = \BJ_n(\wt \CH_k)$ in the 
surjective case.
\end{remarks}

\subsection{Gold's criterion and Gold--Sands improvement}

\subsubsection{The \texorpdfstring{$\lambda$}{Lg}-stability theorem in the totally 
ramified case}

To give an interesting application, we will use our theorem of $\lambda$-stability
proved in \cite{Gra2022}, applied to the modules $\CX_n = \CH_{K_n}$ 
and to $X_k = E_k$, defining the number $\lambda := 
\max(0, \order S_k - 1 - \rho_k)$ (not to be confused with the Iwasawa 
invariant $\lambda_p(K/k)$), with $\rho_k = \rk_{\Z}^{}(X_k) = 0$, thus giving 
$\lambda = 1$ for totally $p$-adic imaginary quadratic fields, otherwise 
$\lambda = 0$.

\smallskip
We have the following properties of $\lambda$-stability in any totally 
ramified $\Z_p$-extension $K/k$, without making any assumption 
on $\CH_k$ \cite[Theorem 3.1]{Gra2022}:

\begin{theorem}
Let $k$ be an imaginary quadratic field. Let $\lambda := \order S_k - 1$.
Assume $p \geq 3$ and let $K/k$ be a totally ramified $\Z_p$-extension. 
We have $\order \CH_{K_n} = \order \CH_k \cdot p^{\lambda \cdot n}$ for all 
$n \geq 0$ if and only if $\order \CH_{K_1} = \order \CH_k \cdot p^{\lambda}$. 
If this property is satisfied, we have, for all $n \geq 0$, $\CH_{K_n} = \CH_{K_n}^{G_n}$, 
$\BJ_{K_n/k}(\CH_k) = \CH_{K_n}^{p^n}$ and $\Ker(\BJ_{K_n/k}) = 
\BN_{K_n/k}(\CH_{K_n}^{[p^n]})$, where $\CH_{K_n}^{[p^n]} := 
\{c \in \CH_{K_n},\ c^{p^n}=1\}$. 
\end{theorem}

This is useful in the split case of $p$ in $k$ ($\lambda = 1$) since, 
in $\wt k/k$, the $p+1$ first layers are numerically accessible, using 
Kummer's theory and reflection principle in $k(\mu_p^{})$, giving 
possibly infinitely many $\lambda_p(K/k) = 1$ as soon as $K_1/k$ is 
ramified at the two $p$-places and $\order \CH_{K_1} = \order \CH_k \cdot p$. 
See Section\,\ref{initial}.

\subsubsection{Other \texorpdfstring{$\Z_p$}{Lg}-extensions such that
\texorpdfstring{$\order \CH_n = \order \CH_k\! \cdot\! p^n$, for all
$n \geq 0$}{Lg}}

We obtain a variant of Gold's criterion which applies to all totally ramified
$\Z_p$-extensions $K/k$, assuming that $\CH_k = \big\langle\,
\Ccl({\mathfrak p})\, \big\rangle$ (hence $\order \wt \CH_k = p^{\delta_p(k)}$):

\begin{theorem}\label{goldplus}
Let $k$ be an imaginary quadratic field and let $p \geq 3$ split in $k$. 
We assume that the $S_k$-class group $\CH_k^{S_k}$ is trivial
(whence, $v_p(\hp) = v_p(\hk)$). Let $K/k$ be a $\Z_p$-extension, 
totally ramified at the $p$-places ($e = \ov e = 0$).
Then, the two following properties are equivalent:

\smallskip
\quad $(i)$ $\delta_p(k) = 0$;

\smallskip
\quad $(ii)$ $\order \CH_{K_n} = \order \CH_k \cdot p^n = 
p^{n + v_p(\hp)}$, for all $n \geq 0$.
\end{theorem}

\begin{proof}
Since $p$ totally ramifies in $K/k$ with $\CH_k = \langle\,\Ccl({\mathfrak p})\,\rangle$, 
the condition on $n$ in Remark \ref{orderomega} is simply $n\geq 0$ since 
$e = \ov e=0$; we recover that $\order \CH_n^1 = \order \CH_k \times p^n$ 
from Chevalley--Herbrand formula, and that $\order (\CH_n^2/\CH_n^1) = \ds
\ffrac{\order \CH_k}{\order \BN_n(\CH_n^1)} \times 
\ffrac{p^n}{\order \omega_n(\Lbda_n^1)}$, where one verifies that
$\Lbda_n^1$ is generated by $x$ since (from \eqref{ambigeES})
$\BN_n(\CH_n^1) = \langle \Ccl({\mathfrak p}) \rangle^{p^n} \BN_n(\CH_n^\ram) 
= \langle \Ccl({\mathfrak p}) \rangle$, 
giving $\Lbda_n^1 = \{a \in k^\times,\ (a) \in \BN_n(\CH_n^1)\} = 
\langle \,x \, \rangle$ and $\order \langle \,\omega_n(x)\, \rangle
= p^{\max (0,n-\delta_p(k))}$ (Theorem \ref{mainorder} and 
Relation \ref{order}); whence $\ds \ffrac{p^n}{\order \omega_n(\Lbda_n^1)}
= p^{\min (0,n-\delta_p(k))+\delta_p(k)}$, thus giving, for all $n \geq 0$:
\begin{equation}\label{E1} 
\left\{\begin{aligned}
{a)} \ \  \order \CH_n^1 & = p^{n+v_p(\hp)}, \\
{b)} \ \  \order \CH_n^2 & = \order \CH_n^1 \times  
p^{\min (0,n-\delta_p(k))+\delta_p(k)} .
\end{aligned}\right.
\end{equation}

(i) If $\delta_p(k) = 0$, $\order \CH_n^2 = \order \CH_n^1$ from 
\eqref{E1}\,(b), the filtration stops, and then $\order \CH_n = \order \CH_n^1 
= p^{n + v_p(\hp)}$ for all $n \geq 0$ from (a), whence 
$\lambda_p(K/k) = 1$ and $\nu_p(K/k) = v_p(\hp)$.

\smallskip
(ii) If $\order \CH_n = p^{n + v_p(\hp)}$, for all $n \geq 0$, 
then $\order \CH_n = \order \CH_n^1$ from \eqref{E1}\,(a); this 
stabilization at $i = 1$ implies necessarily $\order \CH_n^2 = \order \CH_n^1 
= p^{n+v_p(\hp)}$, but $\order \CH_n^2/\CH_n^1 = 
p^{\min (0,n-\delta_p(k))+\delta_p(k)}$ from \eqref{E1}(b); taking for 
instance $n = \delta_p(k)$, one gets $\delta_p(k) = 0$, yielding the 
reciprocal and the theorem.
\end{proof}

\subsubsection{Improvement of some classical results 
for the cyclotomic extension $k^\cyc$}

We still assume that $p$ splits in $k = \Q(\sqrt{-m})$.
We give before, in ({\bf a}), a result valid for any $\Z_p$-exten\-sion 
$K/k$ totally ramified ($e = \ov e = 0$), in particular $k^\cyc/k$:

\smallskip
({\bf a}) Case $\CH_k = 1$. Equalities \eqref{formulas} become:
\begin{equation*}
\order \CH_n^1 = p^n \ \ \ \& \ \ \ 
\order \CH_n^2 = p^{n + \delta_p(k)},\ \hbox{for all $n \geq 0$}, 
\end{equation*}

\noindent
and this is the framework of Gold criterion (as particular case of Theorem 
\ref{goldplus} when $v_p(\hk) = v_p(\hp) = 0$)
dealing, at the origin, with the cyclotomic $\Z_p$-extension $k^\cyc$ and 
Iwasawa's invariants $\lambda_p(k^\cyc/k)$, $\nu_p(k^\cyc/k)$, since 
$\mu_p(k^\cyc/k) = 0$ in this abelian case:

\begin{corollary}[Gold's criterion \cite{Gold1974}] \label{G1974}
Let $k^\cyc$ be the cyclotomic $\Z_p$-extension of $k = \Q(\sqrt{-m})$ 
and assume that $\CH_k = 1$. Then $\lambda_p(k^\cyc/k) = 1\, $ $\&$ 
$\, \nu_p(k^\cyc/k) = 0$, if and only if $\delta_p(k) = 0$. If so, $\CH_{K_n} 
= \langle\, \Ccl_n({\mathfrak P}_n)\,\rangle$, ${\mathfrak P}_n \mid (p)$
in $K_n$, of order $p^n$, for all $n \geq 0$.
\end{corollary}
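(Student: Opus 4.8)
The plan is to obtain Corollary \ref{G1974} as the specialization of Theorem \ref{goldplus} to $K = k^\cyc$ under the extra hypothesis $\CH_k = 1$, so that the filtration computation has already been done. First I would record the simplifications forced by the hypotheses. Since $k^\cyc/k$ is totally ramified at $\mathfrak p$ and $\ov{\mathfrak p}$, we are in the case $e = \ov e = 0$. Since $\CH_k = 1$ we have $p \nmid h_k$, hence $v_p(h) = 0$ and $\wt\delta_p(k) = \delta_p(k)$; moreover the hypothesis ``$\CH_k = \langle\,\Ccl(\mathfrak p)\,\rangle$'' of Theorem \ref{goldplus} holds trivially. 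Thus the target order $p^{n+v_p(h)}$ appearing there is simply $p^n$.

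With these reductions, Theorem \ref{goldplus} yields at once the equivalence
$$\delta_p(k) = 0 \ \iff\ \order \CH_{K_n} = p^n \ \hbox{ for all } n \geq 0 .$$
It then remains to translate the right-hand condition into the Iwasawa invariants. If $\order\CH_{K_n} = p^n$ for all $n$, the growth is linear with no constant term, so $\lambda_p(k^\cyc/k) = 1$, $\mu_p(k^\cyc/k) = 0$ and $\nu_p(k^\cyc/k) = 0$, which gives one implication. For the converse I would not quote the ``for all $n$'' form directly but argue through \eqref{trivial}: using the known vanishing $\mu_p(k^\cyc/k) = 0$, the hypotheses $\lambda_p = 1$ and $\nu_p = 0$ give $\order\CH_{K_n} = p^n$ for $n \gg 0$; but \eqref{trivial} gives $\order\CH_n^2 = p^{n+\delta_p(k)}$ for $n \geq \delta_p(k)$, and $\CH_n^2 \subseteq \CH_{K_n}$ forces $p^{n+\delta_p(k)} \leq p^n$ for large $n$, whence $\delta_p(k) = 0$.

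For the final structural assertion, assume $\delta_p(k) = 0$. Then \eqref{trivial} gives $\order\CH_n^2 = \order\CH_n^1$, so the decreasing sequence of successive quotients of the filtration is already trivial at $i = 1$; hence $\CH_{K_n} = \CH_{K_n}^1 = \CH_{K_n}^{G_n}$, of order $p^n$. I would then read the generators off the genus-theoretic exact sequence \eqref{exact}: with $\CH_k = 1$ the transfer term $\BJ_n(\CH_k)$ is trivial and the unit cokernel vanishes (the units of $k$ being roots of unity only), so $\CH_{K_n}^1 = \CH_n^\ram = \langle\,\Ccl_n(\mathfrak p_n),\ \Ccl_n(\ov{\mathfrak p}_n)\,\rangle$, the two primes being totally ramified. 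The relation $\mathfrak p\,\ov{\mathfrak p} = (p)$, principal, together with the action of complex conjugation sending $\mathfrak p_n$ to $\ov{\mathfrak p}_n$, collapses these two classes to the single generator $\Ccl_n(\mathfrak p_n)$; since the group has order $p^n$, it is cyclic, equal to $\langle\,\Ccl_n(\mathfrak p_n)\,\rangle$.

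Since all the computation is carried by Theorem \ref{goldplus} and formula \eqref{trivial}, I expect no serious obstacle. The only two points needing a little care are the passage from the ``for all $n$'' order formula to the asymptotic invariants in the converse, which is exactly where the independently known fact $\mu_p(k^\cyc/k) = 0$ enters, and the verification that the two ramified-prime classes in the genus subgroup reduce to the single generator $\Ccl_n(\mathfrak p_n)$.
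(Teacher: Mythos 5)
Your reduction and the main equivalence follow exactly the paper's own route: the paper obtains Corollary \ref{G1974} precisely as the particular case $\CH_k = 1$ of Theorem \ref{goldplus}, via the specialization \eqref{trivial} of formulas \eqref{formulas}, and invokes the known vanishing $\mu_p(k^\cyc/k)=0$ to pass between the order formula $\order \CH_{K_n} = p^n$ and the invariants $\lambda_p = 1$, $\nu_p = 0$. Your handling of the converse is in fact more careful than the paper's: since $\lambda_p=1$, $\mu_p=0$, $\nu_p=0$ only give $\order \CH_{K_n} = p^n$ for $n \gg 0$, one cannot literally quote the ``for all $n \geq 0$'' hypothesis of Theorem \ref{goldplus}, and your detour through \eqref{trivial} together with the inclusion $\CH_n^2 \subseteq \CH_{K_n}$ closes that gap cleanly.

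One step in your final structural assertion does not follow as you state it. From $\mathfrak p\,\ov{\mathfrak p} = (p)$ you only get, in $K_n$, that $(\mathfrak p_n \ov{\mathfrak p}_n)^{p^n} = (p)$, i.e.\ $\big(\Ccl_n(\mathfrak p_n)\,\Ccl_n(\ov{\mathfrak p}_n)\big)^{p^n} = 1$; and knowing $\Ccl_n(\ov{\mathfrak p}_n) = \Ccl_n(\mathfrak p_n)^{\tau}$ does not by itself place $\Ccl_n(\ov{\mathfrak p}_n)$ inside $\langle\, \Ccl_n(\mathfrak p_n)\,\rangle$. What is needed, and what is special to $K = k^\cyc$, is that the ideal $\mathfrak p_n \ov{\mathfrak p}_n$ is itself principal: it is the extension to $K_n$ of the prime of $\Q_n^\cyc$ above $p$, which is generated by $\BN_{\Q(\mu_{p^{n+1}})/\Q_n^\cyc}(1-\zeta_{p^{n+1}})$ --- the fact underlying Lemma \ref{ordercyc}, and invoked verbatim as ``the principality of $\mathfrak p_m \ov{\mathfrak p}_m$'' in the paper's proof of Theorem \ref{sands}. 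Equivalently, since the $p$-class group of the real subfield $\Q_n^\cyc$ is trivial, every class $c \in \CH_{K_n}$ satisfies $c^{1+\tau} = \BJ_{K_n/\Q_n^\cyc}\circ \BN_{K_n/\Q_n^\cyc}(c) = 1$, so $\tau$ acts by inversion and the two ramified classes generate the same cyclic group. With this one-line supplement your argument for $\CH_{K_n} = \langle\, \Ccl_n(\mathfrak p_n)\,\rangle$ is complete.
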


The case $\CH_k = 1$, often considered in the literature, may be replaced by 
the interesting Sands remark \cite[Proposition 2.1, Theorem 3.3, Corollary 3.4]
{San1993}, only using the assumption $v_p(\hp) = 0$ (equivalent 
to $\order \wt \CH_k = \order \CH_k \times p^{\delta_p(k)}$), as follows:

\smallskip
({\bf b}) Case $v_p(\hp) = 0$. Properties of the filtration in 
the case $v_p(\hp) = 0$ and $K = k^\cyc$ give the 
Gold--Sands criterion; as we will see, this property is very specific of $k^\cyc$:

\begin{theorem}\label{sands}(Gold--Sands criterion).
Let $p \geq 3$ split in $k = \Q(\sqrt{-m})$ of class number $\hk$. 
Let $k^\cyc$ be the cyclotomic $\Z_p$-extension of $k$. When 
$\Ccl({\mathfrak p}) = 1$ in $\CH_k$ (i.e. $v_p(\hp) = 0$), then 
$\lambda_p(k^\cyc/k) \geq 2$ if and only if $v_p(\hk) \geq 1$ 
or $\delta_p(k) \geq 1$, whence if and only if $\wt \CH_k \ne 1$.
\end{theorem}

\begin{proof}
(i) Set $K = k^\cyc$ and prove that (under the condition 
$v_p(\hp) = 0$) $\wt \CH_k \ne 1$ implies 
$\lambda_p(K/k) \geq 2$ (note that $\lambda_p(K/k) \geq 1$ 
from Chevalley--Herbrand formula).

\smallskip
For this, assume that 
$\order \wt \CH_k = \order \CH_k \cdot p^{\delta_p(k)} \ne 1$ 
and $\lambda_p(K/k) = 1$ to get a contradiction:

\smallskip
Let $m \gg n \gg 0$, whence $\order \CH_m := \order \CH_{K_m} 
= p^{m + \nu}$, $\order \CH_n := \order \CH_{K_n} = p^{n + \nu}$, 
with $\nu = \nu_p(K/k)$. 
One considers the relative formulas for the filtration of $\CH_m$ with respect 
to $G_{m/n} := \Gal(K_m/K_n)$, $\omega_{m/n} := \omega_{K_m/K_n}$, and 
so on for the norms $\BN_{m/n}$ and transfers $\BJ_{m/n}$; let $\CH_m^{(1)}$ 
and $\CH_m^{(2)}$ be the first two elements of the filtration regarding $G_{m/n}$. 
We use the fact that, for all $n$, the unit group $E_n := E_{K_n}$ of 
$K_n = k\Q_n$ is that of $\Q_n$ (hence the group of cyclotomic units)
for which $\omega_{m/n}(E_{\Q_n}) = 1$ and $E_{\Q_n} \cap 
\BN_{\Q_m/\Q_n}(\Q_m^\times) = \BN_{\Q_m/\Q_n}(E_{\Q_m})$, 
then the principality of ${\mathfrak P}_m \ov {\mathfrak P}_m$, 
equal to the unique prime ideal of $\Q^\cyc_m$ above $p$:
\begin{equation*}
\left\{\begin{aligned}
\order \CH_m^{(1)} & = \order \CH_n \times 
\frac{p^{m-n}}{\order \omega_{m/n}(E_n)} = p^{(n+\nu) + (m-n)} = p^{m + \nu}, \\
\CH_m^{(1)} & = \BJ_{m/n}(\CH_n) \cdot \CH_m^\ram = 
\BJ_{m/n}(\CH_n) \cdot \langle\ \Ccl_m({\mathfrak P}_m)\ \rangle, 
\end{aligned}\right.
\end{equation*}
giving $\BN_{m/n} (\CH_m^{(1)}) = \CH_n^{p^{m-n}}\!\! \cdot 
\langle\ \Ccl_n({\mathfrak P}_n)\ \rangle = \langle\, \Ccl_n({\mathfrak P}_n)\, \rangle$
for $m \gg n$, and the following  inequality about the case $i = 1$ of the relative 
filtration: 
$$\order \big( \CH_m^{(2)}/\CH_m^{(1)} \big) = 
\frac{\order \CH_n}{\order \langle\, \Ccl_n({\mathfrak P}_n)\, \rangle}  
\times \frac{p^{m-n}}{\order \omega_{m/n}(\Lbda_{m/n}^1)} \geq  \frac{p^{n + \nu}}
{\order \langle\, \Ccl_n({\mathfrak P}_n)\, \rangle} \times \frac{p^{m - n}}{p^{m - n}}. $$

But ${\mathfrak P}_n^{p^n} = \BJ_n({\mathfrak p})$ is $p$-principal since
$v_p(\hp) = 0$ and $\Ccl_n({\mathfrak P}_n)$ is of order 
a divisor of $p^n$; then we get $\order \big( \CH_m^{(2)}/\CH_m^{(1)} \big) 
\geq p^\nu$; then $\order \CH_m^{(2)} \geq \order \CH_m^{(1)} \cdot p^\nu 
= p^{m + 2 \nu}$, whence the inequality $p^{m + \nu} = 
\order \CH_m \geq \order \CH_m^{(2)} \geq  p^{m + 2 \nu}$, 
which implies $\nu = 0$ and $\order \CH_n = p^n$, for all $n \gg 0$; but 
using the filtration with respect to $G_n = \Gal(K_n/k)$ gives: 
$$\order \CH_n = p^n \geq \order \CH_n^2 = 
\order \CH_n^1 \cdot \order \CH_k \,p^{\delta_p(k)}
= \order \CH_k \, p^n \cdot \order \CH_k \,  p^{\delta_p(k)}
= (\order \CH_k)^2 \times p^{n + \delta_p(k)}, $$

\noindent 
from \eqref{formulas} since $v_p(\hp) = 0$, whence 
$(\order \CH_k)^2 \times p^{\delta_p(k)} = 1$ (contradiction).

\smallskip
(ii) For the reciprocal (i.e. $\lambda_p(K/k) \geq 2$ implies 
$\order \wt \CH_k = \order \CH_k \times  p^{\delta_p(k)} \geq p$), 
let's assume $\lambda_p(K/k) \geq 2$ and prove that 
$\CH_k \ne 1$ or $\delta_p(k) \ne 0$ by contradiction:

\smallskip
$\bullet$ Assume $\CH_k = 1$; then, usual Gold's criterion 
(Corollary \ref{G1974}) implies $\delta_p(k) \geq 1$. 

\smallskip
$\bullet$ If $\delta_p(k) = 0$, then the hypothesis $\CH_k = 1$ of Gold's 
criterion can not hold, otherwise we would have $\lambda_p(K/k) = 1$;
so $\CH_k \ne 1$.
\end{proof}

\begin{remarks}
(i) Contrary to our elementary class field theory proof, 
the main part in the Sands improvement of Gold's criterion \cite[proof 
of Proposition 2.1 and Remark 2.2]{San1993}, comes from the deep 
general analytic result of Federer--Gross--Sinnott saying that the 
leading term of the characteristic polynomial for the basic unramified 
$p$-adic Iwasawa module of $K$ is, in our case (up to a $p$-adic unit),
$\hk \cdot\frac{1}{p}\log_p(x) \cdot T$ \cite[Proposition 3.9, 
Corollary 5.8 to Proposition 5.6]{FGS1981}, which is clearly 
related to $\order \wt \CH_k = \order \CH_k \cdot p^{\delta_p(k)}$.

\smallskip
(ii) The filtration may be somewhat important, giving large 
$\lambda_p(k^\cyc/k)$'s (examples in Dum\-mit--Ford--Kisilevsky--Sands 
\cite{DFKS1991}). For the three examples of $\lambda_p(k^\cyc/k) \geq 2$ 
given in \cite[Section 1, (4)] {Oza2001}, we compute $\order \wt \CH_k = 
p^{\wt \delta_p(k)} = p^{\delta_p(k)}$, in the case $v_p(\hp) = 0$, 
and check that $\delta_p(k) \ne 0$ ($\delta_p(k) = 1$ for $k = \Q(\sqrt{-52391})$, 
$p = 5$; $\delta_p(k) = 2$ for $k = \Q(\sqrt{-1371})$, $p = 7$; $\delta_p(k) = 3$ for 
$k = \Q(\sqrt{-23834})$, $p = 3$). In the case $k = \Q(\sqrt{-239})$ and $p = 3$, for 
which $\order \CH_k = 3$, we find $\wt \delta_p(k) = 1$, even if ${\mathfrak p}$ is 
$3$-principal and $\delta_p(k) = 0$ (thus $\wt \delta_3(k) = 
\delta_3(k) + v_3(\hk) - v_3(\hp) = 1$, as expected). 
Statistical results on  $\lambda_p(k^\cyc/k)$, regarding $\rk_p(\CH_k)$, are 
given in Ray \cite{Ray2023}, varying imaginary quadratic fields $k$, for $p$ fixed.
\end{remarks}

\subsection{Structure of the \texorpdfstring{$p$}{Lg}-class groups in 
\texorpdfstring{$k^\acyc$}{Lg}}

We have privileged the most difficult case $p$ split in $k$ in the study of the 
$\Z_p$-extensions of $k$; in that case the filtrations of the $p$-class groups 
depend on the class factor and on the more tricky norm factor. In the non-split 
case, the norm factor becomes trivial, and, both the formula of Chevalley--Herbrand  
and that giving $\order (\CH_n^2/\CH_n^1)$ allow immediate proofs of some 
interesting results, as that of Kundu--Washington \cite[Theorem 6.1]{KW2023}, 
for the anti-cyclotomic $\Z_p$-extension. Nevertheless we can obtain 
more general results, whatever the decomposition of $p$ in $k$:

\begin{theorem}\label{noncyclic}
Let $k^\acyc$ be the anti-cyclotomic $\Z_p$-extension of $k$, for $p \geq 3$. 
We assume that $\CH_k$ is cyclic non-trivial and that the $p$-Hilbert class 
field $H_k^\nr$ is not contained in $k^\acyc$. Put $k^\acyc \cap H_k^\nr 
= k^\acyc_e$, $e \geq 0$. Then, for all $n \geq e + 1$, the $p$-class group 
$\CH_{k^\acyc_n}$ is not cyclic.
\end{theorem}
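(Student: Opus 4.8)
The plan is to fix $n\ge n_0+1$, write $\CH_n:=\CH_{k^\acyc_n}$ and $G_n:=\Gal(k^\acyc_n/k)=\langle\sigma_n\rangle$, and to exploit that $k^\acyc_n/\Q$ is dihedral, so that complex conjugation $\tau$ acts on $\CH_n$ with $\tau\sigma_n\tau^{-1}=\sigma_n^{-1}$. Since $p>2$, one has the eigenspace splitting $\CH_n=\CH_n^+\oplus\CH_n^-$, where $\CH_n^\pm:=\{c\in\CH_n:\ c^\tau=c^{\pm1}\}$. A nontrivial cyclic $p$-group is $\tau$-indecomposable, so it suffices to show that \emph{both} $\CH_n^+$ and $\CH_n^-$ are nontrivial. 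The decisive structural fact, valid whatever the decomposition of $p$, is that the rational prime $p$ is fixed by $\tau$, so its ramification in $k^\acyc_n/\knr$ is $\tau$-symmetric and feeds the $+$ part.

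First I would dispose of $\CH_n^-$. By Remark \ref{orderomega}, $\order\BN_{k^\acyc_n/k}(\CH_n)=[H_k^\nr:k^\acyc_n\cap H_k^\nr]=[H_k^\nr:\knr]=\order\CH_k/[\knr:k]$, which is $\ge p$ \emph{precisely because} $H_k^\nr\not\subseteq k^\acyc$, i.e.\ $\knr\subsetneq H_k^\nr$. As $\tau$ acts by inversion on $\CH_k$ (so $\CH_k=\CH_k^-$) and the norm is $\tau$-equivariant, this nontrivial image lies in the minus part; hence $\CH_n^-$ surjects onto it and $\CH_n^-\ne1$ for every $n\ge n_0$. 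Consequently, were $\CH_n$ cyclic it would be forced to equal $\CH_n^-$, so $\tau$ would act as $-1$ on all of $\CH_n$.

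The heart of the proof is to show $\CH_n^+\ne1$ for $n\ge n_0+1$, contradicting $\tau=-1$. Let $\mathcal P_n:=\prod_{\mathfrak P\mid p}\mathfrak P$ be the squarefree product of the primes of $k^\acyc_n$ above $p$; it is stable under $\Gal(k^\acyc_n/\Q)$, in particular $\tau$-fixed, so $\Ccl_n(\mathcal P_n)\in\CH_n^+$. I would prove $\Ccl_n(\mathcal P_n)\ne1$ by applying the Chevalley--Herbrand ambiguous class number formula to the totally ramified extension $k^\acyc_n/\knr$ while tracking the $\langle\tau\rangle$-action: the full ramification of $p$, which is genuine exactly when $n>n_0$, contributes to the fixed-point formula an ambiguous class that lies in the $+$ eigenspace because $p\in\Q$ is $\tau$-fixed, while the finite unit group of the CM field $\knr$ contributes only a prime-to-$p$, minus-type factor that cannot absorb it. In the filtration language, $\Ccl_n(\mathcal P_n)$ generates the $+$ part $K^+$ of the norm kernel $K=\ker\!\big(\BN_{k^\acyc_n/k}\colon\CH_n^{G_n}\to\CH_k\big)$; by \eqref{filtration} this $K$ is nontrivial (its order is $\ge p^{\,n-n_0}$), but the real content is that it is not \emph{purely minus}.

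Granting these two points the argument closes at once: if $\CH_n$ were cyclic then $\tau=-1$ on $\CH_n$ by the minus-part step, so the $\tau$-fixed class $\Ccl_n(\mathcal P_n)$ would satisfy $\Ccl_n(\mathcal P_n)^2=1$ and hence, $p$ being odd, be trivial, against $\CH_n^+\ne1$. A convenient cross-check runs through the observation that, for $p$ odd, a cyclic $\CH_n$ already forces $G_n$ to act trivially (the image of $\sigma_n$ in the abelian group $\mathrm{Aut}(\CH_n)$ is a $p$-element equal to its own inverse, hence trivial), so $\CH_n=\CH_n^{G_n}=\CH_n^1$, and the same contradiction reappears through $\CH_n^1\simeq\BJ_n(\CH_k)\cdot\CH_n^{\ram}$ from \eqref{exact}, whose ramified part supplies the nontrivial $\tau$-fixed class. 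I expect the only real obstacle to be the nonvanishing $\CH_n^+\ne1$ (equivalently $\Ccl_n(\mathcal P_n)\ne1$) for \emph{every} $n\ge n_0+1$: unlike the minus part it cannot be seen by a norm down to $\CH_k$ (that norm annihilates it), and its proof requires the genus-theoretic computation above, the delicate steps being the control of the units of $\knr$ and the bookkeeping of the residue degrees of $p$ in $\knr/k$.
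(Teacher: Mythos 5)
Your framework is sound as far as it goes: for $p$ odd the splitting $\CH_n=\CH_n^+\oplus\CH_n^-$ is legitimate, a cyclic $p$-group cannot contain two nontrivial direct summands, your minus-part step is correct ($\BN_{k_n^\acyc/k}$ is $\tau$-equivariant, $\CH_k$ is purely minus, and its norm image $\CH'_k=\Gal(H_k^\nr/\knr)$ is nontrivial precisely because $H_k^\nr\not\subseteq k^\acyc$), and your remark that cyclicity forces $G_n$ to act trivially (abelian automorphism group plus the dihedral relation) is a clean repackaging of the $\tau$-manipulation that ends the paper's proof. The genuine gap is the plus-part step, and it is not a repairable vagueness: the claim $\Ccl_n(\mathcal P_n)\ne 1$ is \emph{false} in general, so no $\tau$-equivariant refinement of Chevalley--Herbrand can prove it. Concretely, take $p$ inert in $k$, $n_0=0$, and suppose $\CH_k$ does not capitulate in $k_1^\acyc$; this is realized by the paper's own data for $m=262$, $p=3$ (Appendix \ref{P7} and Remark \ref{remB1}\,(iii): $\CH_k\simeq\Z/3\Z$, $k_1^\acyc/k$ totally ramified, $\CH_{k_1^\acyc}\simeq\Z/27\Z\times\Z/9\Z$, no capitulation). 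There Chevalley--Herbrand gives $\order\CH_1^{G_1}=\order\CH_k$, while injectivity of $\BJ_1$ gives $\order\BJ_1(\CH_k)=\order\CH_k$; hence $\CH_1^{G_1}=\BJ_1(\CH_k)$, a \emph{purely minus} group. Since $\mathcal P_1$ is the unique prime above $3$, its class is ambiguous and $\tau$-fixed, so it lies in a purely minus group while satisfying $c^\tau=c$; thus $c=c^{-1}$, and $p$ being odd, $\Ccl_1(\mathcal P_1)=1$. In other words, the ambiguous class group can be exhausted by $\BJ_n(\CH_k)$, and what absorbs the ramified classes is not the unit factor you worry about but the \emph{class} factor $\BJ_n(\CH_k)$, which your sketch ignores. (Two smaller slips: for $n_0\ge1$ the field $\knr=k_{n_0}^\acyc$ is not CM and its unit group has rank $p^{n_0}-1$, not finite.)

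The missing idea is that non-cyclicity is not driven by ramified classes at all, but by strict growth of the filtration at the second step: one must prove $\order(\CH_{n}^{2}/\CH_{n}^{1})>1$, which is the actual content of the paper's proof. Note that every branch of your argument ends, after the automorphism-group reduction, at the same statement $\CH_n=\CH_n^{G_n}=\CH_n^1$ (this is also exactly where ``all minus'' leads, since inversion is central in $\mathrm{Aut}(\CH_n)$), and the only thing that contradicts this statement is $\CH_n^2\supsetneq\CH_n^1$ --- never the ramified class, which, as above, may be trivial. Establishing $\CH_{n_1}^2\ne\CH_{n_1}^1$ at $n_1=n_0+1$ is quick in the non-split case (by \eqref{filtration} and cyclicity of $\CH_k$, since $\BN_{n_1}(\CH_{n_1}^1)=\CH_k^{p^{n_1}}\subsetneq\CH'_k$), but in the split case one must in addition exclude the degenerate possibility $\CH_{n_1}^2=\CH_{n_1}^1$, which can occur when $\Ccl(\mathfrak p)$ generates $\CH_k$; the paper rules this out by the Frobenius/inertia-field argument producing an impossible abelian unramified extension of $k_{n_0}^\acyc$. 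Your proposal contains no substitute for either of these computations, so the contradiction never materializes; had you combined your (correct) eigenspace architecture with the paper's filtration inequality instead of the false ramified-class claim, the proof would close.
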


\begin{proof}
Let $\CH_k{'} := \Gal(H_k^\nr/k^\acyc_e)$; by assumption, $\CH_k{'}$ is 
cyclic and non-trivial.
It suffices to assume that, for the layer $e+1$, $\CH_{k^\acyc_{e+1}}$ is cyclic 
and then to show that this leads to a contradiction, which implies the non-cyclicity 
of the $\CH_{k^\acyc_n}$'s for all $n \geq e+1$ (surjectivity of the norms in the 
totally ramified extensions $k_n^\acyc/k^\acyc_{e+1}$):
\unitlength=0.8cm
\begin{equation}\label{schema2}
\begin{aligned}
\vbox{\hbox{\hspace{-3.6cm}  
\begin{picture}(10.5,5.4)
\put(4.0,4.50){\line(1,0){2.6}}
\put(4.2,2.50){\line(1,0){2.5}}
\bezier{300}(3.5,4.85)(7.0,5.4)(10.5,4.85)
\put(7.0,5.3){\ft$\langle \varphi \rangle$}
\put(8.3,4.50){\line(1,0){2.0}}
\put(10.3,4.4){$H_{k^\acyc_{e+1}}^\nr \!=:\! H_{e+1}^\nr$}
\put(9.0,4.65){\ft$p$}
\put(4.9,4.68){\ft$\langle \ov \varphi \rangle$}
\put(3.50,2.9){\line(0,1){1.35}}
\bezier{200}(4.15,2.7)(4.5,3.5)(4.15,4.3)
\put(4.36,3.44){\ft$\langle \ov \sigma \rangle$}
\put(3.15,3.45){\ft$p$}
\put(3.6,1.5){\ft$p^e$}
\put(3.50,1.0){\line(0,1){1.2}}
\put(7.50,2.9){\line(0,1){1.2}}
\bezier{280}(3.1,0.6)(2.3,2.5)(3.1,4.4)
\put(2.0,2.4){\ft$\langle \sigma \rangle$}
\bezier{200}(3.9,0.5)(5.7,0.5)(7.2,2.1)
\put(6.25,0.9){$\CH_k$}
\put(4.9,2.7){\ft$\CH_k{'} \ne 1$}
\put(6.8,4.4){$k_{e+1}^\acyc H_k^\nr$}
\put(3.2,4.4){$k_{e+1}^\acyc$}
\put(6.8,2.4){$H_k^\nr$}
\put(7.9,2.50){\line(1,0){2.3}}
\put(10.6,2.9){\line(0,1){1.3}}
\put(10.4,2.4){$Z$}
\put(3.2,2.4){$k_e^\acyc$}
\put(3.4,0.40){$k$}
\end{picture}}} 
\end{aligned}
\end{equation}
\unitlength=1.0cm

Put $\CH_{k^\acyc_n} =: \CH_n$ and set $s = 2$ (resp. $s = 1$) if $p$ 
splits in $k$ (resp. if not). Since $p$ is unramified in $k^\acyc_e/k$ 
and totally ramified in $k^\acyc/k^\acyc_e$, we have the following 
formulas in $k^\acyc_{e+1}/k$: 
\begin{equation*}
\left \{\begin{aligned}
\order \CH_{e+1}^1 & = \order \BN_{e+1}(\CH_{e+1}) \cdot p^{s-1} =
\order \CH_k{'} \cdot p^{s-1}, \ \hbox{since $\order \Omega_{e+1} = p^{s-1}$, }\\
\order (\CH_{e+1}^2/\CH_{e+1}^1) & = \frac{\order \BN_{e+1}(\CH_{e+1})}
{\order \BN_{e+1}(\CH_{e+1}^1)} \cdot \frac{\order \Omega_{e+1}}
{\order \omega_{e+1}(\Lbda_{e+1})}
= \frac{\order \CH_k{'}}{\order \BN_{e+1}(\CH_{e+1}^1)} \cdot 
\frac{p^{s-1}}{\order \omega_{e+1}(\Lbda_{e+1})}, \\
\CH_{e+1}^1 & = \BJ_{e+1}(\CH_k) \cdot \CH_{e+1}^\ram ,\ \
\BN_{e+1}(\CH_{e+1}^1) = \CH_k^{p^{e+1}} \cdot 
\langle  \Ccl( {\mathfrak p} )^{p^e} \rangle^{s-1}
\hbox{(from \eqref{normPn})},
\end{aligned} \right .
\end{equation*}

\noindent
where $\CH_{e+1}^\ram = \Ccl\big(\langle\, \CP_{\!\! e+1},
\ov \CP_{\!\! e+1}\,\rangle \big)$, 
$\CP_{\!\! e+1} := \prod_{{\mathfrak P}_{e+1} 
\mid {\mathfrak p}} {\mathfrak P}_{e+1}$ in $k_{e+1}^\acyc/k$
($\ov \CP_{\!\! e+1} = \CP_{\!\! e+1}$ if $s = 1$).

\smallskip
So, in the split case $s = 2$, the above formulas become by cyclicity of $\CH_k$:
\begin{equation*}
\left \{\begin{aligned}
\order \CH_{e+1}^1 & = \order \CH_k{'} \cdot p, \\
\order (\CH_{e+1}^2/\CH_{e+1}^1) & = \frac{\order \CH_k{'}}
{\order \big(\CH_k^{p^{e+1}}\!\! \cdot \langle \Ccl({\mathfrak p})^{p^e} \rangle\big)} 
\cdot \frac{p}{\order \omega_{e+1}(\Lbda_{e+1})} \\
& = \frac{\order \CH_k{'}}
{\order \big(\CH_k{'}{}^p \cdot \langle \Ccl( {\mathfrak p} )^{p^e} \rangle \big)} 
\cdot \frac{p}{\order\omega_{e+1}(\Lbda_{e+1})};
\end{aligned} \right.
\end{equation*}

\noindent
since $\CH_k{'} \ne 1$, this yields $\order (\CH_{e+1}^2/\CH_{e+1}^1) \in \{1, p, p^2\}$.

(i) If $s = 1$, $\order (\CH_{e+1}^2/\CH_{e+1}^1) = 
\ds\ffrac{\order \CH_k{'}}{\order \CH_k{'}{}^p} = p$. 

(ii) Let's examine the case $\CH_{e+1}^2 = \CH_{e+1}^1$ in the split case
(whence $\CH_{e+1} = \CH_{e+1}^1$, {\it assumed cyclic of order 
$\order \CH_k{'} \cdot p$}), yielding $\CH_k{'}{}^p \cdot 
\langle  \Ccl( {\mathfrak p} )^{p^e} \rangle = \CH_k{'}$; this 
means that $\Ccl( {\mathfrak p})^{p^e} $ is a generator of $\CH_k{'}$, 
in other words that $\Ccl({\mathfrak p})$ generates $\CH_k$. 
Thus, $\CH_{e+1} = \CH_{e+1}^1 = \Ccl_{e+1} 
\big (\langle \BJ_{e+1}({\mathfrak p}),
\CP_{\!\! e+1}, \ov \CP_{\!\! e+1} \rangle \big )$; but 
$\BJ_{e+1}({\mathfrak p}) = \CP_{\!\! e+1}^p$, giving
$\CH_{e+1} = \Ccl_{e+1} \big (\langle \CP_{\!\! e+1}, 
\ov \CP_{\!\! e+1} \rangle \big )$ cyclic. 

So, $\Ccl_{e+1} (\ov \CP_{\!\! e+1}) = 
\Ccl_{e+1} (\CP_{\!\! e+1})^u$, for $u$ prime to $p$, and
$\Ccl_{e+1} \big (\CP_{\!\! e+1} \big)$ is a generator of $\CH_{e+1}$.
It follows that the class of all the conjugate ideals ${\mathfrak P}_{e+1} 
\mid {\mathfrak p}$ in $k_{e+1}^\acyc$ are of same order 
$\order \CH_k{'} \cdot p$, and that a Frobenius $\varphi := 
\big (\ffrac{H_{e+1}^\nr/k_{e+1}^\acyc}{{\mathfrak P}_{e+1}} \big)$ 
is of same order and generates $\Gal(H_{e+1}^\nr/k_{e+1}^\acyc)$.

Let $\ov \sigma = \sigma^{p^e}$ be a generator of $\Gal(k_{e+1}^\acyc/k_e^\acyc)$, 
and let $Z$ be the inertia field of ${\mathfrak P}_{e+1}$ in $H_{e+1}^\nr/k$
(see Diagram \ref{schema2}); then $\Gal(H_{e+1}^\nr/Z) \simeq 
\langle \ov \sigma \rangle$ and:
$$\varphi^{\ov \sigma} 
= \Big ( \ffrac{H_{e+1}^\nr/k_{e+1}^\acyc}{{\mathfrak P}_{e+1}} \Big)^{\ov \sigma}
= \Big ( \ffrac{H_{e+1}^\nr/k_{e+1}^\acyc}{{\mathfrak P}_{e+1}^{\ov \sigma}} \Big) 
= \Big ( \ffrac{H_{e+1}^\nr/k_{e+1}^\acyc}{{\mathfrak P}_{e+1}} \Big) = \varphi, $$

\noindent
proving that $\Gal(H_{e+1}^\nr/k_e^\acyc)$ is abelian and that $Z$
does not depend on the choice of ${\mathfrak P}_{e+1}$; thus, 
$Z/k_e^\acyc$ is abelian unramified of degree $\order \CH_k{'}\! \cdot\! p$ 
(absurd). So, $\order (\CH_{e+1}^2/\CH_{e+1}^1)\in \{p, p^2\}$. 

\smallskip
(iii) By the assumed cyclicity of $\CH_{e+1}$, $\CH_{e+1}^2$ and 
$\CH_{e+1}^1$ are cyclic; let $h_2$ and $h_1 = h_2^{p^t}$, $t \in \{1, 2\}$,
be 
generators of $\CH_{e+1}^2$ and $\CH_{e+1}^1$, respectively.
We have the exact sequence, where $\sigma$ is a generator of 
$\Gal(k^\acyc_{e+1}/k)$:
$$1 \to \CH_{e+1}^1 \too \CH_{e+1}^2 \mathop {\tooo}^{1 - \sigma} X 
\to 1,\ \, X \subseteq \CH_{e+1}^1\ \&\ \hbox{$\order X = p^t $}. $$

There exists a $p$-power $a$ such that $h_2^{a p^t}$ is a generator of 
$X \ne 1$, hence of the form $h_2^{b (1 - \sigma)}$, with $p \nmid b$, 
which gives $h_2^{a p^t - b (1-\sigma)} = 1$. 

\smallskip
Let $h \in \CH_{e+1}^2$; then 
$h^{(1-\sigma)^2} = 1$ and $(h^{(1-\sigma)^2})^\tau = h^{[\tau(1-\sigma)^2]} = 1$ 
(with left $\Z[\tau]$-modules laws), but $\tau(1-\sigma)^2 = (1-\sigma^{-1})^2 \tau
= \sigma^{-2} (\sigma - 1)^2 \tau$; this shows that $h^\tau \in \CH_{e+1}^2$. 
Since $\CH_{e+1}^2$ is cyclic and stable by $\tau$, $h_2^\tau = 
h_2^u$, with $p \nmid u$, whence $h^\tau = h^u$ for all $h \in \CH_{e+1}^2$. 
The action of $\tau$ on the relation $h_2^{a p^t - b (1 - \sigma)} = 1$ implies:
$$1 = h_2^{[\tau [a p^t - b (1 - \sigma)]]} 
= h_2^{[a p^t \tau - b(1 - \sigma^{-1})\tau]} 
= h_2^{[[a p^t  - b \sigma^{-1} (\sigma -1)] \,u]}. $$

From the relations $h_2^{[a p^t - b (1 - \sigma)]} = 1$
and $h_2^{[a p^t  - b\sigma^{-1} (\sigma -1)]} = 1$, we deduce that:
$$a p^t - b (1 - \sigma)-[a p^t  - b \sigma^{-1} (\sigma -1)]
= b (1 + \sigma^{-1}) (\sigma -1) $$

\noindent
annihilates $\CH_{e+1}^2$ with $b (1 + \sigma^{-1})$ invertible (absurd 
since $t \in \{1, 2\}$ implies $\CH_{e+1}^2 \ne \CH_{e+1}^1$).
Whence the non-cyclicity of the $p$-class groups of $k_n^\acyc$ for
all $n \geq e+1$.
\end{proof}

The case $e = 0$ corresponds to the total ramification of $k^\acyc/k$, 
with $\CH_k \ne 1$, cyclic, giving $\CH_n$ non-cyclic for all $n \geq 1$; 
it is the context which may be numerically checked (see numerical results 
given by Program \ref{P7} showing that all the hypothesis are necessary).

\subsection{Specificity of \texorpdfstring{$k^\cyc$}{Lg} regarding others
\texorpdfstring{$K$}{Lg}'s} \label{obstructions}

We will try to explain why unit groups, even for the non-Galois  
$\Z_p$-extensions, play a fundamental role for Iwasawa's invariants, contrary
to the unique case $K = k^\cyc$. Indeed, one may ask why Theorem \ref{sands} 
is not general.

\begin{remark}\label{noncap}
Recall that minus parts of $p$-class groups and logarithmic class groups 
of $k$ do not capitulate in $k^\cyc$ for $k$ imaginary, and that the modules 
$\CT_k$ never capitulate. The fact that $\CT_k$ is isomorphic to a subgroup 
of $\CH_k$ (Diagram \ref{maindiagram}) and can not capitulate, contrary to 
$\CH_k$, is not a contradiction since $\BJ_n$ does not necessarily commutes 
with isomorphisms; indeed, $\Ccl_k({\mathfrak a}) \in \CH_k$ 
capitulates in $F/k$ if $({\mathfrak a})_F = (\alpha_F)$, $\alpha_F \in F^\times$; but 
$\Ccl_{k,\infty}({\mathfrak a}) \in \CT_k$ capitulates in $F$ if, for all $N$, 
$({\mathfrak a})_F = (\alpha_{F,p^N})$, $\alpha_{F,p^N} \equiv 1 \pmod{p^N}$,
which implies ${\mathfrak a} = (a_{k,p^N})$ in $k$, for all $N$, hence
$\Ccl_{k,\infty}({\mathfrak a}) = 1$ (injectivity of the transfer $\CT_k \mapsto \CT_F$
under Leopoldt's conjecture).
\end{remark}

We describe the four cases of units groups, where $k$ is abelian real or imaginary:

\smallskip
{\bf a}) {\bf $K = k^\cyc$, for $k$ real}.
Recall that Greenberg's conjecture \cite{Gree1976} ($\lambda_p(K/k) 
= \mu_p(K/k) = 0$ for totally real base field $k$, whatever the splitting of $p$), 
is equivalent to the capitulation of $\wt \CH_k$ in $K$. 
The phenomenon is due to the existence of $E_{K_n}$ of maximal $\Z$-rank 
$[K_n : \Q]-1 = 2 p^n - 1$, giving, conjecturally, trivial norm factors in the 
tower from some layer large enough. Real Greenberg's conjecture has been 
largely checked in Kraft--Schoof--Pagani \cite{KS1995,Pag2022}, for $p = 3$ 
and $p = 2$, respectively, then in \cite{MPS2025}.

\smallskip
{\bf b}) {\bf $K = k^\cyc$, for $k$ imaginary}.
In this case, $E_{K_n} = E_{\Q_n^\cyc}$, and we can say, by abuse of 
language, that, in the norm point of view, $K$ has no proper 
units, thus giving largest $\order \CH_{K_n}^i$'s
(e.g., $\order \CH_{K_m}^{G_{m/n}} = \order \CH_{K_n} \cdot p^{(m-n)\,
(\order S_k-1)}$), and, as we have seen, $\lambda_p(K/k)$ may be large.

\smallskip
{\bf c}) {\bf $K = k^\acyc$ or $K$ is a non-Galois $\Z_p$-extension of 
$k$ imaginary}. 
We have $\rk_\Z(E_{K_n}) = p^n - 1$, and for $n \gg 0$,
$E_{K_n} \cap E_{\Q^\cyc} = E_{\Q^\cyc_{\beta}}$, where $\beta$ depends 
on $K \cap k^\cyc$ (see Appendix \ref{B}); this group of real units 
is not necessarily norm in relative $K_m/K_n$'s and does not matter. 
So, any $K/k$ distinct from $k^\cyc/k$, behaves like the totally real 
case for $k^\cyc/k$ because, still conjecturally, $\omega_{m/n}(E_{K_n}) 
= \Omega_{m/n}$ for $m \gg n \gg 0$.

\smallskip
Numerical examples are given \S\,\ref{verifcap}.

\smallskip
In conclusion, we propose the following conjecture in the semi-simple case
(an essential context, otherwise examples of $\mu_p(K/k) \geq 1$ may be 
obtained; see \S\,\ref{nonzero}): 

\begin{conjecture}\label{mainconj} 
Let $k$ be an imaginary abelian field, and let $p \nmid [k : \Q]$ be a prime totally 
split in $k$. Let $K/k$ be a $\Z_p$-extension, $K \ne k^\cyc$.
If the set $S_{K/k}$ of $p$-places of $k$ ramified in $K/k$ is totally 
ramified, then $\mu_p(K/k) = 0$ and $\lambda_p(K/k) = \order S_{K/k} - 1$. 
\end{conjecture}

It is possible that the condition $\CH_k = 1$ be necessary. In a complete 
general situation, the conjecture may depend on a suitable definition of 
``minimal Iwasawa's invariants''.

\subsection{Additional bibliographical remarks}

In relation with the deep results of Sands \cite{San1991,San1993}, Ozaki 
\cite{Oza2004}, Fujii \cite{Fu2013}, Itoh--Takakura \cite{IT2014} 
and others, there is no contradiction with Conjecture \ref{mainconj}, in the 
imaginary quadratic case, since these authors deduce, from specific 
assumptions (involving properties of $k^\cyc$ in general), that 
$\lambda_p(K/k)$ is less than some integer. A good overview 
of these properties may be found in Fujii's paper \cite{Fu2013} giving results 
for any $\Z_p$-extension of $k$, and in particular about $\mu_p(K/k)$. The 
case of multiple $\Z_p$-extensions is the object of \cite{MO2022, Oka2025}.

\smallskip
The triviality of $\mu_p(K/k)$ is known for some $K/k$'s, as the $\Z_p$-extensions
$L, \ov L$ of Diagram \ref{ramification} (see, e.g., Gillard \cite{Gil1985}, Schneps
\cite{Sch1987} for $p \geq 5$, then Oukhaba--Vigui\'e \cite[Theorem 1.2, Section 7]
{OV2016} for all $p$ by means of the theory of Katz $p$-adic $L$-functions). 

\smallskip
In Ozaki--Minardi \cite{Oza2001,Min1986}, the main theorem claims, in the split 
case under the assumption $\CH_k = 1$, that ``\,$\lambda_p(K/k) = 1$ and 
$\mu_p(K/k) = 0$ for all but finitely many $\Z_p$-extension $K/k$.'' In Kataoka 
\cite[Theorems 1.3, 5.3, 5.9]{Kat2017} many generalizations are proved by means 
of defining a topology and a measure on the set of $\Z_p^i$-extensions of $k$.
In \cite{Hor1987} was proved that there exist infinitely many imaginary 
quadratic fields $k$ in which $p$ does not split, and such that 
$\lambda_p(k^\cyc/k) = 0$; but the reason relies on the non-trivial 
existence of infinitely many fields $k$ such that $p \nmid \hk$; 
whence the result from Chevalley--Herbrand formula. 

\smallskip
In Kundu--Washington \cite{KW2023, KW2024} and in 
Ellenberg--Jain--Venkatesh \cite{EJV2011},
heuristics, statistics and conjectures are given for the Iwasawa 
invariants of $k^\acyc/k$, also in the non-split case, which is of another 
nature compared to our study of the totally $p$-adic case; but this suggests 
that the filtrations are bounded so that $\lambda_p(k^\acyc/k)$ and 
$\mu_p(k^\acyc/k)$ are minimal for these imaginary quadratic fields, contrary 
to the examples of $\mu_p(K^\iw/k) \geq 1$ needing a non semi-simple 
context and a specific topological choice of the $\Z_p$-extension  
(see \S\,\ref{nonzero} and Diagram \ref{pathological}). 
Moreover, their paper gives some computations of the first layer of 
$k^\acyc/k$ and examples of capitulation of $\CH_k$ in $k^\acyc$.

\smallskip
Some studies are of cohomological nature, finding again some 
generalizations of Gold's criterion by means of Massey Products 
depending also on the valuation $\delta_p(k)$ of the Fermat quotient
of the $S_k$-unit.\footnote{\,I would 
like to thank Christian Maire who 
helped me to understand the relationship with Massey Products, 
which are especially useful in inverse Galois problems for pro-$p$ 
groups, and not specifically in class field theory. Class field theory is 
more of a basic model; indeed, this Galois context is now based on the 
work of Scholz-Reichardt, revisited for instance in \cite[Theorem A]{HLM2025} 
from the perspective of Massey Products when $k$ does not contain the 
group $\mu_p^{}$ of $p$-th roots of unity (see also \cite{MRT2024}).}
See for instance Peikai \cite[\S\S\,5.1.1, 5.1.2]{Pei2024} (following works 
of \cite {Sha2007, LSW2023}) applied to imaginary quadratic fields, where 
an algorithm is related to a filtration, by means of norm computations, 
but without any programming, and under numerous restrictive assumptions 
($K = k^\cyc$, $\CH_k = 1$, then one must know in advance that 
$\lambda_p(k^\cyc/k)<p$, the algorithm is valid at the first layer (which 
is very rare), the known result $\mu_p(k^\cyc/k) = 0$ is needed, etc.). 

\smallskip
It is then easy to see an interconnection with the use of the filtrations, 
for which many simplifications arise (in particular for the algorithm of
\S\,\ref{inclusions}\,(i)), and the role played by $\delta_p(k)$ from
the ${\mathfrak p}$-unit $x$ (denoted $\alpha$ in \cite{Pei2024}), 
illustrating the long history of such principles having started with the 
famous ``R\'edei matrices'' synthesized in \cite{Gra2017a}; moreover, 
it is almost certain that many cohomological computations have a relationship
with $\wt \CH_k$.

\smallskip
Whatever the method, we think that, ultimately, practical results use the 
same arithmetic processes, especially the most elaborate points of classical 
number theory, with the common problem of separating accessible 
algebraic informations (in general of hight technicality), from random ones 
leading to inaccessible conjectures.

\smallskip
Many studies are based on the General Greenberg Conjecture 
\cite[\S\,4, Conjecture\,3.5]{Gree1998}, as for instance \cite{Min1986,
Oza2001, Tak2020, Mur2023}, among others; in \cite[Theorem\,1.4]{LiQi2023}, 
some links, between the General Greenberg Conjecture and $p$-rationality 
(see Footnote\ref{prat}), are given in \cite{OT1995}, and improved in 
\cite[Th\'eor\`emes 6, 10]{Jau2024a}. See \cite{Mai2018, RMM2025} for 
generalizations. In \cite{Sto2024} a weaker interpretation of Gold's criterion 
is given from the theory of elliptic curves.

\section{Notion of smooth complexity of a 
\texorpdfstring{$\Z_p$}{Lg}-extension}\label{complexity}

Let's begin by a kind of counterexample to ``smooth complexity'' which 
will allow to test if capitulation problem is or not of particular nature 
(Programs \ref{P6}, \ref{P7}).

\subsection{Iwaswa's construction of \texorpdfstring{$K^\iw/k$}{Lg} 
with \texorpdfstring{$\mu_p(K^\iw/k) > 0$}{Lg}}\label{nonzero}

Let $k_0$ be an imaginary quadratic field and let $p \geq 3$ be a fixed prime. 
Let $\{\ell_1,\ldots, \ell_t\}$, $t \geq 2$, be a set of distinct primes 
$\ell_i \equiv 1 \pmod p$, totally inert in $k_0^\cyc/\Q$.
Let $F$ be a degree-$p$ cyclic field of conductor $\ell_1\cdots \ell_t$ 
and set $k := k_0F$. Since the $\ell_i$'s are inert in $k_0/\Q$, there 
are $t$ ramified primes in $k/k_0$.
Being unique, since the $\ell_i$'s do not split in $k_0/\Q$
the decomposition group $D_{\ell_i} \simeq \Z_p$ of $\ell_i$ in 
$\wt {k_0}/k_0$ is invariant by $\tau$; so $D_{\ell_i} = 
\Gal(\wt {k_0}/k_0^\acyc)$ is the unique solution, 
and the $\ell_i$'s totally split in $k_0^\acyc/k_0$. 

\smallskip
Consider the $\Z_p$-extension $K^\iw := k k_0^\acyc$ of $k$;
in some sense, use Diagram \eqref{ramification} with 
the base-field $k_0$ and a ``translation'' of $k_0^\acyc/k_0$ 
into $K^\iw/k$ with the field $F$. 
\unitlength=0.7cm 
\begin{equation}\label{pathological}
\begin{aligned}
\vbox{\hbox{\hspace{-2.0cm} 
\begin{picture}(10.0,7.1)
\put(3.9,6.4){\line(2,0){4.3}}
\put(4.1,4.1){\line(2,0){4.2}}
\put(3.8,1.5){\line(2,0){4.4}}
\put(3.25,4.04){$k_{0,n}^\acyc$}
\put(8.35,4.0){\ft$K^\iw_n$}
\put(8.35,1.35){$k \!=\! k_0$\ft$F$}
\put(5.3,7.1){\ft$\gamma\! \simeq \!\Z/p\Z$}
\put(4.6,0.25){\ft$p$}
\put(0.8,1.0){\small$\{\ell_i\}_{i\in[1,t]}$}
\put(1.05,0.6){\tiny${\rm inert}$}
\put(10.4,1.45){\small$\{{\mathfrak l}_i\}_{i\in[1,t]}$}
\put(-0.5,4.05){\small$\{{\mathfrak l}_i^j\}_{i\in[1,t];\,j\in[1,p^n]}$}
\put(9.4,4.05){\small$\{{\mathfrak L}_i^j\}_{i\in[1,t];\,j\in[1,p^n]}$}
\bezier{300}(3.8,6.75)(6.0,7.1)(8.2,6.75)
\put(3.50,1.75){\line(0,1){2.1}}
\put(8.45,1.75){\line(0,1){2.1}}
\put(3.65,2.8){\ft$p^n$}
\put(8.55,2.75){\ft$G_n$}
\put(3.50,4.4){\line(0,1){1.8}}
\put(8.45,4.35){\line(0,1){1.85}}
\put(8.3,6.35){\ft$K^\iw \!=\! kk_0^\acyc$}
\put(3.30,6.35){$k_0^\acyc$}
\put(3.28,1.35){$k_0$}
\put(0.6,2.85){\tiny${\rm total\ splitting}$}
\put(1.2,2.5){\tiny${\rm of\ the} \ \ell_i$'s}
\put(4.7,4.28){\tiny$t p^n$ {\rm ramified} ${\mathfrak L}_i^j$}
\put(4.8,1.7){\tiny$t$ {\rm ramified} $\ell_i$}
\put(3.3,1.25){\line(-1,-1){1.0}}
\put(3.05,0.65){\ft$2$}
\put(8.3,1.3){\line(-1,-1){1.0}}
\put(6.9,0.0){\ft$F$}
\put(1.9,0.0){\ft$\Q$}
\put(10.4,0.0){\ft$\Q(\mu_{\ell_1 \cdots \ell_t}^{})$}
\put(7.3,0.08){\line(1,0){3.0}}
\put(2.4,0.08){\line(1,0){4.4}}
\end{picture}}}
\end{aligned}
\end{equation}
\unitlength=1.0cm

The $\Z$-rank of the unit group $E_{k_{0,n}^\acyc}$ is $p^n-1$ and the 
module $\Omega_{K^\iw_n/k_{0,n}^\acyc}$ is isomorphic to 
$(\Z/p\Z)^{t \cdot p^n - 1}$. The Chevalley--Herbrand formula, in 
$K^\iw_n/k_{0,n}^\acyc$, yields (for $\CH_n := \CH_{K^\iw_n}$):
\begin{equation*}
\order \CH_n \geq \order \CH_n^{\,\gamma} = \order \CH_{k_{0,n}^\acyc} 
\times \frac{p^{t \cdot p^n-1}_{}}{\omega(E_{k_{0,n}^\acyc})} 
\geq \frac{p^{t \cdot p^n-1}_{}}{p^{p^n - 1}} = p^{(t-1) \cdot p^n},
\end{equation*}

\noindent
giving $\mu_p(K^\iw/k) \geq t-1$, only by considering the first step of the 
$\gamma$-filtration. Note that, a fortiori, $\order \CT_n$ and $\order 
\wt \CH_n$ have same order of magnitude as $\order \CH_n$.

\smallskip
It will be interesting to understand how the algorithm of \S\,\ref{inclusions}\,(i)
works in that pathological example. In the semi-simple case ($p \nmid [k : k_0]$), 
the notion of $\varphi$-objects in a cyclic $p$-tower, defined in the 1970's 
and revisited in \cite[Section 4]{Gra2023}, leads to effective results,
especially for cyclotomic $\Z_p$-extensions. The non-semi-simple case
remains open; but considering:
$$\CH_n^* := \big\{ c \in \CH_n,\ \BN_{K^\iw_n/K^\iw_{n-1}}(c) = 1\big \}, $$

\noindent
one obtains, for $n \gg 0$, the relation
$v_p\big( \order \CH_n^* \big) = \lambda_p(K^\iw/k) 
+ \mu_p(K^\iw/k) \cdot p^{n-1}\,(p-1)$, 
with, possibly, $\lambda_p(K^\iw/k) = \lambda_p(k_0^\acyc/k_0)$, 
$\mu_p(k_0^\acyc/k_0) = 0$. Then,
$\order \CH_n = \order \CH_k \times \prd_{i = 1}^n\order \CH_i^*$, 
is true for all $n \geq 0$ as soon as $k_0^\acyc/k_0$ is totally ramified. 
Since $\Bnu_{\!K^\iw_i/K^\iw_{i-1}} = \Phi_{\!p^i}(\sigma_i)$, where 
$\Phi_{\!p^i}$ is the $p^i$-th cyclotomic polynomial, this suggests a more 
suitable algebraic interpretation since the $\CH_i^*$'s, annihilated 
by $\Phi_{\!p^i}(\sigma_i )$, are finite $\Z_p[\mu^{}_{p^i}]$-modules.

\smallskip
Many variants of this process are in the literature from Iwasawa's examples \cite{Iw1973}, 
e.g., Hubbard--Washington \cite{HW2018} where numerical computations are done, 
then Ozaki \cite{Oza2004} where $\lambda$ and $\mu$ can be prescribed; in any 
case, the genus theory form of Chevalley--Herbrand formula is needed in suitable 
finite $p$-sub-extensions of $K^\iw_n$ (here $K^\iw_n/k_{0,n}^\acyc$) in which the 
number of tame ramified primes is for instance of order $O(p^n)$, due to splitting. 

\smallskip
See important generalizations by Perbet \cite{Per2011}, Hajir--Maire \cite{HM2019} 
where it is shown that large $\mu$'s are related to the existence of tame primes 
that split completely in the tower.

\subsection{Attempt of definition of the complexity}

We note that for the previous example giving $\mu_p(K^\iw/k) \geq t-1$, inequalities
\eqref{inequalities} show that the number $b_n$ of steps of the algorithm at the layer 
$K^\iw_n$ is of the order of $p^n$, which probably defines the biggest ``complexity'' 
that may exist, the problem being to define this notion. An heuristic, that we have 
analyzed and numerically tested in \cite[Theorems 1.1, 1.2]{Gra2024}, for totally 
ramified cyclic $p$-extensions $M/F$, suggests that if the complexity of $\CH_M$ 
is at a low level and $[M : F] \gg 0$, then, in a not very intuitive way, often $\CH_F$ 
capitulates in $M$. On the contrary, in case of hight complexity, capitulation is 
much less easy; we give with Program \ref{P6} some numerical results.

\smallskip
We have proven in \cite[Corollary 2.11]{Gra2024} a {\it sufficient condition} of 
capitulation from the property that $\Bnu_{\!M/F}(\CH_M) = \BJ_{M/F} \circ \BN_{M/F}
(\CH_M) = \BJ_{M/F}(\CH_F)$ in the totally ramified case giving $\BN_{M/F}(\CH_M) =
\CH_F$ by class field theory; so, forcing $\Bnu_{\!M/F}(\CH_M) = 1$ implies capitulation:

\begin{theorem}\label{maincoro}
Let $M/F$ be a totally ramified cyclic $p$-extension of degree $p^N$, $N \geq 1$, 
of Galois group $G = \langle \sigma \rangle$. Let $b(M) \geq 0$ be the minimal 
integer such that $(\sigma - 1)^{b(M)}$ annihilates $\CH_M$ and let $p^{\expo(M)}$
be the exponent of $\CH_M$. 
Then a sufficient condition of capitulation of $\CH_F$ in $M$ is that:
$\expo(M) \in[1, N - s(M)]$ if there exists $s(M) \in [0, N-1]$ such that
$b(M) \in [p^{s(M)}, p^{s(M)+1}-1]$.
If $M/F$ is not totally ramified, the sufficient condition of capitulation applies to 
$\CH'_F = \BN_{M/F}(\CH_M)$.
\end{theorem}

We propose to state a general definition, from \cite[Definition 2.12]{Gra2024}:

\begin{definition} \label{defsmooth}
We say that $M/F$ is of {\it smooth complexity} when $\expo(M) \leq N - s(M)$
if there exists $s_N \in [0, N-1]$ such that $b_N \in \big [p^{s_N}, p^{s_N+1}-1 \big]$.
If $M/F$ is a $\Z_p$-extension $K/k$, we say that $K/k$ is of {\it smooth 
complexity} if for all $n \gg 0$, $K_n$ is of smooth complexity (i.e. with simplified 
notations, $\expo_n \leq n - s_n$ if there exists $s_n \in [0, n-1]$ such that
$b_n \in \big [p^{s_n}, p^{s_n+1}-1 \big]$, 
where $b_n$ is the length of the filtration of $\CH_n$).
\end{definition}

It remains to be seen whether the invariant $\wt \CH_k$ is suitable for 
analogous capitulation criteria (see Conjecture \ref{mainconj}) and if 
the smoothness of $K/k$ implies that $\lambda_p(K/k)$ and $\mu_p(K/k)$ 
are subject to certain constraints and are possibly minimal. For this, inequalities 
\eqref{inequalitiesbis} allow some discussion for $\Z_p$-extensions of an 
imaginary quadratic field in the conditions of Theorem \ref{O}. See \cite{Gra2021} 
about the notion of complexity applied to Greenberg's conjecture.

\section{Initial layers of the \texorpdfstring{$\Z_p$}{Lg}-extensions of 
\texorpdfstring{$k = \Q(\sqrt {-m})$}{Lg}}\label{initial}

In \cite{HW2010,HW2018,KW2023,KW2024}, Hubbard--Washington, 
Kundu--Washington perform a study of this problem, especially for 
the layers of $k^\acyc$, by means of various approaches including complex 
multiplication and Kummer theory. 
Explicit results are given for $p = 3$ (often assumed non-split in $k$). 

\smallskip
After the first findings by Carroll--Kisilevsky \cite{CK1976},
similar literature may include \cite{Br2007,VanH2016}. We have given in
\cite[Sections 2, 7]{Gra2026} (to which one can refer for some proofs)
the general method and four programs computing this first layer and the 
main invariants of $k$ and $k_1^\acyc$, without any hypothesis. 
Thus, we only recall, here, the practical results giving the Kummer radicals 
needed for our programs with $p = 3$; they are obtained from that of 
the field $M=k(\mu_3^{})$ as follows.

\subsection{Radicals and Galois groups}
Let $\chi$ be the character defining $k$ and let $\omega$ be that of 
$\Q(\mu_3^{})$; the mirror field $k^*$ is of character $\chi^* = \omega\,\chi^{-1}$.
For $H_{M,1}^\pr$ fixed by $\Gal(H_{M,1}^\pr/M)^3$, we have
$\Rad(\hbox{\ft$H_M^\pr$}) := \big\{w M^{\times 3}, 
\ M(\sqrt[3]w) \subseteq H_{M,1}^\pr \big \} = \big\{w M^{\times 3},\ (w) \in
I_M^3 \langle\, \, S_M \,\rangle\big \}$. 

\smallskip
If $K/k$ is a $\Z_3$-extension and if $K_1$ is the first layer in $K/k$, then 
it is defined by means of a degree-$3$ irreducible polynomial $Q \in k[x]$.
But the cyclic $3$-extension $MK_1/M$ is a $3$-ramified Kummer extension 
of the form $MK_1 = M(\sqrt[p] w)$, where the radical $w \in M^\times$ may 
be known from $3$-classes and $S_M$-units of $M$; then one deduces easily 
$Q$ from $w$.

\smallskip
The Galois descent of $M H_{M,1}^\pr/M$ to $H_{k,1}^\pr/k$, gives rise 
to the reflection principle. Let $\gamma = \Gal(M/\Q) \simeq (\Z/2\Z)^2$ 
and $\Gamma := \Gal(H_{M,1}^\pr/M)$ 
as $\gamma$-module; then $\Gal(H_{k,1}^\pr/k) \simeq \Gamma_{\chi}$, 
so that the radical of $M \,H_{k,1}^\pr/M$ is given by the component
$(\Rad(\hbox{\ft$H_{M,1}^\pr$}))_{\chi^*}$ which lies in 
$k^{*\times}/(k^{*\times})^3$. 

Thus, the sub-radical giving 
$M \,k_1^\acyc/M$ is the most tricky part of the algorithm of \S\,\ref{algo}.

\begin{proposition}\label{conjugates}
Let $w \in k^*$ (of conjugate $w'$) be the solution giving the radical, of 
character $\chi^*$, associated to the first layer $k_1^\acyc$ of the anti-cyclotomic 
$\Z_3$-extension of $k$, via the identity $M\,k_1^\acyc = M({\sqrt[3]w})$. 

(i) The first layer $k_1^\acyc$ is given by the irreducible 
polynomial of $\sqrt[3]w+\sqrt[3]{w'}$, and is
$Q^\acyc = x^3 - 3a x - t$, where $a^3 = w w'$ and $t = w + w'$, $\in \Q$. 

(ii) The first layers of two conjugate non-Galois $\Z_3$-extensions of $k$, 
are given by
$Q := x^3-3a x-t$, $Q' := x^3-3a x- t'$, 
$a^3 = w w'$, $t := {\rm Tr}_{k(j)/k}(j w)$, $t' := {\rm Tr}_{k(j)/k}(j^2 w')$.
The traces $t, t'$ are given as follows:

\smallskip
Case $3 \nmid m$. Set $w = \frac{1}{2}(u+v \sqrt{3m})$; then the traces $t, t'$ 
are $\frac{-1}{2}(u \pm 3v \sqrt{-m})$.

Case $3 \mid m$. Set $w = \frac{1}{2}(u+v \sqrt{m/3})$; then the traces $t, t'$ 
are $\frac{-1}{2}(u \pm v \sqrt{-m})$.
\end{proposition}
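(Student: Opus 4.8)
I would treat the whole statement as the exploitation, in degree $3$, of the single algebraic identity
\[
\big(\sqrt[3]{W}+\sqrt[3]{W'}\,\big)^3 = W+W' + 3\,\sqrt[3]{WW'}\,\big(\sqrt[3]{W}+\sqrt[3]{W'}\,\big),
\]
specialised to three pairs $(W,W')$ obtained from the anti-cyclotomic radical element $w$. The plan is first to normalise $w$. By the reflection-principle description preceding the statement, $w$ spans the $\chi^*=\omega\chi^{-1}$–line of $\rad(M\wt k)\subset k^{*\times}/(k^{*\times})^3$, where $M=k(j)$ and $\langle s\rangle=\Gal(M/k)$ with $s(w)=w'$, $s(j)=j^2$. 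The condition $\chi^*(s)=-1$ reads $s(w)\equiv w^{-1}$, i.e. $ww'=N_{k^*/\Q}(w)$ is a cube in $k^{*\times}$; writing $ww'=a^3$ with $a\in k^*$ and noting $a^3\in\Q$, a short computation in the \emph{real} quadratic field $k^*=\Q(\sqrt{3m})$ (the $\sqrt{3m}$–part of $a^3$ must vanish and $r^2+ms^2>0$ for $(r,s)\ne0$) forces $a\in\Q$. Choosing the cube roots $\beta=\sqrt[3]{w}$, $\beta'=\sqrt[3]{w'}$ with $\beta\beta'=a$, the identity gives at once $\alpha:=\beta+\beta'$ as a root of $Q=x^3-3ax-t$ with $t=w+w'={\rm Tr}_{k^*/\Q}(w)\in\Q$, so $Q\in\Q[x]$.

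For part (i) the point is to identify $k(\alpha)$ with $k_1^\acyc$. I would first check irreducibility: since $w$ is a nontrivial class modulo cubes, $[M(\sqrt[3]{w}):M]=3$, whence $[k(\alpha):k]=3$. Next I compute the discriminant of $Q$, namely $\Delta=27(4a^3-t^2)=-27(w-w')^2$, so $\sqrt{\Delta}=3\sqrt{-3}\,(w-w')$; using $w-w'=v\sqrt{3m}$ and $\sqrt{-3}\,\sqrt{3m}=3\sqrt{-m}$ this lies in $k$. Hence $\Gal(Q/k)=A_3$: $k(\alpha)$ is the splitting field of $Q$ over $k$, cyclic of degree $3$, while $k(\alpha)/\Q$ is dihedral $S_3$. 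Finally, since $w>0$ is real and the $\beta,\beta'$ are taken real, complex conjugation $\tau$ fixes $\alpha$; thus $\Gal(k(\alpha)/k)$ is inverted by $\tau$, which is exactly the anti-cyclotomic (pro-dihedral) behaviour, so $k(\alpha)=k_1^\acyc$.

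For part (ii) I would pass to the two diagonal radical classes $jw$ and $j^2w$, which together with $w$ exhaust the four degree-$3$ subextensions of $\wt k[3]/k$ (the fourth, the $\omega$–line spanned by $j$, being $k_1^\cyc$). Since $s(jw)=j^2w'$ and $jw\cdot s(jw)=ww'=a^3$, the extension $M(\sqrt[3]{jw})$ is again Galois over $\Q$ and descends to $k$; taking cube roots with $\sqrt[3]{jw}\,\sqrt[3]{j^2w'}=a$, the generator $\eta:=\sqrt[3]{jw}+\sqrt[3]{j^2w'}$ satisfies $\eta^3-3a\eta-(jw+j^2w')=0$, i.e. $Q=x^3-3ax-t$ with $t=jw+j^2w'={\rm Tr}_{k(j)/k}(jw)$. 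Applying $s$ shows $t\in k$, while $\tau(t)=j^2w+jw'={\rm Tr}_{k(j)/k}(j^2w)=t'\ne t$ exhibits $t\in k\setminus\Q$; hence $Q\notin\Q[x]$, $k(\eta)/\Q$ is non-Galois, and $\tau$ sends $k(\eta)$ to the field $k(\eta')$ defined by $Q'=x^3-3ax-t'$, giving the conjugate pair $K_1,K_1'$.

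It remains to make the traces explicit. Writing $w=\tfrac12(u+v\sqrt{3m})$ (case $3\nmid m$) and using $j+j^2=-1$, $j-j^2=\sqrt{-3}$, $\sqrt{-3}\,\sqrt{3m}=3\sqrt{-m}$ yields
\[
t=jw+j^2w'=\tfrac12\big[{-u}+3v\sqrt{-m}\big]=\tfrac{-1}{2}\big(u-3v\sqrt{-m}\big),\qquad t'=\tfrac{-1}{2}\big(u+3v\sqrt{-m}\big);
\]
and with $w=\tfrac12(u+v\sqrt{m/3})$ (case $3\mid m$), $\sqrt{-3}\,\sqrt{m/3}=\sqrt{-m}$ gives $t,t'=\tfrac{-1}{2}(u\mp v\sqrt{-m})$. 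I expect the genuine difficulty to lie not in these computations nor in the cubic identity, but in step two of the identification: cleanly deducing from the reflection principle \emph{which} radical class produces $k_1^\acyc$ (the pure $\chi^*$–class $w$, fixed by $\tau$) as opposed to the non-Galois layers (the diagonal classes $jw,j^2w$, which $\tau$ interchanges). Once the $\gamma=\Gal(M/\Q)$–eigenspace bookkeeping is pinned down, so that the "$s\mapsto-1$" radical decomposes as the $\omega$–line (cyclotomic) plus the $\chi^*$–line (anti-cyclotomic) and the remaining two lines are the conjugate non-Galois ones, the descent via ${\rm Tr}_{k(j)/k}$ and the polynomial formulas follow mechanically.
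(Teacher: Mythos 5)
Your proposal is correct and takes essentially the same route as the paper: the paper's entire proof is the one-line remark that it suffices to compute the minimal polynomials of $\sqrt[3]{w}+\sqrt[3]{w'}$ (case (i)) and $\sqrt[3]{jw}+\sqrt[3]{j^2 w'}$ (case (ii)) over $k$, which is precisely the cubic identity $\big(\sqrt[3]{W}+\sqrt[3]{W'}\big)^3 = W+W'+3\sqrt[3]{WW'}\big(\sqrt[3]{W}+\sqrt[3]{W'}\big)$ you exploit. The extra material you supply (the normalization $a\in\Q$ from $ww'\in(k^{*\times})^3$, irreducibility, the discriminant/Galois identification of $k_1^\acyc$ versus the conjugate non-Galois layers, and the explicit traces) consists of details the paper leaves implicit, relying on the surrounding reflection-principle discussion of Section \ref{nonkummercase}.
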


\begin{proof}
In cases (i) (resp. (ii)), it suffices to compute the irreducible polynomial of 
$\sqrt[3]{w}+\sqrt[3]{w'}$ over $\Q$ (resp. $\sqrt[3]{jw}+\sqrt[3]{j^2 w'}$ 
over $k$ since $t, t' \in k$).
\end{proof}

\subsection{Algorithm of computation of \texorpdfstring{$\CT_k$}{Lg}}
For these questions of ramification theory, the computation of $\CT_k$ is crucial. 
The following program comes from the general one \cite{Gra2019c} and 
computes the four invariants $\CT_k$, $\CH_k$, $\CT^\bp_k$, $\CW^\bp_k$, 
$k = \Q(\sqrt{-m})$, for {\it any} given prime $p$ and positive square-free integer 
$m \ne 3$ when $p = 3$.

\smallskip
The principle is to compute a ray class group ${\sf Kpnu = bnrinit(k,p^{nu})}$
of suitable modulus ${\sf p^{nu}}$ so that the components of indices $j \in [3, d]$
($d$ being the number of components given by {\sc pari/gp})
give the abelian structure of $\CT_k$ (the components of indices 
$i = 1, 2$ coming from the two independent $\Z_p$-extensions; these
compon ents tend to infinity as $\nu \to \infty$):

\ft\begin{verbatim}
{p=3;m=6789;k=bnfinit(x^2+m);Tk=List;nu=valuation(k.no,p)+2;
Kpnu=bnrinit(k,p^nu);Hknu=Kpnu.cyc;ot=1;d=matsize(Hknu)[2];
for(j=3,d,c=Hknu[j];v=valuation(c,p);if(v>0,listput(Tk,p^v);ot=ot*p^v));
ow=1;if(p==3 & Mod(m,9)==3,ow=3);otbp=ot/ow;print("p=",p);
print("m=",m," Tk=",Tk," Hk=",k.cyc," #Tk^bp=",otbp," #Wk^bp=",ow)} 
m=6789  Tk=[9]  Hk=[6,6,2]  #Tk^bp=3  #Wk^bp=3
\end{verbatim}\ns

One proves in \cite[Theorem~3.4]{Gra2026} from \cite[\S\,I.5.8, p.\,16]{FV2002}, 
that the best modulus in the imaginary quadratic case is $p^{\expo_k+2}$, where 
$p^{\expo_k}$ is the exponent of $\CH_k$; to simplify, the program uses 
$p^{v_p(\hk) + 2}$. This does not hold in the real case.

\subsection{Algorithm of computation of \texorpdfstring{$k_1^\acyc$ --}{Lg}
Program for \texorpdfstring{$p = 3$}{Lg}}\label{algo}\label{nonkummercase}

The method is based on the properties of the $\Log_p$-function leading to 
$[H_k^\nr \cap \wt k : k] = [\log_p(I_k \otimes \Z_p) : \log_p(P_k \otimes \Z_p)]$
(for $k = \Q(\sqrt{-m})$, $\Log_p = \log_p := (\log_{\mathfrak p}, \log_{\ov {\mathfrak p}}$), 
where $\log_p({\mathfrak a}) = \frac{1}{h}\log_p(\alpha)$ for ${\mathfrak a}^h = 
(\alpha)$,$\alpha \in k^\times$).

\smallskip
All the details about $\Log_p$-functions and Artin groups in the 
extension $H_k^\pr/k$ are recalled in \cite[Section\,2]{Gra2026}.
Whence the diagram, where $\CA^-_k = \Gal(H_k^\pr/k)^-$,
whose elements are the minus components 
of Artin symbols $\big (\ffrac {H_k^\pr/k}{{\mathfrak a}}\big)$, 
${\mathfrak a} \in I_k \otimes \Z_p$:
\unitlength=0.75cm
\begin{equation}\label{DLog}
\begin{aligned}
\vbox{\hbox{\hspace{-1.5cm}  
\begin{picture}(11.5,5.5)
\put(8.3,4.50){\line(1,0){2.2}}
\put(4.0,4.50){\line(1,0){2.3}}
\put(4.3,2.50){\line(1,0){2.3}}
\bezier{350}(4.0,4.9)(7.3,5.5)(10.6,4.9)
\put(7.2,5.35){\ft$\CT_k$}
\put(3.50,2.85){\line(0,1){1.4}}
\put(3.50,0.8){\line(0,1){1.3}}
\put(7.50,2.85){\line(0,1){1.4}}
\bezier{280}(3.2,0.6)(2.1,2.5)(3.2,4.4)
\put(-0.2,2.4){\ft$\Log_p(I_k \!\otimes\! \Z_p)^-$}
\put(4.1,3.4){\ft$p \,\Log_p(I_k \!\otimes\! \Z_p)^-$}
\bezier{280}(3.8,2.7)(4.2,3.4)(3.8,4.4) 
\bezier{200}(3.9,0.5)(5.7,0.5)(7.5,2.0)
\put(6.25,0.9){\ft$\CA^-_k/(\CA^-_k)^p$}
\bezier{200}(7.8,4.2)(9.3,3.7)(10.8,4.2)
\put(9.0,3.55){\ft$\CT_k^p$}
\bezier{600}(11.3,4.0)(11.4,0.0)(3.9,0.35)
\put(9.8,1.05){\ft$\CA^-_k$}
\bezier{300}(7.8,2.5)(9.8,2.6)(11.0,4.2)
\put(9.7,2.7){\ft$(\CA^-_k)^p$}
\put(10.6,4.4){$H_k^\pr$}
\put(6.5,4.4){${k^\acyc} H_{k,1}^\pr$}
\put(3.4,4.4){${k^\acyc}$}
\put(6.7,2.4){$H_{k,1}^\pr$}
\put(3.2,2.3){$k_1^\acyc$}
\put(3.4,0.3){${k}$}
\put(3.64,1.44){\ft$p$}
\end{picture}}} 
\end{aligned}
\end{equation}
\unitlength=1.0cm

In an algorithmic point of view, we will take prime ideals ${\mathfrak q}$, 
above a prime number $q$, so that the above condition
$\log_3({\mathfrak q})^- \equiv 0 \pmod {3^{\expo_k+2}}$ be satisfied. This 
implies {\it the splitting of ${\mathfrak q}$ in $k_1^\acyc$}.
More precisely, for a choice of $w$ in the list of Kummer radicals 
(${\sf Lw}$ in the program),
this defines a cubic field $K_1 \subset H_k^\pr$, and one
hopes that some ${\mathfrak q}$'s are inert in $K_1$, thus 
eliminating the corresponding radical ${\sf w}$ (inertia being equivalent 
to the irreducibility in $\Q_q[x]$ of the polynomial defining $K_1$). 

\smallskip
Then (if any) the {\it unique} solution ${\sf w^\acyc}$ defines
${\sf k_1^\acyc}$, because all ideals ${\mathfrak q}$, such that 
$\Log_3({\mathfrak q})^- \in 3 \,\Log_3(I_k \otimes \Z_3)^-$, 
are split in this selected cubic field, but not all split in the other 
cubic fields.
Therefore, ${\sf w^\acyc}$ passed the decomposition test for 
all ${\mathfrak q}$, while each of the others ${\sf w}$'s failed for at
least an ideal ${\mathfrak q}$; for 
instance, for $k = \Q(\sqrt{-107})$, the first occurrence is for $q = 79$, 
giving ${\sf Qq = x^3+(76+78*79)*x+(35+5*79)+O(79^2)}$ in $\Q_{79}$, 
eliminating ${\sf w = Mod(12*x-215,x^2-321)} \in k^*$. 

\smallskip
On the contrary, if 
this never holds, for $q$ varying in large interval, this gives a good 
candidate, {\it then the solution giving the first layer $k_1^\acyc$
as soon as it is unique in the list ${\sf L_w}$}.

\smallskip
See Program \ref{P7} and an excerpt of the results. The number $\Val$  
gives the test implying $\log_3({\mathfrak q})^- \in 3 \log_3(I_k \otimes \Z_3)^-$
in Diagram \ref{DLog}. The program also computes $\CH_{k_1^\acyc}$.

\subsection{Capitulations in non-cyclotomic
\texorpdfstring{$\Z_p$-extensions}{Lg}}\label{verifcap}

We obtain general circumstances of (partial) capitulation of $\CH_k$ in the 
first layer of $k^\acyc$ and of the non-Galois $K$'s; it is clear that 
$\CH_k$ may capitulate unconditionally in some larger layer, what 
is difficult to verify numerically. Concerning the subjects of 
capitulation phenomena in unramified extensions and Galois structure of
class groups, we refer to the interesting Bembom Thesis 
\cite{Bem2012} which give many technical properties, and \cite{Th2000}
for capitulations, in the genus fields, of invariants ideals.
We refer to Section \ref{why} for properties of capitulation of the 
logarithmic class group in connection with Greenberg's conjectures.

\subsubsection{Case of the anti-cyclotomic $\Z_p$-extension
with $p$ split in $k$}

For $k^\acyc/k$ we assume the total ramification because the capitulation of 
$\CH_k$ in the Hilbert's class field may occur in $k^\acyc \cap H_k^\nr$, as 
shown by some examples with Program \ref{P8} and $m \in$\,$\{$$362$, $367$,
$\ldots$$\}$ for $p = 3$ (besides, this is going in the right direction, but a total 
ramification is more convincing):

\begin{theorem}\label{capitule}
Assume that $p \geq 3$ splits in $k = \Q(\sqrt{-m})$, that $k^\acyc/k$ is totally ramified, that 
$\CH_k \simeq \Z/p\Z$ and $\CH_{k_1^\acyc}^{} \simeq \Z/p\Z \times \Z/p\Z$.
Then $\CH_k$ capitulates in $k_1^\acyc$. For $p = 3$, this holds for 
$m \in$$\{$$302$, $602$, $617$, $713$, $863$, $1007$, $1046$, 
$1427$, $1454$, $\ldots$$\}$.
\end{theorem}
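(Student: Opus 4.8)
The plan is to exploit the fact that, under these hypotheses, the filtration of $\CH_{k_1^\acyc}$ halts at its first step, and then to locate the transfer $\BJ_{k_1^\acyc/k}(\CH_k)$ relative to the ramified part. Set $\CH_1 := \CH_{k_1^\acyc}$ and $G := \Gal(k_1^\acyc/k) \simeq \Z/3\Z$. Total ramification of $k^\acyc/k$ means $\knr = k$ (equivalently $k^\acyc \cap H_k^\nr = k$, i.e. $e=\ov e=0$), so both $\mathfrak p$ and $\ov{\mathfrak p}$ are totally ramified in $k_1^\acyc/k$; hence $\mathfrak p\, O_{k_1^\acyc} = \mathfrak p_1^{\,3}$ and $\ov{\mathfrak p}\, O_{k_1^\acyc} = \ov{\mathfrak p}_1^{\,3}$ for the unique primes $\mathfrak p_1 \mid \mathfrak p$, $\ov{\mathfrak p}_1 \mid \ov{\mathfrak p}$. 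First I would apply the Chevalley--Herbrand formula: with $\order \CH_k = 3$, two ramified places each of index $3$, and trivial unit index, it gives $\order \CH_1^{G} = \frac{3\cdot 3\cdot 3}{3} = 9 = \order \CH_1$. Therefore $\CH_1 = \CH_1^1 = \CH_1^{G}$, and by the exact sequence \eqref{exact} (whose right-hand term is trivial in the totally $p$-adic case) one gets $\CH_1 = \BJ_{k_1^\acyc/k}(\CH_k)\cdot\CH_1^\ram$ with $\CH_1^\ram = \langle\, \Ccl_1(\mathfrak p_1),\, \Ccl_1(\ov{\mathfrak p}_1)\,\rangle$.

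Next I would analyze the norm $\BN_{k_1^\acyc/k}$. Since $k_1^\acyc \cap H_k^\nr = k$, Remark \ref{orderomega} gives $\order \BN_{k_1^\acyc/k}(\CH_1) = [H_k^\nr : k] = 3$, so $\BN_{k_1^\acyc/k}$ is onto $\CH_k$ with kernel of order $3$. As $\BN_{k_1^\acyc/k}\circ\BJ_{k_1^\acyc/k}$ is the cube map, trivial on $\CH_k \simeq \Z/3\Z$, the subgroup $\BJ_{k_1^\acyc/k}(\CH_k)$ lies in $\ker \BN_{k_1^\acyc/k}$. The decisive point is the dichotomy on $\Ccl(\mathfrak p)$: I claim $\Ccl(\mathfrak p) \ne 1$. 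Indeed, if $\Ccl(\mathfrak p)=1$, then $\BN_{k_1^\acyc/k}(\Ccl_1(\mathfrak p_1)) = \Ccl(\mathfrak p)=1$, and likewise $\BN_{k_1^\acyc/k}(\Ccl_1(\ov{\mathfrak p}_1)) = \Ccl(\ov{\mathfrak p}) = \Ccl(\mathfrak p)^{-1}=1$ (using $\mathfrak p\ov{\mathfrak p}=(3)$); hence $\CH_1^\ram \subseteq \ker \BN_{k_1^\acyc/k}$ as well, so $\CH_1 = \BJ_{k_1^\acyc/k}(\CH_k)\cdot\CH_1^\ram \subseteq \ker \BN_{k_1^\acyc/k}$ would have order $\le 3$, contradicting $\order \CH_1 = 9$.

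Finally, since $\Ccl(\mathfrak p)\ne 1$ it generates $\CH_k \simeq \Z/3\Z$, and from $\mathfrak p\, O_{k_1^\acyc} = \mathfrak p_1^{\,3}$ I obtain $\BJ_{k_1^\acyc/k}(\Ccl(\mathfrak p)) = \Ccl_1(\mathfrak p_1)^3 = 1$, because $\CH_1 \simeq \Z/3\Z \times \Z/3\Z$ has exponent $3$. Thus $\BJ_{k_1^\acyc/k}(\CH_k)=1$, i.e. $\CH_k$ capitulates in $k_1^\acyc$. The main obstacle is precisely this dichotomy step: one must exclude that $\mathfrak p$ is already $3$-principal in $k$ (in which case the transfer could be injective and capitulation would fail), and this is exactly where the prescribed bicyclic shape $\CH_{k_1^\acyc}\simeq (\Z/3\Z)^2$ is indispensable, since it forces the transfer part and the ramified part to contribute two independent copies of $\Z/3\Z$. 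The remaining task, checking that the listed values of $m$ do satisfy the four hypotheses ($3$ split, total ramification, $\CH_k\simeq\Z/3\Z$, $\CH_{k_1^\acyc}\simeq(\Z/3\Z)^2$), is the numerical verification carried out by the programs of the Appendix.
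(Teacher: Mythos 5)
Your proof is correct, and after the common opening move it takes a genuinely different route from the paper's. Both proofs start identically: Chevalley--Herbrand in $k_1^\acyc/k$ gives $\order \CH_{k_1^\acyc}^{g_1} = 9$, hence $\CH_{k_1^\acyc} = \CH_{k_1^\acyc}^{g_1}$. The paper then finishes in one line using the algebraic norm: since the $g_1$-action is trivial and $\CH_{k_1^\acyc}$ has exponent $3$, one has $\Bnu_{k_1^\acyc/k}(\CH_{k_1^\acyc}) = \CH_{k_1^\acyc}^3 = 1$; and since $\Bnu_{k_1^\acyc/k} = \BJ_{k_1^\acyc/k}\circ \BN_{k_1^\acyc/k}$ with $\BN_{k_1^\acyc/k}$ surjective by total ramification, this yields $\BJ_{k_1^\acyc/k}(\CH_k) = \Bnu_{k_1^\acyc/k}(\CH_{k_1^\acyc}) = 1$ directly. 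You instead use the exact sequence \eqref{exact} to decompose $\CH_{k_1^\acyc} = \BJ_{k_1^\acyc/k}(\CH_k)\cdot \CH_{k_1^\acyc}^\ram$, run a counting argument on $\ker \BN_{k_1^\acyc/k}$ (order $3$) to force $\Ccl({\mathfrak p}) \ne 1$, hence a generator of $\CH_k$, and then kill its transfer explicitly via ${\mathfrak p}\,O_{k_1^\acyc} = {\mathfrak p}_1^{\,3}$ and the exponent-$3$ hypothesis. What each buys: the paper's argument is shorter, needs neither the exact sequence nor any case distinction, and is precisely the mechanism abstracted in Theorem \ref{maincoro} (smooth complexity), so it transfers essentially unchanged to the non-split case (Theorem \ref{capitule2}); your argument is more hands-on and yields a by-product the paper does not state, namely that under these hypotheses $\Ccl({\mathfrak p})$ must generate $\CH_k$, so capitulation is realized concretely on the prime above $3$. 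One small correction to your closing commentary: the case $\Ccl({\mathfrak p}) = 1$ is not a configuration ``where capitulation would fail'' that the bicyclic hypothesis is needed to exclude --- your own counting shows it is simply incompatible with the hypotheses, and the paper's proof never has to consider it; the bicyclic shape enters both proofs only through ``order $9$ and exponent $3$.''
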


\begin{proof}
The Chevalley--Herbrand formula, applied in $k_1^\acyc/k$ with $G_1 = 
\Gal(k_1^\acyc/k)$, gives by assumption $\order \CH_{k_1^\acyc}^{G_1} = p^2$,
whence $\CH_{k_1^\acyc} = \CH_{k_1^\acyc}^{G_1}$. Let $\Bnu_{k_1^\acyc/k}$ 
be the algebraic norm equal to $\BJ_{k_1^\acyc/k} \circ \BN_{k_1^\acyc/k}$; so, 
$\Bnu_{k_1^\acyc/k}(\CH_{k_1^\acyc}) = \CH_{k_1^\acyc}^p = 1$ proving the 
result since $\BN_{k_1^\acyc/k}$ is surjective. This is an
illustration of a ``smooth complexity'' (see Theorem \ref{maincoro} with
$M/F = k_1^\acyc/k$, $\expo(M) = 1$, $s(M) = 0$, $b(M) \in [1, p-1]$ 
since $b(M) = 1$ from $\CH_{k_1^\acyc} = \CH_{k_1^\acyc}^{G_1}$).
\end{proof}

\subsubsection{Case of the anti-cyclotomic $\Z_p$-extension 
for $p$ non-split in $k$}

We assume that $k^\acyc/k$ is totally ramified and that $\CH_k \simeq \Z/p\Z$.
We have, in $k_1^\acyc/k$, the relations:
\begin{equation*}
\order \CH_{k_1^\acyc}^{G_1} = \order \CH_k = p, \ \ 
\CH_{k_1^\acyc}^{G_1} = \BJ_{k_1^\acyc/k}(\CH_k) \oplus \CH_{k_1^\acyc}^\ram ,
\ \ \CH_{k_1^\acyc}^\ram =: \langle \Ccl_1 ({\mathfrak P}_1) \rangle, \ 
{\mathfrak P}_1 \mid p,
\end{equation*}

\noindent
the direct sum being due to the fact that in the non-split case, $\CH_k^+ = 1$
and $\CH_{k_1^\acyc}^\ram{}^- = 1$.
We deduce that $\CH_k$ capitulates if and only if $\Ccl_1 ({\mathfrak P}_1)$, 
is of order $p$. With a  {\sc pari/gp} program we have obtained 
many capitulations for $p = 3$, where, as predicted by the Kundu--Washington result 
(Theorem \ref{noncyclic}), $\CH_{k_1^\acyc} \simeq \Z/3\Z \times \Z/3\Z$ or
$\Z/9\Z \times \Z/9\Z$. This holds for $m \in$\,$\{$$298$, $397$, $622$, $643$,
$685$, $706$, $835$, $901$,\,$\ldots$$\}$ if $\CH_{k_1^\acyc} 
\simeq \Z/3\Z \times \Z/3\Z$ and for $m \in$\,$\{$$3073$, $3469$, 
$4405$,\,$\ldots$$\}$ if $\CH_{k_1^\acyc} \simeq \Z/9\Z \times \Z/9\Z$.
More precisely, we have:

\begin{theorem}\label{capitule2}
Assume that $3$ does not split in $k = \Q(\sqrt{-m})$, that $k^\acyc/k$ is totally 
ramified, that $\CH_k \simeq \Z/3\Z$ and that $\CH_{k_1^\acyc}^{} \simeq 
(\Z/3^t\Z)^2$, $t \geq 1$. Then $\CH_k$ capitulates in $k_1^\acyc$. 
For all the other structures of $\CH_{k_1^\acyc}^{}$, $\BJ_1$ is injective.
\end{theorem}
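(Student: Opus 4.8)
The plan is to show that $k_1^\acyc/k$ is of \emph{smooth complexity} in the sense of Definition \ref{smooth} and then to apply Theorem \ref{maincoro}; equivalently, one can carry out the algebraic-norm computation by hand, which I indicate at the end. First I would pin down the first step of the filtration. Since $3$ does not split, there is a single $p$-place above $3$, totally ramified in $k_1^\acyc/k$, so the Chevalley--Herbrand formula (with trivial unit contribution, as $E_k=\mu_2$) gives $\order \CH_{k_1^\acyc}^{g_1}=\order \CH_k=3$, where $g_1=\Gal(k_1^\acyc/k)=\langle \sigma\rangle$. Thus the first filtration term $\CH_{k_1^\acyc}^1:=\CH_{k_1^\acyc}^{g_1}$ is a \emph{proper} subgroup of $\CH_{k_1^\acyc}\simeq \Z/3\Z\times \Z/3\Z$, which is exactly what distinguishes this situation from the split case of Theorem \ref{capitule}, where $\CH_{k_1^\acyc}^{g_1}$ already exhausted $\CH_{k_1^\acyc}$.

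Next I would determine the length $b(k_1^\acyc)$ of the filtration. Because $\CH_{k_1^\acyc}^1\subsetneq \CH_{k_1^\acyc}$, the increasing filtration cannot stabilise at $i=1$. The injections $\CH_{k_1^\acyc}^{i+1}/\CH_{k_1^\acyc}^{i}\hookrightarrow \CH_{k_1^\acyc}^{i}/\CH_{k_1^\acyc}^{i-1}$ induced by $1-\sigma$ force $\order(\CH_{k_1^\acyc}^2/\CH_{k_1^\acyc}^1)$ to divide $\order \CH_{k_1^\acyc}^1=3$; were this quotient trivial the filtration would stop at $\CH_{k_1^\acyc}^1$, contradicting $\order \CH_{k_1^\acyc}=9$. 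Hence $\order(\CH_{k_1^\acyc}^2/\CH_{k_1^\acyc}^1)=3$, so $\CH_{k_1^\acyc}^2=\CH_{k_1^\acyc}$, i.e. $b(k_1^\acyc)=2$ and $(\sigma-1)^2$ annihilates $\CH_{k_1^\acyc}$. Since the exponent is $e(k_1^\acyc)=1$ and $N=1$, these data satisfy the smooth-complexity inequality of Theorem \ref{maincoro}: taking $s=0$ one has $b(k_1^\acyc)=2\in[1,2]$ and $e(k_1^\acyc)=1\in[1,\,N-s]=[1,1]$, which yields the capitulation of $\CH_k$ in $k_1^\acyc$.

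Equivalently and more concretely, I would display the annihilation directly. Writing $\delta=\sigma-1$, the algebraic norm is $\Bnu_{k_1^\acyc/k}=1+\sigma+\sigma^2=\delta^2+3\,\delta+3$. Since $(\sigma-1)^2$ kills $\CH_{k_1^\acyc}$ and $\CH_{k_1^\acyc}$ has exponent $3$, all three summands annihilate $\CH_{k_1^\acyc}$, so $\Bnu_{k_1^\acyc/k}(\CH_{k_1^\acyc})=1$. As $k_1^\acyc/k$ is totally ramified, $\BN_{k_1^\acyc/k}$ is surjective, whence $\BJ_{k_1^\acyc/k}(\CH_k)=\BJ_{k_1^\acyc/k}\big(\BN_{k_1^\acyc/k}(\CH_{k_1^\acyc})\big)=\Bnu_{k_1^\acyc/k}(\CH_{k_1^\acyc})=1$, i.e. $\CH_k$ capitulates. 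The main obstacle is the second step, establishing $b(k_1^\acyc)=2$: unlike the split case, the fixed part here is only half of $\CH_{k_1^\acyc}$, so one must genuinely control the second quotient of the filtration to guarantee that $(\sigma-1)^2$ — and hence the full algebraic norm — dies on the whole group.
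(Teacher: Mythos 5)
Your proof is correct and follows essentially the same route as the paper: the Chevalley--Herbrand formula gives $\order \CH_{k_1^\acyc}^{g_1}=3$, one concludes $\CH_{k_1^\acyc}=\CH_{k_1^\acyc}^{2}$ so that $(1-\sigma)^2$ annihilates $\CH_{k_1^\acyc}$, and then the identity $1+\sigma+\sigma^2=(1-\sigma)^2-3(1-\sigma)+3$ together with the exponent-$3$ hypothesis kills the algebraic norm $\Bnu_{k_1^\acyc/k}$, whence capitulation by surjectivity of $\BN_{k_1^\acyc/k}$ (total ramification), exactly as in the paper and its appeal to Theorem \ref{maincoro}. The only harmless variation is the justification of $b=2$: you derive it abstractly from the strict growth of the filtration and the $(1-\sigma)$-injections, whereas the paper computes $\order(\CH_1^2/\CH_1^1)=3$ directly from the second Chevalley--Herbrand formula using $\BN_1(\CH_1^1)=\CH_k^3\cdot\langle\,\Ccl(p)\,\rangle=1$ and the trivial norm factor in the non-split case.
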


\begin{proof}
Let's give a proof for $t = 1$ (case of ``smooth complexity'').
Using for short the notations ``$\CH_n^i, \BN_n, \Bnu_n$'' for the filtrations 
in $k^\acyc$, and the norms, we have $\order \CH_1^1 = \order \CH_k = 3$ 
and $\order(\CH_1^2/\CH_1^1) = 3$ since $\BN_1(\CH_1^1) = 
(\CH_k)^3 \cdot \langle \Ccl({\mathfrak p}) \rangle = 1$; whence $\order \CH_1^2 = 3 
\cdot \order \CH_1^1 = \order \CH_1 = 9$ by assumption, and $\CH_1 = 
\CH_1^2 \simeq (\Z/3\Z)^2$; but $\Bnu_1 = 1+\sigma_1+\sigma_1^2 =
3 - 3 (1- \sigma_1) + (1- \sigma_1)^2$ yields $\Bnu_1(\CH_1) = \Bnu_1(\CH_1^2) = 1$
(this case also illustrates Theorem \ref{maincoro}, with 
$\expo(M) = 1$, $s(M) = 0$, $b(M) \in [1, p-1] = [1, 2]$ since $b(M) = 2$).
The proof for $t \geq 2$ uses \cite[Proposition 9.8, Theorem C.2]{Gra2026}.
\end{proof}

\subsubsection{Case of ramified non-Galois $\Z_3$-extensions $K, \ov K$}

Assume that $\wt k \cap H_k^\nr = k$ and that $3$ splits in $k$. 
Then $K_1$ and $\ov K_1$ are defined by means of conjugate cubic 
polynomials $Q, Q'$ in $k[x]$ and $R = Q Q'$ is irreductible over $\Q$.
In this context, $K_1$ (resp. $\ov K_1$) coincides with the first layer of the 
inertia field $L$ of ${\mathfrak p}$ (resp. $\ov L$ of ${\ov {\mathfrak p}}$) 
in $\wt k/k$; so $K_1/k$ is ramified at ${\ov {\mathfrak p}}$ and not at 
${\mathfrak p}$, and $\ov K_1/k$ is ramified at ${\mathfrak p}$ and not 
at ${\ov {\mathfrak p}}$ (see Diagram \ref{maindiagram}). 

\begin{theorem}\label{capituleK}
Assume that $3$ splits in $k = \Q(\sqrt{-m})$, that $\CH_k \simeq \Z/3\Z$ and that 
$\wt k \cap H_k^\nr = k$. Let $Z$ (equal to a $k$-conjugate 
of $K_1$ or of $\ov K_1$) be a field defined over $\Q$ by $R$.
We assume that $\CH_Z \simeq \Z/3\Z$; then $\CH_k$ capitulates 
in $K_1$ and $\ov K_1$. This holds for $m \in$\,$\{$$107$, $302$, $503$, 
$509$, $533$, $602$, $617$, $713$, $863$, $1007$,\,$\ldots$\,$\}$.
\end{theorem}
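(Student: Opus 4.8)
The plan is to run the algebraic-norm argument of Theorems \ref{capitule} and \ref{capitule2}, the only new ingredients being the identification of the abstract field $Z$ with $K_1$ and the triviality of the $G_1$-action on the small group $\CH_{K_1}$.

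First I would observe that a root $\alpha$ of the degree-$6$ irreducible polynomial $R = QQ'$ satisfies $[\Q(\alpha):\Q]=6=[K_1:\Q]$; since $\alpha$ is a root of $Q$ (resp. $Q'$), one has $\Q(\alpha)=k(\alpha)=K_1$ (resp. $K_1'$). Hence $Z\simeq K_1$ as abstract number fields, and, since complex conjugation $\tau$ sends $Q$ to $Q'$, also $Z\simeq K_1'$; the hypothesis $\CH_Z\simeq\Z/3\Z$ therefore gives $\CH_{K_1}\simeq\CH_{K_1'}\simeq\Z/3\Z$.

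Next, $K_1/k$ is cyclic of degree $3$ with group $G_1=\langle\sigma_1\rangle$. As $\mathrm{Aut}(\Z/3\Z)\simeq\Z/2\Z$ has no element of order $3$, $\sigma_1$ acts trivially on $\CH_{K_1}$, i.e. $\CH_{K_1}^{G_1}=\CH_{K_1}$ (one may instead check $\order\CH_{K_1}^{G_1}=3$ by Chevalley--Herbrand). I would then show that $K_1/k$ is ramified, so that $\BN_{K_1/k}$ is onto $\CH_k$: by Proposition \ref{conjugates} the conjugate layers are distinct and $\tau(K_1)=K_1'\neq K_1$, so $K_1/\Q$ is not Galois, whereas $H_k^\nr/\Q$ is Galois (dihedral, $\tau$ acting by inversion on $\CH_k\simeq\Z/3\Z$); thus $K_1\neq H_k^\nr$, and since $[K_1:k]=3$ is prime, $K_1\cap H_k^\nr=k$. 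Class field theory then yields $\CH_k/\BN_{K_1/k}(\CH_{K_1})\simeq\Gal(K_1\cap H_k^\nr/k)=1$, so $\BN_{K_1/k}$ is surjective.

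Finally, writing $\Bnu_{K_1/k}=\BJ_{K_1/k}\circ\BN_{K_1/k}=1+\sigma_1+\sigma_1^2$ and using the trivial $\sigma_1$-action, $\Bnu_{K_1/k}$ is multiplication by $3$ on $\CH_{K_1}$, hence $\Bnu_{K_1/k}(\CH_{K_1})=\CH_{K_1}^3=1$ by exponent $3$. Surjectivity of $\BN_{K_1/k}$ turns this into $\BJ_{K_1/k}(\CH_k)=\Bnu_{K_1/k}(\CH_{K_1})=1$, so $\CH_k$ capitulates in $K_1$, and applying $\tau$ gives the same conclusion for $K_1'$ (an instance of smooth complexity with $e(K_1)=1$, $b(K_1)=1$, $s(K_1)=0$, cf. Theorems \ref{capitule}, \ref{capitule2}). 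The main obstacle is the ramification step: the argument needs $\BN_{K_1/k}$ onto, which would fail exactly when $K_1/k$ is unramified; the non-Galois nature of $K_1/\Q$ is precisely what excludes $K_1=H_k^\nr$ (and, in that degenerate case, the principal ideal theorem would give capitulation anyway).
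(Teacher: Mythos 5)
Your proof is correct, and its engine is the same as the paper's: the algebraic norm $\Bnu_{K_1/k}=1+\sigma_1+\sigma_1^2$ acts as multiplication by $3$ on a group of exponent $3$ with trivial Galois action, hence kills $\CH_{K_1}$, and surjectivity of $\BN_{K_1/k}$ then forces $\BJ_{K_1/k}(\CH_k)=1$ (the paper compresses this into ``giving analogous result as for the anti-cyclotomic extension'', i.e., it reruns the proof of Theorem \ref{capitule}). Where you genuinely differ is in how the two inputs are obtained. The paper derives $\CH_{K_1}=\CH_{K_1}^{g_1}$ from the Chevalley--Herbrand formula together with the assertion that exactly one place above $3$ ramifies in $K_1/k$, and it gets norm surjectivity from that same ramification; you instead get the trivial action from the elementary fact that a group of order $3$ can only act trivially on $\Z/3\Z$ (since $\mathrm{Aut}(\Z/3\Z)\simeq\Z/2\Z$), and you get surjectivity from $K_1\cap H_k^\nr=k$, proved by observing that $K_1/\Q$ is not Galois (Proposition \ref{conjugates}: $\tau(K_1)=K_1'\ne K_1$) while $H_k^\nr/\Q$ is, so $K_1\ne H_k^\nr$, and prime degree finishes. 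Your variant buys some robustness: the paper's single-place ramification claim is automatic only when $H_k^\nr\not\subset\wt k$; in the opposite case ($H_k^\nr=\knr\subseteq k^\acyc$) both ${\mathfrak p}$ and $\ov{\mathfrak p}$ ramify in $K_1/k$, and that configuration is excluded only a posteriori, because Chevalley--Herbrand would then give $\order\CH_{K_1}^{g_1}=9$, contradicting the hypothesis $\CH_Z\simeq\Z/3\Z$. Your argument never needs to settle the ramification at all, at the cost of invoking the norm-group correspondence $\CH_k/\BN_{K_1/k}(\CH_{K_1})\simeq\Gal((K_1\cap H_k^\nr)/k)$ rather than the genus-theory apparatus the paper already has in place. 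Both proofs are sound; yours is the more self-contained of the two, and your closing remark (that in the excluded degenerate case $K_1=H_k^\nr$ the principal ideal theorem would give capitulation anyway) is a correct, if unnecessary, safety net.
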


\begin{proof}
We are in the case $3 \nmid m$ of Proposition \ref{conjugates}, with
$w = \frac{1}{2}(u+v \sqrt{3m})$ and $t, t' = \frac{-1}{2}(u \pm 3v \sqrt{-m})$.
We have $R = (x^3-3ax)^2+(x^3-3ax) (w+w') + (w+w')^2 - 3 w w' \in \Q[x]$.
For a root $\alpha$ of $Q$, $\Q(\alpha)$ gives, for instance, the first layer $K_1$. 
Since $3$ splits in $k$ and ramifies in $K_1$ at a single $3$-place, 
the Chevalley--Herbrand formula applied in $K_1/k$ leads to $\CH_{K_1} = 
\CH_{K_1}^{G_1}$, giving analogous result as for the anti-cyclotomic extension.
\end{proof}

These properties are related to the notion of ``smooth complexity'' of the 
$\Z_p$-extension implying capitulations (Section \ref{complexity}), and 
this comforts the philosophical comments given in \S\,\ref{obstructions} 
for capitulations in all the $\Z_p$-extensions $K/k$, except the cyclotomic 
one, thus a similarity with real Greenberg's conjecture suggesting 
Conjecture \ref{mainconj}.

\section{Generalizations of Ozaki's example \texorpdfstring{$k = \Q(\sqrt{-239}),
\lambda_3(k^\acyc/k) \geq 3$}{Lg}}\label{D}  

The numerical example given by Ozaki in \cite[Section 3, p. 390]{Oza2001}, using 
Minardi's thesis \cite{Min1986} and some improvements, claims that $\lambda_3(k^\acyc/k) 
\geq 3$ for the anti-cyclotomic $\Z_3$-extension of $k = \Q(\sqrt{-239})$, for which
$\CH_k \simeq \Z/3 \Z$, knowing that the $3$-Hilbert class field $H_k^\nr$ of $k$ 
is contained in $k^\acyc$. When $H_k^\nr \subset k^\acyc$, it 
follows that $\CH_k \simeq \Z/p^e \Z$, $e \geq 0$, and that $\CT_k = 1$ (see 
Diagram \eqref{ramification}), hence, that $k$ is $3$-rational.

\smallskip
This suggests general statements, on Iwasawa's invariants, under analogous 
conditions, and gives rise to easy proofs using fixed points formulas; numerical 
examples with a program are given in \S\,\ref{ozakiP}. We give first the case of 
$k^\acyc$, then the non-Galois case:

\subsection{Case of the anti-cyclotomic 
\texorpdfstring{$\Z_p$-extension of $k$}{Lg}}
For a $\Z_p$-extension $K/k$, let $K_n$ be its layer of degree $p^n$ 
over $k$ and let's use the simplified notations at each level $n$
($\CH_n := \CH_{K_n}$, and so on).

\begin{theorem}\label{H}
Let $k$ be an imaginary quadratic field, and let $p \geq 3$ be a 
prime number split in $k$ into $(p) = {\mathfrak p} {\ov {\mathfrak p}}$.
Let $K := k^\acyc$ be the anti-cyclotomic $\Z_p$-extension of $k$. 
We assume the following conditions, where $x$ is the fundamental 
$\{{\mathfrak p}\}$-unit of $k$, given by 
${\mathfrak p}^{\hp} = (x)$:

\smallskip
(i) The $p$-Hilbert class field $H_k^\nr$ of $k$ is contained in $K$
(whence $\CH_k \simeq \Z/p^e \Z$, $e \geq 0$).

\smallskip
(ii) The prime ideal ${\mathfrak p}$ is $p$-principal (hence 
$p \nmid \hp$).

\smallskip
(iii) The ${\mathfrak p}$-valuation, of the ${\mathfrak p}$-Fermat quotient of 
$\,\ov x$ is zero, that is to say, $\delta_p(k) := v_{\mathfrak p}(\ov x^{\,p-1} - 1)-1 = 0$. 
Then:

\smallskip
\quad ({\rm a}) The Iwasawa invariants $\lambda_p^S(k^\acyc/k)$, 
$\mu_p^S(k^\acyc/k)$, $\nu_p^S(k^\acyc/k)$, defined by the $S_n$-class 
groups of the $K_n$'s, are zero; then $\CH_n = \Ccl_n(\langle S_n\rangle)$, 
for all $n \geq e$, and $\CH_n$ is of exponent divisor of $p^{n - e}$.

\smallskip
\quad ({\rm b}) We have $[p^{n - e}]^{p^e} \leq \order \CH_n \leq 
[p^{n-e}]^{2p^e-1}$, for all $n \geq e$, whence $\mu_p(k^\acyc/k) = 0$, 
and $p^e \leq \lambda_p(k^\acyc/k) \leq 2p^e-1$.
\end{theorem}

\begin{proof}
We will prove these results using, in a suitable order, various Chevalley--Herbrand 
formulas. Since ${\mathfrak p}$ is $p$-principal, ${\mathfrak p}$ and ${\ov {\mathfrak p}}$ 
completely split in $K_e = H_k^\nr$, and $\CH_e = 1$:

\medskip
(i) {\bf Invariant classes in $K_n/K_e$}.
Let $g_n := \Gal(K_n/K_e)$, for all $n \geq e$. Chevalley--Herbrand 
formula in $K_n/K_e$ (totally ramified at the $p$-places of $K_e$) gives:
\begin{equation}\label{chs}
\order \CH_n^{g_n} = \frac{\order \CH_e \times[ p^{n-e}]^{2p^e}} 
{[K_n : K_e] (E_e : E_e \cap \BN_{n/e} (K_n^\times))} = 
\frac{[ p^{n-e}]^{2p^e - 1}} {(E_e : E_e \cap \BN_{n/e} (K_n^\times))}. 
\end{equation}

Since the $\Z$-rank of $E_e$ is $p^e - 1$, we have 
$(E_e : E_e \cap \BN_{n/e} (K_n^\times)) \leq [p^{n-e}]^{p^e-1}$, 
whence $\order \CH_n \geq \order \CH_n^{g_n} \geq [p^{n-e}]^{p^e}$, proving the 
minorations of the class groups in ({\rm b}).

\smallskip
(ii) {\bf Invariant classes in $K_n/k$}.
In $K_n/k$, $\order \CH_n^{G_n} = \ffrac{\order \CH_k \times[ p^{n-e}]^{2}}
{[K_n : k] \times 1} = \ffrac{p^e \times[ p^{n-e}]^{2}}{p^n} = p^{n-e}$, for
all $n \geq e$.

\smallskip
Let ${\mathfrak P}_n \mid {\mathfrak p}$  be a fixed prime ideal of $K_n$
and $\ov {\mathfrak P}_n := {\mathfrak P}_n^\tau \mid {\ov {\mathfrak p}}$
(which makes sense since $K = k^\acyc$ is Galois).
Let $\sigma$ be a topological generator of $\Gal(K/k)$.
Note that ${\mathfrak P}_n^{\sigma^{p^e}} = {\mathfrak P}_n$,
$\ov {\mathfrak P}_n^{\sigma^{p^e}} = \ov {\mathfrak P}_n$, and 
$\tau({\mathfrak P}_n^{\sigma^i}) = 
(\tau {\mathfrak P}_n)^{\sigma^{-i}} = (\ov {\mathfrak P}_n)^{\sigma^{-i}}$. 
Then, in $K_n$, $n \geq e$, we have the prime decompositions
$\BJ_n({\mathfrak p}) = (\CP_{\!n})^{p^{n-e}}$,
$\BJ_n({\ov {\mathfrak p}}) = (\ov \CP_{\!n})^{p^{n-e}}$, where, 
due to the commutation of $\tau$ and $1+\sigma + \cdots + \sigma^{p^e - 1}$,
$\CP_{\!n} = {\mathfrak P}_n^{\,1+\sigma + \cdots + \sigma^{p^e - 1}}$,
$\ov \CP_{\!n} = \ov {\mathfrak P}_n^{\,1+\sigma + \cdots + \sigma^{p^e - 1}}
= \CP_{\!n}^{\,\tau}$.

\smallskip
So, $\BJ_n({\mathfrak p})$ being $p$-principal,
the common order of $\Ccl_n (\CP_n)$ and $\Ccl_n 
(\CP_n^{\,\tau})$ divides $p^{n-e}$.

\smallskip
Since $\CH_k$ capitulates in $H_k^\nr = K_e$, $\CH_n^{G_n} = \CH_n^\ram = \Ccl_n
(\langle \CP_n , \CP_n^{\,\tau} \rangle)$, for all $n \geq e$ (from \eqref{ambigeES}).
It is clear that $\tau$ operates on $\CH_n^{G_n}$, so that we can define
$\CH_n^{G_n}{}^+$ and $\CH_n^{G_n}{}^-$; these components are of orders
$p^{\rho_n^+}$ and $p^{\rho_n^-}$, respectively, such that 
$\rho_n^+ + \rho_n^- = n-e$.

\smallskip
The second element of the filtration is of order $p^n$ for $n \geq e$; 
indeed, by assumption (iii), $\order \wt \CH_k = \order \CH_k = p^e$
(see Theorem \ref{stability}).

\smallskip 
(iii) {\bf Invariant $S_n$-classes in $K_n/k$}.
A generalization of the Chevalley--Herbrand formula in $K_n/k$ is that for 
the $S_n$-class groups $\CH_n^{S_n} = \CH_n/\Ccl_n(\langle S_n \rangle)$
(see \cite[Th\'eor\`eme III.1.9, p. 177]{Jau1986}) or \cite[Corollary 3.9]{Gra2017a}:
\begin{equation}\label{CHS}
\order (\CH_n^{S_n})^{G_n} = \frac{\order \CH_k^{S_k} 
\prod_{v \in S_k} d_v} {[K_n : k] (E_k^{S_k} :
E_k^{S_k} \cap \BN_n (K_n^\times))}, 
\end{equation}

\noindent
where $d_v$ is the local degree of $K_n/k$ at the place $v$. 
Since $\Ccl_k(\langle S_k \rangle) = 1$
by assumption (ii), $p$ totally splits in $K_e/k$, $\CH_k^{S_k} = \CH_k$ 
and $\order (\CH_n^{S_n})^{G_n} = \ffrac{p^e \times [p^{n-e}]^2} 
{p^n \, \order \omega_n(\langle x, \ov x \rangle)} = 
\ffrac{p^{n-e}}{\order \omega_n(\langle x, \ov x \rangle)}$.

Since, from Theorem \ref{mainorder} and Relation \ref{order}, 
$\omega_n( x )$ and $\omega_n( \ov x )$ 
have same order $p^{n - e - \delta_p(k)} = p^{n-e}$ under (iii), 
$\order \omega_n(\langle x, \ov x \rangle) = p^{n-e}$, for all $n \geq e$; then 
$\order (\CH_n^{S_n})^{G_n} = 1$, hence:
\begin{equation}\label{hn}
\CH_n = \Ccl_n(\langle S_n\rangle) = \Ccl_n(\langle {\mathfrak P}_n , 
{\mathfrak P}^{\sigma}_n, \ldots, {\mathfrak P}_n^{\sigma^{p^e-1}},\ \ov {\mathfrak P}_n, 
(\ov {\mathfrak P}_n)^{\sigma} , \ldots ,(\ov {\mathfrak P}_n)^{\sigma^{p^e-1}} \rangle). 
\end{equation}

The ideal ${\mathfrak P}_e$ is $p$-principal in $K_e = H_k^\nr$ 
and gives ${\mathfrak P}_n^{p^{n-e}}$ by extension in $K_n$; so, 
$\Ccl_n ({\mathfrak P}_n)$ and its $2p^e$ conjugates have 
same order divisor of $p^{n-e}$. Which concludes ({\rm a}). 

\medskip
(iv) {\bf Majoration of $\order \CH_n$ and $\lambda_p(k^\acyc/k)$}.
Since the $2p^e$ conjugates of ${\mathfrak P}_n$ are invariant by 
$g_n = \Gal(K_n/K_e)$, and since $\BJ_{n/e}(\CH_e) = \BJ_{n/e}(1) = 1$, 
we have the exact sequence (cf. \ref{ambigeES}):
\begin{equation*}
\begin{aligned}
1 \too  \Ccl_n(\langle {\mathfrak P}_n , 
{\mathfrak P}^{\sigma}_n, \ldots, {\mathfrak P}_n^{\sigma^{p^e-1}},  \ \ov {\mathfrak P}_n, 
(\ov {\mathfrak P}_n)^{\sigma} , & \ldots ,(\ov {\mathfrak P}_n)^{\sigma^{p^e-1}} \rangle) 
\too \CH_n^{g_n} \\
& \too E_e \cap \BN_{n/e}(K_n^\times)/\BN_{n/e}(E_n) \too 1.
\end{aligned}
\end{equation*}

From Formula \eqref{hn}, $\CH_n = \CH_n^{g_n}$ and $E_e \cap 
\BN_{n/e}(K_n^\times) = \BN_{n/e}(E_n)$. From Formula \eqref{chs}: 
$$\order \CH_n = \frac {[ p^{n-e}]^{2p^e - 1}} 
{\order \omega_{n/e}(E_e) } \leq [ p^{n-e}]^{2p^e - 1} =
p_{}^{(2p^e - 1)\, n - (2p^e - 1)\,e}. $$

This implies $\mu_p(k^\acyc/k) = 0$ and completely proves ({\rm b}) 
and the theorem.
\end{proof}

\begin{corollary}
Under the assumptions of Theorem \ref{H}, $\Omega_{n/e} \simeq 
\Z/[p^{n-e}]^{2 p^e - 1}\Z$ and
$\order \omega_{n/e}(E_e^{S_e}) = [p^{n-e}]^{2 p^e - 1}$, 
for all $n \geq e$, whence
$\order \big[ E_e^{S_e} \cap \BN_{n/e}(K_n^\times) / 
(E_e^{S_e})^{p^{n-e}}\big] = [p^{n-e}]^{p^e}$.
\end{corollary}

\begin{proof}
The Chevalley--Herbrand formula, for $S_n$-class groups in $K_n/K_e$, gives,
knowing that $\CH_n^{S_n} = 1$ for all $n \geq e$,
$\order ( \CH_n^{S_n})^{g_n} = \ffrac{\order \CH_e^{S_e} 
\times [p^{n-e}]^{2 p^e}}{[K_n : K_e] \times \big(E_e^{S_e} : E_e^{S_e}
 \cap \BN_{n/e}(K_n^\times) \big)} 
= \ffrac{ [p^{n-e}]^{2 p^e - 1}}{\omega_{n/e}(E_e^{S_e})} = 1$. 
\end{proof}

Theorem \ref{H} may be improved with the following reciprocal aspect:

\begin{proposition}\label{reciprocal} 
Let $K = k^\acyc$, in the split case. Put $K \cap H_k^\nr =: K_e$, $e \geq 0$
is also $\wt e$.
Assume that there exists at least one integer $n \geq e$ such that $\CH_n^{S_n} = 1$. 
This implies the following:

\smallskip
(i) $\CH_n^{S_n}=1$ for all $n \geq e$.

\smallskip
(ii) $p$ totally splits in $K_e/k$;

\smallskip
(iii) $\delta_p(k) = 0$ and $\wt \CH_k \simeq \Z/p^e \Z$.
\end{proposition}

\begin{proof}
Let $H^\spl_k$ be the decomposition field of $p$ in $H_k^\nr/k$ (subfield of 
$H_k^\nr$ fixed by the group generated by the decomposition groups
of the two $p$-places of $k$) and let $K_{e'} := K \cap H^\spl_k \subseteq K_e$; 
so $[H_k^\nr : H^\spl_k] = \order \Ccl(\langle {S_k} \rangle)$ and the 
$p$-places are inert in $K_e/K_{e'} ]$ and totally ramified in 
$K/K_e$. Formula  \eqref{CHS} becomes, for $n \geq e$:
$$\order (\CH_n^{S_n})^{G_n} = \frac{\order \CH_k \cdot \big [p^{n-e'} \big ]^2}
{\order \Ccl(\langle {S_k} \rangle) \cdot p^n \cdot
p^{n - e-\delta_p(k)}} = [H^\spl_k : K_{e'} ] \times p^{e - e' + \delta_p(k)}.$$

\noindent
The condition $\CH_n^{S_n} = 1$ is independent of $n \geq e$, and is 
satisfied if and only if $e' = e$, $\delta_p(k) = 0$, $H^\spl_k = K_{e'} = K_e$;
then $\order \wt \CH_k = \order \CH_k^{S_k} = p^e$.
Under the supplementary condition $\Ccl({\mathfrak p}) = 1$, we get
$H_k^\nr = K_e$.
\end{proof}

\begin{remark}\label{logsplit}
Let $k$ be an imaginary quadratic field and $p \geq 3$ split in $k$.
We have defined $H_k^\lac$ in Remark \ref{lognonsplit}, the maximal 
$p$-split extension of $k^\acyc$ in $H_k^\pr$. We know (see the Diagram 
\ref{Tpsplit2}) that $p$ totally splits in $\Gal(H_k^\pr/\wt k)$; thus,
$H_k^\lac$ is the decomposition field of the $p$-places in the unramified 
extension $H_k^\pr/k^\acyc$. Then $H_k^\lac/K_e$ is of residue degree $1$
and $\wt k \cap H_k^\lac/k^\acyc$ is totally split. Knowing the decomposition 
of the $p$-places in $\wt k/k$, one deduce that $\wt k \cap H_k^\lac = k^\acyc$,
whence $H_k^\lac = k^\acyc H^\spl_k$.
\end{remark}

\begin{corollary}
Let's assume, from Proposition \ref{reciprocal} with $K = k^\acyc$, 
that $\CH_e^{S_e} = 1$. 
Then, the anti-cyclotomic logarithmic class group $\Gal(H_k^\lac/k^\acyc)$ is trivial.
\end{corollary}

\begin{proof}
Results from $\delta_p(k) = 0$ and the equality $H^\spl_k = K_e$. 
\end{proof}

\begin{remarks}
(i) Examples, analogous to that given by Ozaki, may be computed to 
obtain $p^e \leq \lambda_p(k^\acyc/k) \leq 2p^e-1$, $\mu_p(k^\acyc/k) = 0$.
Program \ref{ozakiP} finds all such examples, and it is not difficult to
conjecture that they are infinite in number. Unfortunately, no
computation in the tower is accessible and it is difficult to see if
$\lambda_p(k^\acyc/k) > p^e$ is possible.

\smallskip
(ii) In \cite[Section 3\,(3), p. 390]{Oza2001}, the author proves the inequality
$\lambda_3(K/k) \geq 3$ as follows (we take $K = k^\acyc$ to simplify and 
put $H_k^\nr =: K_e$, $e \geq 1$, $p \geq 3$).
He considers $\wt K_e$, the compositum of all the $\Z_p$-extensions 
of $K_e$, for which $\Gal(\wt K_e/K_e) \simeq \Z_p^{p^e+1}$, whence 
$\Gal(\wt K_e/K) \simeq \Z_p^{p^e}$ and $KK_e^\cyc = K^\cyc = \wt k$. 
The extensions $K^\cyc/K_e^\cyc$ and $K^\cyc/K$ are unramified (see Diagram \eqref{ramification}).
Then, he refers that the inertia group $I_{{\mathfrak P}_e}$ of some 
${\mathfrak P}_e \mid p$ in $\Gal(\wt K_e/K_e)$, is isomorphic to $\Z_p$, 
and deduces that $\wt K_e/K$ is unramified, giving $\lambda_p(k^\acyc/k) \geq 3$.
\end{remarks}

\subsection{Case of the non-Galois \texorpdfstring{$\Z_p$-extensions
of $k$}{Lg}}

Let $K/k$ be a $\Z_p$-extension defined, from Theorem \ref{6cases}, 
by $G_K = \langle \sigma_-^{p^\alpha u} \!\cdot \sigma_+ \rangle$,
$\alpha \geq 0$, $u \sim 1$, $u \ne 1$; this means that
$K \cap k^\cyc = k$ and $K \cap k^\acyc = k_\alpha^\acyc$. Recall 
that $p^{\wt e} := [k^\acyc \cap H_k^\nr : k]$, only depends on $k$. 

\begin{theorem}\label{nongal}
Let $k$ be an imaginary quadratic field, and let $p \geq 3$ split in $k$.
Assume that $H_k^\nr \subset k^\acyc$, that ${\mathfrak p}$ is $p$-principal 
and that $\delta_p(k) = 0$. Let $K/k$ be a $\Z_p$-extension such that $\alpha 
\geq \wt e$ (Cases (iii) and (iv) of Theorem \ref{6cases}, with $u-1 \sim 1$ 
if $\wt e = 0$, in which case, $e = \ov e = \wt e$). Then $\lambda_p(K/k) \geq p^e$ 
for this infinite family of $\Z_p$-extensions $K/k$.
\end{theorem}

\begin{proof}
The above conditions imply $H_k^\nr \subset K$ because 
$\alpha \geq \wt e$ means $[K \cap k^\acyc : k] = p^\alpha$, hence
$H_k^\nr \subseteq K \cap k^\acyc$, with 
$L \cap \ov L = H_k^\nr$; then $K/H_k^\nr$ is totally ramified at the 
two $p$-places and point (i) of the proof of Theorem \ref{H} gives the 
result ($e = \ov e = v_p(\order \CH_k)$). 
\end{proof}

\begin{theorem}\label{deltanul}
Let $k$ be an imaginary quadratic field, and let $p \geq 3$ split in $k$. 
Assume that $\CH_k = 1$ and $\delta_p(k) = 0$. Let $K/k$ be a 
$\Z_p$-extension, $K \ne L$, $\ov L$. 
Then $\mu_p(K/k) = 0$ and $\lambda_p(K/k) = 1$.
\end{theorem}

\begin{proof}
This may be deduced from Theorem \ref{stability} (iii), but we prove again
this result by adapting the proof of Theorem \ref{H}. Then $\wt e = 0$
by assumption, which 
implies $\ov e = 0$, $e \geq 0$ depending on $u$ in the expression
$G_K = \langle \sigma_-^{p^\alpha u} \!\cdot \sigma_+ \rangle$
(Case \ref{6cases}\,(iii), $e = v_p(u-1)$, with $u-1 \sim 1$). 
Hence, ${\mathfrak p}$ is unramified in $K_e/k$, totally ramified 
in $K/K_e$, and ${\ov {\mathfrak p}}$ is totally ramified in $K/k$.
Since $\wt \CH_k = 1$, $H_k^\lc = k^\cyc$, then $p$ is totally inert in 
$\wt k/k^\cyc$. Hence, ${\mathfrak p}$, ${\ov {\mathfrak p}}$ do not split in $K_e/k$.

\medskip
(i) {\bf Invariant classes in $K_n/k$}.
Chevalley--Herbrand formula in $K_n/k$ gives, for $n \geq e$,
$\order \CH_n^{G_n} = \ffrac{\order \CH_k \times[p^n \, p^{n-e}]} 
{[K_n : k] \times 1} = p^{n-e}$.
Let ${\mathfrak P}_n$, ${\mathfrak P}'_n$ be the prime 
ideals of $K_n$ above ${\mathfrak p}$, ${\ov {\mathfrak p}}$.
Then, for $n \geq e$, we have $\BJ_n({\mathfrak p}) = 
({\mathfrak P}_{\!n})^{p^{n-e}}$, $\BJ_n({\ov {\mathfrak p}}) 
= ({\mathfrak P}'_{\!n})^{p^{n}}$.
So, $\CH_n^{G_n} = \CH_n^\ram = \Ccl_n
(\langle  {\mathfrak P}_n , {\mathfrak P}'_n \rangle)$ (from \eqref{ambigeES})
of order $p^{n-e}$.

\medskip
(ii) {\bf Invariant $S_n$-classes in $K_n/k$}.
We have, for $S_n = \{{\mathfrak P}_n , {\mathfrak P}'_n\}$:
\begin{equation*}
\order (\CH_n^{S_n})^{G_n} = \frac{\prod_{v \in S_k} d_v} {[K_n : k] (E_k^{S_k} :
E_k^{S_k} \cap \BN_n (K_n^\times))} = \frac{p^n \times p^n} 
{p^n \cdot \omega_n (\langle x, \ov x \rangle)} = \frac{p^n}
{\order \omega_n(\langle x, \ov x \rangle)} = 1;
\end{equation*}

\noindent
indeed, since $\wt \CH_k = 1$, there is no splitting in $\wt k/k$, thus
$d_v = p^n$ for all $v \in S_k$; then $\omega_n(x)$ is of order 
$p^{n - \ov e} = p^n$ and $\omega_n(\ov x)$ is of order $p^{n - e}$,
which ends the computation. Then $\CH_n = \Ccl_n(\langle S_n\rangle)$,
for all $n\geq e$.

\medskip
(iii) {\bf Majoration of $\order \CH_n$ and $\lambda_p(K/k)$}.
Since the two ideals of $S_n$ are invariant by $g_n = \Gal(K_n/K_e)$, 
we have the exact sequence (cf. \eqref{ambigeES}):
$$1 \to  \Ccl_n(\langle {\mathfrak P}_n , {\mathfrak P}'_n \rangle) = \CH_n
\too \CH_n^{g_n} \too E_e \cap \BN_{n/e}(K_n^\times)/\BN_{n/e}(E_n) \to 1. $$

So, $\CH_n = \CH_n^{g_n}$ of order $\ffrac {p^{n-e}\, p^{n-e}}{p^{n-e} \times 
(E_e : E_e \cap \BN_{n/e} (K_n^\times))} \leq p^{n-e}$, yielding to $\mu_p(K/k) 
= 0$ and $\lambda_p(K/k) \leq 1$, hence $\lambda_p(K/k) = 1$; since the
$n$-sequence $(E_e : \BN_{n/e}E_n)$ is decreasing, stationary from some
layer $n_1$, we have $\nu_p(K/k) = -e - v_p((E_e : \BN_{n_1/e}E_{n_1}))$.
\end{proof}

When $K \cap k^\cyc =k_\beta^\cyc$, with $\beta \ne 0$, the proof is similar
with $e = \ov e = 0$ since $k^\cyc/k$ is totally ramified at ${\mathfrak p}$,
${\ov {\mathfrak p}}$. If $\delta_p(k)$ is not assumed trivial in the 
theorem, then $\order (\CH_n^{S_n})^{G_n} = p^{\delta_p(k)}$.

\begin{appendices}

\section{Logarithmic classes of imaginary abelian number fields
\texorpdfstring{\\}{Lg}
by J-F.  Jaulent} \label{A}

Let $k$ be an abelian number field. It is well known that such a field 
satisfies the so-called Gross--Kuz'min conjecture for every prime 
number $p$, i.e. that its logarithmic class group $\wt \CH_k$ has finite 
order, see \cite{Jau1994}. The computation of this last group for 
arbitrary number fields via the {\sc pari/gp} system is given in 
Belabas--Jaulent \cite{BJ2016}.

\smallskip
We are interested, in this appendix, to give an explicit formula for the orders 
of the imaginary components of $\wt \CH_k$ in the semi-simple case.
\smallskip

We assume from now on that $k$ is an imaginary abelian number field and that 
the prime number $p$ does not divide $d=[k:\Q]$. Thus the $p$-adic algebra 
$\Z_p[\Delta]$ of the Galois group $\Delta=\Gal(k/\Q)$ is a direct product indexed 
by the primitive idempotents $e_\varphi=\frac{1}{d}\sum_{\tau\in\Delta}
\varphi(\tau)\tau^{-1}$ associated to the $\Q_p$-irreducible characters $\varphi$:

\smallskip
\centerline{$\Z_p[\Delta] = \prod_\varphi \Z_p[\Delta]\,e_\varphi=
\prod_\varphi \Z_\varphi$,}

\smallskip
\noindent 
where $\Z_\varphi=\Z_p[\Delta]\,e_\varphi$ is an unramified extension 
of $\Z_p$ of degree $d_\varphi={\rm deg}\,\varphi$.

As a consequence, the $p$-class group $\CH_k$, as well as the logarithmic 
$p$-class group $\wt\CH_k$, decompose as the product of their $\varphi$-components; 
likewise do their respective wild subgroups $\CH_k^w$ and $\wt\CH_k^w$ 
constructed on the places $\mathfrak p$ over $p$. Now, we have:

\begin{lemma}\label{LA1}
For any irreducible character $\varphi$ the $\varphi$-components of the tame 
quotients $\CH_k':=\CH_k/\CH_k^w$ and  $\wt\CH_k':=\wt\CH_k/\wt\CH_k^w$ 
are $\Z_\varphi$-isomorphic. From where the equality between orders: 
$\order(\wt\CH_k)^{e_\varphi}/\order(\wt\CH_k^w)^{e_\varphi}=
\order(\CH_k)^{e_\varphi}/\order(\CH_k^w)^{e_\varphi}$.
\end{lemma}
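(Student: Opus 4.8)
The plan is to identify both tame quotients $\CH_k'$ and $\wt\CH_k'$ with one and the same object, namely the $p$-part of the $S$-class group of $k$ for $S$ the set of places above $p$, and then to read off $\varphi$-components by means of the idempotent $e_\varphi$. Write $\mathcal{D}_k=\bigoplus_{\mathfrak q}\Z_p\,\mathfrak q$ for the $p$-adified group of finite places of $k$, with its $\Delta$-stable decomposition $\mathcal{D}_k=\mathcal{D}_k^t\oplus\mathcal{D}_k^w$ into a tame part (places $\mathfrak q\nmid p$) and a wild part (places $\mathfrak p\mid p$). Then $\CH_k=\mathcal{D}_k/\mathcal{P}_k$, where $\mathcal{P}_k$ is the image of $k^\times\otimes\Z_p$ under $\alpha\mapsto(v_{\mathfrak q}(\alpha))_{\mathfrak q}$, and $\CH_k^w$ is the image of $\mathcal{D}_k^w$; hence $\CH_k'=\mathcal{D}_k/(\mathcal{P}_k+\mathcal{D}_k^w)\cong\mathcal{D}_k^t/\mathcal{P}_k^t$, where $\mathcal{P}_k^t$ denotes the group of tame components of principal divisors. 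The analogous description $\wt\CH_k'\cong\mathcal{D}_k^t/\wt{\mathcal P}_k^t$ holds with the logarithmic valuations $\wt v_{\mathfrak q}$ in place of the $v_{\mathfrak q}$.

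The key step is that, away from $p$, the logarithmic and the ordinary arithmetic agree. For every $\mathfrak q\nmid p$ the local cyclotomic extension $k_{\mathfrak q}^\cyc/k_{\mathfrak q}$ is unramified, so Jaulent's logarithmic valuation satisfies $\wt v_{\mathfrak q}=c_{\mathfrak q}\,v_{\mathfrak q}$ for a unit $c_{\mathfrak q}\in\Z_p^\times$ depending only on the completion $k_{\mathfrak q}$, and hence constant on each $\Delta$-orbit (both valuations have the units as kernel and both send a uniformizer to a generator of $\Z_p$). Consequently the coordinatewise rescaling $\Phi:(x_{\mathfrak q})\mapsto(c_{\mathfrak q}x_{\mathfrak q})$ is a $\Delta$-equivariant automorphism of $\mathcal{D}_k^t$ carrying $\mathcal{P}_k^t$ onto $\wt{\mathcal P}_k^t$. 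It thus induces a $\Z_p[\Delta]$-isomorphism $\CH_k'=\mathcal{D}_k^t/\mathcal{P}_k^t\cong\mathcal{D}_k^t/\wt{\mathcal P}_k^t=\wt\CH_k'$; equivalently, both tame quotients are the $p$-part of the $S$-class group of $k$.

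It then remains to extract $\varphi$-components and compare orders. As $p\nmid d$, the algebra $\Z_p[\Delta]=\prod_\varphi\Z_\varphi$ is a finite product of the unramified $\Z_p$-algebras $\Z_\varphi=\Z_p[\Delta]\,e_\varphi$, so $M\mapsto e_\varphi M$ is an exact functor on finite $\Z_p[\Delta]$-modules. Applying $e_\varphi$ to the $\Delta$-isomorphism above yields the asserted $\Z_\varphi$-isomorphism of the $\varphi$-components of $\CH_k'$ and $\wt\CH_k'$. Applying the same exact functor to the short exact sequences $1\to\CH_k^w\to\CH_k\to\CH_k'\to1$ and $1\to\wt\CH_k^w\to\wt\CH_k\to\wt\CH_k'\to1$ gives $\order(\CH_k')^{e_\varphi}=\order(\CH_k)^{e_\varphi}/\order(\CH_k^w)^{e_\varphi}$ and likewise for $\wt\CH_k$; equality of the two left-hand sides is then the displayed formula.

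The main obstacle is the bookkeeping at $p$: one must check that the degree-zero normalization built into the logarithmic divisor class group does not disturb the identification of $\wt\CH_k'$ with the $S$-class group. This is precisely where the wild places intervene, for they carry the nonzero logarithmic degree; imposing degree zero on $\mathcal{D}_k$ and then quotienting by the wild subgroup leaves all of $\mathcal{D}_k^t$, so no tame condition survives. On the odd components relevant to the imaginary part of $\wt\CH_k$ the logarithmic degree map, being $\Delta$-invariant with trivial target action, vanishes identically, and the point is automatic.
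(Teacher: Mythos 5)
Your argument is correct, but it follows a genuinely different route from the paper's. The paper stays entirely inside class field theory: it reads $\wt\CH_k\simeq\Gal(H_k^\lc/k^\cyc)$ and $\CH_k'\simeq\Gal(H_k'/k)$, where $H_k'$ is the maximal subextension of $H_k^\lc$ that splits completely over $k$ at the places above $p$, and then checks that the restriction map $\Gal(H_k^\lc/k^\cyc)\to\Gal(H_k'/k)$ has kernel $\wt\CH_k^w$ and cokernel $\Gal((k^\cyc\cap H_k')/k)$, the latter trivial precisely because $p\nmid d$; equivariance of restriction then gives the $\varphi$-componentwise statement at once. You instead argue at the level of divisors, using that the logarithmic and ordinary valuations agree up to a $\Z_p^\times$-factor away from $p$ (with Jaulent's normalization one even has $\wt v_{\mathfrak q}=v_{\mathfrak q}$ for $\mathfrak q\nmid p$), so that a $\Delta$-equivariant coordinatewise rescaling identifies both tame quotients with the $p$-part of the $S$-class group, $S$ being the set of $p$-places. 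Your route is more hands-on, explains \emph{why} the two groups agree (the discrepancy is purely wild), and yields the identification with the $S$-class group as a bonus; the paper's route is shorter and sidesteps the degree-zero bookkeeping entirely, since that normalization is absorbed into the field-theoretic definition of $H_k^\lc$.

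One step of yours needs tightening: the claim that imposing degree zero and then quotienting by the wild subgroup ``leaves all of $\mathcal{D}_k^t$'' is justified only by the wild places carrying \emph{nonzero} degree, and nonzero is not enough in $\Z_p$. What you actually need is that the ideal $\sum_{\mathfrak p\mid p}\Z_p\deg\mathfrak p$ contains every tame degree $\deg\mathfrak q$, i.e.\ surjectivity of the projection $\wt{\mathcal D}_k^{(0)}\to\mathcal D_k^t$. This is exactly where $p\nmid d$ enters on your side (just as it kills the cokernel on the paper's side): it forces the logarithmic inertia degrees above $p$ to be prime to $p$, whence $v_p(\deg\mathfrak p)=v_p(\deg p)\le v_p(\deg\mathfrak q)$ for all $\mathfrak q\nmid p$, giving the required surjectivity. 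Alternatively, your own closing observation already settles every $\varphi\ne 1$, since the degree map vanishes identically on those isotypic components, so the constraint is vacuous there; and for $\varphi=1$ both sides of the lemma are trivial, the unit components of $\CH_k$ and $\wt\CH_k$ being those of $\Q$ because $p\nmid d$. Either repair makes the proof complete.
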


\proof Let us denote $H_k^\lc$ the maximal abelian pro-$p$-extension of $k$ which 
completely splits, at every place, over the cyclotomic $\Z_p$-extension $k^\cyc$; 
and let $H_k'$ be its maximal sub-extension which completely splits over $k$ at 
every place dividing $p$. By $p$-adic class field theory (Jaulent \cite{Jau1998}), 
we have the isomorphisms:
\smallskip

\centerline{$\wt\CH_k \simeq \Gal(H_k^\lc/k^\cyc)$
\qquad\&\qquad$\CH'_k \simeq \Gal(H'_k/k)$.}

\smallskip
\noindent Now, for the canonical map $\Gal(H_k^\lc/k^\cyc) \to \Gal(H'_k/k)$ 
given by the restriction of Galois automorphisms, the kernel is isomorphic to 
the wild subgroup $\wt\CH_k^w$ of $\wt\CH_k$ and the cokernel 
$\Gal((k^\cyc \cap H_k')/k)$ is here trivial, since we have $p\nmid d$.

\medskip
Let us assume now that $\varphi$ is imaginary and denote 
$[\mathfrak p_\varphi]=[\mathfrak p]^{e_\varphi}$ the $\varphi$-component 
of the class in $\CH_k$ of any prime $\mathfrak p$ above $p$. 
Let $w_\varphi\in\mathbb N$ be the order of $[\mathfrak p_\varphi]$. 
As a consequence, taking the $p$-adifications $\mathcal D_k:=D_k\otimes_\Z\Z_p$ 
and $\mathcal E'_k:=E'_k\otimes_\Z\Z_p$ of the groups of divisors 
and $p$-units of $k$, we can write:

\smallskip
\centerline{$\mathfrak p_\varphi^{w_\varphi}=\mathfrak p^{w_\varphi e_\varphi}
=(\alpha_\varphi)$, with $\alpha_\varphi$ isotypic in $\mathcal E'_k$.}

\smallskip
Let us observe that $\alpha_\varphi$ is uniquely defined (except in the special 
case where $k$ contains the $p$th roots of unity and $\varphi$ is the Teichm\"uller 
character $\omega$, where it is defined up to a root of unity) and that 
$\alpha_\varphi$ generates the $\varphi$-component of~$\mathcal E'_k$. Thus:

\smallskip
\centerline{$\order(\CH_k^w)^{e_\varphi} = (\Z_\varphi:w_\varphi\Z_\varphi)=
w_\varphi^{\,\deg\varphi}$.}

\smallskip
Likewise the logarithmic valuation of $\alpha_\varphi$ is 
$\wt\nu_{\mathfrak p}(\alpha_\varphi):= 
\frac{1}{\deg\mathfrak p}\log_p(N_{k_{\mathfrak p}/\Q_p}(\alpha_\varphi))$, 
where $\log_p$ is the Iwasawa logarithm in $\Q_p$ (see \cite{Jau1994}). 
So, we get (with the convention $x \sim y$ whenever $x/y$ is a $p$-adic unit):

\smallskip
\centerline{$\order(\wt\CH_k^w)^{e_\varphi} = (\Z_\varphi:\wt w_\varphi\Z_\varphi)=
\wt w_\varphi^{\,\deg\varphi}$,}

\smallskip
\noindent where $\wt w_\varphi \sim \frac{1}
{\deg\mathfrak p}\log_p(N_{k_{\mathfrak p}/\Q_p}(\alpha_\varphi))$ is 
the order in $\mathbb N$ of the class $[\,\wt{\mathfrak p}_\varphi]= 
e_\varphi[\,\wt{\mathfrak p}\,]$ of the logarithmic 
divisor constructed on $\mathfrak p$.

\smallskip
In summary, we have:

\begin{theorem}\label{JFJ}
Let $k$ be an abelian imaginary number field, $p\nmid d=[k:\Q]$ be a prime number, 
$\Delta_p$ be the decomposition subgroup of $p$ in $\Delta=\Gal(k/\Q)$ and 
$\chi_p:={\rm Ind}_{\Delta_p}^\Delta 1_{\Delta_p}$ the character of $\Delta$ 
induced by the unit character of $\Delta_p$. Then:
\begin{itemize}
\item[(i)] For each irreducible character $\varphi$ of $\Delta$ that does not appear 
in $\chi_p$, the $\varphi$ components of the logarithmic class group 
$\wt\CH_k^{e_\varphi}$ and of the classical class group $\CH_k^{e_\varphi}$ 
are $\Z_\varphi$-isomorphic; so, they have same orders: $\wt h_\varphi^{\,\deg\varphi} 
= h_\varphi^{\,\deg\varphi}$.

\item[(ii)] For each imaginary $\varphi$ that does appear in $\chi_p$, the integer 
$\,\wt h_\varphi$ is given by the formula: $\wt h_\varphi \sim  \frac{1}{\deg\mathfrak p} 
\log_p(N_{k_{\mathfrak p}/\Q_p}(\beta_\varphi))$, where $\beta_\varphi$ is 
an isotypic generator of the principal divisor $\mathfrak p^{h_\varphi e_\varphi}$.
\end{itemize}
\end{theorem}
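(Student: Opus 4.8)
The plan is to read the statement as a bookkeeping of the two short exact sequences $1\to(\CH_k^w)^{e_\varphi}\to\CH_k^{e_\varphi}\to(\CH_k')^{e_\varphi}\to1$ and its logarithmic analogue, isolated component by component, and to feed in the two ingredients already at hand: the $\Z_\varphi$-isomorphism of tame quotients from Lemma \ref{LA1}, and the explicit wild-part computation carried out just above for imaginary $\varphi$. First I would record the module-theoretic meaning of $\chi_p$: the $p$-adified group of divisors supported on the primes above $p$ is $\bigoplus_{\mathfrak p\mid p}\Z_p\,\mathfrak p\simeq\mathrm{Ind}_{\Delta_p}^\Delta\Z_p$, whose $\Z_p[\Delta]$-character is exactly $\chi_p$, and the same holds for its logarithmic counterpart. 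Since the wild subgroups $\CH_k^w$ and $\wt\CH_k^w$ are, by construction, the images of these divisor modules, the idempotent $e_\varphi$ kills both of them whenever $\varphi$ does not occur in $\chi_p$.

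For (i) this is all that is needed: if $\varphi\nmid\chi_p$ then $(\CH_k^w)^{e_\varphi}=(\wt\CH_k^w)^{e_\varphi}=0$, so the two exact sequences collapse to $\CH_k^{e_\varphi}=(\CH_k')^{e_\varphi}$ and $\wt\CH_k^{e_\varphi}=(\wt\CH_k')^{e_\varphi}$. Lemma \ref{LA1} then provides a $\Z_\varphi$-isomorphism between them, whence $\wt h_\varphi^{\,\deg\varphi}=h_\varphi^{\,\deg\varphi}$.

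For (ii) I would extract from Lemma \ref{LA1} the numerical identity $\wt h_\varphi/\wt w_\varphi=h_\varphi/w_\varphi$, i.e. the equality of the $\Z_\varphi$-orders of the two tame quotients, and combine it with the formula $\wt w_\varphi\sim\frac1{\deg\mathfrak p}\log_p(N_{k_{\mathfrak p}/\Q_p}(\alpha_\varphi))$ established above, where $\alpha_\varphi$ generates $\mathfrak p^{w_\varphi e_\varphi}$. The point is that replacing $\alpha_\varphi$ by the generator $\beta_\varphi$ of $\mathfrak p^{h_\varphi e_\varphi}$ absorbs the ratio $h_\varphi/w_\varphi$: since the $\varphi$-component of the $p$-units is $\Z_\varphi$-generated by $\alpha_\varphi$, one has $\beta_\varphi=\alpha_\varphi^{\,h_\varphi/w_\varphi}$ up to a root of unity, and the Iwasawa logarithm is insensitive to roots of unity, so $\log_p(N_{k_{\mathfrak p}/\Q_p}(\beta_\varphi))=\frac{h_\varphi}{w_\varphi}\log_p(N_{k_{\mathfrak p}/\Q_p}(\alpha_\varphi))$. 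Multiplying the wild formula by $h_\varphi/w_\varphi$ then yields $\wt h_\varphi\sim\frac1{\deg\mathfrak p}\log_p(N_{k_{\mathfrak p}/\Q_p}(\beta_\varphi))$, as claimed.

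The steps requiring care, rather than the routine assembly, are three. One must check that the exceptional Teichm\"uller case, where $\alpha_\varphi$ (hence $\beta_\varphi$) is only defined up to a root of unity, does not spoil the computation; this is precisely why the statement is phrased with $\sim$ and why it is harmless that $\log_p$ annihilates $\mu_k$. One must invoke the Gross--Kuz'min conjecture, valid here since $k$ is abelian, to know that $\wt w_\varphi$ is a genuine nonzero power of $p$, i.e. that the logarithmic valuation $\frac1{\deg\mathfrak p}\log_p(N_{k_{\mathfrak p}/\Q_p}(\alpha_\varphi))$ does not vanish. Finally, the hypothesis $p\nmid d$ is what makes the $\varphi$-decomposition available and forces the cokernel $\Gal((k^\cyc\cap H_k')/k)$ of Lemma \ref{LA1} to be trivial, so that the tame quotients match exactly; I expect the verification that $\beta_\varphi$ can be chosen isotypic and that $\mathfrak p^{h_\varphi e_\varphi}$ is principal (which needs $w_\varphi\mid h_\varphi$, clear since $h_\varphi$ is a multiple of the exponent of $\CH_k^{e_\varphi}$) to be the only genuinely fiddly point.
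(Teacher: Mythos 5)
Your proposal is correct and follows essentially the same route as the paper's own proof: part (i) is exactly the observation that the wild $\varphi$-components vanish when $\varphi$ does not occur in $\chi_p$ (so Lemma \ref{LA1} applies to the full groups), and part (ii) is the same combination $\wt h_\varphi = h_\varphi\,\wt w_\varphi/w_\varphi$ from Lemma \ref{LA1} with the wild-part formula for $\wt w_\varphi$, absorbing the ratio $h_\varphi/w_\varphi$ into $\beta_\varphi=\alpha_\varphi^{h_\varphi/w_\varphi}$. Your extra justifications (the induced-module description $\bigoplus_{\mathfrak p\mid p}\Z_p\,\mathfrak p\simeq{\rm Ind}_{\Delta_p}^\Delta\Z_p$ explaining why $e_\varphi$ kills the wild parts, and the remarks on the Teichm\"uller case and Gross--Kuz'min) merely make explicit points the paper leaves implicit.
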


\proof 
Let us observe first that every finite $\Z_\varphi$-module is the direct sum of cyclic 
submodules. So, we can write its order as the $\deg\varphi$-th power of an integer.

($i$) If the irreducible character $\varphi$ does not appear in $\chi_p$, the 
$\varphi$-components of the wild subgroups $\wt\CH^w_k$ and $\CH^w_k$ 
are trivial. So, the Lemma \ref{LA1} gives directly:

\smallskip
\centerline{$\wt\CH_k\simeq\CH_k$.} 

($ii$) Otherwise, we get however, accordingly to Lemma \ref{LA1}:

\smallskip
\centerline{$\wt h_\varphi = \dfrac{h_\varphi \wt w_\varphi}{w_\varphi} 
\sim \dfrac{h_\varphi}{w_\varphi}\, \big(\frac{1}{\deg{\mathfrak p}}
\log_p(N_{k_{\mathfrak p}/\Q_p}(\alpha_\varphi))\big)
= \frac{1}{\deg{\mathfrak p}} \log_p(N_{k_{\mathfrak p}/\Q_p}(\beta_\varphi))$,}

\smallskip
\noindent  
where $\beta_\varphi=\alpha^{h_\varphi/w_\varphi}$ is an isotypic 
generator of the principal divisor $\mathfrak p_\varphi^{h_\varphi}=
\mathfrak p^{h_\varphi e_\varphi}$.

\begin{corollary}
In case the prime $p$ completely splits in $k$, we get anyway for every imaginary 
irreducible character $\varphi$:

\smallskip
\centerline{$\wt h_\varphi \sim  (\frac{1}{p} \log_p(\beta_\varphi))^{\deg \varphi}$.}
\end{corollary}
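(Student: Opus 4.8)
The plan is to obtain the Corollary as a direct specialization of the Theorem to the totally split case, so that the work reduces to simplifying each ingredient of part~(ii) while keeping careful track of the normalizations. First I would translate the hypothesis ``$p$ completely splits in $k$'' into the statement $\Delta_p=\{1\}$ for the decomposition subgroup $\Delta_p$ of $p$. Then the induced character $\chi_p={\rm Ind}_{\Delta_p}^\Delta 1_{\Delta_p}$ becomes the regular character of $\Delta$, in which every irreducible character $\varphi$ occurs. Consequently no imaginary $\varphi$ can fall under alternative~(i) of the Theorem, and part~(ii) applies to all of them simultaneously; this is exactly the force of the word \emph{anyway} in the statement.

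Next I would specialize the local data entering the formula of~(ii). Complete splitting means that each prime $\mathfrak p\mid p$ has residue degree $1$, so $k_{\mathfrak p}=\Q_p$ and the local norm $N_{k_{\mathfrak p}/\Q_p}$ is the identity; the expression $\frac{1}{\deg\mathfrak p}\log_p(N_{k_{\mathfrak p}/\Q_p}(\beta_\varphi))$ of~(ii) therefore collapses to $\frac{1}{\deg\mathfrak p}\log_p(\beta_\varphi)$, with $\beta_\varphi$ read through the embedding $k\hookrightarrow k_{\mathfrak p}=\Q_p$. The factor $\frac1p$ of the Corollary is produced precisely here: the relevant $\deg\mathfrak p$ is the logarithmic degree $\widetilde{\deg}\,\mathfrak p=f_{\mathfrak p}\,\widetilde{\deg}\,p$, and since the Iwasawa logarithm sends $\Z_p^\times$ into $p\Z_p$ one has $v_p(\widetilde{\deg}\,p)=1$, i.e. $\widetilde{\deg}\,\mathfrak p\sim p$ when $f_{\mathfrak p}=1$; thus $\frac{1}{\deg\mathfrak p}\log_p(\beta_\varphi)\sim\frac1p\log_p(\beta_\varphi)$. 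As a sanity check, for $k$ imaginary quadratic one has $\deg\varphi=1$ and recovers exactly Theorem~\ref{fq}.

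Finally I would account for the exponent $\deg\varphi$. Because $\wt\CH_k^{e_\varphi}$ is a finite module over the unramified ring $\Z_\varphi$ of degree $\deg\varphi$ (the cyclic-decomposition remark opening the proof of the Theorem), its cardinality is the $\deg\varphi$-th power of the $\Z_p$-level invariant computed above, which yields $\order(\wt\CH_k^{e_\varphi})\sim(\frac1p\log_p(\beta_\varphi))^{\deg\varphi}$. I expect the main obstacle to be bookkeeping rather than conceptual: one must fix the normalization of the logarithmic degree of $p$ so that the constant is exactly $\frac1p$ and not some other unit multiple of a power of $p$, and one must be careful that $\log_p(\beta_\varphi)$ is read as a generator of the $\varphi$-isotypic line, so that raising to the power $\deg\varphi$ genuinely reflects the $\Z_\varphi$-module order rather than double-counting the contribution already packaged into the isotypic element $\beta_\varphi$.
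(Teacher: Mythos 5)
Your proof is correct and follows the same route as the paper, which gives no separate argument for this corollary and treats it as the direct specialization of Theorem~(ii) that you spell out: complete splitting forces $\Delta_p=1$, so $\chi_p$ is the regular character and every imaginary $\varphi$ falls under case~(ii), while $k_{\mathfrak p}=\Q_p$ makes $N_{k_{\mathfrak p}/\Q_p}$ the identity and $\deg\mathfrak p\sim p$. Your handling of the exponent $\deg\varphi$ (reading the corollary's $\wt h_\varphi$ as the order of the $\varphi$-component, i.e.\ the $\deg\varphi$-th power of the cyclic-level integer of Theorem~(ii), per the convention opening the theorem's proof) is exactly the intended resolution of that notational shift, as the quadratic case of the second corollary confirms.
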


\begin{corollary}\label{qc}
In the particular case $k$ is an imaginary quadratic field and the prime $p$ 
decomposes in $k$ as $(p)=\mathfrak p\bar{\mathfrak p}$, the order 
$\wt h=\order\wt\CH_k$ satisfies the identity:

\smallskip
\centerline{$\wt h \sim  (\frac{1}{p} \log_p(\beta))$, where $\beta$ is a generator 
of $(\mathfrak p/\bar{\mathfrak p})^{h_{k,p}}$ with $h_{k,p}=\order\CH_k$.}
\end{corollary}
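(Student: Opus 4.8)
The plan is to obtain this final Corollary as the specialization of the preceding (completely split) Corollary to $\Delta=\Gal(k/\Q)=\{1,\tau\}$, where $\tau$ is complex conjugation. Since $p$ is odd we have $p\nmid d=2$, so the semi-simple hypothesis holds; the two $\Q_p$-irreducible characters of $\Delta$ are the trivial character $1$ and the quadratic character $\chi$, both of degree $1$, and $\chi$ is the unique imaginary character, $\chi(\tau)=-1$. As $p$ splits, the decomposition group $\Delta_p$ is trivial, so $\chi_p$ is the regular character $1+\chi$ and $\chi$ does appear in it; thus we are in case (ii) of the Theorem, with ${\mathfrak p}$ of residue degree $1$ and $k_{\mathfrak p}=\Q_p$.

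First I would check that $\CH_k$ and $\wt\CH_k$ are purely $\chi$-isotypic. Since $\tau$ acts by inversion on both groups (as recalled in the main text), the idempotent $e_1=\frac12(1+\tau)$ acts as $0$ and $e_\chi=\frac12(1-\tau)$ as the identity; hence the plus parts vanish, $\order\wt\CH_k=\wt h_\chi$ and $\order\CH_k=h_\chi=h$. I would also note that the Teichm\"uller exception of the appendix does not occur: $\mu_p\subset\Q(\sqrt{-m})$ would force $p=3$ and $k=\Q(\sqrt{-3})$, in which $3$ ramifies, contradicting that $p$ splits. Applying the completely split Corollary to $\varphi=\chi$ (with $\deg\chi=1$) then gives $\wt h=\wt h_\chi\sim\frac1p\log_p(\beta_\chi)$, where $\beta_\chi\in\mathcal E'_k$ is an isotypic generator of the divisor ${\mathfrak p}^{h_\chi e_\chi}$, principal in $\mathcal D_k$.

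It remains to match $\beta_\chi$ with the $\beta$ of the statement. Using $e_\chi=\frac12(1-\tau)$ and ${\mathfrak p}^\tau=\ov{\mathfrak p}$, one computes ${\mathfrak p}^{h_\chi e_\chi}=({\mathfrak p}/\ov{\mathfrak p})^{h/2}$, so $\beta=\beta_\chi^{\,2}$ up to roots of unity; since $2\in\Z_p^\times$ and the Iwasawa logarithm kills roots of unity, $\frac1p\log_p(\beta_\chi)=\frac12\cdot\frac1p\log_p(\beta)\sim\frac1p\log_p(\beta)$, whence $\wt h\sim\frac1p\log_p(\beta)$. The only real obstacle is bookkeeping: everything must be read inside the $p$-adified groups $\mathcal D_k$ and $\mathcal E'_k$, so that halving the exponent $h$ is legitimate and $({\mathfrak p}/\ov{\mathfrak p})^h$ is principal precisely because its class is annihilated in the $p$-class group; and one should confirm that the normalization $\frac1p\log_p$ is exactly the one defining $\delta_p(k)$ in Section \ref{log}, so that the outcome is consistent with Theorem \ref{fq}.
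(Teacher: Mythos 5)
Your proof is correct and follows essentially the same route as the paper's: both deduce the statement from the preceding completely-split Corollary by noting that $\Gal(k/\Q)$ has exactly the two characters $1$ and $\chi$, that the real ($1$-)components of $\CH_k$ and $\wt\CH_k$ are trivial, and hence $h=h_\chi$ and $\wt h=\wt h_\chi$ with $\deg\chi=1$. Your explicit bookkeeping — identifying $\mathfrak p^{h e_\chi}=(\mathfrak p/\ov{\mathfrak p})^{h\cdot 2^{-1}}$ in $\mathcal D_k$, so that $\beta_\chi^{2}=\beta$ up to torsion and $\fffrac{1}{p}\log_p(\beta_\chi)\sim\fffrac{1}{p}\log_p(\beta)$ since $2\in\Z_p^\times$, plus the check that the Teichm\"uller exception cannot occur when $p$ splits — simply fills in steps the paper's proof leaves implicit.
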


\proof 
In this later case there are exactly two irreducible characters $1$ and $\varphi$. 
The 1-components, i.e. the real components of ordinary or logarithmic $p$-class groups 
coincide with the respective groups of $\Q$, which are trivial. So, the whole groups 
coincide with their $\varphi$-components, i.e. the imaginary ones, and we have the 
equality between the whole orders: $h:=h_1h_\varphi=h_\varphi$; and 
$\wt h:=\wt h_1\wt h_\varphi=\wt h_\varphi$.

\section{Parametrisation and Ramification of the 
\texorpdfstring{$\Z_p$}{Lg}-extensions in 
\texorpdfstring{$\wt k/k$}{Lg}}\label{B}

Let $p \geq 3$, split in $k$; set $(p) = {\mathfrak p} {\ov {\mathfrak p}}$. We refer to 
Diagram \eqref{maindiagram} and we will justify the existence then give a description 
of the non-Galois $\Z_p$-extensions $K/k$, totally ramified at ${\mathfrak p}$
from some layer $K_e$ and at ${\ov {\mathfrak p}}$ from some layer $K_{\ov e}$, 
assuming for instance $e \geq \ov e$, so that $K_{\ov e} = K^\nr := K \cap H_k^\nr$, 
the maximal unramified sub-extension of $K$; whence, $K^\nr \subseteq \knr 
:= \wt k \cap H_k^\nr = k^\acyc \cap H_k^\nr$. We will show that, in general, 
$e = \ov e$, but there are infinitely many counterexamples 
and the case $p = 3$ yields to a pathological situation.

\subsection{Parametrisation of \texorpdfstring{$\Gal(K/k)$}{Lg}
in \texorpdfstring{$\Gal(\wt k/k)$}{Lg}}
Let $\sigma_-$ and $\sigma_+$ be topological generators of 
$\Gamma^- = \Gal(\wt k/k^\cyc)$ and $\Gamma^+ = \Gal(\wt k/k^\acyc)$, 
respectively; we define any sub-extension $F$ of $\wt k$ by means of 
generators of $G_F := \Gal(\wt k/F) \subseteq \Gamma := \langle \sigma_-, 
\sigma_+ \rangle = \Gamma^- \oplus \Gamma^+$ and compute the quotient 
$\Gamma/G_F$. Any subgroup $G$ of $\Gamma$, $G \ne 1, \Gamma$, is 
free, whence of $\Z_p$-rank $1$.

\smallskip
Recall that the $\Z_p$-extensions $L$ and $\ov L$ are, in 
$\wt k/k$, the inertia fields of ${\mathfrak p}$ and ${\ov {\mathfrak p}}$, 
respectively, and that $L \cap \ov L = \knr$; so, $K_{\ov e} = 
K \cap \ov L \subseteq K_e = K \cap L$.

\smallskip
A field $K \subset \wt k$ is a $\Z_p$-extension of $k$ if and only if $G_K = 
\langle \sigma_-^a \! \cdot \sigma_+^b \rangle$ with $a, b \in \Z_p$ such that 
$a$ or $b$ is a $p$-adic unit (otherwise, $\Gal(K/k) \simeq \Gamma/G_K$ would 
have a non-trivial torsion element). 
Then $G_L = \langle \sigma_-^x \! \cdot \sigma_+^y \rangle$, where 
$x, y \in \Z_p$ are both non-zero and not both in $p\Z_p$,
$G_{\ov L} = G_L^\tau := \tau \circ G_L \circ \tau^{-1}
= \langle \sigma_-^{-x} \cdot \sigma_+^y \rangle$ since $\tau$ 
commutes with $\sigma_+$ and acts by inversion on $\sigma_-$.

\smallskip
Set $U \sim V$ in $\Q_p \backslash \{0\}$, if $V \,U^{-1} \in \Z_p^\times$ and put:
$$a = p^\alpha u,\  b  =  p^\beta v, \ \, \alpha \cdot \beta = 0, \ \ \
x = p^s u',\  y = p^t v', \ \, s \cdot t = 0,\ \ \
u \sim v \sim u' \sim v' \sim 1.$$

\subsection{Ramification of the \texorpdfstring{$p$}{Lg}-places 
in \texorpdfstring{$K/k$}{Lg}}

One may choose the generators $\sigma_-$ and $\sigma_+$ such that $u' = v' = 1$,
so that $x = p^s$, $y = p^t$, and one may choose the generator 
$\sigma_-^{p^\alpha u} \! \cdot \sigma_+^{p^\beta v}$ of $G_K$ such
that $v = 1$, giving the bijective parametrization:
$$G_K = \langle \sigma_-^{p^\alpha u} \!\cdot \sigma_+^{p^\beta} \rangle,
\ \  \hbox{$\alpha = 0$ or $\beta = 0$, $u \sim 1$}. $$

We deduce that $[K \cap k^\cyc : k] = p^\beta$ and $[K \cap k^\acyc : k] = 
p^\alpha$. Put $\wt e := v_p([\knr : k])$ and consider the following cases:

\smallskip
(i) Case $K \cap k^\cyc \ne k$ ($\beta \ne 0$, $\alpha = 0$); then
$K/k$ is totally ramified at the two $p$-places ($\ov e = e = 0$).

\smallskip
(ii) Case $K \cap k^\acyc \ne k$, with $\wt e = 0$ (linear disjunction of
$k^\acyc$ and $H_k^\nr$); similarly $\alpha \ne 0$, $\beta = 0$
and $\ov e = e = 0$. 

\smallskip
(iii) When $\wt e = 0$ and $K \cap k^\cyc = K \cap k^\acyc = k$ 
($\alpha = \beta = 0$), many cases occur with $K/k$ totally ramified
at ${\ov {\mathfrak p}}$ ($\ov e = 0$, $e \geq 0$).

\smallskip
(iv) When $\wt e \ne 0$, the discussion depends on $K \cap k^\acyc$, 
hence of $\alpha$ and $u$; if $K \cap k^\acyc \ne k$, it is clear that 
$\ov e$ and $e$ are both non-zero.

\smallskip
So we have the following array giving for each field $F$, the
Galois group $G_F$ and the required structure of $\Gal(F/k) 
\simeq \Gamma/G_F$ as $\Z_p$-module; if $\beta \ne 0$ (i.e. 
$K \cap k^\cyc \ne k$), then $\alpha = e = \ov e = 0$ (Case (i) 
of Theorem \ref{6cases}). So, we may assume $\beta = 0$ in 
what follows (recall that $s \cdot t=0$):
\begin{equation}
\left\{\begin{aligned}
& K  & G_K &= \langle \sigma_-^{p^\alpha u} \!\cdot 
\sigma_+  \rangle,  
\hspace{0.2cm} &&  \Z_p & \\
& L ,\ \ov L & G_L &= \langle \sigma_-^{p^s} \!\cdot 
\sigma_+^{p^t} \rangle, \hspace{0.2cm} G_{\ov L} 
= \langle \sigma_-^{-p^s}\! \cdot 
\sigma_+^{p^t} \rangle \hspace{0.2cm}  
\hspace{0.2cm} && \Z_p,\  \Z_p & \\
& L \cap \ov L = \knr  & G_{L \cap \ov L} &
= \langle \sigma_-^{p^s} \!\cdot 
\sigma_+^{p^t} , \sigma_-^{-p^s}\! \cdot \sigma_+^{p^t}
\rangle = \langle \sigma_-^{p^s}, \sigma_+^{p^t} \rangle 
\hspace{0.2cm}  && \Z/p^{\wt e} \,\Z &  \\
& K \cap L = K_e& G_{K \cap L} &= \langle \sigma_-^{p^\alpha u} 
\!\cdot \sigma_+ , \sigma_-^{p^s}\! \cdot \sigma_+^{p^t}
\rangle \hspace{0.2cm} && \Z/p^e \Z & \\
& K \cap \ov L = K_{\ov e} & G_{K \cap \ov L}  &= 
\langle \sigma_-^{p^\alpha u} \!\cdot 
\sigma_+ , \sigma_-^{-p^s}\! \cdot \sigma_+^{p^t}
\rangle  \hspace{0.2cm} && \Z/p^{\ov e} \Z & \\
\end{aligned} \right .
\end{equation}

Let's denote with the symbol $\approx$ the matricies
equivalence associated to the theory of free $\Z_p$-modules.

\smallskip
Since $ L \cap \ov L = \knr \subset k^\acyc$, $\Gamma/G_{L \cap \ov L}$ 
is a cyclic group of order $p^{\wt e}$; this implies that the matrix:
$$\begin{pmatrix}
p^s & p^t  \\
-p^s & p^t  \\
\end{pmatrix} \approx
\begin{pmatrix}
p^s & 0  \\
0 & p^t  \\
\end{pmatrix}$$
must be equivalent to the matrix 
$\begin{pmatrix}
p^{{\wt e}}  & 0 \\
0 & 1  \\
\end{pmatrix}$;
this yields $(s,t) \in \{ ( {\wt e}, 0), (0, {\wt e})\}$. So, $t = 0$, $s = \wt e$, 
since the projections of $G_L$, $G_{\ov L}$ on $\Gal(k^\cyc/k)$ are 
onto. 

\smallskip
Since $\Gal(K \cap \ov L/k) \simeq \Z/p^{\ov e} \Z$ and 
$\Gal(K \cap L/k) \simeq \Z/p^e \Z$, necessarily we obtain:
$$\begin{pmatrix}
p^\alpha u  & 1 \\
-p^{\wt e} & 1 \\
\end{pmatrix} 
\approx \begin{pmatrix}
p^{\ov e}  & 0 \\
0 & 1  \\
\end{pmatrix} \ \ {\rm and}\ \  
\begin{pmatrix}
p^\alpha u  & 1 \\
p^{\wt e} & 1 \\
\end{pmatrix} 
\approx \begin{pmatrix}
p^e  & 0 \\
0 & 1  \\
\end{pmatrix}. $$

If $\wt e = 0$, $\alpha \ne 0$ (Case \ref{6cases}\,(ii)), then
$\ov e = e = 0$.

\smallskip
If $\wt e = 0$, $\alpha = 0$ (Case \ref{6cases}\,(iii)), then 
$\ov e = 0$, $e = v_p(u-1)$ (the case $\alpha = 0$, $u = 1$ or
$u = -1$ do not occur since in that cases, $K = L$ or
$K = \ov L$, excluded).

\smallskip
Now, assume $\wt e \ne 0$, and consider the two matrices 
$\begin{pmatrix}
p^\alpha u & 1 \\
\pm p^{\wt e} & 1 \\
\end{pmatrix}$:

If $\alpha \ne \wt e$, the two matrices are equivalent, so $\ov e = e = \min(\alpha, \wt e)$
(Case \ref{6cases}\,(iv)). 

If $\alpha = \wt e$, the two matrices are equivalent to
$\begin{pmatrix}
p^{\wt e} (u \pm 1) & 0 \\
0 & 1  \\
\end{pmatrix}$
giving $\ov e = e = \wt e$ as soon as $u \pm 1$ are both $p$-adic 
units (Case \ref{6cases}\,(v))), otherwise, if $u + 1 \sim 1$ and 
$u - 1 \sim p^{\rho}$, $\rho \geq 1$, then $\ov e = \wt e$ and 
$e = \wt e + \rho$ (Case \ref{6cases}\,(vi)).

\smallskip
In summary, we have obtained the following description of the ramification
indices of ${\mathfrak p}$ and ${\ov {\mathfrak p}}$ in a non-Galois 
$\Z_p$-extension $K$, distinct from $L$ and $\ov L$, in the split case:

\begin{theorem}\label{6cases}
Let $K$ be the $\Z_p$-extension fixed by $G_K = \langle 
\sigma_-^{p^\alpha u} \! \cdot \sigma_+^{p^\beta} \rangle$, $u \sim 1$,
$u \ne \pm 1$, with $\alpha \cdot \beta = 0$.
This describes all the non-Galois $\Z_p$-extensions
distinct from $L$ and $\ov L$. Put $\wt e = v_p([\knr : k])$, where 
$\knr := \wt  k \cap H_k^\nr = k^\acyc \cap H_k^\nr$.
We have the following values of $\ov e$ and $e$:
\begin{equation*}
\left \{\begin{aligned} 
&\hbox{$(i)$}  &&\hbox{$ \ov e = e = 0$, if $K \cap k^\cyc \ne k$ 
($\alpha = 0$, $\beta \ne 0$);} \\
&\hbox{$(ii)$}  &&\hbox{$ \ov e = e = 0$, if $\wt e = 0$ and 
$K \cap k^\acyc \ne k$ ($\alpha \ne 0$, $\beta = 0$);} \\
&\hbox{$(iii)$}  &&\hbox{$ \ov e = 0$, $e = v_p(u - 1)$, if $\wt e = 0$, 
$K \cap k^\cyc = K \cap k^\acyc = k$ ($\alpha = \beta = 0$);} \\
&\hbox{$(iv)$}  &&\hbox{$ \ov e = e = \min(\alpha, \wt e)$, if $\wt e \ne 0$, 
$\alpha \ne \wt e$, $\beta = 0$;} \\
&\hbox{$(v)$}  &&\hbox{$ \ov e = e = \wt e$, if $\wt e \ne 0$, $\alpha = \wt e$, 
$u + 1 \sim 1$, $u - 1 \sim 1$, $\beta = 0$;} \\
&\hbox{$(vi)$}  &&\hbox{$ \ov e = \wt e$, $e = \wt e + v_p(u - 1)$, if $\wt e \ne 0$, 
$\alpha = \wt e$, $u + 1 \sim 1$, $\beta = 0$.}
\end{aligned}\right .
\end{equation*}
\end{theorem}

\subsection{Case \texorpdfstring{$p=3$}{Lg} and examples}
For $p = 3$, split in $k$, and $\alpha = \wt e$, the above last exceptional Case
\ref{6cases}\,(vi) always occurs and Case \ref{6cases}\,(v) never occurs
since $\Z_3^\times = \{\ov 1, \ov 2\}$, which gives $e > \ov e$; numerical 
examples require too large degrees, except if $\wt e = 0$ and $K_1 = L_1$, 
which yields the total ramification of ${\ov {\mathfrak p}}$ ($\ov e = 0$, $e \geq 1$). 
In that case, any non-Galois $\Z_3$-extension $K/k$, linearly disjoint from 
$k^\cyc$ and $k^\acyc$ (i.e. $\alpha = \beta = 0$), can not be totally 
ramified at ${\mathfrak p}$; we have the following numerical illustrations 
for some $k = \Q(\sqrt{-m})$ with $3$ split in $k$:

\smallskip
(i) Two cases ($\Q(\sqrt{-107})$, $\Q(\sqrt{-302})$) where $\wt e = 0$ 
with the single prime ideal ${\ov {\mathfrak p}}$ ramified in $K_1 = L_1$; 
in the first (resp. second) case, ${\mathfrak p}$ splits (resp. is inert).

\smallskip
(ii) An example ($\Q(\sqrt{-362})$) where $\wt e \geq 1$, $\alpha = \ov e = e = 0$ with 
non-Galois $K_1\ne  k_1^\cyc, k_1^\acyc$. 

\smallskip
We compute the degree $6$ defining polynomial of $K_1$, and the 
factorisation of $(3)$; for each prime ${\mathfrak P}_1 \mid 3$ of 
$K_1$, the third and fourth components give the ramification index 
and the residue degree; the last one ($1$ or $3$) is the power 
of ${\mathfrak P}_1$ in $(3)$, also giving the ramification:

\ft\begin{verbatim}
m=107 kronecker(-m,3)=1 Hk=[3] k_1^ac/k Ramified
Q^ac=x^3+6*x-17 Hkacyc=[9,3]
x^6+2178*x^5+1978263*x^4+959122620*x^3+261780977439*x^2
+38136284905122*x+2316602639725929
[[3,[-1,0,-1,0,-1,-1]~,3,1,
  [0,12,18,6,-6,27;2,2,-5,-11,-5,8;2,-4,-8,1,-8,-4;
  2,5,7,13,4,-4;2,-4,-5,-8,-2,-1;1,10,8,2,5,-8]],3;
[3,[-1,1,1,0,0,-1]~,1,1,
  [0,6,3,3,-6,6;1,1,0,-5,-2,3;0,0,-3,3,0,-3;
  1,1,3,4,1,0;0,0,0,-3,-3,0;0,3,3,0,0,0]],1;
[3,[2,-1,-1,-1,1,-1]~,1,1,
  [0,3,6,6,-3,12;2,3,-1,-3,-4,4;1,-3,-5,0,-2,-4;
  0,0,3,6,3,-3;1,0,-2,-3,-2,-1;0,6,6,3,3,-3]],1;
[3,[2,0,0,-1,0,1]~,1,1,
  [0,3,6,6,6,3;0,3,0,0,-3,3;1,-4,-1,-2,0,2;
  0,0,0,3,0,-3;0,0,-3,0,0,0;1,2,-1,1,0,-4]],1]
  
m=302 kronecker(-m,3)=1 Hk=[12] k_1^acyc/k Ramified
Q^ac=x^3-93*x-458 Hkacyc=[12,3]
x^6+936*x^5+338256*x^4+58423680*x^3+4781680128*x^2
+156378968064*x+1923066261504
[[3,[-1,1,0,0,0,0]~,3,1,
  [0,-15,-15,3,27,18;1,1,-1,-2,4,12;1,-2,-1,1,-2,-9;
  0,-6,-6,0,0,9;0,-6,0,3,-3,0;0,0,0,-6,3,3]] 3]
[[3,[1,0,0,-1,1,0]~,1,3,
  [0,-9,-33,-1,44,29;1,-1,-2,5,1,12;2,-2,-1,-2,-7,-10;
  1,-10,-8,2,-2,15;0,-6,3,4,-5,6;0,-3,-3,-7,2,5]] 1]
  
m=362 kronecker(-m,3)=1 Hk=[18] k_1^ac/k Unramified
Q^ac=x^3+5*x-14 Hkacyc=[6]
x^6+6720*x^5+18817440*x^4+28104987008*x^3+23613547649280*x^2
+10582052588943360*x+1976055196443480064
[[3,[0,0,0,-1,0,-1]~,3,1,
  [0,-24,-54,-72,48,0;0,3,0,33,6,24;2,7,-2,1,32,44;
  2,-5,4,-5,2,-10;1,-4,-1,-7,4,-44;0,-3,3,0,6,0]],3;
[3,[0,1,0,-1,0,1]~,3,1,
  [0,-54,-24,-24,-48,0;2,5,2,39,-4,-44;0,3,0,9,30,-24;
  2,-1,-4,-3,-4,10;1,4,-5,-3,-2,-34;0,-3,3,-6,6,0]],3]
\end{verbatim}\ns

For $k = \Q(\sqrt{-362})$, $K_1$ is obtained as one of the degree $6$ 
subfield of the compositum $M := k_1^\cyc\, k_1^\acyc$, of degree 
$18$, and the instruction ${\sf nfsubfields(M,6)}$ giving the list 
$\{k_1^\cyc, K_1, \ov K_1, k_1^\acyc\}$.

\section{Numerical computations -- {\sc pari/gp} programs} \label{C}

\subsection{Presentation of the programs}

The programs use the standard {\sc pari/gp} instructions,
computing invariants of algebraic number fields, that we recall
with our variable names:

\begin{itemize}

\item ${\sf k := bnfinit(x^2+m)}$: computes the arithmetic of the field $k = \Q(\sqrt{-m})$,

\item ${\sf basis := k.zk}$: gives an integral basis of the field $k$,

\item ${\sf bnflog(k,p)}$: gives 
$\big[\wt \CH_k,\ \wt \Ccl(S_k),\ \CH_k^{S_k} = \CH_k/\Ccl(S_k)\big ]$,

\item ${\sf Clog := bnflog(k,p)[1]}$: group structure of $\wt \CH_k$,

\item ${\sf Hk := k.cyc}$: group structure of the class group $\BH_k$, 

\item${\sf rk}$: $p$-rank of the $p$-class group $\CH_k$ ($p=3$),

\item ${\sf hk := k.no}$: class number of $k$, 

\item ${\sf hta = hk/3^{valuation(hk,3)}}$: 
tame part of $\order \BH_k$, 

\item ${\sf k.clgp}$: gives the class group $\BH_k$ with generating ideals,

\item ${\sf Kpnu := bnrinit(k,p^{nu})}$: computes the ray class field mod $p^\nu$,

\item ${\sf HKnu := Kpnu.cyc}$: group structure of the corresponding ray class group,

\item ${\sf Tk}$: gives the group structure of $\CT_k$, from that of ${\sf HKnu}$, 

\item ${\sf ot}$: gives $\order \CT_k$, ${\sf otbp}$: gives $\order \CT_k^\bp$,

\item ${\sf P := component(idealfactor(k,p),1)[1],
ovP := component(idealfactor(k,p),1)[2]}$: prime
ideals ${\mathfrak p}$, ${\ov {\mathfrak p}} \mid p$,

\item ${\sf bnfisprincipal(k,A)}$: test if the ideal  ${\mathfrak a}$
is principal or not. It gives for instance:

\quad ${\sf bnfisprincipal(k,A) = [[0,0,0],[24409,2926,-9378,2778,2724,-698]]}$, 

\noindent
where the second part gives the components of a generator of 
${\mathfrak a}$ on the integral basis. Thus, ${\mathfrak a}$ is principal 
{\it if and only if} ${\sf bnfisprincipal(k,A)[1]}$ is the zero vector as above. 
Otherise, one finds ${\sf bnfisprincipal(k,A)[1] = [3,0]}$), giving the 
components on the generating classes $h_i$ of $\BH_k$ (in this example 
$\BH_k \simeq \Z/9\Z \times \Z/3\Z$, $\Ccl({\mathfrak a}) = h_1^3 \cdot h_2^0$), 

\item ${\sf delta := idealval(k,x^{(p-1)}-1,ovP)-1}$, 
${\sf Delta := delta + [vhk-vh]}$: ${\mathfrak p}$-valuation of the Fermat quotient 
of $x$ (generator of $ {\mathfrak p}^{\hp}$) and $p$-valuation of $\wt \CH_k$,

\item For the arithmetic of the field $k^* = \Q(\sqrt{3m})$, the programs
replace any variable name ${\sf z}$ by ${\sf zstar}$ (e.g., ${\sf Pstar = x^2-3*m}$, 
${\sf kstar = bnfinit(Pstar)}$, etc.),

\item ${\sf Lw}$: list of radicals $w \in k^*$ giving the $3$-ramified
cyclic cubic extensions of $k$
\end{itemize}

For Galois computations, the programs use:

\begin{itemize}
\item ${\sf G := nfgaloisconj(K)}$: Galois group of $K/\Q$ for ${\sf K := bnfinit(R)}$ 
\item ${\sf nfgaloisapply(K,S,A)}$: gives the image by $s \in G$ of the object $A$,
\item ${\sf polgalois(R)}$: computes the Galois group of the polynomial $R$,
\item ${\sf polredbest(Q)}$: polynomial, with smaller coefficients,
defining the same field
\end{itemize}

\subsection{Program computing the Ozaki family}\label{ozakiP}
The search of the imaginary quadratic fields of Theorem \ref{H} 
for $p = 3$ is very easy.
The program gives the class groups and logarithmic class groups of $k$ 
and $k_1^\acyc$ for information. Even if the knowledge of $k^\acyc$ is not
necessary, recall that for $m \not\equiv 3 \pmod 9$, $\rk_3(\CH_{k^*}) = 
\rk_3(\CT_k)$, where $k^* = \Q(\sqrt{3m})$ \cite[Theorem 4.3]{Gra2026}; 
this simplifies the search of radicals since $\CT_k = 1$ (equivalent  
to $H_k^\nr \subset k^\acyc$). So the unique 
radical is given by the fundamental unit of $k^*$; it gives the unramified
layer $k_1^\acyc$. We only consider the cases $\CH_k \ne 1$:

\ft\begin{verbatim}
{for(m=2,10^4,if(core(m)!=m || Mod(-m,3)!=1,next);P=x^2+m;
k=bnfinit(P);hk=k.no;vhk=valuation(hk,3);if(vhk==0,next);hkta=hk/3^vhk;
\\TESTS: PRINCIPALITY OF P and delta_3(k)=0:
Prime=component(idealfactor(k,3),1);Y=idealpow(k,Prime[1],hkta);
Z=bnfisprincipal(k,Y);if(Z[1]==0,
X=Mod(Z[2][1]*k.zk[1]+Z[2][2]*k.zk[2],P);
delta=idealval(k,X^2-1,Prime[2])-1;if(delta==0,
\\TEST: T_k=1:
nu=vhk+2;ot=1;Kpnu=bnrinit(k,3^nu);Hknu=Kpnu.cyc;
d=matsize(Hknu)[2];for(j=3,d,c=Hknu[j];ot=ot*3^valuation(c,3));
if(ot==1,print();
\\MAIN INVARIANTS:
print("m=",m," S-unit x=",X);print("hk=",hk," #Tk=",ot," Clogk=",bnflog(k,3));
\\FUNDAMENTAL UNIT OF k* AND FIRST LAYER OF k^ac:
dstar=quaddisc(3*m);w=quadunit(dstar);
Q=polredbest(x^3-3*norm(w)*x-trace(w));
R=polcompositum(P,Q)[1];kacyc=bnfinit(R,1);
print("Q^ac=",Q," Hkacyc=",kacyc.cyc," Clogkacyc=",bnflog(kacyc,3))))))}

   m=239 S-unit x=Mod(x+2,x^2+239)
hk=15 #Tk=1 Clogk=[[3],[],[3]]
Q^ac=x^3-x-3 Hkacyc=[5] Clogkacyc=[[3],[3],[]]

   m=461 S-unit x=Mod(5*x-218,x^2+461)
hk=30 #Tk=1 Clogk=[[3],[],[3]]
Q^ac=x^3-7*x-18 Hkacyc=[10] Clogkacyc=[[3],[3],[]]
          (...)
   m=9770 S-unit x=Mod(10252*x-4674391,x^2+9770)
hk=84 #Tk=1 Clogk=[[3],[],[3]]
Q^ac=x^3-97*x-414 Hkacyc=[14,2] Clogkacyc=[[3],[3],[]]

   m=9857 S-unit x=Mod(329*x+49192,x^2+9857)
hk=60 #Tk=1 Clogk=[[3],[],[3]]
Q^ac=x^3-34*x-108 Hkacyc=[20,5,5] Clogkacyc=[[3],[3],[]]
time = 3,051 ms.
\end{verbatim}\ns

In the following examples, $\CH_k \simeq \Z/27\Z$ (resp. $\Z/81\Z$);
whence $\lambda_3(k^\acyc/k) \geq 27$ (resp. $81$, the only one for 
$m \leq 10^6$). So, $k_1^\acyc$ is the first layer of $H_k^\nr$ whose 
$3$-class group is of order $9$ (resp. $27$). 

\ft\begin{verbatim}
   m=174431 S-unit x=Mod(61*x-22654,x^2+174431)
hk=513 #Tk=1 Clogk=[[27],[],[27]]
Q^ac=x^3-x^2+6*x-243 Hkacyc=[171] Clogkacyc=[[27],[3],[9]]

   m=1449311 S-unit x=Mod(251*x-53246,x^2+1449311)
hk=1863 #Tk=1 Clogk=[[81],[],[81]]
Q^ac=x^3-x^2+48596722899993379033514972856892525213677868304877600822041889024 
Hkacyc=[621] Clogkacyc=[[27,3],[3],[27]]
\end{verbatim}\ns

\subsection{Programs computing \texorpdfstring{$\wt \CH_k$, $\CT_k$, 
$\CH_k$}{Lg} (in \texorpdfstring{${\sf Clogk[1], Tk, Hk}$}{Lg})}\label{P1}

\subsubsection{General program}
The integer $m$ and the prime $p$, split in $k = \Q(\sqrt{-m})$, vary in any given
interval. We give some excerpts in the case $\wt \CH_k \ne 1$ ($\wt \CH_k$ 
is computed from the {\sc pari/gp} instruction ${\sf Clogk = bnflog(k,p)}$
\cite{BJ2016} and not from expression of Theorem \ref{fq}):

\ft\begin{verbatim}
{for(m=2,10^4,if(core(m)!=m,next);k=bnfinit(x^2+m);Hk=k.cyc;
nu=valuation(k.no,3)+2;forprime(p=3,10^4,if(kronecker(-m,p)!=1,next);
Clog=bnflog(k,p);if(Clog[1]!=[],
\\COMPUTATION OF Tk (cf. previous program):
Tk=List;Kpnu=bnrinit(k,p^nu);HKnu=Kpnu.cyc;d=matsize(HKnu)[2];
for(j=3,d,c=HKnu[j];w=valuation(c,p);if(w>0,listput(Tk,p^w)));
\\COMPUTATION OF THE USUAL 3_CLASS GROUP:
Hkp=List;d=matsize(Hk)[2];for(j=1,d,
c=Hk[j];w=valuation(c,p);if(w>0,listput(Hkp,p^w)));
print("m=",m," p=",p," Clogk=",Clog," Tk=",Tk," Hkp=",Hkp))))}

m=3     p=13      Clogk=[[13],[13],[]]       Tk=[]    Hkp=[]
m=3     p=181     Clogk=[[181],[181],[]]     Tk=[]    Hkp=[]
m=3     p=2521    Clogk=[[2521],[2521],[]]   Tk=[]    Hkp=[]
m=5     p=5881    Clogk=[[5881],[5881],[]]   Tk=[]    Hkp=[]
(...)
m=1987  p=163     Clogk=[[26569],[26569],[]] Tk=[]    Hkp=[]
m=2239  p=5       Clogk=[[25],[5],[5]]       Tk=[]    Hkp=[5]
m=2334  p=11      Clogk=[[11],[11],[]]       Tk=[11]  Hkp=[11]
m=2351  p=3       Clogk=[[729],[729],[]]     Tk=[9]   Hkp=[9]
m=2759  p=3       Clogk=[[3],[3],[]]         Tk=[3]   Hkp=[27]
m=2915  p=3       Clogk=[[243],[243],[]]     Tk=[3]   Hkp=[3]
m=2963  p=13      Clogk=[[169],[169],[]]     Tk=[13]  Hkp=[13]
(...)
m=18665 p=3       Clogk=[[81],[81],[]]       Tk=[]    Hkp=[]
m=18665 p=7       Clogk=[[7],[7],[]]         Tk=[]    Hkp=[]
(...)
m=83909 p=3       Clogk=[[],[],[]]           Tk=[]    Hkp=[81]
m=83909 p=5       Clogk=[[25],[25],[]]       Tk=[]    Hkp=[]
\end{verbatim}\ns

\subsubsection{Examples of \texorpdfstring{$\wt \CH_k$}{Lg}'s of 
\texorpdfstring{$3$-rank $3$}{Lg}}\label{P2}

This table uses the previous program for $p = 3$. We do not repeat 
identical structures for the three invariants of ${\sf Clogk}$:

\ft\begin{verbatim}
m=207143     Clogk=[[9,3,3],[9],[3,3]]   Tk=[3,3]    Hkp=[3,3]
m=1654781    Clogk=[[3,3,3],[3],[3,3]]   Tk=[3,3]    Hkp=[9,3]
m=2688977    Clogk=[[9,9,3],[9],[9,3]]   Tk=[9,3]    Hkp=[9,3]
m=2750507    Clogk=[[3,3,3],[3],[3,3]]   Tk=[3,3]    Hkp=[3,3]
m=3334937    Clogk=[[9,3,3],[3],[9,3]]   Tk=[3,3]    Hkp=[9,3]
m=3527078    Clogk=[[3,3,3],[3],[3,3]]   Tk=[3,3]    Hkp=[3,3]
m=4201313    Clogk=[[3,3,3],[3],[3,3]]   Tk=[3,3]    Hkp=[9,3]
m=4455293    Clogk=[[9,3,3],[9],[3,3]]   Tk=[3,3]    Hkp=[3,3]
m=4996655    Clogk=[[3,3,3],[3],[3,3]]   Tk=[3,3]    Hkp=[243,3]
m=5176481    Clogk=[[9,3,3],[3],[9,3]]   Tk=[9,3]    Hkp=[9,3]
m=5546015    Clogk=[[9,3,3],[9],[3,3]]   Tk=[9,3]    Hkp=[27,3]
m=5894459    Clogk=[[9,3,3],[3],[9,3]]   Tk=[3,3]    Hkp=[27,3]
m=6493730    Clogk=[[3,3,3],[3],[3,3]]   Tk=[3,3]    Hkp=[27,3]
m=6740726    Clogk=[[27,9,3],[9],[27,3]] Tk=[27,3]   Hkp=[27,3]
m=7054241    Clogk=[[27,3,3],[27],[3,3]] Tk=[3,3]    Hkp=[3,3]
m=7347419    Clogk=[[3,3,3],[3],[3,3]]   Tk=[3,3]    Hkp=[81,3]
m=8596733    Clogk=[[81,3,3],[81],[3,3]] Tk=[3,3]    Hkp=[3,3]
m=9098093    Clogk=[[81,9,3],[81],[9,3]] Tk=[9,3]    Hkp=[9,3]
m=9483965    Clogk=[[9,3,3],[3],[3,3,3]] Tk=[3,3]    Hkp=[3,3,3]
m=10446266   Clogk=[[9,3,3],[3],[9,3]]   Tk=[9,3]    Hkp=[9,3]
m=10566149   Clogk=[[9,3,3],[9],[3,3]]   Tk=[9,3]    Hkp=[9,3]
m=11458877   Clogk=[[9,3,3],[3],[9,3]]   Tk=[3,3]    Hkp=[9,3]
m=11584382   Clogk=[[9,3,3],[9],[3,3]]   Tk=[9,3]    Hkp=[81,3]
m=11702378   Clogk=[[3,3,3],[3],[3,3]]   Tk=[3,3]    Hkp=[81,3]
\end{verbatim}\ns

\subsubsection{Experiments on the invariants 
\texorpdfstring{$\wt \CH_k$, $\CT_k$, $\CH_k$}{Lg}}\label{P9}

The following program computes these main invariants for the real 
or imaginary quadratic fields $k = \Q(\sqrt{\pm m})$, with $p = 3$ 
split or non-split in $k$, and with conditions at will on these invariants
(for $k$ real, ${\sf nu}$ must be large enough because of the normalized 
regulator $\CR_k$):

\ft\begin{verbatim}
\p 400
{\\choose s (imaginary fields: s=-1; real fields: s=1)
 \\choose splitting (split: kro=split; non-split: kro=nsplit)
s=-1;kro=split;d0=(5-s)/2;print();print("s=",s," kro=",kro);
for(m=5,10^5,if(core(m)!=m,next);mod3=lift(Mod(s*m,3));
if(kro==split & mod3!=1,next);if(kro==nsplit & mod3==1,next);
k=bnfinit(x^2-s*m,1);Hk=k.cyc;vhk=valuation(k.no,3);basis=k.zk;
D=divisors(k.no);N=numdiv(k.no);
\\COMPUTATION OF Clog:
Clog=bnflog(k,3);vlog=0;d=matsize(Clog[1])[2];for(j=1,d,c=Clog[1][j];
w=valuation(c,3);if(w>0,vlog=vlog+w));
\\COMPUTATION OF THE 3_CLASS GROUP:
vhk=0;Hkp=List;d=matsize(Hk)[2];for(j=1,d,c=Hk[j];
w=valuation(c,3);if(w>0,vhk=vhk+w;listput(Hkp,3^w)));
\\COMPUTATION OF Tk:
nu=vhk+2;if(s==1,nu=vlog+2);
vtk=0;Tk=List;Kpnu=bnrinit(k,3^nu);
HKnu=Kpnu.cyc;d=matsize(HKnu)[2];for(j=d0,d,c=HKnu[j];
w=valuation(c,3);if(w>0,vtk=vtk+w;listput(Tk,3^w)));
\\IMAGINARY OR REAL NON-SPLIT CASE:
if(kro==nsplit,hp=1;delta=0);
\\IMAGINARY OR REAL SPLIT CASE:
if(kro==split,
\\COMPUTATION OF hp=#Cl(S) AND delta:
P=component(idealfactor(k,3),1)[1];ovP=component(idealfactor(k,3),1)[2];
for(j=1,N,hh=D[j];Y=idealpow(k,P,hh);Z=bnfisprincipal(k,Y);
if(Z[1]!=0,next);hp=hh;vhp=valuation(hp,3);break);
\\COMPUTATION OF delta:
if(s==-1,X=Mod(Z[2][1]*basis[1]+Z[2][2]*basis[2],x^2-s*m);
delta=idealval(k,X^2-1,ovP)-1);if(s==1,delta=0));
\\ CONDITIONS ON #Clog,#Tk,#Hk,delta;to be modified at will:
if(vlog==1 & vhk>=4,
print("m=",s*m," Clogk=",Clog," Tk=",Tk," Hkp=",Hkp);
print(" HS=",Clog[3]," delta=",delta," vhp=",valuation(hp,3))))}

  m=-3671 Clogk=[[3],[3],[]] Tk=[3] Hkp=[81]  HS=[] delta=1 vhp=4
  m=-7727 Clogk=[[3],[],[3]] Tk=[] Hkp=[81]  HS=[3] delta=0 vhp=3
  m=-13829 Clogk=[[3],[],[3]] Tk=[3] Hkp=[27,3]  HS=[3] delta=0 vhp=3
  m=-38231 Clogk=[[3],[3],[]] Tk=[3] Hkp=[243]  HS=[] delta=1 vhp=5
\end{verbatim}\ns

\subsubsection{Successive maxima of 
\texorpdfstring{$\wt \delta_p(k)$}{Lg}}\label{P4}

The program shows that $\wt \delta_p(k)$ is, as expected for a Fermat 
quotient, very small, giving possibly $\lambda_p(K/k) = 1$ for $p \gg 0$ 
(see Conjecture \ref{pfinitude}):

\ft\begin{verbatim}
\p 800
{p=3;Dmax=0;for(m=2,10^6,if(core(m)!=m,next);
if(kronecker(-m,p)!=1,next);k=bnfinit(x^2+m);
basis=k.zk;Hk=k.cyc;hk=k.no;D=divisors(hk);N=numdiv(hk);
\\COMPUTATION OF THE ORDER OF THE CLASS OF A PRIME P DIVIDING p:
P=component(idealfactor(k,p),1)[1];ovP=component(idealfactor(k,p),1)[2];
for(j=1,N,hh=D[j];Y=idealpow(k,P,hh);Z=bnfisprincipal(k,Y);
if(Z[1]!=0,next);h=hh;break);vhk=valuation(hk,p);
vh=valuation(h,p);X=Mod(Z[2][1]*basis[1]+Z[2][2]*basis[2],x^2+m);
Delta=idealval(k,X^(p-1)-1,ovP)-1+vhk-vh;if(Delta>Dmax,Dmax=Delta;
\\COMPUTATION OF T_k AND H_k:
Tk=List;nu=vhk+2;Knu=bnrinit(k,p^nu);CKnu=Knu.cyc;
d=matsize(CKnu)[2];for(j=3,d,c=CKnu[j];w=valuation(c,p);
if(w>0,listput(Tk,p^w)));
Hkp=List;for(j=1,matsize(Hk)[2],c=Hk[j];w=valuation(c,p);
if(w>0,listput(Hkp,p^w)));
print("m=",m," d(X)=",Delta," Tk=",Tk," Hkp=",Hkp)))}

p=3
  m=14   d(X)=1 Tk=[]  Hkp=[]      m=2351   d(X)=6  Tk=[9]  Hkp=[9]
  m=41   d(X)=3 Tk=[]  Hkp=[]      m=8693   d(X)=7  Tk=[]   Hkp=[3]
  m=365  d(X)=4 Tk=[]  Hkp=[]      m=26243  d(X)=8  Tk=[]   Hkp=[3]
  m=971  d(X)=5 Tk=[]  Hkp=[3]     m=30611  d(X)=13 Tk=[]   Hkp=[]
p=5
  m=11   d(X)=1 Tk=[]  Hkp=[]      m=16249 d(X)=5 Tk=[5]    Hkp=[5]
  m=51   d(X)=3 Tk=[]  Hkp=[]      m=88654 d(X)=6 Tk=[]     Hkp=[]
  m=4821 d(X)=4 Tk=[]  Hkp=[]      m=173611 d(X)=8  Tk=[]   Hkp=[5]
\end{verbatim}\ns

\subsubsection{Research of primes \texorpdfstring{$p$}{Lg} such that 
\texorpdfstring{$\delta_p(k) \geq 1$}{Lg}}\label{P5}

The program gives, for $k = \Q(\sqrt{-m})$, split primes $p$ in a given 
interval, solutions to $\delta_p(k) \geq 1$, without computation of the precise 
value of $\lambda_p(k^\cyc/k)$ (cf. Theorem \ref{sands}), which needs to 
know approximations of the $p$-adic $L$-functions as is done in 
Dummit--Ford--Kisilevsky--Sands \cite{DFKS1991}. 

\smallskip
Let $x$ be the fundamental ${\mathfrak p}$-unit.
Put $x = u+v \sqrt{-m}$. The principle is to test the principality of 
${\mathfrak p}^{\hp}$, from increasing divisors 
$\hp$ (in ${\sf hP}$) of $\hk$ (in ${\sf hk}$). 
If $\hp \ne 1$, $v \sqrt{-m} \equiv -u \pmod {{\mathfrak p}^2}$, 
then $\ov x \equiv 2u \pmod {{\mathfrak p}^2}$, and it suffices to compute the 
$p$-valuation of the Fermat quotient $q = \frac{1}{p}\big((2u)^{p-1}-1\big)$. 
If $\hp = 1$, we use the relations $u^2-m v^2+2uv \sqrt {-m} 
\equiv 0 \pmod {{\mathfrak p}^2}$ and $u^2 + m v^2 = p$.
Then $v \sqrt {-m} \equiv \ffrac{p}{2u} - u \pmod {{\mathfrak p}^2}$, 
and replacing in $\ov x^{\,p-1} - 1 = (u - v \sqrt {-m})^{p-1} - 1$ yields
$\frac{1}{p} \big(\ov x^{\,p-1} - 1\big) \equiv \ffrac{(2u)^2 q + 1}{(2u)^2} 
\pmod {\mathfrak p}$, whose $p$-valuation is that of $(2u)^2 q + 1$, and 
gives $\delta_p(k) = 0$ or $\delta_p(k) \geq 1$ (see Remark \ref{borel-cantelli}).

\smallskip
All the results coincide with that obtained in the table of \cite{DFKS1991} 
by means of analytic methods with $L_p$-functions for computing 
$\lambda_p(k^\cyc/k)$, and we can check all the cases, but our program
may be used for any interval and for instance the case $k = \Q(\sqrt{-923})$, 
$\order \CH_k = 1$, $\hp = 10$, $p = 85985437$, 
does not appear in their table limited to $p < 10^7$.

\smallskip
We then have $\delta_p(k) \geq 1$ (i.e. $\lambda_p(k^\cyc/k) 
\geq 2$ when $v_p(\hp) = 0$) if and 
only if the valuation ${\sf w}$ in the program is non-zero; 
we give only few excerpts of the results:

\ft\begin{verbatim}
\p 400
{for(m=3,10^4,if(core(m)!=m,next);k=bnfinit(x^2+m);
basis=k.zk;hk=k.no;D=divisors(hk);N=numdiv(hk);
forprime(p=3,10^7,if(kronecker(-m,p)!=1,next);
\\ORDERS OF THE CLASSES OF P AND ovP:
P=component(idealfactor(k,p),1)[1];
ovP=component(idealfactor(k,p),1)[2];
for(n=1,N,hP=D[n];Y=idealpow(k,P,hP);Z=bnfisprincipal(k,Y);
if(Z[1]==0,X=Z[2][1]*basis[1]+Z[2][2]*basis[2];
\\FERMAT QUOTIENT OF THE P_UNIT x:
a=Mod(2*polcoeff(X,0),p^2);q=(lift(a^(p-1)-1))/p;
if(hP!=1,z=q);if(hP==1,z=a^2*q+1);w=valuation(z,p);
if(w>0,print("m=",m," hk=",hk," hP=",hP," p=",p);break)))))}

m=3    hk=1   hP=1     p=13            m=47   hk=5   hP=5     p=3
m=3    hk=1   hP=1     p=181           m=47   hk=5   hP=5     p=17
m=3    hk=1   hP=1     p=2521          m=47   hk=5   hP=5     p=157
m=3    hk=1   hP=1     p=76543         m=47   hk=5   hP=1     p=1193
m=3    hk=1   hP=1     p=489061        m=47   hk=5   hP=5     p=1493
m=3    hk=1   hP=1     p=6811741       m=47   hk=5   hP=5     p=1511    
m=941  hk=46  hP=23    p=5             m=998  hk=26  hP=26    p=3
m=941  hk=46  hP=46    p=7             m=998  hk=26  hP=26    p=13
m=941  hk=46  hP=46    p=11            m=998  hk=26  hP=13    p=1279
m=995  hk=8   hP=8     p=3             m=998  hk=26  hP=26    p=847277
\end{verbatim}\ns

\subsection{Complexity of the \texorpdfstring{$\Z_3$}{Lg}-extensions
\texorpdfstring{$K^\iw = k k_0^\acyc $}{Lg} having 
\texorpdfstring{$\mu_3(K^\iw/k) >0$}{Lg}}\label{P6}

We consider the $\Z_3$-extensions $K^\iw = k k_0^\acyc$ built \S\,\ref{nonzero}, 
over $k = k_0 F$, $k_0 = \Q(\sqrt{d})$, $d \in \{-2, -11\}$, $F$ being a cubic 
field of suitable conductor $f = \ell_1 \ell_2$ ($\ell_1$, $\ell_2$ totally inert in 
$k_0^\cyc/\Q$). We use some polynomials defining the first layer of 
$k_0^\acyc$ given by Program \ref{P7} (some were given by Kim--Oh 
\cite[Table I]{KO2004}), whence $K^\iw_1$, and the following program, where 
the two fields $F$ of conductor $f$ are considered; ${\sf split \in \{0,1\}}$ 
indicates the splitting ($0$) or not ($1$) of $3$ in $F$ (hence in $k$ since 
for $d \in \{-2, -11\}$, $3$ splits in $k_0$); in this context, $3$ is totally 
ramified in $K^\iw/k$, which is crucial for capitulation aspects.

\smallskip
Since these examples of $K^\iw/k$ are of very hight complexity, we test
if there is, at least, partial capitulation of $\CH_k$ in $K^\iw_1$ by computing 
the algebraic norm $\Bnu_{\!K^\iw_1/k}(\CH_{K^\iw_1})$. 

\smallskip
In the list ${\sf G}$, giving $\Gal(K^\iw_1/\Q)$, one selects the
elements of order $3$ and characterizes ${\sf S0}$ fixing ${\sf k}$;
then ${\sf NuA}$ computes the algebraic norm of ideals ${\sf A}$ 
generating $\CH_{K^\iw_1}$. The polynomial ${\sf Pk}$ defines 
${\sf k}$ of degree $6$, ${\sf PK1}$ defines ${\sf K1}$ of degree 
$18$, ${\sf Hk}$ and ${\sf HK1}$ are the class groups of ${\sf k}$ 
and ${\sf K1}$, respectively; the intervals of definition of ${\sf ell1}$
and ${\sf ell2}$ may be chosen at will:

\ft\begin{verbatim}
\p 800
allocatemem(800000000)
{p=3;L=[[-2,x^3-3*x-10],[-11,x^3-3*x-46]];for(i=1,2,d=L[i][1];Pk0=x^2-d;
Pkac=L[i][2];print();print("d=",d," Pkac=",Pkac);forprime(ell1=7,13,
forprime(ell2=ell1+2,37,if(Mod(ell1-1,p)!=0 || Mod(ell2-1,p)!=0,next);
if(kronecker(d,ell1)!=-1 || kronecker(d,ell2)!=-1,next);f=ell1*ell2;
for(i=1,4,PF=polsubcyclo(f,p)[i];D=nfdisc(PF);if(D!=f^2,next);print();
\\DECOMPOSITION OF 3 in F:
split=polisirreducible(PF+O(p));if(split!=0,next);
Pk=polcompositum(PF,Pk0)[1];print("ell1=",ell1," ell2=",ell2," Pk=",Pk);
k=bnfinit(Pk,1);kcyc=k.cyc;rk=matsize(kcyc)[2];
PK1=polcompositum(Pk,Pkac)[1];K1=bnfinit(PK1,1);
print("split=",split," Hk=",kcyc," HK1^iw=",K1.cyc);
\\SEARCH OF A GENERATOR S0 OF GAL(K1/k):
G=nfgaloisconj(K1);X=nfsubfields(K1,6);
for(j=1,4,PX=X[j][1];Y=Mod(X[j][2],PK1);
if(polgalois(PX)==polgalois(polcyclo(18)),
print("j=",j," PX=",PX);break);
for(kk=1,18,S=G[kk];if(nfgaloisapply(K1,S,S)==x,next);
T=nfgaloisapply(K1,S,S);T=nfgaloisapply(K1,S,T);if(T!=x,next);
YY=nfgaloisapply(K1,S,Y);if(YY!=Y,next);S0=S;break));
\\COMPUTATION OF THE ALGEBRAIC NORMS:
rK1=matsize(component(K1.clgp,2))[2];
for(j=1,rK1,A=component(K1.clgp,3)[j];
A1=A;As=nfgaloisapply(K1,S0,A);
Ass=nfgaloisapply(K1,S0,As);NuA=idealmul(K1,A1,As);
NuA=idealmul(K1,NuA,Ass);
B=bnfisprincipal(K1,NuA)[1];print(B))))))}
\end{verbatim}\ns

In the first example below, the data means (for $\Bnu_{K_1^{\iw}/k}$ 
denoted $\Bnu_1$):
$$\Bnu_1(\CH_{K_1^{\iw}}) = \langle\, \Bnu_1(h_1), 
\Bnu_1(h_2), \ldots, \Bnu_1(h_r) \,\rangle = \langle\, h_1^{201}, 
1, \ldots, 1 \,\rangle \simeq \Z/3\Z; $$

\noindent 
since the image is cyclic of order $3$, $\CH_k \simeq \Z/3\Z$ 
does not capitulate in $K^{\iw}_1$. In the selected intervals
there is very few capitulations when $\CH_k \simeq \Z/3\Z$ and 
few partial capitulations when $\CH_k \simeq \Z/3\Z \times \Z/3\Z$,
when $3$ totally splits (except one case), which enforces the heuristic 
regarding the huge complexity of $\CH_{K^\iw_1}$; but we can not say 
if a capitulation does exist in larger layers, and the sufficient condition 
of Theorem \ref{maincoro} does not hold. 

\smallskip
The following list gives cases of non-trivial capitulation
(zero matrices are not printed):

\ft\begin{verbatim}
Cyclic 3-class groups Hk:

d=-2 Pkac=x^3-3*x-10

ell1=7 ell2=37 Pk=x^6+2*x^5-165*x^4-586*x^3+6990*x^2+38488*x+60857
split=1 Hk=[93] HK1^iw=[279,3,3,3]
j=4 PX=x^6-6*x^5-1485*x^4+15822*x^3+566190*x^2-9352584*x+44364753
TOTAL CAPITULATION in K1^iw - Inertia of 3 in F

ell1=7 ell2=37 Pk=x^6+2*x^5-165*x^4-68*x^3+7508*x^2-9168*x+17604
split=0 Hk=[222,2] HK1^iw=[666,6,3,3,3]
j=3 PX=x^6-6*x^5-1485*x^4+1836*x^3+608148*x^2+2227824*x+12833316
[444,0,0,0,0]
[444,0,0,0,0]
[444,0,0,0,0]
[0,0,0,0,0]
[0,0,0,0,0]
NO CAPITULATION

ell1=13 ell2=37 Pk=x^6+2*x^5-313*x^4+80*x^3+26008*x^2-65704*x+90124
split=1 Hk=[78,2] HK1^iw=[78,6,3,3,3]
j=2 PX=x^6-6*x^5-2817*x^4-2160*x^3+2106648*x^2+15966072*x+65700396
TOTAL CAPITULATION in K1^iw - Inertia of 3 in F

ell1=13 ell2=37 Pk=x^6+2*x^5-313*x^4+1042*x^3+26970*x^2-225396*x+508113
split=0 Hk=[1029] HK1^iw=[3087,9,3,3,3,3]
j=2 PX=x^6-6*x^5-2817*x^4-28134*x^3+2184570*x^2+54771228*x+370414377
[0,0,0,0,0,0]
[2058,3,0,0,0,0]
[0,0,0,0,0,0]
[0,0,0,0,0,0]
[0,0,0,0,0,0]
[0,0,0,0,0,0]
NO CAPITULATION

d=-11 Pkac=x^3-3*x-46

ell1=7 ell2=19 Pk=x^6+2*x^5-54*x^4+94*x^3+2459*x^2-11352*x+36639
split=0 Hk=[291] HK1^iw=[873,3,3,3,3]
j=2 PX=x^6-6*x^5-486*x^4-2538*x^3+199179*x^2+2758536*x+26709831
[291,0,0,0,0]
[0,0,0,0,0]
[0,0,0,0,0]
[0,0,0,0,0]
[0,0,0,0,0]
NO CAPITULATION

ell1=7 ell2=19 Pk=x^6+2*x^5-54*x^4-172*x^3+2193*x^2+9130*x+38900
split=1 Hk=[57] HK1^iw=[171,3,3,3]
j=4 PX=x^6-6*x^5-486*x^4+4644*x^3+177633*x^2-2218590*x+28358100
TOTAL CAPITULATION in K1^iw - Inertia of 3 in F

Non-cyclic 3-class groups Hk:

d=-2 Pkac=x^3-3*x-10

ell1=7 ell2=181 Pk=x^6+2*x^5-837*x^4+5546*x^3+184482*x^2-2733176*x+10529273
split=1 Hk=[489,3] HK1^iw=[1467,3,3,3,3,3]
j=2 PX=x^6-6*x^5-7533*x^4-149742*x^3+14943042*x^2+664161768*x+7675840017
[489,0,0,0,0,0]
[0,0,0,0,0,0]
[0,0,0,0,0,0]
[0,0,0,0,0,0]
[0,0,0,0,0,0]
[0,0,0,0,0,0]
PARTIAL CAPITULATION in K1^iw - Inertia of 3 in F

ell1=7 ell2=181 Pk=x^6+2*x^5-837*x^4-7124*x^3+171812*x^2+2689584*x+10256868
split=0 Hk=[2388,12] HK1^iw=[50148,84,3,3,3,3]
j=2 PX=x^6-6*x^5-7533*x^4+192348*x^3+13916772*x^2-653568912*x+7477256772
[16716,0,0,0,0,0]
[0,0,0,0,0,0]
[0,0,0,0,0,0]
[0,0,0,0,0,0]
[0,0,0,0,0,0]
[0,0,0,0,0,0]
PARTIAL CAPITULATION in K1^iw - Splitting of 3 in F

ell1=7 ell2=223 Pk=x^6+2*x^5-1033*x^4-2882*x^3+268566*x^2+971028*x+1404297
split=0 Hk=[993,3] HK1^iw=[2979,9,3,3,3,3]
j=2 PX=x^6-6*x^5-9297*x^4+77814*x^3+21753846*x^2-235959804*x+1023732513
[1986,6,0,0,0,0]
[1986,6,0,0,0,0]
[0,0,0,0,0,0]
[0,0,0,0,0,0]
[0,0,0,0,0,0]
[0,0,0,0,0,0]
PARTIAL CAPITULATION in K1^iw - Splitting of 3 in F

ell1=7 ell2=223 Pk=x^6+2*x^5-1033*x^4+3362*x^3+274810*x^2-2313316*x+5362993
split=1 Hk=[51,51] HK1^iw=[153,153,9,3,3,3]
j=2 PX=x^6-6*x^5-9297*x^4-90774*x^3+22259610*x^2+562135788*x+3909621897
[0,0,6,0,0,0]
[0,0,0,0,0,0]
[0,0,3,0,0,0]
[0,0,0,0,0,0]
[0,0,0,0,0,0]
[0,0,0,0,0,0]
PARTIAL CAPITULATION in K1^iw - Inertia of 3 in F
(...)
\end{verbatim}\ns

Examples with $3$ inert in $k/\Q$ and non-cyclic 3-class groups $\CH_k$:

\ft\begin{verbatim}
d=-7 Pkac=x^3-3*x-5

ell1=13 ell2=103 Pk=x^6+2*x^5-870*x^4-8402*x^3+191539*x^2+3514100*x+15694639
split=1 Hk=[651, 3] HK1^iw=[12369,57,3,3,3]
j=2 PX=x^6-6*x^5-7830*x^4+226854*x^3+15514659*x^2-853926300*x+11441391831
TOTAL CAPITULATION in K1^iw

ell1=13 ell2=103 Pk=x^6+2*x^5-870*x^4-368*x^3+199573*x^2-237778*x+1494544
split=1 Hk=[1554,6] HK1^iw=[1554,6,3,3,3]
j=2 PX=x^6-6*x^5-7830*x^4+9936*x^3+16165413*x^2+57780054*x+1089522576
TOTAL CAPITULATION in K1^iw
\end{verbatim}\ns

\subsection{Program computing \texorpdfstring{$k_1^\acyc$}{Lg} for 
\texorpdfstring{$k = \Q(\sqrt{-m})$, $p = 3$, 
$m\not\equiv 3 \pmod 9$}{Lg}}\label{P7}

The program characterizes $k_1^\acyc$ by means of a (non-Galois) 
cubic polynomial ${\sf Q}$ or a Galois polynomial ${\sf R = 
polcompositum(Q,x^2+m)[1]}$ of degree $6$, from the Kummer radical
$w$ defining ${\sf Q}$ (one may use ${\sf polredbest}$ to simplify the 
results); it also gives the class group of $k_1^\acyc$. The number 
$\Val = v_3(\order \CT_k)-v_3(m)+2$ gives the fundamental test implying 
$\log_3({\mathfrak q})^- \in 3 \log_3(I_k \otimes \Z_3)^-$ (see Diagram \ref{DLog}). 

\smallskip
We assume $m \not \equiv 3 \pmod 9$ in the program to avoid some 
complications; use the paper \cite[Section 7, \S\,7.1, 7.2, 7.3, 7.4]{Gra2026}
for calculations without any restriction, and performing programs. 
For the case $\CH_{k^*} = 1$ giving the unique radical $\varepsilon^*$
(fundamental unit of $k^*$), giving $Q = x^3- 3\,{\bf N}(\varepsilon^*)\,x
- {\bf Tr}(\varepsilon^*)$, we have introduced the instruction 
${\sf if(rkstar==0,next)}$, which can be deleted by the reader.

\smallskip
For large $m$'s, a sufficient precision is needed (e.g., 
${\sf \backslash p\ 400}$ or much more). 
The auxiliary primes $q$ are taken in the interval defined by ${\sf Maxq}$ 
and the list ${\sf ListSigma}$ gives the integers $m$ for which the program 
fails to find the unique solution; if so this means that ${\sf Maxq}$ must be 
increased. In practice, this list is always empty with ${\sf Maxq = 500}$:

\ft\begin{verbatim}
\p 400
{ListSigma=List;Maxq=500;for(m=5,2000,if(core(m)!=m || Mod(m,9)==3,next);
P=x^2+m;k=bnfinit(P);basis=k.zk;hk=k.no;vhk=valuation(hk,3);Delta=0;
Pstar=x^2-3*m;if(Mod(m,3)==0,Pstar=x^2-m/3);kstar=bnfinit(Pstar,1);
basisstar=kstar.zk;Lstar=List;Cstar=kstar.clgp[2];rkstar=0;for(j=1,
matsize(Cstar)[2],C= Cstar[j];if(Mod(C,3)==0,rkstar=rkstar+1;listput(Lstar,j)));
\\if(rkstar==0,next);\\ has been deleted for a complete table
\\STRUCTURE OF T_k AND COMPUTATION OF Val:
nu=vhk+2;Tk=List;Kpnu=bnrinit(k,3^nu);Hknu=Kpnu.cyc;ot=1;rt=0;
d=matsize(Hknu)[2];for(j=3,d,c=Hknu[j];v=valuation(c,3);if(v>0,listput(Tk,3^v);
ot=ot*3^v;rt=rt+1));hk3=3^vhk;ram=Ramified;if(hk3!=ot,ram=Unramified);
Val=valuation(ot/m,3)+2;
\\WRITINGS OF THE MAIN INVARIANTS:
kro=kronecker(-m,3);Disc=m;if(Mod(-m,4)!=1,Disc=4*m);print();
print("m=",m," Disc=",Disc," (-m,3)=",kro," hk=",hk," Hk=",k.cyc," Tk=",Tk,
" hkstar=",kstar.no," Hkstar=",kstar.cyc," #Hk=",hk3,"  k_1^ac/k is ",ram);
\\COMPUTATION OF THE LIST Lw OF RADICALS:
Lw0=List;w0=kstar.fu[1];listput(Lw0,w0);
for(i=1,rkstar,Idstar=kstar.clgp[3][Lstar[i]];
alpha=bnfisprincipal(kstar,idealpow(kstar,Idstar,Cstar[Lstar[i]])); 
wi=Mod(basisstar[1]*alpha[2][1]+basisstar[2]*alpha[2][2],Pstar);
listput(Lw0,wi));Nrad=rkstar+1;Lw=List;nw=3^Nrad-1;
LD=List;for(i=1,nw,D=digits(i,3);if(D[1]!=2,listput(LD,D)));
nw=nw/2;for(i=1,nw,d=matsize(LD[i])[2];d0=Nrad+1-d;w=1;
for(j=d0,Nrad,w=w*Lw0[j]^LD[i][j-d0+1]);listput(Lw,w));
print("Lw=",Lw);
\\RESEARCH OF THE SOLUTION AMONG THE RADICALS:
for(J=1,nw,w=Lw[J];Nw=norm(w);ispower(Nw,3,&a);
Qac=x^3-3*a*x-trace(w);Q=polredbest(Qac);Test=0;e0=1;if(Mod(m,3)==0,e0=2);
dres=1;if(kro==-1,dres=2);forprime(q=5,Maxq,if(kronecker(-m,q)!=1,next);
A=component(idealfactor(k,q),1)[1];B=idealpow(k,A,hk);b=bnfisprincipal(k,B);
pib=Mod(basis[1]*b[2][1]+basis[2]*b[2][2],P)^(3^dres-1)-1;Log=pib;t=1;
while(t<=e0*(Val+log(t)+1),t=t+1;Log=Log-(-1)^t*pib^t/t);Log=lift(Log);
C1=polcoeff(Log,1);if(valuation(C1,3)<=Val,next);Qq=Q+O(q);
if(polisirreducible(Qq)==1,print("J=",J," q=",q," Qq irreducible");Test=1;break));
if(Test==0,Delta=Delta+1;R=polcompositum(P,Q)[1];kacyc=bnfinit(polredbest(R),1);
print("SOLUTION:J=",J," w=",w);print("Q^ac=",Q," Val=",Val," Hkacyc=",kacyc.cyc)));
if(Delta!=1,listput(ListSigma,m)));print(ListSigma)}
\end{verbatim}\ns

Let's give some extracts, where ${\sf Q^{ac} \to polredbest(Q^{ac})}$:

\ft\begin{verbatim}
m=107 Disc=107 kronecker(-m,3)=1 hk=3 Hk=[3] Tk=[3] hkstar=3 
Hkstar=[3] #Hk=3  k_1^ac/k is Ramified
Lw=List([Mod(-1/2*x-17/2,x^2-321),Mod(12*x-215,x^2-321),
Mod(11/2*x-197/2,x^2-321),Mod(5/2*x-91/2,x^2-321)])
SOLUTION:J=1 w=Mod(-1/2*x-17/2,x^2-321)
Q^ac=x^3+6*x-17   Val=3    Hkacyc=[9,3]
J=2 q=79 Qq irreducible
J=3 q=79 Qq irreducible
J=4 q=79 Qq irreducible

m=262 Disc=1048 kronecker(-m,3)=-1 hk=6 Hk=[6] Tk=[3] hkstar=6 
Hkstar=[6] #Hk=3  k_1^ac/k is Ramified
Lw=List([Mod(-2*x+137,x^2-786),Mod(28*x-785,x^2-786),
Mod(5406*x-151561,x^2-786),Mod(1043744*x-29262089,x^2-786)])
J=1 q=113 Qq irreducible
J=2 q=113 Qq irreducible
J=3 q=113 Qq irreducible
SOLUTION:J=4 w=Mod(1043744*x-29262089,x^2-786)
Q^ac=x^3+42*x-40   Val=3    Hkacyc=[54,9]

m=362 Disc=1448 kronecker(-m,3)=1 hk=18 Hk=[18] Tk=[3] hkstar=6 
Hkstar=[6] #Hk=9  k_1^ac/k is Unramified
Lw=List([Mod(-20*x+743,x^2-1086),Mod(22*x+725,x^2-1086),
Mod(1846*x+60835,x^2-1086),Mod(154878*x+5105285,x^2-1086)])
J=1 q=53 Qq irreducible
J=2 q=53 Qq irreducible
J=3 q=53 Qq irreducible
SOLUTION:J=4 w=Mod(154878*x+5105285,x^2-1086)
Q^ac=x^3+5*x-14   Val=3    Hkacyc=[6]

m=586 Disc=2344 kronecker(-m,3)=-1 hk=18 Hk=[18] Tk=[9] hkstar=6 
Hkstar=[6] #Hk=9  k_1^ac/k is Ramified
Lw=List([Mod(28*x-2491,x^2-1758),Mod(14*x+587,x^2-1758),
Mod(-18438*x-773081,x^2-1758),Mod(24282790*x+1018152659,x^2-1758)])
J=1 q=137 Qq irreducible
J=2 q=137 Qq irreducible
J=3 q=137 Qq irreducible
SOLUTION:J=4 w=Mod(24282790*x+1018152659,x^2-1758)
Q^ac=x^3-99*x-630   Val=4    Hkacyc=[90,15]

m=974 Disc=3896 kronecker(-m,3)=1 hk=36 Hk=[12,3] Tk=[3] hkstar=6 
Hkstar=[6] #Hk=9  k_1^ac/k is Unramified
Lw=List([Mod(444*x+24101,x^2-2922),Mod(18*x+973,x^2-2922),
Mod(865830*x+46802897,x^2-2922),
Mod(41647855098*x+2251296756037,x^2-2922)])
J=1 q=71 Qq irreducible
J=2 q=71 Qq irreducible
J=3 q=71 Qq irreducible
SOLUTION:J=4 w=Mod(41647855098*x+2251296756037,x^2-2922)
Q^ac=x^3-x^2+16*x-8   Val=3    Hkacyc=[12,3,3]

EXAMPLES WITH rkstar=2:

m=28477 Disc=113908 (-m,3)=-1 hk=54 Hk=[18,3] Tk=List([3,3]) hkstar=18 
Hkstar=[6,3] #Hk=27  k_1^ac/k is Unramified
SOLUTION:J=4 w=Mod(167093627943*x-48839097896288,x^2-85431)
Q^ac=x^3-47*x-140 Val=4 Hkacyc=[36,12,6]

m=32573 Disc=130292 (-m,3)=1 hk=126 Hk=[42,3] Tk=List([3,3]) hkstar=18 
Hkstar=[6,3] #Hk=9  k_1^ac/k is Ramified
SOLUTION:J=4 w=Mod(9469075744*x+2960044436165,x^2-97719)
Q^ac=x^3+402*x-5428 Val=4 Hkacyc=[126,3,3,3]
(...)
m=39677 Disc=158708 (-m,3)=1 hk=144 Hk=[24,6] Tk=List([3,3]) hkstar=36 
Hkstar=[6,6] #Hk=9  k_1^ac/k is Ramified
SOLUTION:J=12 w=Mod(6552180178393*x-2260559136307388,x^2-119031)
Q^ac=x^3+585*x-13428 Val=4 Hkacyc=[72,6,3,3]

m=50983 Disc=50983 (-m,3)=-1 hk=90 Hk=[30,3] Tk=List([3,3]) hkstar=18 
Hkstar=[6,3] #Hk=9  k_1^ac/k is Ramified
SOLUTION:J=12 w=Mod(-11066570533*x+4327991281906,x^2-152949)
Q^ac=x^3-471*x-4234 Val=4 Hkacyc=[90,3,3,3]
\end{verbatim}\ns

For $k = \Q(\sqrt{-262})$, the resulting polynomial, given analytically by
Kundu--Washington in \cite[\S\,5.2]{KW2023}, is $x^3+42 x-40$, 
in perfect accordance with our result obtained in the Kummer context
and the properties of the $\Log_p$-function.

\subsection{Capitulation of \texorpdfstring{$\CH_k$}{Lg}
in \texorpdfstring{$k^\acyc$}{Lg}}\label{P8}

The following program computes $\Bnu_{k_1^\acyc/k}(\CH_{k_1^\acyc})$ 
to see if there is capitulation (or partial capitulation or none) of $\CH_k$; 
some examples confirm statements of Theorems \ref{capitule}, \ref{capitule2}:

\ft\begin{verbatim}
{L=[[107,x^3+6*x-17],[262,x^3+42*x-40],[298,x^3-99*x-522],
[302,x^3-93*x-458],[331,x^3+36*x-45],[362,x^3+5*x-14],
[367,x^3-x^2+2*x+3],[397,x^3+90*x-252],[586,x^3-99*x-630]];
for(i=1,9,m=L[i][1];Qac=L[i][2];P=x^2+m;k=bnfinit(P);hk=k.no;
hta=hk/3^valuation(hk,3);R=polcompositum(Qac,P)[1];kacyc=bnfinit(R,1);
print();print("m=",m," Qac=", Qac," Hk=",k.cyc," Hkacyc",1,"=",kacyc.cyc);
\\SEARCH OF AN ELEMENT OF ORDER 3 OF G:
G=nfgaloisconj(kacyc);for(j=1,6,S=G[j];T=nfgaloisapply(kacyc,S,S);
if(T==x,next);T=nfgaloisapply(kacyc,T,S);if(T!=x,next);S0=S;break);
\\COMPUTATION OF THE ALGEBRAIC NORMS:
d=matsize(kacyc.clgp[2])[2];for(j=1,d,
A=kacyc.clgp[3][j];As=nfgaloisapply(kacyc,S0,A);
Ass=nfgaloisapply(kacyc,S0,As);NuA=idealmul(kacyc,A,As);
NuA=idealmul(kacyc,NuA,Ass);NuA=idealpow(kacyc,NuA,hta);
B=bnfisprincipal(kacyc,NuA);print(B)))}

m=107 Q^ac=x^3+6*x-17 Hk=[3] Hkacyc=[9,3] k_1^acyc/k Ramified
[[3,0],[1,0,0,0,0,0]]
[[0,0],[2,0,0,0,0,0]]
No capitulation in k_1^acyc 

m=262 Q^ac=x^3+42*x-40 Hk=[6] Hkacyc=[54,9] k_1^acyc/k Ramified
[[36,0],[too big rationals]]
[[0,0],[4,0,0,0,0,0]]
No capitulation in k_1^acyc 

m=298 Q^ac=x^3-99*x-522 Hk=[6] Hkacyc=[6,6,2] k_1^acyc/k Ramified
[[0,0,0],[2,0,0,0,0,0]]
[[0,0,0],[24409,9324,-5650,-54,-2724,-698]]
[[0,0,0],[8,0,0,0,0,0]]
Capitulation in k_1^acyc 

m=302 Q^ac=x^3-93*x-458 Hk=[12] Hkacyc=[12,3] k_1^acyc/k Ramified
[[0,0],[22143693,1346420,-829846,2772642,-3983252,736266]]
[[0,0],[840263516,26919056,192957256,35605288,110376752,21654320]]
Capitulation in k_1^acyc 

m=331 Q^ac=x^3+36*x-45 Hk=[3] Hkacyc=[3,3,3] k_1^acyc/k Ramified
[[0,0,0],[2,0,0,0,0,0]]
[[0,0,0],[2,0,0,0,0,0]]
[[2,1,0],[-5/4,1/4,0,0,1/4,0]]
No capitulation in k_1^acyc 

m=362 Q^ac=x^3+5*x-14 Hk=[18] Hkacyc=[6] k_1^acyc/k Unramified
[[0],[187,-66,78,114,30,-18]]
Partial capitulation in k_1^acyc

m=367 Q^ac=x^3-x^2+2*x+3 Hk=[9] Hkacyc=[3] k_1^acyc/k Unramified
[[0],[2,2,2,-1,0,-1]]
Partial capitulation in k_1^acyc 

m=397 Q^ac=x^3+90*x-252 Hk=[6] Hkacyc=[6,3] k_1^acyc/k Ramified
[[0,0],[2,0,0,0,0,0]]
[[0,0],[[413346743896,-96157089046,-58525843370,-10433180696,
                                -72646724196,12020514874]]
Capitulation in k_1^acyc

m=586 Q^ac=x^3-99*x-630 Hk=[18] Hkacyc=[90,15] k_1^acyc/k Ramified
[[60,0],[too big rationals]]
[[0,0],[-6419648, 839936, 2340736, 671488, -686912, 168064]]
Partial capitulation in k_1^acyc 
\end{verbatim}\ns

The case ${\sf m = 367,\ Q^{ac} = x^3-x^2+2*x+3,\ H_k = [9],
\  H_k^{acyc} = [3]}$, is interesting since $\CH_k$, cyclic of order $9$, partially 
capitulates in $k_1^\acyc$ unramified over $k$, defined by the polynomial
$R =  x^6 - 15*x^4 - 7*x^3+148*x^2+236*x+104$. 
So, we get $\Bnu_{k_1^\acyc/k}(\CH_{k_1^\acyc}) = 
\BJ_{k_1^\acyc/k}(\CH_k{'}) = \BJ_{k_1^\acyc/k}(\CH_k^3) = 1$ 
(the image of $\BN_{k_1^\acyc/k}$ is $\CH_k{'}$), giving that the 
subgroup $\CH_k{'} = \CH_k^3$ capitulates in $k_1^\acyc$; indeed, 
we have the following data, where $\CH_k$ is generated by the class 
of ${\mathfrak p}_2 \mid 2$, with $\BJ_{k_1^\acyc/k}({\mathfrak p}_2) 
=: (\beta)$ defined by:

\ft\begin{verbatim}
beta=Mod(-195553/53501908122*x^5+80786/26750954061*x^4-66973577/17833969374*x^3 
     +339236483/53501908122*x^2-6106391899/5944656458*x+8976954289/2057765697, 
     x^6+1131*x^4+194*x^3+404292*x^2-210684*x+45482177)
\end{verbatim}\ns

To check this result, one computes the ideal decomposition of $\beta$ 
and its norm, which leads to $(\beta) = {\mathfrak P}_2^3$, with 
${\mathfrak P}_2 \mid {\mathfrak p}_2$ inert in $k_1^\acyc/k$, 
and $\beta$ of norm $512$ as expected.

\end{appendices}

\section*{Definitions and Notations}

We organize the main definitions and notations regarding various  
categories of objects; $p \geq 3$ is a prime number split into ${\mathfrak p}$, 
${\ov {\mathfrak p}} := {\mathfrak p}^\tau$ (complex conjugate of ${\mathfrak p}$), 
in the base field $k$:

\smallskip
{\it Number fields}
\begin{itemize}
\item $\tau$: complex conjugation, 
\item $g = \langle \tau \rangle$,
\item $k$: imaginary quadratic field,
\item ``Totally $p$-adic field'': field in which $p$ totally splits,
\item $k^\cyc$: cyclotomic $\Z_p$-extension of $k$,
\item $k^\acyc$: anti-cyclotomic $\Z_p$-extension of $k$,
\item $\wt k$: compositum of the $\Z_p$-extensions of $k$,
\item $K, \ov K := K^\tau$: non-Galois $\Z_p$-extension of $k$ and its conjugate,
\item $K_n$: the $n^{\rm th}$ layer of the $\Z_p$-extension $K/k$,
\item $K_e, K_{\ov e}$: inertia fields of ${\mathfrak p}$, ${\ov {\mathfrak p}}$, in $K/k$,
\item $L, \ov L$: inertia fields of ${\mathfrak p}$, ${\ov {\mathfrak p}}$, in $\wt k/k$,
\item $H_k^\nr$: $p$-Hilbert's class field of $k$,
\item $H_k^\pr$: maximal abelian $p$-ramified pro-$p$ extension of $k$,
\item $H_k^\bp$: Bertrandias--Payan field (maximal abelian
pro-$p$-extension of $k$ in which any cyclic extension is embeddable
in cyclic $p$-extensions of $k$ of arbitrary degree),
\item $H_k^\lc$: maximal abelian locally cyclotomic pro-$p$-extension of $k$
(i.e. such that $p$ totally splits in $H_k^\lc/k^\cyc$),
\item $H_k^\lac$: maximal abelian locally anti-cyclotomic pro-$p$-extension of $k$
\item $\knr = \wt k \cap H_k^\nr$, 
\item $K^\nr = K \cap H_k^\nr$,
\item $\wt e = v_p([\knr : k])$
\end{itemize}

{\it Units, $S$-units}
\begin{itemize}
\item $S_F$ or simply $S$: set of $p$-places of a number field $F$,
\item $E_F$: group of global units of $F$,
\item $E_F^S$: group of global $S$-units of $F$,
\item $\hp$: order of the class of ${\mathfrak p}$,
\item $x$: generator of ${\mathfrak p}^{\hp}$ (fundamental 
${\mathfrak p}$-unit), and $\ov x$ its complex conjugate,
\item $\CU_{\mathfrak p}$: group of principal local units of the completion 
$k_{\mathfrak p}$,
\item $\CU_p = \oplus_{{\mathfrak p} \mid p} \CU_{\mathfrak p}$,
\end{itemize}

{\it Class groups, $S$-class groups, Galois groups}
\begin{itemize}
\item $\BH_F = I_F/P_F$: class group of the number field $F$,
\item $\CH_F = \BH_F \otimes \Z_p$: $p$-class group of $F$,
\item $\Ccl : I_F \mapsto \BH_F$ or $I_F \mapsto \CH_F$,
\item ``$p$-principality of the ideal ${\mathfrak a} \in I_F$'': $\Ccl({\mathfrak a}) = 1$
in $\CH_F$,
\item $\CH_F^S = \CH_F/\Ccl(S)$: $S$-class group of $F$ for the 
set $S$ of $p$-places of $F$,
\item $\CH_n^i = \CH_{K_n}^i$: the $i^{\rm th}$-element of the filtration of $\CH_{K_n}$,
\item $\CH_{K_n} = \Ccl(\CI_n)$ for a group of ideals $\CI_n$ of $K_n$,
\item $\Lbda_{K_n/k} = \{a \in k^\times, \, (a) \in \BN_{K_n/k}(\CI_n)\}$,
\item $\CT_k$: $p$-torsion group $\Gal(H_k^\pr/ \wt k)$,
\item $\CW_k^\bp = \oplus_v \mu_p(k_v)\big / \mu_p(k) \simeq \Gal(H_k^\pr/H_k^\bp)$
(trivial in the split case),
\item $\CT_k^\bp = \Gal(H_k^\bp/\wt k)$: Bertrandias--Payan module
($= \CT_k$ in the split case),
\item $\wt \CH_k \simeq \Gal(H_k^\lc/ k^\cyc)$: logarithmic class group,
\item $\CH_{K_n}^\ram = \Ccl( \langle {\mathcal P}_n, \ov {\mathcal P}_n \rangle)
\subset \CH_n$; ${\mathcal P}_n =$ product of the ${\mathfrak P}_n \mid p$ in $K_n$,
\item $G_n = \Gal(K_n/k)$,
\item $X^- = X^{\frac{1-\tau}{2}}$, $X^+ = X^{\frac{1+\tau}{2}}$, for a $g$-module $X$,
\item $\Gamma = \Gal(\wt k/k)$, $\Gamma^- = \Gal(\wt k/k^\cyc)$, $\Gamma^+
= \Gal(\wt k/k^\acyc)$
\end{itemize}

{\it Canonical maps}
\begin{itemize}
\item $\BN_{M/F}$, $\BN_{m/n}$: arithmetic norm in $M/F$, in $K_m/K_n$,
\item $\Bnu_{M/F}$: algebraic norm in $M/F$,
\item $\BJ_{M/F}$, $\BJ_{m/n}$: transfer map in $M/F$, in $K_m/K_n$,
\item $\Log_p({\mathfrak a}) \equiv \frac{1}{m} \log_p(\alpha) \pmod 
{\Q_p \log_p(E_k)}$, if ${\mathfrak a}^m = (\alpha)$ in $k$, 
\item $\log_{\mathfrak p}$, $\log_{\ov {\mathfrak p}}$: usual $p$-adic
logarithms defined on the completions $k_{\mathfrak p}^\times$, 
$k_{\ov {\mathfrak p}}^\times$ of $k$,
\item $v_p$: $p$-adic valuation, $v_{\mathfrak p}$: ${\mathfrak p}$-adic valuation, 
\end{itemize}

{\it Symbols}
\begin{itemize} 
\item $\big(\fffrac{x\,,\,F/k}{\mathfrak q}\big)$: Hasse norm residue symbols 
of $x$ at ${\mathfrak q}$,
\item $\big(\fffrac{F/k}{\mathfrak A}\big)$: Artin symbol of the ideal ${\mathfrak A}$,
\item $G_{n,{\mathfrak p}}$, $G_{n,{\ov {\mathfrak p}}}$: inertia groups of the 
$p$-places in $K_n/k$,
\item $\Omega_{K_n/k} = \big\{(s, \ov s) \in G_{n,\mathfrak p} \times 
G_{n,{\ov {\mathfrak p}}},\ \ s\cdot \ov s = 1 \big\}$,
\item $\CN_{K_n/k}$: subgroup of elements $a \in k^\times$ 
local norms in $K_n/k$ at all place $v \nmid p$,
\item $\omega_{K_n/k} : \CN_{K_n/k} \to \Omega_{K_n/k}$
\end{itemize}

{\it Iwasawa's theory}
\begin{itemize}
\item $\lambda_p(K/k)$, $\mu_p(K/k)$, $\nu_p(K/k)$: usual Iwasawa's 
invariants for $K/k$,
\item $\lambda_p^S(K/k)$, $\mu_p^S(K/k)$, $\nu_p^S(K/k)$: Iwasawa's 
invariants for the $S$-class groups in $K/k$
\item $\ds \CC_{\,\wt k} = \limproj_{F \subset \wt k} \CH_F$ for
the arithmetic norms
\end{itemize}

{\it Class numbers and numerical invariants}
\begin{itemize}
\item $\hk = \order \BH_k$: class number of $k$,
\item $\hp \mid \hk$: order of the class of ${\mathfrak p} \mid p$,
\item $\delta_p(k) = v_{\mathfrak p}(\ov x^{\,p-1}-1) - 1$ 
(${\mathfrak p}$-valuation of the ${\mathfrak p}$-Fermat quotient of $\ov x$),
\item $\wt \delta_p(k) = \delta_p(k) + \big [v_p (\hk) 
- v_p(\hp) \big] = \delta_p(k) + v_p(\order \CH_k^{S_k}) =
v_p(\wt \CH_k)$
\end{itemize}

{\it Kummer radicals and extensions}
\begin{itemize}
\item $M = k(\mu_3)$, $k^* = \Q(\sqrt{3m})$,
\item $H_M^\pr[p]$: maximal elementary abelian $p$-extension of $M$ in $H_M^\pr$,
\item $\Rad(\hbox{\ft$H_M^\pr[p]$}) := \big\{w M^{\times p}, 
\ M(\sqrt[p]w) \subseteq H_M^\pr[p] \big \}$,
\item $Q^\acyc = x^3 - 3a x - t$ ($a^3 = w w'$, $t = w + w'$):
irreducible polynomial defining $k_1^\cyc$ from the radical $w \in k^*$
\end{itemize}

{\it Characters}
\begin{itemize} 
\item $\psi$ (degree 1), $\varphi$ ($p$-adic), $\chi$ (rationnel),
\item $\Delta$: Galois group of prime-to-$p$ order,
\item $\Z_\varphi = \Z_p[\Delta] e_\varphi$ for the idempotent 
$e_\varphi = \frac{1}{d}\sum_{\tau\in\Delta}
\varphi(\tau)\tau^{-1}$
\item $\wt h_\varphi = \order \wt \CH_k^{\,e_\varphi}$
\end{itemize}

\nocite{*}
\bibliographystyle{cdraifplain}
\bibliography{xampl}

\end{document}